\numberwithin{equation}{section}
\newtheorem{theorem}{Theorem}[section]
\newtheorem{proposition}[theorem]{Proposition}
\newtheorem{corollary}[theorem]{Corollary}
\newtheorem{lemma}[theorem]{Lemma}
\newtheorem{remark}[theorem]{Remark}
\newtheorem{definition}[theorem]{Definition}
\DeclareMathOperator{\col}{col}
\DeclareMathOperator{\diag}{diag}
\DeclareMathOperator{\dom}{dom}
\DeclareMathOperator{\range}{ran}
\DeclareMathOperator{\rank}{rank}
\DeclareMathOperator{\Span}{span}
\DeclareMathOperator{\Lip}{Lip}
\DeclareMathOperator{\tr}{tr}
\DeclareMathOperator{\GL}{GL}
\DeclareMathOperator{\supp}{supp}
\DeclareMathOperator{\const}{const}
\DeclareMathOperator{\ctg}{ctg}
\newcommand{\eps}{\varepsilon}
\renewcommand{\k}{\kappa}
\renewcommand{\l}{\lambda}
\renewcommand{\Im}{{\rm Im}}
\renewcommand{\Re}{{\rm Re}}
\newcommand{\wt}{\widetilde}
\newcommand{\ol}{\overline}
\def\cB{\mathcal{B}}
\def\cH{\mathcal{H}}
\def\cI{\mathcal{I}}
\def\cK{\mathcal{K}}
\def\cL{\mathcal{L}}
\def\cR{\mathcal{R}}
\def\cS{\mathcal{S}}
\def\fH{\mathfrak{H}}
\def\fS{\mathfrak{S}}
\def\bC{\mathbb{C}}
\def\bD{\mathbb{D}}
\def\bN{\mathbb{N}}
\def\bR{\mathbb{R}}
\def\bZ{\mathbb{Z}}
\begin{document}

\sloppy

\title[On completeness and Riesz basis property]
{On the completeness and Riesz basis property \\
of root subspaces of boundary value problems \\
for first order systems and applications}

\author{Anton A. Lunyov}
\address{Institute of Applied Mathematics and Mechanics, NAS of Ukraine,
R. Luxemburg str. 74, \, 83114 Donetsk, Ukraine} \curraddr{}
\email{A.A.Lunyov@gmail.com}

\author{Mark~M.~Malamud}
\address{Institute of Applied Mathematics and Mechanics, NAS of Ukraine,
R. Luxemburg str. 74, \, 83114 Donetsk, Ukraine} \curraddr{}
\email{mmm@telenet.dn.ua}

\subjclass[2010]{47E05, 34L10, 35L35, 47A15}
\date{}
\keywords{Systems of ordinary differential equations; regular
boundary conditions; completeness of root vectors; Riesz basis
property; resolvent operator; dissipative operators; spectral
synthesis; Timoshenko beam model.}

\begin{abstract}
The paper is concerned with the completeness property of root
functions of general boundary value problems for $n \times n$
first order systems of ordinary differential equations on a
finite interval. In comparison with the recent
paper~\cite{MalOri12} we substantially relax the assumptions on
boundary conditions guarantying the completeness of root
vectors, allowing them to be non-weakly regular and even
degenerate. Emphasize that in this case the completeness
property substantially depends on the values of a potential
matrix at the endpoints of the interval.

It is also shown that the system of root vectors of the general
$n \times n$ Dirac type system subject to certain boundary
conditions forms a Riesz basis with parentheses. We also show
that arbitrary complete dissipative boundary value problem for
Dirac type operator with a summable potential matrix admits the
spectral synthesis in $L^2([0,1]; \mathbb{C}^n)$. Finally, we
apply our results to investigate completeness and the Riesz
basis property of the dynamic generator of spatially
non-homogenous damped Timoshenko beam model.
\end{abstract}

\maketitle

\renewcommand{\contentsname}{Contents}
\tableofcontents

\section{Introduction}
%
%
Spectral theory of non-selfadjoint boundary value problems (BVP)
on a finite interval $\cI=(a,b)$ for $n$th order ordinary
differential equations (ODE)
\begin{equation}\label{eq:ODE}
    y^{(n)} + q_1y^{(n-2)} + ... + q_{n-1}y = \l ^n y, \qquad x\in (a,b),
\end{equation}
with coefficients $q_j\in L^1(a,b)$ takes its origin in the
classical papers by Birkhoff~\cite{Bir08, Bir08exp} and
Tamarkin~\cite{Tam12, Tam17, Tam28}. They introduced the concept
of \emph{regular boundary conditions} for ODE and investigated
the asymptotic behavior of eigenvalues and eigenfunctions of
related BVP. Moreover, they proved that the system of root
functions, i.e. eigenfunctions and associated functions, of the
regular BVP is complete. Their results are also treated in the
classical monographs (see~\cite[Section 2]{Nai69}
and~\cite[Chapter 19]{DunSch71}).

The completeness property of non-regular BVP for $n$th order
ODE~\eqref{eq:ODE} has been studied by
M.V.~Keldysh~\cite{Kel51}, A.A.~Shkalikov~\cite{Shk76},
A.G.~Kostyuchenko and A.A.~Shkalikov~\cite{KosShk78},
G.M.~Gubreev~\cite{Gub03}, A.P.~Khromov~\cite{Khr77,Khr03},
V.S.~Rykhlov~\cite{Ryh09} and many others (see references
in~\cite{Khr03}). On the other hand, the Riesz basis property
for regular BVP were investigated by N.~Dunford~\cite{Dun58},
V.P.~Mikhailov~\cite{Mikh62}, G.M.~Kesel'man~\cite{Kes64},
N.~Dunford and J.~Schwartz~\cite[Chapter 19.4]{DunSch71},
A.A.~Shkalikov~\cite{Shk79,Shk82,Shk83}. Numerous papers are
devoted to the completeness and Riesz basis property for the
Sturm-Liouville operator (see the recent paper  \cite{ShkVel09}
by A. Shkalikov and O. Veliev and the review~\cite{Mak12} by
A.S.~Makin and the references therein). We especially mention
the recent achievements for periodic (anti-periodic)
Sturm-Liouville operator $-\frac{d^2}{dx^2} + q(x)$ on
$[0,\pi]$. Namely, F.~Gesztesy and
V.A.~Tkachenko~\cite{GesTka09,GesTka11} for $q \in L^2[0,\pi]$
and later on P.~Djakov and B.S.~Mityagin~\cite{DjaMit12Crit} for
$q \in W^{-1,2}[0,\pi]$ established by different methods a
\emph{criterion} for the system of root functions to contain a
Riesz basis (see Remark~\ref{rem:Riesz.basis} for detailed
discussion).

In this paper we consider first order system of ODE of the form
\begin{equation} \label{eq:system}
    Ly := L(Q)y := -i B^{-1} y' + Q(x)y = \l y,
    \quad y = \col(y_1,...,y_n),
\end{equation}
where $B$ is a nonsingular diagonal $n\times n$ matrix with
complex entries,
\begin{equation} \label{eq:B.def}
    B = \diag(b_1, b_2, \ldots, b_n) \in \bC^{n\times n},
\end{equation}
and $Q(\cdot) =: (q_{jk}(\cdot))_{j,k=1}^n \in L^1([0,1];
\bC^{n\times n})$ is a potential matrix.

Note that, systems~\eqref{eq:system} form a more general object
than ordinary differential equations. Namely, the $n$th order
ODE~\eqref{eq:ODE} can be reduced to the
system~\eqref{eq:system} with ${b}_j = \exp\left(2\pi
ij/n\right)$ (see~\cite{Mal99}). Nevertheless, in general a BVP
for ODE~\eqref{eq:ODE} is not reduced to a
BVP~\eqref{eq:system}--\eqref{eq:CY(0)+DY(1)=0} (see below).
Systems~\eqref{eq:system} are of significant interest in some
theoretical and practical questions. For instance, if $n=2m$,
$B=\diag(-I_m, I_m)$ and $Q=\begin{pmatrix} 0 & Q_{12} \\ Q_{21}
& 0 \end{pmatrix}$, system~\eqref{eq:system} is equivalent to
the Dirac system (see~\cite[{Section
VII.1}]{LevSar88},~\cite[{Section 1.2}]{Mar77}). Note also that
equation~\eqref{eq:system} is used to integrate the $N$-waves
problem arising in nonlinear optics~\cite[{Sec.
III.4}]{ZMNovPit80}.

With system~\eqref{eq:system} one associates, in a natural way,
the maximal operator $L = L(Q)$ acting in $L^2([0,1]; \bC^n)$ on
the domain
\begin{equation*}
    \dom(L) = \{y \in W^{1,1}([0,1]; \bC^n) : Ly \in L^2([0,1]; \bC^n)\}.
\end{equation*}
To obtain a BVP, equation~\eqref{eq:system} is subject to the
following boundary conditions
\begin{equation} \label{eq:CY(0)+DY(1)=0}
    Cy(0) + D y(1)=0, \qquad C = (c_{jk}),\ \
    D = (d_{jk}) \in \bC^{n\times n}.
\end{equation}
Denote by $L_{C, D} := L_{C, D}(Q)$ the operator associated in
$L^2([0,1]; \bC^n)$ with the
BVP~\eqref{eq:system}--\eqref{eq:CY(0)+DY(1)=0}. It is defined
as the restriction of $L = L(Q)$ to the domain
\begin{equation} \label{eq:dom}
    \dom(L_{C, D}) = \{y \in \dom(L) : \ Cy(0) + D y(1)=0\}.
\end{equation}
Moreover, in what follows we always impose the maximality
condition
\begin{equation} \label{eq:rankCD}
    {\rm rank}\begin{pmatrix} C & D \end{pmatrix} = n,
\end{equation}
which is equivalent to $\ker(CC^*+DD^*)=\{0\}$.

To the best of our knowledge, the spectral problem
\eqref{eq:system}--\eqref{eq:CY(0)+DY(1)=0} has first been
investigated by G.D.~Birkhoff and R.E.~Langer~\cite{BirLan23}.
Namely, they have extended some previous results of~Birkhoff and
Tamarkin on non-selfadjoint boundary value problem for
ODE~\eqref{eq:ODE} to the case of
BVP~\eqref{eq:system}--\eqref{eq:CY(0)+DY(1)=0}. More precisely,
they introduced the concepts of \emph{regular and strictly
regular boundary conditions}~\eqref{eq:CY(0)+DY(1)=0} and
investigated the asymptotic behavior of eigenvalues and
eigenfunctions of the corresponding operator $L_{C, D}$.
Moreover, they proved \emph{a pointwise convergence result} on
spectral decompositions of the operator $L_{C, D}$ corresponding
to the BVP~\eqref{eq:system}--\eqref{eq:CY(0)+DY(1)=0} with
regular boundary conditions.

The problem of the completeness of the system of root functions
\emph{of general
BVP}~\eqref{eq:system}--\eqref{eq:CY(0)+DY(1)=0} has first been
investigated in the recent papers~\cite{MalOri00, MalOri10,
MalOri12} by one of the authors and L.L.~Oridoroga. In these
papers the concept of \emph{weakly regular} boundary conditions
for the system~\eqref{eq:system} was introduced and the
completeness of root vectors for this class of BVP was proved.
During the last decade there appeared numerous papers devoted
mainly to the Riesz basis property for $2\times 2$ Dirac system
subject to the \emph{regular or strictly regular} boundary
conditions (see~\cite{TroYam01, TroYam02, Mit04, DjaMit06Zone,
HasOri09, Bask11, DjaMit10BariDir, DjaMit12UncDir,
DjaMit12TrigDir, DjaMit12Equi, DjaMit12Crit, DjaMit13CritDir}).

Let us recall the definition of regular (see~\cite[{p.
89}]{BirLan23}) and weakly regular
(see~\cite{MalOri00,MalOri12}) boundary conditions. To this end
we need the following construction. Let $A=\diag(a_1,\dots,a_n)$
be a diagonal matrix with entries $a_k$ (not necessarily
distinct) that are not lying on the imaginary axis, $\Re\, a_k
\ne 0$. Starting from arbitrary matrices $C, D\in \bC^{n\times
n}$, we define the auxiliary $n \times n$ matrix $T_A(C,D)$ as
follows:
\begin{itemize}
    \item if $\Re\, a_k<0$, then the $k$th column in the
        matrix $T_A(C,D)$ coincides with the $k$th column of
        the matrix $C$,
    \item if $\Re\, a_k>0$, then the $k$th column in the
        matrix $T_A(C,D)$ coincides with the $k$th column of
        the matrix $D$.
\end{itemize}

Now consider the lines
\begin{equation}
    l_j := \{\lambda \in \bC : \Re(i b_j \lambda)=0\}, \quad j \in \{1,\ldots,n\},
\end{equation}
of the complex plane. They divide the complex plane into $m=2r
\le 2n$ sectors. Denote these sectors by $\sigma_1, \sigma_2,
\ldots \sigma_m$. Let $z_j$ lie in the interior of $\sigma_j,
j\in \{1,\ldots, m\}$. The boundary
conditions~\eqref{eq:CY(0)+DY(1)=0} are called regular whenever
\begin{equation} \label{eq:detTizBCD}
    \det T_{i z_j B}(C,D) \ne 0, \qquad j \in \{1,\ldots, m\}.
\end{equation}
We call $z \in \bC$ \emph{admissible} if $\Re(i b_j z) \ne 0$
for $j \in \{1,\ldots,n\}$. Since $T_{i z_j B}(C,D)$ does not
depend on a particular choice of the point $z_j \in \sigma_j$,
the boundary conditions~\eqref{eq:CY(0)+DY(1)=0} are regular if
and only if $\det T_{izB}(C,D) \ne 0$ for each admissible $z$.
%
%
\begin{definition} \emph{(\cite{MalOri12})} \label{def:weak.regular}
The boundary conditions~\eqref{eq:CY(0)+DY(1)=0} are called
\emph{weakly $B$-regular} (or, simply, weakly regular) if there
exist three admissible complex numbers $z_1, z_2, z_3$
satisfying the following conditions:

(a) the origin is an interior point of the triangle $\triangle
_{z_1z_2z_3};$

(b) $\det \,T_{i z_j B}(C, D) \ne 0$ for $j\in \{1,2,3\}$.
\end{definition}
%
%
In the case of Dirac type system ($B = B^*$) the weak regularity
of boundary conditions~\eqref{eq:CY(0)+DY(1)=0} is equivalent to
their regularity~\eqref{eq:detTizBCD} and turns into
\begin{equation} \label{eq:detT+-}
    \det T_{\pm} := \det(C P_{\mp} + D P_{\pm}) \ne 0.
\end{equation}
Here $P_+$ and $P_-$ denote the spectral projections onto
"positive"\ and "negative"\ parts of the spectrum of $B=B^*$,
respectively. Therefore, by~\cite[Theorem 1.2]{MalOri12}, this
condition implies the completeness and minimality in $L^2([0,1];
\bC^n)$ of the root functions of
BVP~\eqref{eq:system}--\eqref{eq:CY(0)+DY(1)=0}. In special
cases this statement has earlier been obtained by
V.A.~Marchenko~\cite[\S1.3]{Mar77} ($2\times 2$ Dirac system)
and V.P.~Ginzburg~\cite{Gin71} $(B=I_n, Q=0)$.

Our first main result (Theorem~\ref{th:explicit.nxn}) states the
completeness property for the general
BVP~\eqref{eq:system}--\eqref{eq:CY(0)+DY(1)=0} with non-weakly
regular boundary conditions. It substantially generalizes the
corresponding results from~\cite{MalOri12}
and~\cite{AgiMalOri12}. \emph{Emphasize that in the case of
non-weakly regular boundary conditions the completeness property
substantially depends on the values $Q(0)$ and $Q(1)$.} The
latter means that Theorem~\ref{th:explicit.nxn} cannot be
treated as a perturbation theory result: the operator
$L_{C,D}(Q)$ satisfying the conditions of this theorem is
complete while the system of root vectors of the unperturbed
operator $L_{C,D}(0)$ may have infinite defect in $L^2([0,1];
\bC^{n \times n})$. We demonstrate this fact by the
corresponding examples (cf. Corollary~\ref{cor:y1(0)=0}).

Our second main achievement is the Riesz basis property for
general $n \times n$ Dirac type system with $Q \in
L^{\infty}([0,1]; \bC^{n \times n})$ subject to certain boundary
conditions. These conditions form rather broad class that
covers, in particular, periodic, antiperiodic, and regular
splitting (not necessarily selfadjoint) boundary conditions for
$2n \times 2n$ Dirac system $(B = \diag(-I_n,I_n))$ (see
Theorem~\ref{th:basis.LCD} and
Proposition~\ref{prop:basis.LCD.per} for the precise
statements). Emphasize that to the best of our knowledge
\emph{even for $2n \times 2n$ Dirac systems with $n>1$ the
results on the Riesz basis property are obtained here for the
first time}.

In this connection we mention the series of recent papers by
P.~Djakov and B.S.~Mityagin~\cite{DjaMit12UncDir,
DjaMit12TrigDir, DjaMit12Equi, DjaMit12Crit, DjaMit13CritDir}.
In~\cite{DjaMit12UncDir} the authors proved that the system of
root functions for $2 \times 2$ Dirac system with $Q \in
L^2([0,1]; \bC^{2 \times 2})$ subject to the \emph{regular}
boundary conditions forms \emph{a Riesz basis with parentheses}
while this system forms \emph{ordinary Riesz basis} provided
that the boundary conditions are \emph{strictly regular}.
Moreover,
in~\cite[Theorem~13]{DjaMit12TrigDir},~\cite[Theorem~19]{DjaMit12Crit}
and~\cite{DjaMit13CritDir} it is established a \emph{criterion}
for the system of root functions to contain a Riesz basis for
periodic (resp., antiperiodic) $2 \times 2$ Dirac operator in
terms of the Fourier coefficients of $Q$ as well as in terms of
periodic (resp., antiperiodic) and Dirichlet spectra.

Further, it is worth to mention one more our result: any
dissipative BVP~\eqref{eq:system}--\eqref{eq:CY(0)+DY(1)=0}
admits the spectral synthesis in $L^2([0,1]; \bC^n)$ (see
Theorem~\ref{th:synthesis}). The spectral synthesis problem was
originated by J.~Wermer~\cite{Wermer52} and then studied by many
authors (see~\cite{Brod66,Markus70,Nik80} and references
therein). We also mention recent
preprints~\cite{BarBelBor12,BarBelBorYak12,BarYak12,BarYak13}
devoted to the problems of completeness and spectral synthesis
for singular perturbations of selfadjoint operators and systems
of exponents and to problems of removal of spectrum.

Note in this connection that each dissipative boundary value
problem for equation~\eqref{eq:ODE} with $n \geqslant 2$ admits
the spectral synthesis. The proof of this fact substantially
relies on two main ingredients:
\begin{itemize}
\item the resolvent of any BVP for equation~\eqref{eq:ODE}
    is the trace class operator;
\item the dissipative boundary value problem is always
    complete
\end{itemize}
(see Remark~\ref{rem:ODE.synth} for details).

As distinct from the situation above, the resolvent of the Dirac
type operator $L_{C,D}(Q)$ is no longer in trace class (see
Proposition~\ref{prop:RL.in.S1+KyFan}). Moreover, the system of
root vectors of the dissipative operator $L_{C,D}(Q)$ may be
incomplete (see, for instance~\cite[Remark 5.10]{MalOri12}).
Thus, the problem of spectral synthesis is non-trivial in this
case.

Finally, we apply our main abstract results with $B = B^* \in
\bC^{4 \times 4}$ to the Timoshenko beam model investigated
under the different restrictions in numerous papers
(see~\cite{Tim21,Tim55,KimRen87,Shub02,Souf03,XuYung04,XuHanYung07,WuXue11}
and the references therein). We show in
Proposition~\ref{prop:Tim.similar} that the dynamic generator of
this model is similar to the special $4 \times 4$ Dirac type
operator. It allows us to derive completeness property in both
regular and non-regular cases. Moreover, \emph{in the regular
case we obtain also the Riesz basis property with parentheses}.

The paper is organized as follows. In Section~\ref{sec:prelim}
we obtain the general result on completeness that
generalizes~\cite[Theorem 1.2]{MalOri12}. In
Section~\ref{sec:asymp} we obtain refined asymptotic formulas
for solutions of system~\eqref{eq:system} and the characteristic
determinant $\Delta(\cdot)$ of the
problem~\eqref{eq:system}--\eqref{eq:CY(0)+DY(1)=0}, provided
that the potential matrix $Q(\cdot)$ is continuous at the
endpoints 0 and 1.

In Section~\ref{sec:completeness} we prove our main result on
completeness, Theorem~\ref{th:explicit.nxn}. We illustrate this
result in $2 \times 2$ case by deriving completeness and
minimality in $L^2([0,\pi]; \bC^2)$ of the system
\begin{equation}
    \left\{\col(e^{a n x} \sin n x, n e^{(a-i) n x})\right\}_{n \in \bZ \setminus \{0\}}.
\end{equation}
We also obtain some necessary conditions on completeness for
general BVP~\eqref{eq:system}--\eqref{eq:CY(0)+DY(1)=0}
generalizing~\cite[Proposiiton 5.12]{MalOri12} and coinciding
with it in the case of $2 \times 2$ Dirac system.

In Section~\ref{sec:riesz} we prove the mentioned above results
on the Riesz basis property with parentheses for
BVP~\eqref{eq:system}--\eqref{eq:CY(0)+DY(1)=0} with a bounded
potential matrix.
In Section~\ref{sec:resolv} we discuss different properties of
the resolvent operator $(L_{C,D}(Q) - \l)^{-1}$. In particular,
we show that the resolvent difference of two operators
$L_{C_1,D_1}(Q_1)$ and $L_{C_2,D_2}(Q_2)$ is trace class
operator (Theorem~\ref{th:Resolv.dif.in.S1}). Using this result
we prove mentioned above result on spectral synthesis for
dissipative Dirac type operators as well as obtain some explicit
conditions in terms of the matrices $B, C, D, Q(\cdot)$ for the
operator $L_{C,D}(Q)$ to admit the spectral synthesis (see
Theorem~\ref{th:accum}).

Finally, in Section~\ref{sec:Timoshenko} we prove mentioned
above results on the completeness and Riesz basis property with
parentheses for the dynamic generator of spatially
non-homogenous Timoshenko beam model with both boundary and
locally distributed damping.

The main results of
Sections~\ref{sec:prelim}--\ref{sec:completeness} have been
announced in~\cite{LunMal13Dokl}.

{\bf Notation.} $\left\langle \cdot,\cdot\right\rangle$ denotes
the inner product in $\bC^{n}$; $\bC^{n\times n}$ denotes the
set of $n\times n$ matrices with complex entries. $I_n (\in
\bC^{n\times n})$ denotes the identity matrix; $\GL (n, \bC)$
denotes the set of nonsingular matrices from $\bC^{n\times n}$;
$W^{n,p}[a,b]$ is Sobolev space of functions $f$ having $n-1$
absolutely continuous derivatives on $[a,b]$ and satisfying
$f^{(n)}\in L^p[a,b]$.

$T$ is a closed operator in a Hilbert space $\fH$; $\sigma(T)$
and $\rho(T) = \bC \setminus \sigma(T)$ denote the spectrum and
resolvent set of the operator $T$, respectively.

$\fS_p(\fH)$, $0 < p \leqslant \infty$, denotes the
Neumann-Schatten ideals in a Hilbert space $\fH$. In particular,
$\fS_{\infty}(\fH)$ is the ideal of compact operators.
$\fS_p(\fH)$ is a two-sided ideal in algebra $\cB(\fH)$ of
bounded linear operators.
%
%
\section{Preliminaries} \label{sec:prelim}
%
%
In what follows we will systematically use the following simple
lemma.
%
%
\begin{lemma} \label{lem:adjoint}
Let $L_{C,D}(Q)$ be the operator defined
by~\eqref{eq:system}--\eqref{eq:rankCD}. Then there exist
matrices $C_*, D_* \in \bC^{n \times n}$ such that $\rank
\begin{pmatrix} C_* & D_* \end{pmatrix} = n$ and the adjoint
operator $L_{C,D}^* := (L_{C,D}(Q))^*$ coincides with the
restriction of the maximal differential operator
\begin{equation}
\begin{split}
    L_* y &:= -i (B^*)^{-1} y' + Q^*(x) y, \\
    \dom(L_*) &= \{y \in AC([0,1]; \bC^n) : L_* y \in L^2([0,1];\bC^n)\},
\end{split}
\end{equation}
to the domain
\begin{equation}
    \dom(L_{C,D}^*) = \{y \in \dom(L_*) : C_* y(0) + D_* y(1) = 0\}.
\end{equation}
In particular, if $B=B^*$ then $L_{C,D}^* = L_{C_*,D_*}(Q^*)$.
\end{lemma}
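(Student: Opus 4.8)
The plan is to compute the adjoint of $L_{C,D}(Q)$ directly via integration by parts, identifying the boundary form that must vanish. First I would start with $y \in \dom(L_{C,D}(Q))$ and $z \in \dom(L^*_{C,D})$, and write out $\langle L_{C,D}(Q)y, z\rangle - \langle y, L_* z\rangle$. Since $(-iB^{-1}y', z) = \int_0^1 \langle -iB^{-1}y'(x), z(x)\rangle\,dx$, integrating by parts moves the derivative onto $z$ and produces the boundary term $-i\left[\langle B^{-1}y(x), z(x)\rangle\right]_0^1 = -i\bigl(\langle B^{-1}y(1), z(1)\rangle - \langle B^{-1}y(0), z(0)\rangle\bigr)$; the potential term $\langle Q(x)y(x), z(x)\rangle = \langle y(x), Q^*(x)z(x)\rangle$ contributes no boundary part. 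Rewriting using $\langle B^{-1}u, v\rangle = \langle u, (B^*)^{-1}v\rangle$, the remaining integral is exactly $\langle y, L_* z\rangle$ with $L_*$ as defined, so the adjoint domain is characterized by the vanishing of the sesquilinear boundary form
\begin{equation*}
    \Omega(y,z) := \langle B^{-1}y(1), z(1)\rangle - \langle B^{-1}y(0), z(0)\rangle = 0
\end{equation*}
for all $y \in \dom(L_{C,D}(Q))$.

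The next step is the linear-algebra reduction. Condition \eqref{eq:rankCD} means $\begin{pmatrix} C & D\end{pmatrix}$ has rank $n$, so the constraint $Cy(0) + Dy(1) = 0$ cuts the $2n$-dimensional space of boundary values $(y(0),y(1))$ down to an $n$-dimensional subspace $\mathcal{V} \subset \bC^n \oplus \bC^n$; moreover every $y \in \dom(L)$ can realize any prescribed pair $(y(0),y(1))$, so $\dom(L_{C,D}(Q))$ realizes exactly the pairs in $\mathcal{V}$. The boundary form $\Omega$ is a nondegenerate sesquilinear form on $\bC^n \oplus \bC^n$ (because $B$ is nonsingular), with the indefinite ``signature'' structure $\Omega\bigl((u_0,u_1),(v_0,v_1)\bigr) = \langle B^{-1}u_1,v_1\rangle - \langle B^{-1}u_0,v_0\rangle$. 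The adjoint boundary condition is precisely that $(z(0),z(1))$ lies in the $\Omega$-annihilator $\mathcal{V}^{\perp_\Omega}$, which by nondegeneracy of $\Omega$ is again $n$-dimensional. It then remains to exhibit matrices $C_*, D_* \in \bC^{n\times n}$ with $\rank\begin{pmatrix} C_* & D_*\end{pmatrix} = n$ such that $\mathcal{V}^{\perp_\Omega} = \ker\begin{pmatrix} C_* & D_*\end{pmatrix}$. Writing $\mathcal{V}$ as the kernel of $\begin{pmatrix} C & D\end{pmatrix}$, a direct computation gives that one may take $C_* = -(B^*)^{-1}(C')^{\dagger}$-type expressions; concretely, if $\begin{pmatrix} C & D\end{pmatrix}$ is completed appropriately, $\mathcal{V}^{\perp_\Omega}$ is the row space of $\begin{pmatrix} -C B^{-*} \cdot(\text{sign})& D B^{-*}\end{pmatrix}$ up to an invertible left factor — the essential point being that conjugating the block anti-diagonal form of $\Omega$ with $\diag(B^{-1},B^{-1})$ turns ``orthogonal complement w.r.t.\ $\Omega$'' into an explicit algebraic operation on $(C,D)$. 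The rank condition on $(C_*,D_*)$ follows because $\dim\mathcal{V}^{\perp_\Omega} = n$ and $\Omega$ is nondegenerate.

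For the last assertion, when $B = B^*$ one has $(B^*)^{-1} = B^{-1}$, so $L_*y = -iB^{-1}y' + Q^*(x)y = L(Q^*)y$ has exactly the same form as the original maximal operator with potential replaced by $Q^*$; then by the same construction applied in reverse, $L^*_{C,D}(Q)$ is the restriction of $L(Q^*)$ to the boundary condition $C_*z(0)+D_*z(1)=0$, i.e.\ $L^*_{C,D}(Q) = L_{C_*,D_*}(Q^*)$. I expect the main obstacle to be purely bookkeeping: carrying the nonsingular matrix $B$ (and $B^*$ in the non-selfadjoint case) correctly through the annihilator computation and producing an honest pair $(C_*,D_*)$ satisfying the maximality rank condition, rather than merely asserting abstract existence of the adjoint domain. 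The one genuine subtlety — that the derivative is well-defined pointwise at the endpoints and integration by parts is valid — is handled by the fact that $\dom(L) \subset W^{1,1}([0,1];\bC^n) \subset AC([0,1];\bC^n)$, so $y(0), y(1)$ and $z(0), z(1)$ all make sense and the boundary terms are legitimate.
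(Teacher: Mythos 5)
Your argument is correct and is the standard one; note that the paper states Lemma~\ref{lem:adjoint} without proof (it is introduced as a "simple lemma"), so there is no proof of record to compare against — but the same boundary-form machinery you use appears later in the paper in the proofs of Lemma~\ref{lem:normal} and Lemma~\ref{lem:accum.CB*C-DB*D}, where the subspaces $\ker\begin{pmatrix} C & D\end{pmatrix}$ and $\{\col(BC^*h,-BD^*h):h\in\bC^n\}$ are shown to be mutual annihilators of the form $w(u,v)=\langle B^{-1}u_1,v_1\rangle-\langle B^{-1}u_0,v_0\rangle$. Two points worth tightening: (1) your vague "$(C')^{\dagger}$-type" formula can be replaced by the exact statement that $\mathcal{V}^{\perp_\Omega}=\{\col(-B^*C^*h,\,B^*D^*h):h\in\bC^n\}$, an $n$-dimensional subspace (by~\eqref{eq:rankCD}) which is therefore the kernel of some rank-$n$ block row $\begin{pmatrix} C_* & D_*\end{pmatrix}$; (2) the genuine technical content is not that the endpoint values make sense, but that every $z\in\dom(L_{C,D}^*)$ is in fact absolutely continuous with $L_*z\in L^2$ — since $Q$ is only $L^1$, smooth compactly supported functions need not lie in $\dom(L)$, so one should instead feed the adjoint relation with the Volterra-type solutions $y(x)=\int_0^x\Phi(x)\Phi^{-1}(t)iBf(t)\,dt$, $f\in L^2$ (cf.~\eqref{eq:LI0Q}), to conclude by the usual density argument that the minimal operator's adjoint is the maximal operator $L_*$. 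Both are routine, and your overall proof is sound.
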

%
%
Let $\beta_1,\ldots,\beta_r$ be all different values among
$b_1,\ldots,b_n$. Note that the lines
\begin{equation} \label{eq:ljk}
    l_{jk} := \{ \l \in \bC : \Re(i \beta_j \l) = \Re(i \beta_k \l) \},
    \quad 1 \leqslant j < k \leqslant r,
\end{equation}
together with the lines
\begin{equation} \label{eq:lj}
    l_j := \{ \l \in \bC : \Re(i \beta_j \l) = 0 \}, \quad j \in \{1,\ldots,r\},
\end{equation}
separate $\nu \leqslant r^2+r$ open sectors $S_p$ with vertexes
at the origin, such that for any $p \in \{1,\ldots,\nu\}$ the
numbers $\beta_1, \ldots, \beta_r$ can be renumbered so that the
following inequalities hold:
\begin{equation} \label{eq:i.bj.sorted}
    \Re(i \beta_{j_1}\l) < \ldots < \Re(i \beta_{j_{\k}}\l) < 0
    < \Re(i \beta_{j_{\k+1}}\l) < \ldots < \Re(i \beta_{j_r}\l),
    \quad \l \in S_p.
\end{equation}
Here $\k = \k_p$ is the number of negative values among $\Re(i
\beta_1 \l), \ldots, \Re(i \beta_r \l)$ in the sector $S_p$. We
call $z \in \bC$ \emph{feasible} if $z$ does not belong to any
of the lines~\eqref{eq:ljk} and~\eqref{eq:lj}, that is, $z$ lies
strictly inside some sector $S_p$. Note that feasible point is
more restrictive notion than admissible point.

Clearly, each of the sectors $S_p$ is of the form $S_p = \{ z :
\varphi_{1p} < \arg z < \varphi_{2p}\}$. Denote by $S_{p,\eps}$
a sector strictly embedded into the latter, i.e.,
\begin{align}
    \label{eq:Spe} S_{p,\eps} &:=\{z: \varphi_{1p} + \eps < \arg z < \varphi_{2p} - \eps\},
    \quad \text{where}\ \ \eps>0 \ \ \text{is sufficiently small}; \\
    \label{eq:SpeR} S_{p,\eps,R} &:= \{z\in S_{p,\eps}: |z|>R\}.
\end{align}
%
%
\begin{proposition} \label{prop:BirkSys}
\emph{\cite[Proposition 2.2]{MalOri12}} Let $\delta_{jk}$ be a
Kronecker symbol, let
\begin{gather}
    \label{eq:B.beta}
        B = \diag(\beta_1 I_{n_1}, \ldots, \beta_r I_{n_r}), \quad n_1+\ldots+n_r=n, \\
    \label{eq:Q=Qjk}
        Q = (Q_{jk})^r_{j,k=1}, \quad Q_{jk} \in L^1([0,1]; \bC^{n_j \times n_k}), \\
    \label{eq:Qjj=0}
        Q_{jj}(\cdot) \equiv 0, \quad j \in \{1,\ldots,r\}.
\end{gather}
Let also $p \in \{1,\ldots,\nu\}$ and let $\eps > 0$ be
sufficiently small. Then for a sufficiently large $R$,
equation~\eqref{eq:system} has a fundamental matrix solution
\begin{equation} \label{eq:Yxl}
    Y(x,\l) = \begin{pmatrix} Y_1 & \ldots & Y_n \end{pmatrix},
    \quad Y_k(x,\l) = \col(y_{1k}, \ldots, y_{nk}),
    \quad k \in \{1,\ldots,n\},
\end{equation}
which is analytic in $\l \in S_{p,\eps,R}$ and satisfies
$($uniformly in $x \in [0,1])$
\begin{equation} \label{eq:yjk=(1+o(1)).e(i.bk.l)}
    y_{jk}(x,\l) = (\delta_{jk}+o(1)) e^{i b_k \l x},
    \quad\text{as}\quad \l \to \infty,\ \l \in S_{p,\eps,R},
    \quad j,k \in \{1,\ldots,n\}.
\end{equation}
\end{proposition}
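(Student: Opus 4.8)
The plan is to adapt the classical Birkhoff--Tamarkin asymptotic method, in the $L^1$ setting (cf. \cite[\S2]{Nai69}, \cite[\S2]{MalOri12}), working throughout in the Banach space $C([0,1];\bC^n)$ equipped with the sup-norm. First I would rewrite \eqref{eq:system} as $y' = i\l By - iBQ(x)y$ and look for the $k$th column of $Y$ in the form $Y_k(x,\l) = e^{ib_k\l x}z_k(x,\l)$ with $z_k(\cdot,\l)\to e_k$. Then $z_k$ must satisfy $z_k' = i\l(B-b_kI_n)z_k - iBQ(x)z_k$. By \eqref{eq:B.beta} and \eqref{eq:B.def} we have $b_k=\beta_s$ for the block $s$ containing the index $k$, so the diagonal matrix $B-b_kI_n$ vanishes on the block $j_s$ (the \emph{resonant} block) and is nonzero on the others. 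Fixing the sector $S_p$ and ordering the $\beta_j$ as in \eqref{eq:i.bj.sorted}, the sign of $\Re\bigl(i(\beta_{j_l}-b_k)\l\bigr)=\Re(i\beta_{j_l}\l)-\Re(i\beta_s\l)$ is constant on $S_p$: negative for $l<s$, zero for $l=s$, positive for $l>s$. Applying variation of parameters block by block and integrating the blocks with $l\le s$ from $0$ and the blocks with $l>s$ from $1$ yields an integral equation $z_k = e_k + W_\l z_k$, where $W_\l$ is an integral operator whose kernel on the block $j_l$ is $e^{i\l(\beta_{j_l}-b_k)(x-t)}$ times $-iP_{j_l}BQ(t)$, and $|e^{i\l(\beta_{j_l}-b_k)(x-t)}|\le 1$ on the relevant range of $(x,t)$ for every $l$.

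The heart of the matter is to make $W_\l$ effectively small. Split $W_\l = W_\l^0 + W_\l^{\ne 0}$ according to $l=s$ and $l\ne s$, and write $z_k = u+v$ with $u=P^0z_k$ (the resonant block, to which $e_k$ belongs) and $v=(I_n-P^0)z_k$. Shrinking $S_p$ to $S_{p,\eps}$ gives $|\Re(i(\beta_{j_l}-b_k)\l)|\ge c(\eps)|\l|$ for $l\ne s$, so the kernel of $W_\l^{\ne 0}$ is dominated by $e^{-c(\eps)|\l|\,|x-t|}$; using absolute continuity of the integral of $Q\in L^1$ one gets $\sup_{x\in[0,1]}\int_0^1 e^{-c(\eps)|\l|\,|x-t|}\,|Q(t)|\,dt\to 0$ as $|\l|\to\infty$, hence $\|W_\l^{\ne 0}\|_{C([0,1])}\le\eta(\l)\to 0$. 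The resonant operator $W_\l^0$ has kernel $\equiv -iP_{j_s}BQ(t)$, so only $\|W_\l^0\|\le M:=|\beta_s|\,\|Q\|_{L^1}$, which is \emph{not} small; but since $Q_{jj}\equiv0$ by \eqref{eq:Qjj=0}, the block-row $P_{j_s}Q$ has vanishing $(s,s)$ entry, whence $W_\l^0 z_k = W_\l^0 v$: the resonant block couples only to the non-resonant components. This structural cancellation is exactly why the hypothesis $Q_{jj}\equiv0$ cannot be dropped.

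From $v=W_\l^{\ne 0}(u+v)$ and $u=e_k+W_\l^0 v$ I would eliminate $u$ to get $v = W_\l^{\ne 0}e_k + W_\l^{\ne 0}(I_n+W_\l^0)v$, where $\|W_\l^{\ne 0}(I_n+W_\l^0)\|\le\eta(\l)(1+M)<1$ for $|\l|\ge R$ with $R=R(\eps,p,B,Q)$ large. Hence $v$ is given by a convergent Neumann series, $\|v\|\le\eta(\l)/(1-\eta(\l)(1+M))\to 0$, then $u$ is determined and $\|u-e_k\|=\|W_\l^0 v\|\le M\|v\|\to 0$, so $z_k:=u+v$ solves the integral equation; differentiating and using $Q\in L^1$ shows $z_k(\cdot,\l)\in W^{1,1}$ and solves the ODE for $z_k$, so $Y_k:=e^{ib_k\l x}z_k$ solves \eqref{eq:system}. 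Each $W_\l^\bullet$ is analytic in $\l$ (the exponential kernels are entire and the limits $0,1$ are fixed) and the Neumann series converges locally uniformly, so $z_k(\cdot,\l)$, and hence $Y_k$, is analytic in $\l\in S_{p,\eps,R}$; this yields \eqref{eq:yjk=(1+o(1)).e(i.bk.l)} with the $o(1)$ uniform in $x\in[0,1]$. Finally, writing $Y(x,\l)=(I_n+E(x,\l))\,\diag(e^{ib_1\l x},\dots,e^{ib_n\l x})$ with $\|E(x,\l)\|\to 0$, we get $\det Y(x,\l)=(1+o(1))\,e^{i\left(\sum_{k=1}^n b_k\right)\l x}$, so, enlarging $R$ if necessary, $Y(x,\l)$ is nonsingular for all $\l\in S_{p,\eps,R}$ and is therefore the required fundamental matrix solution.

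The main obstacle is the smallness step: in contrast to continuous $Q$, an $L^1$ potential yields only $\|W_\l^0\|=O(1)$, so one must simultaneously exploit the exponential gain off the resonant block (the $\eps$-shrinking of the sector) and the algebraic cancellation $Q_{jj}\equiv0$ inside it; moreover the two-sided (non-Volterra) structure of $W_\l$, produced by integrating some blocks from $1$, is what blocks a one-line Gr\"onwall estimate and forces the $u/v$ elimination above.
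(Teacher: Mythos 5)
Your proposal is correct and follows essentially the same route as the proof the paper relies on (cited from \cite{MalOri12} and partially recalled in \eqref{eq:system-int}--\eqref{eq:ajk.def}): after the substitution $y_{jk}=e^{ib_k\l x}z_{jk}$ your integral equation, with the lower limits chosen according to the sign of $\Re(i(b_j-b_k)\l)$, is exactly that system, and the two key points --- exponential smallness of the non-resonant kernels in $S_{p,\eps,R}$ for $Q\in L^1$, and the use of $Q_{jj}\equiv 0$ to neutralize the $O(1)$ resonant block --- are the same. Your explicit $u/v$ elimination is just a repackaging of the successive-approximation argument (equivalently, of the observation that the resonant part $W_\l^0$ satisfies $(W_\l^{0})^{2}=0$, so that $\|W_\l^{2}\|\to 0$ even though $\|W_\l\|$ does not).
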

%
%
In what follows we will systematically use a concept of the
similarity of unbounded operators.
%
%
\begin{definition} \label{def:similar}
Let $\fH_j$ be a Hilbert space, $A_j$ a closed operator in
$\fH_j$ with domain $\dom(A_j)$, $j\in \{1,2\}$. The operators
$A_1$ and $A_2$ are called similar if there exists a bounded
operator $T$ (a similarity transformation operator) from $\fH_1$
onto $\fH_2$ with bounded inverse, such that $A_2 = T A_1
T^{-1}$, i.e.
\begin{equation}
    \dom(A_2) = T \dom(A_1) \quad\text{and}\quad A_2 f = T A_1 T^{-1} f, \quad f \in \dom(A_2).
\end{equation}
\end{definition}
%
%
Note that similar operators $A_1$ and $A_2$ ($A_2 = T A_1
T^{-1}$) have the same spectra, algebraic and geometric
multiplicities of eigenvalues, while the systems of their root
vectors $\{e_k^{(j)}\}$, $j\in \{1,2\}$, are related by
$e_k^{(2)} = T e_k^{(1)}$. Therefore, they also have the same
geometric properties (completeness, minimality, basis property,
etc.).

Let $\Phi(x,\l)$ be a fundamental matrix solution of
equation~\eqref{eq:system} satisfying
\begin{equation}\label{eq:Phi0=In}
    \Phi(0,\l)=I_n, \quad \l \in \bC.
\end{equation}
The characteristic determinant $\Delta(\cdot)$ of the
problem~\eqref{eq:system}--\eqref{eq:CY(0)+DY(1)=0} is given by
\begin{equation} \label{eq:Delta(l).def}
    \Delta(\l) := \det(C+D\Phi(1,\l)), \quad \l \in \bC.
\end{equation}
Next we prove the completeness result which slightly
generalizes~\cite[Theorem~1.2]{MalOri12}.
%
%
\begin{theorem} \label{th:compl.gen}
Let $Q(\cdot) \in L^1([0,1]; \bC^{n\times n})$. Assume that
there exist $C,R>0$, $s \in \bZ_+$ and three feasible numbers
${z_1,z_2,z_3}$ satisfying the following conditions:

(i) the origin is the interior point of the triangle
$\Delta_{z_1z_2z_3}$;

(ii) for $k \in \{1,2,3\}$ we have
\begin{equation} \label{eq:Delta>=e/l}
    |\Delta(\l)| \geqslant \frac{C e^{\Re(i \tau_k \l)}}{|\l|^s},
    \quad \tau_k = \sum_{j=1 \atop{\Re(i b_j z_k) > 0}}^n b_j,
    \quad |\l| > R, \ \arg\l=\arg z_k.
\end{equation}
Then the system of root functions of the
BVP~\eqref{eq:system}--\eqref{eq:CY(0)+DY(1)=0} $($of the
operator $L_{C,D}(Q))$ is complete and minimal in $L^2([0,1];
\bC^n)$.
\end{theorem}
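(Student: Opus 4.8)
The plan is to prove completeness and minimality of the root system by a standard duality/resolvent growth argument, following the scheme of \cite[Theorem~1.2]{MalOri12}. First I would recall that minimality of the root system is equivalent to the statement that every root function is \emph{not} in the closed linear span of the others; this follows from the fact that the characteristic determinant $\Delta(\cdot)$ is an entire function of exponential type whose zeros (with multiplicities) are precisely the eigenvalues of $L_{C,D}(Q)$, and that the resolvent $(L_{C,D}(Q)-\lambda)^{-1}$ is a finite-rank-meromorphic family whose principal parts produce a biorthogonal-type system. Concretely, I would write the resolvent explicitly via the Green function built from the fundamental matrix $\Phi(x,\lambda)$ of \eqref{eq:Phi0=In} and the boundary matrices $C,D$: the resolvent has a representation of the form $(C+D\Phi(1,\lambda))^{-1}$ times entire operator-valued factors, so that $\Delta(\lambda)$ is exactly the denominator controlling its poles. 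Minimality then drops out once we know the resolvent is meromorphic with only finitely many root functions per eigenvalue, which is automatic here.

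Next, for completeness, I would argue by contradiction: suppose $f \in L^2([0,1];\bC^n)$ is orthogonal to all root functions of $L_{C,D}(Q)$. Equivalently, by Lemma~\ref{lem:adjoint} and the biorthogonality between root functions of $L_{C,D}(Q)$ and those of $L_{C,D}^*=L_{C_*,D_*}(Q^*)$ (when $B=B^*$) or more generally via the resolvent, the scalar function
\begin{equation}
    F(\lambda) := \langle (L_{C,D}(Q)-\lambda)^{-1} f, g\rangle
\end{equation}
is entire for a suitable dense set of $g$ (the poles cancel because $f \perp$ all root subspaces). The standard machinery then gives that $F$ is entire of exponential type, and a Phragmén–Lindel\"of argument on the three sectors determined by $z_1,z_2,z_3$ forces $F\equiv 0$, whence $f=0$. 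The crucial point is that $F$ should decay along the three rays $\arg\lambda = \arg z_k$. This is where hypothesis (ii) enters: the lower bound $|\Delta(\lambda)| \geqslant C e^{\Re(i\tau_k\lambda)}/|\lambda|^s$ precisely compensates the exponential growth $e^{\Re(i\tau_k\lambda)}$ of the numerator of the resolvent along the ray $\arg z_k$ (the numerator's indicator in direction $z_k$ being governed by the exponentials $e^{ib_j\lambda}$ with $\Re(ib_jz_k)>0$, which sum in the exponent to $\tau_k$). Hence $F(\lambda)$ is bounded — in fact $O(|\lambda|^{s-1}/|\lambda|)$ type decay after using $f \in L^2$ and integrating — along each of these three rays.

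The main obstacle, and the step requiring the most care, is the estimate of the numerator of the resolvent along the rays $\arg\lambda=\arg z_k$: one must show that $(C+D\Phi(1,\lambda))^{-1}$ acting on the entire part, paired against $f$ and $g$, grows no faster than $e^{\Re(i\tau_k\lambda)}/|\lambda|$ up to the factor cut by $\Delta$. For this I would use the asymptotics of $\Phi(1,\lambda)$ coming from Proposition~\ref{prop:BirkSys} (applicable after the block-diagonalization \eqref{eq:B.beta}–\eqref{eq:Qjj=0}, which is achieved by a bounded similarity transformation as in Definition~\ref{def:similar}, preserving all spectral properties): along a feasible ray $\arg z_k$ the entries $y_{jk}(1,\lambda)$ behave like $(\delta_{jk}+o(1))e^{ib_k\lambda}$, so the cofactor expansion of the Green function produces exactly the claimed exponential indicator $\tau_k$. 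Since $z_1,z_2,z_3$ are required to be \emph{feasible} (not merely admissible) this asymptotic is valid, and condition (i) guarantees the three rays surround the origin so that Phragmén–Lindel\"of applies in each sector between consecutive rays. Combining the three ray-decays with the exponential-type bound of $F$ on all of $\bC$ yields $F\equiv 0$; the only genuinely new ingredient relative to \cite{MalOri12} is that $s$ is allowed to be an arbitrary nonnegative integer rather than a specific small value, which costs only a harmless polynomial factor absorbed by the $L^2$ integration against $f$.
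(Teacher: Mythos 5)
Your overall scheme (orthogonal $f$, entire quotient by $\Delta$, decay/growth on the three rays, Phragm\'en--Lindel\"of in the three sectors) is the same as the paper's, but there is a genuine gap at the decisive step, and it is exactly the step that distinguishes Theorem~\ref{th:compl.gen} from \cite[Theorem 1.2]{MalOri12}. With hypothesis (ii) the lower bound on $\Delta$ loses a factor $|\l|^{-s}$, so on the rays $\arg\l=\arg z_k$ the quotient $G_j(\l)=F_j(\l)/\Delta(\l)$ is only bounded by $C|\l|^{s}$ (the $L^2$ pairing contributes at best an extra $|\l|^{-1/2}$, which does not make this tend to zero once $s\geqslant 1$). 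Phragm\'en--Lindel\"of and Liouville therefore only give that $G_j$ is a \emph{polynomial of degree at most $s$}, not that $G_j\equiv 0$. Your claim that the polynomial factor is ``harmless'' and ``absorbed by the $L^2$ integration'' is precisely where the argument breaks down: for $s=0$ one does get $G_j=o(1)$ on the rays and hence $G_j\equiv 0$ directly, but that is the case already contained in \cite{MalOri12}; for $s\geqslant 1$ nothing so far rules out a nonzero polynomial.

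The missing idea is the paper's separate argument killing this polynomial. Writing $G_j(\l)=\l^d(\alpha_j+o(1))$ with $d=\max_j\deg G_j\geqslant 0$ and $\alpha=\col(\alpha_1,\dots,\alpha_n)\ne 0$, one substitutes the Birkhoff solutions $Y(x,\l)$ into the identity $\int_0^1 f^*\Psi\,dx=G\,(C\Psi(0,\l)+D\Psi(1,\l))$ in a sector where $\Re(ib_k\l)>0$ and in the opposite sector where $\Re(ib_k\l)<0$; comparing the growth of the two sides forces $\sum_j\alpha_j d_{jk}=0$ and $\sum_j\alpha_j c_{jk}=0$ for every $k$, i.e.\ $C^T\alpha=D^T\alpha=0$, which contradicts the maximality condition $\rank\begin{pmatrix} C & D\end{pmatrix}=n$. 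Only after this does one conclude $G_j\equiv 0$, and then a further (cited) step is needed: $\int_0^1\langle\Phi_j(x,\l),f(x)\rangle dx\equiv 0$ implies $f=0$ by \cite[Theorem 1.2, step (vi)]{MalOri12}. A smaller but real inaccuracy in your setup: orthogonality of $f$ to the root functions of $L_{C,D}(Q)$ cancels the poles of $\l\mapsto\langle(L-\l)^{-1}g,f\rangle$ (it kills $P_k^*f$, not $P_kf$), so the function you should study pairs $f$ against the resolvent in the second slot; the paper avoids this by pairing $f$ directly with the vector functions $U_{\Phi,j}(\cdot,\l)$ whose $\l$-derivatives at the eigenvalues span the root subspaces.
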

%
%
\begin{remark}
In the case $s=0$ Theorem~\ref{th:compl.gen} is implicitly
contained in~\cite[Theorem 1.2]{MalOri12}. Our proof follows the
scheme proposed in~\cite[Theorem 1.2]{MalOri12}.
\end{remark}
%
%
\begin{proof}[Proof of Theorem~\ref{th:compl.gen}]
By renumbering $y_1, \ldots, y_n$ we can assume that the matrix
$B$ satisfies~\eqref{eq:B.beta} and hence $Q$ has
representation~\eqref{eq:Q=Qjk}. Let
\begin{equation} \label{eq:Q1.def}
    Q_1(x) := \diag(Q_{11}(x), \ldots, Q_{rr}(x))
\end{equation}
and let $W(\cdot)$ be the solution to the Cauchy problem
\begin{equation} \label{eq:iBW'=Q1.W}
    i B^{-1} W' = Q_1(x) W, \qquad W(0)=I_n.
\end{equation}
Due to the block structure of the matrices $B$ and $Q_1$ one
easily derives
\begin{equation} \label{eq:Wx}
    W(x) = \diag(W_{11}(x),\ldots,W_{rr}(x)), \quad W_{jj}(x) \in \GL(n_j, \bC), \quad x\in [0,1].
\end{equation}
Denoting by  $W : y \to W(x)y$ the gauge transform  and letting
\begin{equation} \label{eq:wtD.wtQ}
    \wt{D} := D W(1) \quad\text{and}\quad  \wt{Q}(x) = W^{-1}(x)(Q(x)-Q_1(x))W(x) =: (\wt{q}_{jk}(x))_{j,k=1}^n
\end{equation}
we get
\begin{equation}
    L_{C,\wt{D}}(\wt{Q}) = W^{-1} L_{C,{D}}({Q}) W,
\end{equation}
i.e. $L_{C,D}(Q)$ and $L_{C,\wt{D}}(\wt{Q})$ are similar.
Clearly, $\wt{\Phi} =: W^{-1}\Phi$ is a fundamental solution of
equation~\eqref{eq:system} with $\wt{Q}$ in place of $Q$ and the
corresponding characteristic determinant $\wt{\Delta}(\cdot)$
(see~\eqref{eq:Delta(l).def}) is
\begin{equation} \label{eq:wtDelta=Delta}
    \wt{\Delta}(\l) :=  \det(C + \wt{D} \wt{\Phi}(1,\l))
    = \det(C + {D} W(1) W^{-1}(1)\Phi(1,\l)) = {\Delta}(\l).
\end{equation}
So, above gauge transformation does not change the
characteristic determinant. Therefore, replacing if necessary,
$L_{C,D}(Q)$ by ${L}_{C,\wt{D}}(\wt{Q})$ we can assume that
conditions~\eqref{eq:B.beta}--\eqref{eq:Qjj=0} are satisfied.

Further, let $\Psi(x,\l)$ be a fundamental $n\times n$ matrix
solution of equation~\eqref{eq:system} in a domain $S$, i.e.
\begin{equation} \label{eq:det(Psi(0,l))!=0}
    \det(\Psi(0,\l)) \ne 0, \quad \l \in S.
\end{equation}
Denote by $\Psi_k(x,\l)$ the $k$th vector column of the matrix
$\Psi(x,\l)$, i.e.,
\begin{equation}\label{eq:Psi=Psi1...Psin}
    \Psi(x,\l)=\begin{pmatrix}\Psi_1 & \ldots & \Psi_n\end{pmatrix},\quad
    \Psi_k(x,\l)=\col(\psi_{1k}, \ldots, \psi_{nk}).
\end{equation}
Further, denote
\begin{gather}
    \label{eq:A.Psi(l).def}
        A_{\Psi}(\l) := C \Psi(0,\l)+D \Psi(1,\l), \\
    \label{eq:Delta.Psi(l).def}
        \Delta_{\Psi}(\l) := \det A_{\Psi}(\l)=\det (C \Psi(0,\l)+D \Psi(1,\l)).
\end{gather}
Clearly $\Delta(\cdot) = \Delta_{\Phi}(\cdot)$. Denote by
$\wt{A}_{\Psi}(\l)=(\Delta^{jk}_{\Psi}(\l))_{j,k=1}^n$ the
adjugate matrix, that is,
\begin{equation} \label{eq:A.Psi.adjugate}
    A_{\Psi}(\l) \cdot \wt{A}_{\Psi}(\l)
    = \wt{A}_{\Psi}(\l) \cdot A_{\Psi}(\l) = \Delta_{\Psi}(\l) I_n,
\end{equation}
and introduce the vector functions
\begin{align}
    \label{eq:UPsij(x,l).def}
        U_{\Psi,j}(x,\l) &:= \sum_{k=1}^n \Delta^{jk}_{\Psi}(\l) \Psi_k(x,\l),
        \qquad j \in \{1, 2, \dots, n\}, \\
    \label{eq:UPsi(x,l).def}
        U_{\Psi}(x,\l) &:= \begin{pmatrix} U_{\Psi,1}(x,\l) & \ldots & U_{\Psi,n}(x,\l)\end{pmatrix}
        = \Psi(x,\l) \wt{A}_\Psi(\l).
\end{align}
The spectrum $\sigma(L_{C,D})$ of the
problem~\eqref{eq:system}--\eqref{eq:CY(0)+DY(1)=0} coincides
with the set of roots of the characteristic determinant
$\Delta(\cdot)=\Delta_{\Phi}(\cdot)$. Assumption (ii) of Theorem
\ref{th:compl.gen} yields the relation $\Delta(\l)\not \equiv
0$. Therefore, the spectrum $\sigma(L_{C,D})$ of the
problem~\eqref{eq:system}--\eqref{eq:CY(0)+DY(1)=0} is discrete,
i.e., $\sigma(L_{C,D})$ consists of at most countably many
eigenvalues $\{\l_k\}_{k=1}^{N},\ N \leqslant \infty$, of finite
algebraic multiplicities. Let $\l_k$ be an $m_k$-multiple zero
of the function $\Delta(\l)$. As shown in the step (i) of the
proof of~\cite[Theorem 1.2]{MalOri12} the system of functions
\begin{equation}\label{eq:DpUj}
    \left\{\left. \frac{\partial^p}{\partial\l^p} U_{\Phi,j}(x,\l) \right|_{\l=\l_k}
    : \ \ p \in \{0,1,\ldots,m_k-1\},\ \ j \in \{1,\ldots,n\}\right\}
\end{equation}
spans the root subspace $\cR_{\l_k}(L_{C,D})$ of the operator
$L_{C,D}$, where $U_{\Phi,j}(x,\l)$ is defined by the
formula~\eqref{eq:UPsij(x,l).def} for the solution $\Phi(x,\l)$
in place of $\Psi(x,\l)$. Note that $\Phi(x,\l)$ as well as
$U_{\Phi,j}(x,\l)$ and $\Delta(\l)$ are entire functions of
exponential type.

We prove the completeness of union of systems~\eqref{eq:DpUj}
for all $k$ by contradiction. To this end, we assume that there
exists a non-zero vector function $f=\col(f_1,\dots, f_n) \in
L^2([0,1]; \bC^n)$ orthogonal to this system. Consider the
entire functions
\begin{equation} \label{eq:Fj(l).def}
    F_j(\l):=(U_{\Phi,j}(\cdot,\l),f(\cdot))_{L^2([0,1]; \bC^n)},
    \quad j \in \{1,\ldots,n\}.
\end{equation}
Since $f$ is orthogonal to the system~\eqref{eq:DpUj} then each
$\l_k(\in\sigma(L_{C,D}))$ is a zero of $F_j(\cdot)$ of
multiplicity at least $m_k$, i.e. for $\l_k \in \sigma(L_{C,D})$
\begin{equation} \label{eq:Fjpl=0}
    \left. F_j^{(p)}(\l) \right|_{\l=\l_k}=0, \qquad
    p\in\{0,1,\dots,m_k-1\},\quad j \in \{1,\ldots,n\}.
\end{equation}
Thus, the ratio
\begin{equation} \label{eq:Gj(l).def}
    G_j(\l):=\frac{F_j(\l)}{\Delta(\l)}, \quad j \in \{1,\ldots,n\},
\end{equation}
is an entire function. Moreover, since functions
$U_{\Phi,j}(x,\l)$ and $\Delta(\l)$ are entire functions of
exponential type then so are $G_1(\l), \ldots, G_n(\l)$. Let us
prove that these functions are polynomials in $\l$ by estimating
their growth. Denote
\begin{equation}
    G(\l) := \begin{pmatrix} G_1(\l) & \ldots & G_n(\l) \end{pmatrix}.
\end{equation}
It follows from~\eqref{eq:Fj(l).def} and~\eqref{eq:Gj(l).def}
that
\begin{equation} \label{eq:int.f*U=Delta.G}
    \int_0^1 f^*(x) U_{\Phi}(x,\l) dx = \Delta(\l) G(\l), \quad \l \in \bC,
\end{equation}
where $f^*(x) := \begin{pmatrix}\overline{f_1(x)} & \ldots &
\overline{f_n(x)}\end{pmatrix} = \overline{f(x)}^T$.

Multiplying~\eqref{eq:int.f*U=Delta.G} by the matrix
$A_{\Phi}(\cdot)$ from the right we get in view
of~\eqref{eq:UPsi(x,l).def} and~\eqref{eq:A.Psi.adjugate}
\begin{equation}
    \Delta(\l) \int_0^1 f^*(x) \Phi(x,\l) dx
    = \Delta(\l) G(\l) A_{\Phi}(\l), \quad \l \in \bC,
\end{equation}
or equivalently
\begin{equation}
    \int_0^1 f^*(x) \Phi(x,\l) dx
    = G(\l) A_{\Phi}(\l), \quad \l \not \in \sigma(L_{C,D}).
\end{equation}
Now the continuity of the integral in the last equality with
respect to $\l$, the discreteness of the set $\sigma(L_{C,D})$
and definition of $A_{\Phi}(\l)$ (see
formula~\eqref{eq:A.Psi(l).def}) yields the following relation
\begin{equation} \label{eq:int.f*Phi=G.A_Phi}
    \int_0^1 f^*(x) \Phi(x,\l) dx
    = G(\l) (C \Phi(0,\l) + D \Phi(1,\l)), \quad \l \in \bC.
\end{equation}
Let $\Psi(x,\l)$ be a fundamental $n\times n$ matrix solution of
the equation~\eqref{eq:system} in a domain $S$. Due to the
initial condition $\Phi(0,\l) = I_n$ the matrix functions
$\Phi(x,\l)$ and $\Psi(x,\l)$ are related by
\begin{equation}\label{eq:Psi(x)=Phi.Psi(0)}
    \Psi(x,\l) = \Phi(x,\l) \Psi(0,\l),\qquad x\in [0,1],
    \quad \l\in S,
\end{equation}
where $\Psi(0,\l)$ is invertible matrix function for $\l \in S$.
Multiplying~\eqref{eq:int.f*Phi=G.A_Phi} by $\Psi(0,\l)$ from
the right we get
\begin{equation} \label{eq:int.f*Psi=G.A_Psi}
    \int_0^1 f^*(x) \Psi(x,\l) dx
    = G(\l) (C \Psi(0,\l) + D \Psi(1,\l)), \quad \l \in S.
\end{equation}
Now multiplying~\eqref{eq:int.f*Psi=G.A_Psi} by
$\wt{A}_{\Psi}(x,\l)$ from the right we get with account
of~\eqref{eq:UPsi(x,l).def} and~\eqref{eq:A.Psi.adjugate}
\begin{equation}
    \int_0^1 f^*(x) U_{\Psi}(x,\l) dx = \Delta_{\Psi}(\l) G(\l), \quad \l \in S,
\end{equation}
or equivalently
\begin{equation} \label{eq:UPsij.f=Gj.Delta}
    F_{\Psi,j}(\l) := (U_{\Psi,j}(\cdot,\l), f(\cdot))_{L^2([0,1]; \bC^n)}
    = G_j(\l) \Delta_{\Psi}(\l), \quad \l \in S, \quad j \in \{1,\ldots,n\}.
\end{equation}
Let us estimate $G_j(\l)$ from above on the rays
\begin{equation}
    \Gamma_k := \{ \l \in \bC :\arg \l = \arg z_k\}, \quad k \in \{1,2,3\},
\end{equation}
using relation~\eqref{eq:UPsij.f=Gj.Delta} for appropriate
solutions $\Psi(x,\l)$.

Let $k \in \{1,2,3\}$ be fixed. Since $z_k$ is feasible then
$\Gamma_k$ lies inside some sector $S_p$ and hence $\Gamma_k \in
S_{p,\eps}$ for some $\eps>0$. According to
Proposition~\ref{prop:BirkSys} there exists a fundamental matrix
solution $Y(x,\l)$ of the system~\eqref{eq:system} with
asymptotic behavior~\eqref{eq:yjk=(1+o(1)).e(i.bk.l)} at the
domain $S_{p,\eps,R}$ for some $R>0$. It was shown in the proof
of~\cite[Theorem 1.2]{MalOri12} that for the function
$F_{Y,j}(\l)$ defined by~\eqref{eq:UPsij.f=Gj.Delta} one has
\begin{equation} \label{eq:FYj(l)=o(e)}
    F_{Y,j}(\l) = o(e^{i \tau_k \l}),
    \quad\text{as}\quad \l \to \infty,\ \l \in S_{p,\eps}.
\end{equation}
Next, it follows from~\eqref{eq:yjk=(1+o(1)).e(i.bk.l)} that
\begin{equation} \label{eq:Y(0)=I+o(1)}
    Y(0,\l) = I_n + o_n(1), \quad\text{as}\quad \l \to \infty,\ \l \in S_{p,\eps},
\end{equation}
where $o_n(1)$ denotes an $n\times n$ matrix function with
entries of the form $o(1)$ as $\l \to \infty$.
Now~\eqref{eq:A.Psi(l).def},~\eqref{eq:Delta.Psi(l).def},
\eqref{eq:Psi(x)=Phi.Psi(0)} with $Y$ in place of $\Psi$
and~\eqref{eq:Y(0)=I+o(1)} yields
\begin{equation} \label{eq:DeltaY(l)=(1+o(1))Delta(l)}
    \Delta_{Y}(\l) = \Delta(\l) \det Y(0,\l) = (1+o(1)) \Delta(\l),
    \quad\text{as}\quad \l \to \infty,\ \l \in S_{p,\eps}.
\end{equation}
Inserting~\eqref{eq:FYj(l)=o(e)},
\eqref{eq:DeltaY(l)=(1+o(1))Delta(l)},~\eqref{eq:Delta>=e/l}
into~\eqref{eq:UPsij.f=Gj.Delta} we get
\begin{equation} \label{eq:Gj<=l^m}
    \left|G_j(\l)\right| = \left|\frac{o(e^{i \tau_k \l})}{(1+o(1))\Delta(\l)}\right|
    \leqslant \frac{C_1 e^{\Re(i \tau_k \l)}|\l|^s}{C e^{\Re(i \tau_k \l)}} = C_2 |\l|^s,
    \quad \arg \l = \arg z_k, \ |\l|>R,
\end{equation}
for some $C_1 > 0$ with $C_2 = C_1 / C$.

Since zero is the interior point of the triangle $\triangle_{z_1
z_2 z_3}$, the rays $\Gamma_1, \Gamma_2, \Gamma_3$ divide the
complex plane into three closed sectors $\Omega_1, \Omega_2,
\Omega_3$ of opening less than $\pi$. Fix $k \in \{1,2,3\}$ and
apply the Phragm\'{e}n-Lindel\"{o}f theorem~\cite[Theorem
6.1]{Lev96} to the function $\wt{G}_j(\l)$ considered in the
sector $\Omega_k$. Using~\eqref{eq:Gj<=l^m} we get
\begin{equation}
    \left|G_j(\l)\right| \leqslant C_3 |\l|^s, \quad \l \in \Omega_k,
\end{equation}
for some $C_3 > 0$, and hence
\begin{equation}
    \left|G_j(\l)\right| \leqslant C_3 |\l|^s, \quad \l \in \bC.
\end{equation}
By the Liouville theorem (cf.~\cite[Theorem 1.1]{Lev96}),
$G_j(\l)$ is a polynomial of degree at most $s$.

Now let us prove that $G_j(\cdot) \equiv 0,\ j \in
\{1,\ldots,n\}$, using equality~\eqref{eq:int.f*Psi=G.A_Psi} for
appropriate solutions $\Psi(x,\l)$ and the fact that $G_j(\l)$
is a polynomial in $\l$. Putting~\eqref{eq:Psi=Psi1...Psin}
into~\eqref{eq:int.f*Psi=G.A_Psi} we get for $k \in
\{1,\ldots,n\}$
\begin{equation} \label{eq:sum.int.fj.psijk=sum.Gj...}
    \sum_{j=1}^n\int_0^1 \overline{f_j(x)} \psi_{jk}(x,\l) dx
    = \sum_{j=1}^n G_j(\l) \sum_{l=1}^n \left(c_{jl} \psi_{lk}(0,\l)
    + d_{jl} \psi_{lk}(1,\l)\right).
\end{equation}
Consider some sector $S_{p,\eps}$. Let $Y(x,\l)$ be a matrix
solution of equation~\eqref{eq:system}
satisfying~\eqref{eq:yjk=(1+o(1)).e(i.bk.l)} in $S_{p,\eps,R}$.
It follows from~\eqref{eq:yjk=(1+o(1)).e(i.bk.l)} that
\begin{equation} \label{eq:yjk<=Cjk.e}
    \left|y_{jk}(x,\l)\right| \leqslant C e^{\Re(i b_k \l) x},
    \quad j,k \in \{1,\ldots,n\}, \quad x \in [0,1], \quad \l \in S_{p,\eps,R}, \\
\end{equation}
for some $C>0$. Hence, by the Cauchy inequality,
\begin{align}\label{eq:int.f.yjk<=C.f.e}
    \left|\sum_{j=1}^n\int_0^1 \overline{f_j(x)} y_{jk}(x,\l) dx\right|
    & \leqslant C \|f\|_{L^2([0,1]; \bC^n)}
    \left(\int_0^1 e^{2\Re(i b_k \l) x} dx \right)^{1/2} \nonumber \\
    & \leqslant \frac{C \|f\|}{\sqrt{|\l|}}\max\{e^{\Re(i b_k \l)},1\},
    \quad \l \in S_{p,\eps,R}, \quad k \in \{1,\ldots,n\}.
\end{align}
Substituting~\eqref{eq:yjk=(1+o(1)).e(i.bk.l)}
and~\eqref{eq:int.f.yjk<=C.f.e}
into~\eqref{eq:sum.int.fj.psijk=sum.Gj...} with $Y$ in place of
$\Psi$ we get
\begin{multline}\label{eq:sum.Gj<=C.f.e}
    \left|\sum_{j=1}^n G_j(\l) \left(c_{jk}+d_{jk} e^{i b_k \l}
    + \sum_{l=1}^n \bigl(c_{jl} \cdot o(1)
    + d_{jl} \cdot o(1) \cdot e^{i b_k \l}\bigr)\right)\right| \\
    \leqslant \frac{C \|f\|}{\sqrt{|\l|}}\max\{e^{\Re(i b_k \l)},1\},
    \quad\text{as}\quad \l \to \infty,\ \l \in S_{p,\eps,R}, \quad k \in \{1,\ldots,n\}.
\end{multline}
Assume that
\begin{equation}
    d := \max\{\deg G_j : j \in \{1,\ldots,n\}\} \geqslant 0
\end{equation}
and denote by $\alpha_{j}$ the coefficient of $\l^d$ in
$G_j(\l)$. So
\begin{equation} \label{eq:Gj(l)=l^d(aj+o(1))}
    G_j(\l) = \l^d (\alpha_j + o(1)), \quad\text{as}\quad \l \to \infty,
    \quad j \in \{1,\ldots,n\}.
\end{equation}
From definition of $d$ it follows that $d = \deg G_{j_0}$ for
some $j_0 \in \{1,\ldots,n\}$ and hence $\alpha_{j_0} \ne 0$.
Therefore, $\alpha := \col(\alpha_1,\ldots,\alpha_n) \ne 0$.

Let us fix $k \in \{1,\ldots,n\}$. Without loss of generality we
may assume that
\begin{equation} \label{eq:Reiakl>0}
    \Re(i b_k \l) > 0,\quad \l \in S_{p,\eps}.
\end{equation}
Relation~\eqref{eq:Reiakl>0} yields
\begin{equation} \label{eq:cjk+djk.e...=(djk+o(1)).e}
    c_{jk}+d_{jk} e^{i b_k \l} + \sum_{l=1}^n \bigl(c_{jl} \cdot
    o(1) + d_{jl} \cdot o(1) \cdot e^{i b_k \l}\bigr) = (d_{jk} + o(1)) e^{i b_k \l},
    \qquad j \in \{1,\ldots,n\},
\end{equation}
as $\l \to \infty$, $\l \in S_{p,\eps}$.
Inserting~\eqref{eq:Gj(l)=l^d(aj+o(1))}
and~\eqref{eq:cjk+djk.e...=(djk+o(1)).e}
into~\eqref{eq:sum.Gj<=C.f.e} we get
\begin{equation}
    \bigl|\alpha_1 d_{1k} + \ldots + \alpha_n d_{nk} + o(1)\bigr|
    \cdot e^{\Re(i b_k \l)} \cdot |\l|^d
    \leqslant \frac{C \|f\|}{\sqrt{|\l|}}\max\{e^{\Re(i b_k \l)},1\},
    \quad \l \in S_{p,\eps,R}.
\end{equation}
In view of~\eqref{eq:Reiakl>0} this estimate yields
\begin{equation} \label{eq:a1.d1k+...=0}
    \alpha_1 d_{1k} + \ldots + \alpha_n d_{nk} = 0.
\end{equation}
Now consider the sector $S_{\wt{p},\eps}$ which is opposite to
$S_{p,\eps}$. In this sector due to~\eqref{eq:Reiakl>0} one has
\begin{equation} \label{eq:Reiakl<0}
    \Re(i b_k \l) < 0, \quad \l \in S_{\wt{p},\eps}.
\end{equation}
Let $\wt{Y}(x,\l)$ be a solution of system~\eqref{eq:system}
having asymptotic behavior~\eqref{eq:yjk=(1+o(1)).e(i.bk.l)} in
the sector $S_{\wt{p},\eps}$. Inserting $\wt{Y}(x,\l)$
into~\eqref{eq:sum.int.fj.psijk=sum.Gj...} in place of
$\Psi(\cdot,\cdot)$ we get similarly to the previous case that
\begin{equation}
    \bigl|\alpha_1 c_{1k} + \ldots + \alpha_n c_{nk} + o(1)\bigr| \cdot |\l|^d
    \leqslant \frac{C \|f\|}{\sqrt{|\l|}}\max\{e^{\Re(i b_k \l)},1\},
    \quad \l \in S_{\wt{p},\eps,R}.
\end{equation}
This estimate is compatible with~\eqref{eq:Reiakl<0} only if
\begin{equation} \label{eq:a1.c1k+...=0}
    \alpha_1 c_{1k} + \ldots + \alpha_n c_{nk} = 0.
\end{equation}
Since $k \in \{1,\ldots,n\}$ is arbitrary, combining
relations~\eqref{eq:a1.d1k+...=0} and~\eqref{eq:a1.c1k+...=0}
yields
\begin{equation}
    D^T \alpha = 0, \quad C^T \alpha = 0,
\end{equation}
which implies $\alpha = 0$ because of the maximality
condition~\eqref{eq:rankCD}. This contradicts the assumption $d
\geqslant 0$. Hence $G_j(\cdot) \equiv 0$ for $j \in
\{1,\ldots,n\}$.

Now it follows from~\eqref{eq:int.f*Phi=G.A_Phi} that
\begin{equation} \label{eq:Phi.f=0}
    \int_0^1 \bigl\langle \Phi_j(x,\l) , f(x) \bigr\rangle \,dx \equiv 0,
    \qquad \l \in \bC, \quad j \in \{1, 2, \ldots, n\}.
\end{equation}
By~\cite[Theorem 1.2, step (vi)]{MalOri12}, the vector function
$f$ satisfying~\eqref{eq:Phi.f=0} is zero, i.e. the system of
root functions of the operator $L_{C,D}(Q)$ is complete. Its
minimality is implied by~\cite[Lemma 2.4]{MalOri12} applied to
the operator $(L_{C,D}-\l)^{-1}$ with $\l \in \rho(L_{C,D})$.
\end{proof}
%
%
\section{Asymptotic behavior of solutions and characteristic
determinant} \label{sec:asymp}
%
%
Here we refine asymptotic
formulas~\eqref{eq:yjk=(1+o(1)).e(i.bk.l)} assuming that
$Q(\cdot)$ is continuous at the endpoints 0 and 1. These
formulas will be applied to investigate asymptotic behavior of
the characteristic determinant $\Delta(\cdot)$. We start with
the following lemma.
%
%
\begin{lemma}\label{lem:int.e.q}
Let $b \in \bC\setminus\{0\}$, $C>0$ and $S \subset \bC$ be a
non-bounded subset of $\bC$ such that
\begin{equation} \label{eq:Rebl<-C|l|}
\Re(b\l) < -C|\l|, \quad \l \in S.
\end{equation}

(i) Let $\varphi \in L^1[0,1]$ and $\varphi(\cdot)$ is
continuous at zero. Then
\begin{equation}\label{eq:le1i}
    \int_0^1 e^{b\l t} \varphi(t) dt = \frac{\varphi(0)+o(1)}{-b\l},
    \quad\text{as}\quad \l \to \infty,\ \l \in S.
\end{equation}

(ii) Let $\varphi \in L^1[0,1]$ and let $\varphi(\cdot)$ be
bounded at a neighborhood of zero. Then
\begin{equation}\label{eq:le1ii}
    \int_0^1 \left|e^{b\l t} \varphi(t)\right| dt = O(|\l|^{-1}),
    \quad \l \in S.
\end{equation}
\end{lemma}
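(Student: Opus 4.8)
The statement is a Watson-type lemma / Riemann–Lebesgue estimate for the Laplace-type integral $\int_0^1 e^{b\l t}\varphi(t)\,dt$ along directions in which $\Re(b\l)\to-\infty$ linearly in $|\l|$. The natural strategy is to split the interval $[0,1]$ at a point $\delta>0$ small enough that $\varphi$ is well-behaved (continuous at $0$ for part (i), bounded for part (ii)) on $[0,\delta]$, estimate the two pieces separately, and then let $\delta\to 0$ after $\l\to\infty$ by a standard $\eps$–$\delta$ argument. The key quantitative input is the elementary bound $\int_0^\delta e^{\Re(b\l)t}\,dt \le \frac{1}{|\Re(b\l)|}\le \frac{1}{C|\l|}$, valid for $\l\in S$ by hypothesis~\eqref{eq:Rebl<-C|l|}, together with the fact that on the tail $[\delta,1]$ one has $e^{\Re(b\l)t}\le e^{-C\delta|\l|}$, which decays faster than any power of $|\l|^{-1}$.

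\smallskip

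For part (ii) this is almost immediate: pick $\delta>0$ and $M>0$ with $|\varphi(t)|\le M$ for $t\in[0,\delta]$. Then
\begin{equation*}
    \int_0^\delta \bigl|e^{b\l t}\varphi(t)\bigr|\,dt \le M\int_0^\delta e^{\Re(b\l)t}\,dt \le \frac{M}{C|\l|},
\end{equation*}
while $\int_\delta^1 \bigl|e^{b\l t}\varphi(t)\bigr|\,dt \le e^{-C\delta|\l|}\|\varphi\|_{L^1[\delta,1]} = o(|\l|^{-1})$ as $\l\to\infty$, $\l\in S$. Adding these gives the $O(|\l|^{-1})$ bound in~\eqref{eq:le1ii}.

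\smallskip

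For part (i) the plan is to compare with the integral having $\varphi$ replaced by the constant $\varphi(0)$, for which one computes exactly
\begin{equation*}
    \int_0^1 e^{b\l t}\varphi(0)\,dt = \varphi(0)\cdot\frac{e^{b\l}-1}{b\l} = \frac{-\varphi(0)+o(1)}{-b\l},
\end{equation*}
since $|e^{b\l}| = e^{\Re(b\l)}\le e^{-C|\l|}\to 0$. It remains to show the error term is $o(|\l|^{-1})$. Given $\eps>0$, use continuity of $\varphi$ at $0$ to choose $\delta>0$ with $|\varphi(t)-\varphi(0)|<\eps$ for $t\in[0,\delta]$; then
\begin{equation*}
    \left|\int_0^\delta e^{b\l t}(\varphi(t)-\varphi(0))\,dt\right| \le \eps\int_0^\delta e^{\Re(b\l)t}\,dt \le \frac{\eps}{C|\l|},
\end{equation*}
and the tail $\int_\delta^1 e^{b\l t}(\varphi(t)-\varphi(0))\,dt$ is bounded by $e^{-C\delta|\l|}\bigl(\|\varphi\|_{L^1}+|\varphi(0)|\bigr)$, which is $o(|\l|^{-1})$. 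Multiplying through by $|\l|$ and sending $\l\to\infty$ then $\eps\to0$ yields $|\l|\cdot\bigl|\int_0^1 e^{b\l t}(\varphi(t)-\varphi(0))\,dt\bigr|\to 0$, i.e. the error is $o(|\l|^{-1})$, which is exactly~\eqref{eq:le1i}.

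\smallskip

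The only mild subtlety — the ``main obstacle'' such as it is — is bookkeeping the order of quantifiers: the $o(1)$ in~\eqref{eq:le1i} must be uniform in the sense of an honest limit as $\l\to\infty$ within $S$, so one must fix $\delta$ first (depending only on $\eps$, not on $\l$), extract the power decay of the tail, and only afterwards let $|\l|\to\infty$; the constant $C$ from~\eqref{eq:Rebl<-C|l|} is what makes the split-point estimate independent of $\l$. No analyticity or further structure is needed; this is purely a dominated-convergence / Riemann–Lebesgue argument.
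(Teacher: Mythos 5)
Your proof is correct and follows essentially the same route as the paper: split the integral at a small $\delta$, bound the piece near $0$ by $\frac{1}{C|\lambda|}\sup_{[0,\delta]}|\varphi|$ (or by $\eps/(C|\lambda|)$ after subtracting $\varphi(0)$) and the tail by $e^{-C\delta|\lambda|}\|\varphi\|_1$, reducing (i) to the exactly computable constant case. The only blemish is a harmless sign slip in your intermediate display for the constant integral, which should read $\varphi(0)\frac{e^{b\lambda}-1}{b\lambda}=\frac{\varphi(0)+o(1)}{-b\lambda}$ — precisely the form the conclusion requires.
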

%
%
\begin{proof}[Proof]
Taking into account~\eqref{eq:Rebl<-C|l|} one has
\begin{equation} \label{eq:int01.ebt}
    \int_0^1 \left|e^{bt} \varphi(t)\right| dt \leqslant
    \left(\int_0^\delta + \int_\delta^1\right) e^{-C|\lambda|t} |\varphi(t)| dt
    \leqslant \frac 1{C|\lambda|}\sup_{t \in [0, \delta]} |\varphi(t)| + \|\varphi\|_1 e^{-C\delta |\lambda|}.
\end{equation}
This implies~\eqref{eq:le1ii}. Further,~\eqref{eq:le1i} is true
for $\varphi(\cdot) \equiv \const$. Therefore, it is sufficient
to prove it in the case $\varphi(0) = 0$.
Estimate~\eqref{eq:int01.ebt} proves this, taking into account
that $\delta$ can be chosen arbitrary small.
\end{proof}
Lemma~\ref{lem:int.e.q} allows us to refine the asymptotic
formulas~\eqref{eq:yjk=(1+o(1)).e(i.bk.l)} from
Proposition~\ref{prop:BirkSys} when $Q$ is continuous at the
endpoints of the segment $[0,1]$.
%
%
\begin{proposition}\label{prop:asymp}
Assume conditions~\eqref{eq:B.beta}--\eqref{eq:Qjj=0} and let $p
\in \{1,\ldots,\nu\}$. Assume, in addition, that $Q$ is
continuous at the endpoints $0$, $1$. Then for a sufficiently
large $R$ and small $\eps > 0$ equation~\eqref{eq:system} has a
fundamental matrix solution~\eqref{eq:Yxl} analytic with respect
to $\l \in S_{p,\eps,R}$. Moreover, $y_{jk}(x,\l)$, $j,k \in
\{1,\ldots,n\}$, satisfies~\eqref{eq:yjk=(1+o(1)).e(i.bk.l)} and
has the following asymptotic behavior at the endpoints 0 and 1
as $\l \to \infty$, $\l \in S_{p,\eps,R}$,
\begin{align}
    \label{eq:yjk(0,l)}
        y_{jk}(0,\l) &= \begin{cases}
            0, & \text{if}\ \ \ \Re(i b_j \l) < \Re(i b_k \l), \\
            \delta_{jk}, & \text{if}\ \ \ b_j = b_k, \\
            \frac{b_j q_{jk}(0) + o(1)}{b_j-b_k}\cdot\frac{1}{\l},
            & \text{if}\ \ \ \Re(i b_j \l) > \Re(i b_k \l);
        \end{cases} \\
    \label{eq:yjk(1,l)}
        y_{jk}(1,\l) &= \begin{cases}
            \frac{b_j q_{jk}(1) + o(1)}{b_j-b_k}\cdot\frac{e^{i b_k \l}}{\l}, & \text{if}\ \ \ \Re(i b_j \l) < \Re(i b_k \l), \\
            (\delta_{jk} + o(1)) e^{i b_k \l}, & \text{if}\ \ \ b_j = b_k, \\
            0, & \text{if}\ \ \ \Re(i b_j \l) > \Re(i b_k \l).
        \end{cases}
\end{align}
\end{proposition}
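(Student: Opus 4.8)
The plan is to feed the crude asymptotics~\eqref{eq:yjk=(1+o(1)).e(i.bk.l)} from Proposition~\ref{prop:BirkSys} once into the Volterra integral equation that defines $Y(x,\l)$, and then to read off the leading behaviour at $x=0$ and $x=1$ by means of Lemma~\ref{lem:int.e.q}. Recall that, writing equation~\eqref{eq:system} columnwise as $Y_k'=i\l B\,Y_k-iBQ(x)Y_k$, the solution $Y(x,\l)$ in Proposition~\ref{prop:BirkSys} is obtained as the fixed point of the Volterra operator whose $(j,k)$ entry reads
\begin{equation*}
    y_{jk}(x,\l)=\delta_{jk}\,e^{ib_j\l x}-ib_j\int_{x_{jk}}^{x}e^{ib_j\l(x-t)}\sum_{l=1}^n q_{jl}(t)\,y_{lk}(t,\l)\,dt,
\end{equation*}
where throughout the sector $S_{p,\eps}$ one takes $x_{jk}=0$ when $b_j=b_k$ or $\Re(ib_j\l)<\Re(ib_k\l)$, and $x_{jk}=1$ when $\Re(ib_j\l)>\Re(ib_k\l)$; this is exactly the choice that makes the Picard iteration converge and reproduce~\eqref{eq:yjk=(1+o(1)).e(i.bk.l)}. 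Since the directions of $S_{p,\eps}$ stay away from all the lines $l_{jk}$ and $l_j$, there is $c>0$ with $|\Re(ib_j\l)-\Re(ib_k\l)|\geqslant c|\l|$ on $S_{p,\eps,R}$ whenever $b_j\ne b_k$, so every exponential kernel below meets the hypothesis of Lemma~\ref{lem:int.e.q} after an obvious change of variable.

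First I would insert~\eqref{eq:yjk=(1+o(1)).e(i.bk.l)}, written as $y_{lk}(t,\l)=(\delta_{lk}+r_{lk}(t,\l))e^{ib_k\l t}$ with $\sup_{t\in[0,1]}|r_{lk}(t,\l)|\to 0$, into the right-hand side. Because $q_{jl}\equiv 0$ whenever $b_l=b_j$ (see~\eqref{eq:Qjj=0}), this gives $\sum_l q_{jl}(t)y_{lk}(t,\l)=\big(q_{jk}(t)+\rho_j(t,\l)\big)e^{ib_k\l t}$ with $|\rho_j(t,\l)|\leqslant\eps(\l)\,g_j(t)$, where $g_j:=\sum_l|q_{jl}|\in L^1[0,1]$ is bounded near $0$ and near $1$ (continuity of $Q$ at the endpoints) and $\eps(\l):=\max_l\sup_t|r_{lk}(t,\l)|\to 0$; note that when $b_j=b_k$ the term $q_{jk}$ is itself absent. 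Hence
\begin{equation*}
    y_{jk}(x,\l)=\delta_{jk}\,e^{ib_j\l x}-ib_j\int_{x_{jk}}^{x}e^{ib_j\l(x-t)}\big(q_{jk}(t)+\rho_j(t,\l)\big)\,e^{ib_k\l t}\,dt.
\end{equation*}

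Now I would set $x=0$ and $x=1$ and run through the cases. If $x_{jk}=0$ the integral is empty at $x=0$, so $y_{jk}(0,\l)=\delta_{jk}$, which is the second line of~\eqref{eq:yjk(0,l)} and, for $b_j\ne b_k$, the value $0$ in its first line; dually, if $x_{jk}=1$ the integral is empty at $x=1$, so $y_{jk}(1,\l)=\delta_{jk}e^{ib_j\l}=0$, the third line of~\eqref{eq:yjk(1,l)}. In the two remaining situations one is left, after the substitution $t\mapsto 1-t$ when $x=1$, with an integral $e^{ib_k\l x}\int_0^1 e^{\mu_{jk}\l s}\varphi_{jk}(s)\,ds$, where $\mu_{jk}\in\{\pm i(b_k-b_j)\}$ satisfies $\Re(\mu_{jk}\l)\leqslant -c|\l|$ by the choice of $x_{jk}$ and $\varphi_{jk}$ equals $q_{jk}$ near the relevant endpoint, together with the analogous $\rho_j$-integral. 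Lemma~\ref{lem:int.e.q}(i) (using continuity of $q_{jk}$ at $0$, resp.\ at $1$), combined with the prefactor $-ib_j$, then produces $\tfrac{b_jq_{jk}(0)+o(1)}{(b_j-b_k)\l}$, resp.\ $\tfrac{b_jq_{jk}(1)+o(1)}{(b_j-b_k)\l}\,e^{ib_k\l}$, while Lemma~\ref{lem:int.e.q}(ii) bounds the $\rho_j$-integral by $\eps(\l)\,O(|\l|^{-1})=o(|\l|^{-1})$; this yields the third line of~\eqref{eq:yjk(0,l)} and the first line of~\eqref{eq:yjk(1,l)}. The only case not covered so far is $b_j=b_k$ at $x=1$, where $q_{jk}$ is absent and the exponent $i(b_k-b_j)\l$ in front of $\rho_j$ vanishes, so $y_{jk}(1,\l)=\delta_{jk}e^{ib_k\l}+e^{ib_k\l}\int_0^1\rho_j(t,\l)\,dt=(\delta_{jk}+o(1))e^{ib_k\l}$, the second line of~\eqref{eq:yjk(1,l)}.

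The bulk of the work is routine bookkeeping of the signs of $b_j-b_k$ and of the several subcases. The point that really has to be watched is that the error $r_{lk}$ inherited from~\eqref{eq:yjk=(1+o(1)).e(i.bk.l)} is only $o(1)$ uniformly in $t$ and gets multiplied by the merely $L^1$ (not $L^\infty$) functions $q_{jl}$; this is precisely the scenario handled by part~(ii) of Lemma~\ref{lem:int.e.q}, which is why a single substitution of the crude asymptotics into the integral equation already delivers the sharp $o(|\l|^{-1})$ remainder, with no further Picard iteration needed.
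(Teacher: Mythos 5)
Your proposal is correct and follows essentially the same route as the paper's proof: identify the lower limits $a_{jk}$ of the Volterra integral equations from the construction in Proposition~\ref{prop:BirkSys}, substitute the crude asymptotics~\eqref{eq:yjk=(1+o(1)).e(i.bk.l)} once, and apply Lemma~\ref{lem:int.e.q}(i) to the $q_{jk}$ term and (ii) to the $o(1)$-error term. The only cosmetic difference is the case $b_j=b_k$ at $x=1$, which the paper settles by quoting~\eqref{eq:yjk=(1+o(1)).e(i.bk.l)} directly rather than re-deriving it from the integral equation as you do.
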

%
%
\begin{proof}[Proof]
According to the proof of~\cite[Proposition 2.2]{MalOri12} the
matrix solution $Y(x,\l)$ of system~\eqref{eq:system} with the
asymptotic behavior~\eqref{eq:yjk=(1+o(1)).e(i.bk.l)} in
$S_{p,\eps,R}$ was constructed as the unique solution of the
following system of integral equations
\begin{equation}\label{eq:system-int}
    y_{jk}(x,\l) = \delta_{jk}e^{i b_k \l x}
    -i b_j \int_{a_{jk}}^x e^{-i b_j \l (t-x)} \sum_{l=1}^n q_{jl}(t)y_{lk}(t,\l)dt,
\end{equation}
where
\begin{equation} \label{eq:ajk.def}
    a_{jk} := \begin{cases}
        0, \ \ \text{if}\ \ \ \Re(i b_j \l) \leqslant \Re(i b_k \l), \quad \l \in S_{p,\eps}, \\
        1, \ \ \text{if}\ \ \ \Re(i b_j \l) > \Re(i b_k \l), \quad \l \in S_{p,\eps}.
    \end{cases}
\end{equation}
In particular, $a_{jk}=0$ if $b_j=b_k$. Let us show that this
solution satisfies~\eqref{eq:yjk(0,l)},~\eqref{eq:yjk(1,l)}. It
is clear from~\eqref{eq:system-int} that for $\l \in
S_{p,\eps,R}$ we have
\begin{eqnarray}
    y_{jk}(0,\l) &=& 0, \quad \Re(i b_j \l) < \Re(i b_k \l), \\
    y_{jk}(0,\l) &=& \delta_{jk}, \quad b_j = b_k, \\
    y_{jk}(1,\l) &=& 0, \quad \Re(i b_j \l) > \Re(i b_k \l),
\end{eqnarray}
while the second relation in~\eqref{eq:yjk(1,l)} follows from
Proposition~\ref{prop:asymp}. Thus, we need to prove only the
third relation in~\eqref{eq:yjk(0,l)} and the first one
in~\eqref{eq:yjk(1,l)}.

At first we rewrite~\eqref{eq:yjk=(1+o(1)).e(i.bk.l)} in the
following form
\begin{equation} \label{eq:yjk=(1+ro.jk).e(i.bk.l)}
    y_{jk}(x,\l) = (\delta_{jk}+\rho_{jk}(x,\l))e^{i b_k \l},
    \quad j,k \in \{1,\ldots,n\},
\end{equation}
where $\rho_{jk}(x,\l) = o(1)$, as $\l \to \infty$, $\l \in
S_{p,\eps,R}$, uniformly in $x \in [0,1]$. Now inserting
expression~\eqref{eq:yjk=(1+ro.jk).e(i.bk.l)} for $y_{jk}(x,\l)$
into~\eqref{eq:system-int} we obtain
\begin{equation}\label{eq:subst}
    y_{jk}(x,\l) = \left(\delta_{jk}-i b_j \int_{a_{jk}}^x e^{i (b_k-b_j) \l (t-x)}\left(q_{jk}(t)
    +\sum_{l=1}^n q_{jl}(t)\rho_{lk}(t,\l)\right)dt\right)e^{i b_k \l x}.
\end{equation}
Let $\Re(i b_j \l) > \Re(i b_k \l)$. Setting $x=0$
in~\eqref{eq:subst} one gets
\begin{equation} \label{eq:yjk(0,l).subst}
    y_{jk}(0,\l) = i b_j \int_{0}^1 e^{i (b_k-b_j) \l t}
    q_{jk}(t) dt + i b_j \int_0^1 e^{i (b_k-b_j) \l t} \sum_{l=1}^n
    q_{jl}(t)\rho_{lk}(t,\l)dt.
\end{equation}
Clearly,
\begin{equation}
    \Re(i(b_k-b_j)\l) < -C |\l|, \quad \l \in S_{p,\eps,R},
\end{equation}
for some $C>0$. Hence, applying Lemma~\ref{lem:int.e.q}(i) with
\begin{equation}
    S=S_{p,\eps,R},\quad b=i(b_k-b_j), \quad \varphi(\cdot)=i b_j q_{jk}(\cdot),
\end{equation}
and taking into account the continuity of $q_{jk}(\cdot)$ at
zero, we derive from~\eqref{eq:le1i}
\begin{equation}\label{eq:subst1}
    i b_j \int_0^1 e^{i (b_k-b_j) \l t} q_{jk}(t)dt
    =\frac{b_j q_{jk}(0) + o(1)}{(b_j-b_k) \l},
    \quad\text{as}\quad \l \to \infty,\ \l \in S_{p,\eps,R}.
\end{equation}
Further, since $q_{jl}(\cdot)$, $l \in \{1,\ldots,n\}$, is
bounded at a neighborhood of zero and
\begin{equation}
    \sup\limits_{t\in[0,1]}|\rho_{lk}(t,\l)| = o(1)
    \quad\text{as}\quad \l \to \infty,\ \l \in S_{p,\eps,R},
\end{equation}
Lemma~\ref{lem:int.e.q}(ii) implies
\begin{equation} \label{eq:subst2}
    \int_0^1 e^{i (b_k-b_j) \l t} \sum_{l=1}^n q_{jl}(t)\rho_{lk}(t,\l)dt
    = \sum_{l=1}^n o\left(\int_0^1 \left|e^{i (b_k-b_j) \l t} q_{jl}(t) \right|dt\right) = o(\l^{-1}),
\end{equation}
as $\l \to \infty$, $\l \in S_{p,\eps,R}$. This together
with~\eqref{eq:yjk(0,l).subst} and~\eqref{eq:subst1} yields the
first relation in~\eqref{eq:yjk(0,l)}. Next, let $\Re(i b_j \l)
< \Re(i b_k \l)$. Then using~\eqref{eq:ajk.def} we obtain
from~\eqref{eq:subst}
\begin{equation}
    y_{jk}(1,\l) = -i b_j e^{i b_k \l} \int_{0}^1 e^{i (b_j-b_k) \l s}\left(q_{jk}(1-s)
    +\sum_{l=1}^n q_{jl}(1-s)\rho_{lk}(1-s,\l)\right)ds.
\end{equation}
Using the inequality
\begin{equation}
    \Re(i(b_j-b_k)\l) < -C |\l|, \quad \l \in S_{p,\eps,R},
\end{equation}
and continuity of $q_{jl}(\cdot)$, $l \in \{1,\ldots,n\}$, at
the point 1, and follow the above reasoning we arrive at the
third relation in~\eqref{eq:yjk(1,l)}.
\end{proof}
%
%
\begin{remark} \label{rem:qjk}
Fix $j,k\in \{1,\ldots,n\}$. As it is clear from the proof of
Proposition~\ref{prop:asymp}, the individual function
$y_{j,k}(x,\l)$ satisfies the third relation
in~\eqref{eq:yjk(0,l)} whenever $q_{jk}(\cdot)$ is continuous at
zero and $q_{jl}(\cdot)$ is bounded at zero for $l \in
\{1,\ldots,n\}$. Otherwise it satisfies only the weaker relation
\begin{equation}
    y_{jk}(0,\l) = o(1) \quad\text{as}\quad
    \l \to \infty, \quad \l \in S_{p,\eps}.
\end{equation}
Moreover, if $q_{jl}(\cdot)$, $l \in \{1,\ldots,n\}$, is just
bounded at zero then, by Lemma~\ref{lem:int.e.q}(ii),
$y_{jk}(0,\l) = O(\l^{-1})$, $\l \in S_{p,\eps,R}$. Similar
statements are true for $y_{jk}(1,\l)$. This allows us to weaken
assumptions on $Q(\cdot)$ in further considerations.
\end{remark}
%
%
In the next step we investigate the asymptotic behavior of the
characteristic determinant $\Delta(\cdot)$. For convenience in
applications we do not assume that equal $b_j$ are grouped into
blocks as it was in the previous paper~\cite{MalOri12}.
%
%
\begin{proposition}\label{prop:DeltaY=(c0+c1/l)}
Let $B$ be defined by~\eqref{eq:B.def}, $Q(\cdot) \in L^1([0,1];
\bC^{n \times n})$ and let $q_{jk}$ be continuous at points 0
and 1 if $b_j \ne b_k$. Let, as above, $\Delta(\cdot)$ be the
characteristic determinant~\eqref{eq:Delta(l).def} of the
problem~\eqref{eq:system}--\eqref{eq:CY(0)+DY(1)=0}. Finally,
let $p \in \{1,\ldots,\nu\}$. Then for sufficiently small $\eps
> 0$ the characteristic determinant $\Delta(\cdot)$ admits the
following asymptotic expansion
\begin{equation}\label{eq:DeltaY=(c0(zp)+c1(zp)/l)}
    \Delta(\l)=\gamma_p \cdot \left(\omega_0(z_p) \cdot(1+o(1))
    + \frac{\omega_1(z_p) + o(1)}{\l} \right) e^{i \tau_p \l},
    \quad\text{as}\quad \l \to \infty, \ \l \in S_{p,\eps}.
\end{equation}
Here $z_p$ is a fixed point in $S_{p,\eps}$,
\begin{eqnarray}
    \label{eq:delta.zp}
        \gamma_p &:=& \exp\left(\sum_{\Re(i b_j z_p) > 0} i b_j \int_0^1 q_{jj}(t) dt\right),\\
    \label{eq:tau.def}
        \tau_p &:=& \sum_{\Re(i b_j z_p) > 0} b_j, \\
    \label{eq:c0zp.def}
        \omega_0(z_p) &:=& \det T_{i z_p B}(C,D), \\
    \label{eq:c1zp.def}
        \omega_1(z_p) &:=& \sum_{\Re(i b_j z_p) < 0 \atop \Re(i b_k z_p) > 0}
        \frac{\det T_{i z_p B}^{c_j \to c_k} b_k q_{kj}(0)
        - \det T_{i z_p B}^{d_k \to d_j} b_j q_{jk}(1)}{b_k-b_j},
\end{eqnarray}
and the matrix $T_{i z_p B}^{c_j \to c_k}$ $(T_{i z_p B}^{d_j
\to d_k})$ is obtained from $T_{i z_p B}(C,D)$ by replacing its
$j$th column by the $k$th column of the matrix $C$ $($resp.
$D)$.
\end{proposition}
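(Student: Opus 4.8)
The plan is to reduce to the normalized situation $\widetilde Q_{jj}\equiv 0$ covered by Proposition~\ref{prop:asymp}, compute $\Delta(\cdot)$ there through the Birkhoff fundamental solution, and translate back. First I would regroup $B$ as in~\eqref{eq:B.beta} and pass, exactly as in the proof of Theorem~\ref{th:compl.gen}, from $L_{C,D}(Q)$ to the similar operator $L_{C,\widetilde D}(\widetilde Q)$ with $\widetilde D = DW(1)$ and $\widetilde Q = W^{-1}(Q-Q_1)W$. Then $\widetilde Q_{jj}\equiv 0$ and $\widetilde\Delta(\cdot)=\Delta(\cdot)$ by~\eqref{eq:wtDelta=Delta}; moreover, $W$ being block diagonal, absolutely continuous, and normalized by $W(0)=I_n$, the entries $\widetilde q_{jk}$ with $b_j\ne b_k$ stay continuous at $0$ and $1$ and obey $\widetilde q_{jk}(0)=q_{jk}(0)$, so Proposition~\ref{prop:asymp} is applicable to $\widetilde Q$. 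I would also record that, by Liouville's formula, the product $\prod_{\Re(i\beta_a z_p)>0}\det W_{aa}(1)$ is exactly the exponential prefactor $\gamma_p$ of~\eqref{eq:delta.zp}.

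Second, I would take the Birkhoff solution $Y:=Y_{\widetilde Q}$ of Proposition~\ref{prop:asymp}, analytic in $S_{p,\eps,R}$ and satisfying the refined endpoint asymptotics~\eqref{eq:yjk(0,l)},~\eqref{eq:yjk(1,l)}. Since $Y(\cdot,\l)=\Phi(\cdot,\l)Y(0,\l)$, relation~\eqref{eq:DeltaY(l)=(1+o(1))Delta(l)} gives $\Delta(\l)=\widetilde\Delta(\l)=\det\bigl(CY(0,\l)+\widetilde DY(1,\l)\bigr)/\det Y(0,\l)$. Ordering the $b_j$ by $\Re(ib_j\l)$ and using the first two cases of~\eqref{eq:yjk(0,l)}, which carry no error term, the matrix $Y(0,\l)$ is block lower triangular with identity diagonal blocks; hence $\det Y(0,\l)\equiv 1$ and $\Delta(\l)=\det\bigl(CY(0,\l)+\widetilde DY(1,\l)\bigr)$.

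Third --- the core step --- I would evaluate this determinant asymptotically. From~\eqref{eq:yjk(0,l)},~\eqref{eq:yjk(1,l)}: for $k$ with $\Re(ib_kz_p)<0$ the $k$th column equals $c_k+\lambda^{-1}\sum_{j:\,\Re(ib_j\l)>\Re(ib_k\l)}\tfrac{b_j\widetilde q_{jk}(0)+o(1)}{b_j-b_k}c_j$ up to exponentially small terms ($c_j$ the $j$th column of $C$); for $k$ with $\Re(ib_kz_p)>0$, after extracting the scalar $e^{ib_k\l}$, the $k$th column equals $(1+o(1))\widetilde d_k+\lambda^{-1}\sum_{j:\,\Re(ib_j\l)<\Re(ib_k\l)}\tfrac{b_j\widetilde q_{jk}(1)+o(1)}{b_j-b_k}\widetilde d_j$ up to exponentially small terms ($\widetilde d_j$ the $j$th column of $\widetilde D$). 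Pulling $\prod_{\Re(ib_kz_p)>0}e^{ib_k\l}=e^{i\tau_p\l}$ out of the determinant and expanding multilinearly, the zero-order term is $\det T_{iz_pB}(C,\widetilde D)$, and a first-order term obtained by replacing one column with one of its $\lambda^{-1}$-perturbations survives only when that perturbation is directed along a column \emph{not} already present in $T_{iz_pB}(C,\widetilde D)$ --- i.e. only for the ``cross'' pairs $j,k$ on opposite sides of the spectral splitting --- yielding precisely the two one-column-replaced determinants in the definition of $\widetilde\omega_1$, everything else being $o(\lambda^{-1})$. Undoing the gauge with $W(0)=I_n$ and the block-diagonal transition matrix assembled from the $W_{aa}(1)$ gives $\det T_{iz_pB}(C,\widetilde D)=\gamma_p\,\omega_0(z_p)$ and, after a matrix computation matching the conjugation defining $\widetilde q_{jk}(1)$ against the column replacements $\widetilde d_k\mapsto\widetilde d_j$, $\widetilde\omega_1=\gamma_p\,\omega_1(z_p)$ --- which is~\eqref{eq:DeltaY=(c0(zp)+c1(zp)/l)}.

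The main obstacle is controlling the error terms in the determinant expansion, in particular the $o(1)$ --- a priori not $o(\lambda^{-1})$ --- errors that appear in the columns with $\Re(ib_kz_p)>0$ when several $b_j$ coincide (blocks of size $>1$), where no endpoint continuity of $\widetilde q_{jk}$ is assumed. Such errors lie in the span of columns already present in $T_{iz_pB}(C,\widetilde D)$, so their first-order contributions to the determinant cancel, and one must check that the resulting corrections are genuinely $o(\lambda^{-1})$; this is what makes the $\omega_0(z_p)(1+o(1))$ form of the leading term (rather than $\omega_0(z_p)+o(1)$) the natural one. A secondary, purely bookkeeping difficulty is carrying the gauge transform through the $1/\lambda$-coefficient cleanly at the endpoint $1$.
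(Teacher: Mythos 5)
Your proposal is correct and follows essentially the same route as the paper: gauge away the diagonal blocks via $W$, invoke the refined endpoint asymptotics of Proposition~\ref{prop:asymp}, and extract the $e^{i\tau_p\l}$, $\omega_0$ and $\omega_1/\l$ contributions from $\det\bigl(CY(0,\l)+\wt{D}Y(1,\l)\bigr)$. The only difference is organizational --- you expand the determinant multilinearly in perturbed columns, whereas the paper uses the Cauchy--Binet formula together with a Laplace expansion over minors of $\begin{pmatrix} C & D \end{pmatrix}$ --- and the within-block $o(1)$ errors you flag as the main obstacle are indeed disposed of exactly as you describe: they only recombine columns already present in $T_{iz_pB}(C,\wt{D})$, so they contribute $o(1)\cdot\omega_0(z_p)$, which is absorbed into the $(1+o(1))$ factor of the leading term rather than polluting the $1/\l$ coefficient.
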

%
%
\begin{remark}
Denote by $c_j$ $(d_j)$ the $j$th column of the matrix $C$
$($resp. $D)$. Note that if $\Re(i b_j \l) < 0$, the $j$th
column of $T_{i z_p B}(C,D)$ coincides with $c_j$. Therefore,
the superscript $c_j\to c_k$ in the notation of the matrix $T_{i
z_p B}^{c_j \to c_k}$ means just replacement $c_j$ by $c_k$ in
$T_{i z_p B}$. The notation $T_{i z_p B}^{d_k \to d_j}$ is
justified  similarly.
\end{remark}
%
%
\begin{proof}[Proof of Proposition~\ref{prop:DeltaY=(c0+c1/l)}]
As in the proof of Theorem~\ref{th:compl.gen} we can assume that
conditions~\eqref{eq:B.beta}--\eqref{eq:Q=Qjk} are fulfilled.
Further, applying the gauge transform $W : y \to W(x)y$ with
$W(\cdot)$ given by~\eqref{eq:iBW'=Q1.W}--\eqref{eq:Wx} the
operator $L_{C,D}(Q)$ is transformed into the operator
$L_{C,\wt{D}}(\wt{Q})$ with $\wt{D}$ and $\wt{Q}(x)$ given
by~\eqref{eq:wtD.wtQ}. Due to~\eqref{eq:wtDelta=Delta} the
characteristic determinant is preserved under this transform.

Further, $Q - Q_1$ is continuous at the endpoints 0 and 1. Since
both $W(\cdot)$ and $W^{-1}(\cdot)$ are continuous on $[0,1]$,
$\wt{Q}$ is continuous at the endpoints 0 and 1 too. According
to~\eqref{eq:wtD.wtQ} $\wt{Q}$ satisfies~\eqref{eq:Qjj=0} and,
by Proposition~\ref{prop:asymp}, there exists a fundamental
matrix solution $\wt{Y}(\cdot,\l)$ of system~\eqref{eq:system}
with $\wt{Q}$ in place of $Q$, that satisfies asymptotic
relations~\eqref{eq:yjk(0,l)} and~\eqref{eq:yjk(1,l)} with
$\wt{q}_{jk}(\cdot)$ in place of $q_{jk}(\cdot)$. The
fundamental matrices $\wt{Y}(\cdot,\l)$ and $\wt\Phi(\cdot,\l)$
are related by
\begin{equation}
    \wt{Y}(x,\l) = \wt{\Phi}(x,\l) P(\l),
    \qquad x \in [0,1], \quad \l \in S_{p, \eps, R},
\end{equation}
where $P(\lambda)=:(p_{kj}(\lambda))_{k,j=1}^n$ is an analytical
invertible matrix function in $S_{p,\varepsilon,R}$. Hence
$\wt{Y}(0,\lambda) = P(\lambda)$ and due
to~\eqref{eq:yjk=(1+o(1)).e(i.bk.l)}
and~\eqref{eq:wtDelta=Delta} (cf. \cite[formula
(3.31)]{MalOri12}),
\begin{equation} \label{eq:DeltawtY.def}
    \Delta_{\wt{Y}}(\l):=\det(C \wt{Y}(0,\l) + \wt{D} \wt{Y}(1,\l))  = \wt{\Delta}(\l) \det (\wt{Y}(0,\l))
    = (1+o(1)) \Delta(\l),
\end{equation}
as $\l \to \infty$, $\l \in S_{p,\eps}$. Thus, it suffices to
prove~\eqref{eq:DeltaY=(c0(zp)+c1(zp)/l)} with
$\Delta_{\wt{Y}}(\cdot)$ instead of $\Delta(\cdot)$. Since $W(0)
= I_n,$ one has  $\wt{Q}(0) = Q(0) - Q_1(0)$ and hence
\begin{equation} \label{eq:wtY0}
    \wt{Y}(0,\l) = Y_0 := Y_0(\l) := \left(y_{jk}^{[0]}(\l)\right)_{j,k=1}^n,
\end{equation}
where $y_{jk}^{[0]}(\l)$ is given  by~\eqref{eq:yjk(0,l)}. Let
us simplify $\wt{Y}(1,\l)$. To this end let
\begin{align}
    \wt{Q}(x) &= \left(\wt{Q}_{jk}(x)\right)_{j,k=1}^r,
    \quad \wt{Q}_{jk}(x) \in \bC^{n_j \times n_k}, \\
    \wt{Y}(x,\l) &= \left(\wt{Y}_{jk}(x,\l)\right)_{j,k=1}^r,
    \quad \wt{Y}_{jk}(x,\l) \in \bC^{n_j \times n_k},
\end{align}
be the block-representations of matrices $\wt{Q}(x)$ and
$\wt{Y}(x,\l)$ with respect to the orthogonal decomposition
$\bC^n = \bC^{n_1} \oplus \ldots \oplus \bC^{n_r}$. It follows
from~\eqref{eq:Wx}--\eqref{eq:wtD.wtQ} that
\begin{equation} \label{eq:wtQjk}
    \wt{Q}_{jk}(1) = W_{jj}^{-1}(1) Q_{jk}(1) W_{kk}(1), \qquad
    j\not = k.
\end{equation}
Further, note that due to~\eqref{eq:B.beta}--\eqref{eq:Q=Qjk}
formula~\eqref{eq:yjk(1,l)} for $\wt{Y}(1,\l)$ takes the form
\begin{equation} \label{eq:wtYjk1l}
    \wt{Y}_{jk}(1,\l) = \begin{cases}
        \frac{\beta_j \wt{Q}_{jk}(1) + o(1)}{\beta_j-\beta_k}\cdot\frac{e^{i \beta_k \l}}{\l},
        & \text{if}\ \ \ \Re(i \beta_j \l) < \Re(i \beta_k \l), \\
        (I_{n_k} + o(1)) e^{i \beta_k \l}, & \text{if}\ \ \ j = k, \\
        0, & \text{if}\ \ \ \Re(i \beta_j \l) > \Re(i \beta_k \l).
    \end{cases}
\end{equation}
In view of~\eqref{eq:B.beta}--\eqref{eq:Q=Qjk} and
\eqref{eq:wtQjk}--\eqref{eq:wtYjk1l} we have
\begin{equation} \label{eQ:wtY1}
    \wt{Y}(1,\l) = W^{-1}(1) Y_1 W(1), \quad Y_1 := Y_1(\l) = \left(y_{jk}^{[1]}(\l)\right)_{j,k=1}^n,
\end{equation}
where $y_{jk}^{[1]}(\l)$ is given  by~\eqref{eq:yjk(1,l)}.
Combining
\eqref{eq:wtD.wtQ},~\eqref{eq:DeltawtY.def},~\eqref{eq:wtY0}
and~\eqref{eQ:wtY1} yields
\begin{equation} \label{eq:Delta.wtY=detJV}
    \Delta_{\wt{Y}}(\l) = \det\left(C Y_0(\l) + D Y_1(\l) W(1)\right) = \det(J \cdot V),
\end{equation}
where
\begin{equation}
    V := V(\l) := \begin{pmatrix} Y_0 \\ V_1 \\ \end{pmatrix}, \quad
    V_1 := V_1(\l) := Y_1(\l) W(1), \quad\text{and}\quad J := \begin{pmatrix} C & D \end{pmatrix}.
\end{equation}
By the Cauchy-Binet formula
\begin{equation}\label{eq:binet}
    \Delta_{\wt{Y}}(\l)=\sum_{1 \leqslant k_1 < \ldots < k_n \leqslant 2n}
        J\begin{pmatrix}
            1 &   2 & \ldots &   n \\
          k_1 & k_2 & \ldots & k_n \\
        \end{pmatrix}
    \cdot
        V\begin{pmatrix}
          k_1 & k_2 & \ldots & k_n \\
            1 &   2 & \ldots &   n \\
        \end{pmatrix}.
\end{equation}
Here $A\begin{pmatrix}
  j_1 & j_2 & \ldots & j_p \\
  k_1 & k_2 & \ldots & k_p
\end{pmatrix}$ denotes the minor of $n \times n'$ matrix $A = (a_{jk})$
composed of its entries located in the rows with indices
$j_1,\ldots,j_p \in \{1,\ldots,n\}$ and columns with indices
$k_1,\ldots,k_p \in \{1,\ldots,n'\}$.

Fix a set $\{k_1,k_2,\ldots,k_n\}$ such that $1 \leqslant k_1 <
\ldots < k_n \leqslant 2n$ and denote by $m$ the number of
entries of the set that do not exceed $n$, i.e.,
\begin{equation}
    1 \leqslant k_1 < \ldots < k_{m} \leqslant n < k_{{m}+1} < \ldots < k_n.
\end{equation}
Applying Laplace theorem to expand the second factor
in~\eqref{eq:binet} with respect to the first $m$ rows, one gets
\begin{multline}\label{eq:V()=sum}
    V\begin{pmatrix}
      k_1 & k_2 & \ldots & k_n \\
        1 &   2 & \ldots &   n \\
    \end{pmatrix}
    = \sum_{1 \leqslant j_1 < \ldots j_{m} \leqslant n \atop
        {1 \leqslant j_{{m}+1} < \ldots < j_n < n \atop
        \{j_1,\ldots,j_n\}=\{1,\ldots,n\}}}
    (-1)^{(1+\ldots+{m})+(j_1+\ldots+j_{m})} \\
    \times Y_0\begin{pmatrix}
              k_1 & \ldots & k_{m} \\
              j_1 & \ldots & j_{m} \\
        \end{pmatrix}
    \cdot V_1\begin{pmatrix}
              k_{{m}+1}-n & \ldots & k_n-n \\
              j_{{m}+1}   & \ldots & j_n \\
    \end{pmatrix}.
\end{multline}
It follows from~\eqref{eq:yjk(0,l)} and~\eqref{eq:yjk(1,l)} that
\begin{equation}\label{eq:yjk(0)=O(1),...}
    y_{jk}^{[0]}(\l)=O(1), \quad y_{jk}^{[1]}(\l) = O(1)\cdot e^{i b_k \l},
    \qquad \l \in S_{p,\eps,R},  \quad j,k \in \{1, \ldots, n\}.
\end{equation}
Setting
\begin{equation}
    (v_{jk}(\l))_{j,k=1}^n := V_1(\l)= Y_1(\l)W(1)
\end{equation}
we obtain from~\eqref{eq:yjk(0)=O(1),...} and the block-diagonal
structure of the matrices $B$ and $W(1)$ that
\begin{equation}\label{eq:vjk(1)=O(1).e}
    v_{jk}(\l)=O(1)\cdot e^{i b_k \l},
    \quad \l \in S_{p,\eps,R}, \quad j,k \in \{1, \ldots, n\}.
\end{equation}
It follows
from~\eqref{eq:wtY0},~\eqref{eQ:wtY1},~\eqref{eq:yjk(0)=O(1),...},~\eqref{eq:vjk(1)=O(1).e}
that for $\l \in S_{p,\eps,R}$
\begin{eqnarray}
    \label{eq:Y0()=O(1)}
        Y_0\begin{pmatrix}
          k_1 & \ldots & k_{m} \\
          j_1 & \ldots & j_{m} \\
        \end{pmatrix} & = & O(1), \\
    \label{eq:Y1()=O(1).e}
        V_1\begin{pmatrix}
          k_{{m}+1}-n & \ldots & k_n-n \\
          j_{{m}+1} & \ldots & j_n \\
        \end{pmatrix}
        & = & O(1)\cdot e^{i(b_{j_{{m}+1}}+\ldots+b_{j_n})\l}.
\end{eqnarray}
Let $\k$ be a number of negative values among $\Re(i b_1 \l),
\ldots, \Re(i b_n \l),\ \l \in S_{p,\eps}$. For definiteness we
assume that
\begin{equation} \label{eq:Re(i.bj.l)}
\begin{split}
    \Re(i b_j \l) < 0, & \qquad j \in \{1,\ldots,{\k}\}, \\
    \Re(i b_j \l) > 0, & \qquad j \in \{{\k+1},\ldots,n\}.
\end{split}
\end{equation}
It is clear from~\eqref{eq:Re(i.bj.l)} that for
$\{j_{{m}+1},\ldots,j_n\} \ne \{{\k+1},\ldots,n\}$ the following
inequality holds
\begin{equation}
    \Re(i b_{j_{{m}+1}}\l)+\ldots+\Re(i b_{j_{n}}\l) <
    \Re(i b_{{\k+1}}\l)+\ldots+\Re(i b_{n}\l)
    = \Re(i \tau_p \l),\quad \l \in S_{p,\eps},
\end{equation}
where $\tau_p$ is given by~\eqref{eq:tau.def}. Combining this
estimate with~\eqref{eq:Y0()=O(1)} and~\eqref{eq:Y1()=O(1).e}
yields that for $\{j_{{m}+1},\ldots,j_n\} \ne
\{{\k+1},\ldots,n\}$ and each $h \in \bN$,
\begin{equation}\label{eq:Y0.Y1=O(1/l^m).e}
    Y_0\begin{pmatrix}
      k_1 & \ldots & k_{m} \\
      j_1 & \ldots & j_{m} \\
    \end{pmatrix}
    \cdot V_1\begin{pmatrix}
      k_{{m}+1}-n & \ldots & k_n-n \\
      j_{{m}+1}   & \ldots & j_n \\
    \end{pmatrix}
    = O\left(\frac{1}{\l^{h}}\right) \cdot e^{i \tau_p \l},
    \quad \l \in S_{p,\eps,R}.
\end{equation}
Inserting~\eqref{eq:Y0.Y1=O(1/l^m).e} into~\eqref{eq:V()=sum} we
obtain for $\l \in S_{p,\eps,R}$ and each $h \in \bN$ that
\begin{equation} \label{eq:V()=O(1/l^m).e}
    V\begin{pmatrix}
      k_1 & \ldots & k_n \\
        1 & \ldots &   n \\
    \end{pmatrix}
    = O\left(\frac{1}{\l^{h}}\right) \cdot e^{i \tau_p \l},
    \quad m \ne \k;
\end{equation}
\begin{equation} \label{eq:V()=Y0().Y1()+}
    V\begin{pmatrix}
      k_1 & \ldots & k_n \\
        1 & \ldots &   n \\
    \end{pmatrix}
    = Y_0\begin{pmatrix}
      k_1 & \ldots & k_{\k} \\
        1 & \ldots &   {\k} \\
    \end{pmatrix}
    \cdot V_1\begin{pmatrix}
      k_{\k+1}-n & \ldots & k_n-n \\
        {\k+1}   & \ldots &     n \\
    \end{pmatrix}
    + O\left(\frac{e^{i \tau_p \l}}{\l^{h}}\right),
    \quad {m}=\k,
\end{equation}
Due to the block-diagonal structure of $W(1)$ one has
\begin{equation} \label{eq:V1=Y1.delta}
    V_1\begin{pmatrix}
          k_{\k+1} & \ldots & k_n \\
          {\k+1} & \ldots & n \\
        \end{pmatrix} = Y_1\begin{pmatrix}
          k_{\k+1} & \ldots & k_n \\
          {\k+1} & \ldots & n \\
        \end{pmatrix} \gamma(\l), \quad
    \gamma(\l) := \prod_{j=1 \atop{\Re(i \beta_j \l) > 0}}^{r} \det W_{jj}(1).
\end{equation}
Applying the Liouville theorem to system~\eqref{eq:iBW'=Q1.W}
and using the definition of the sector $S_{p,\eps}$ yields
\begin{equation}
    \gamma(\l) = \gamma_p, \quad \l \in S_{p,\eps},
\end{equation}
where $\gamma_p$ is given by~\eqref{eq:delta.zp}. Now it follows
from~\eqref{eq:binet},~\eqref{eq:V()=O(1/l^m).e},~\eqref{eq:V()=Y0().Y1()+}
and~\eqref{eq:V1=Y1.delta} that for $\l \in S_{p,\eps,R}$
\begin{multline}\label{eq:DeltaY=sum.J()Y0()Y1()}
    \Delta_{\wt{Y}}(\l)
    = \gamma_p \sum_{1 \leqslant k_1      < \ldots < k_{\k} \leqslant n \atop
            1 \leqslant k_{\k+1} < \ldots < k_n    \leqslant n}
    J \begin{pmatrix}
        1 & \ldots &    \k  &      \k+1  & \ldots &     n\\
      k_1 & \ldots & k_{\k} & n+k_{\k+1} & \ldots & n+k_n\\
    \end{pmatrix}\ \\
    \times \ Y_0\begin{pmatrix}
      k_1 & \ldots & k_{\k} \\
      1 & \ldots & {\k} \\
    \end{pmatrix}
    \cdot Y_1\begin{pmatrix}
      k_{\k+1} & \ldots & k_n \\
      {\k+1} & \ldots & n \\
    \end{pmatrix}
    + O\left(\frac{1}{\l^{h}}\right) \cdot e^{i \tau_p \l},
    \quad h \in \bN.
\end{multline}
Let $(k_1,\ldots,k_{\k}) \in \bN^{\k}$ be a sequence satisfying
$1 \leqslant k_1 < \ldots < k_{\k} \leqslant n$ and let $(l_1,
\ldots, l_{\k})$ be its permutation. It is easily seen that
\begin{multline} \label{eq:JY0.perm}
    J \begin{pmatrix}
        1 & \ldots &    \k  &      \k+1  & \ldots &     n\\
      k_1 & \ldots & k_{\k} & n+k_{\k+1} & \ldots & n+k_n\\
    \end{pmatrix} \cdot Y_0\begin{pmatrix}
      k_1 & \ldots & k_{\k} \\
      1 & \ldots & {\k} \\
    \end{pmatrix} \\
    = J \begin{pmatrix}
        1 & \ldots &    \k  &      \k+1  & \ldots &     n\\
      l_1 & \ldots & l_{\k} & n+k_{\k+1} & \ldots & n+k_n\\
    \end{pmatrix}  \cdot Y_0\begin{pmatrix}
      l_1 & \ldots & l_{\k} \\
      1 & \ldots & {\k} \\
    \end{pmatrix}.
\end{multline}
This identity means that for each summand in the right-hand side
of~\eqref{eq:DeltaY=sum.J()Y0()Y1()} we can choose arbitrary
permutation of the corresponding sequence $(k_1,\ldots,k_{\k})$.
Clearly, the same is true for the corresponding sequence
$(k_{\k+1},\ldots,k_n)$.

It follows from~\eqref{eq:yjk(0,l)} that
\begin{equation} \label{eq:Y0=In+o(1)}
    Y_0 = Y(0,\l) = \begin{pmatrix}
        I_{\k} + o(1) & O(\l^{-1}) \\
        O(\l^{-1}) & I_{n-\k} + o(1) \\
    \end{pmatrix}, \quad\text{as}\quad \l \to \infty,\ \l \in S_{p,\eps,R}.
\end{equation}
Hence if the intersection of the sets $\{k_1,\ldots,k_{\k}\}$
and $\{{\k+1},\ldots, n\}$ consists of $s$ elements, then the
corresponding minor $Y_0 \begin{pmatrix}
k_1 & \ldots & k_{\k} \\
1 & \ldots & {\k} \\  \end{pmatrix}$ contains exactly $s$ lines
with entries of the form $O(\l^{-1})$ while all entries of other
lines are of the form $O(1)$. Indeed, if $k_j > \k$, then $j$th
line of the considered minor coincides with the $(k_j-\k)$th
line of the lower-left block of the
block-matrix~\eqref{eq:Y0=In+o(1)}. Thus, we have
\begin{equation} \label{eq:Y0()=O(1/l^m)}
    Y_0 \begin{pmatrix}
        k_1 & \ldots & k_{\k} \\
        1 & \ldots & {\k} \\
    \end{pmatrix}
    = O\left(\frac{1}{\l^{s}}\right), \quad \l \in S_{p,\eps,R}.
\end{equation}
For the cases $s=0$ and $s=1$ we can obtain sharper estimates.
At first,~\eqref{eq:Y0=In+o(1)} directly implies
\begin{equation}\label{eq:Y0().m=0}
    Y_0\begin{pmatrix}
      1 & \ldots & {\k} \\
      1 & \ldots & {\k} \\
    \end{pmatrix}
    = 1+o(1), \quad\text{as}\quad \l \to \infty,\ \l \in S_{p,\eps,R}.
\end{equation}
Next, assume that $s=1$, i.e. the set $\{k_1,\ldots,k_{\k}\}$ is
obtained from $\{1,\ldots,{\k}\}$ by replacing its one entry by
an entry from $\{{\k+1},\ldots,n\}$. Assume that $j$ is replaced
by $k$, where $1 \leqslant j \leqslant \k < k \leqslant n$.
Then, according to~\eqref{eq:Re(i.bj.l)}, $\Re(i b_k \l) > 0 >
\Re(i b_j \l)$ and, by~\eqref{eq:yjk(0,l)},
\begin{multline}\label{eq:Y0().m=1}
    Y_0 \begin{pmatrix}
      1 & \ldots & {j-1} & k & {j+1} & \ldots & {\k} \\
      1 & \ldots & {j-1} & j & {j+1} & \ldots & {\k} \\
    \end{pmatrix} \\
    =\det \begin{pmatrix}
      1 + o(1)   & \cdots & o(1)       & o(1)                       & o(1)       & \cdots & o(1)       \\
      \vdots     & \ddots & \vdots     & \vdots                     & \vdots     & \ddots & \vdots     \\
      o(1)       & \cdots & 1 + o(1)   & o(1)                       & o(1)       & \cdots & o(1)       \\
      O(\l^{-1}) & \cdots & O(\l^{-1}) & \frac{r_{k j}(0)+o(1)}{\l} & O(\l^{-1}) & \cdots & O(\l^{-1}) \\
      o(1)       & \cdots & o(1)       & o(1)                       & 1 + o(1)   & \cdots & o(1)       \\
      \vdots     & \ddots & \vdots     & \vdots                     & \vdots     & \ddots & \vdots     \\
      o(1)       & \cdots & o(1)       & o(1)                       & o(1)       & \cdots & 1 + o(1)   \\
    \end{pmatrix} \\
    = \frac{r_{k j}(0)+o(1)}{\l},
    \quad\text{as}\quad \l \to \infty,\ \l \in S_{p,\eps,R},
\end{multline}
where we set for brevity $r_{jk}(x) := \frac{b_j q_{jk}(x)}{b_j
- b_k}$.

Further, according to~\eqref{eq:yjk(1,l)}
\begin{equation} \label{eq:Y1=In+o(1)}
    Y_1 = Y(1,\l) = \begin{pmatrix}
        I_{\k} + o(1) & O(\l^{-1}) \\
        O(\l^{-1}) & I_{n-\k} + o(1) \\
    \end{pmatrix} \cdot E(\l), \quad  E(\l) := \diag(e^{i b_1 \l}, \ldots, e^{i b_n \l}),
\end{equation}
as $\l \to \infty$, $\l \in S_{p,\eps,R}$.

Let the set $\{k_{\k+1},\ldots,k_n\}$ contain exactly $s$
entries from the set $\{1, \ldots, {\k}\}$. Then repeating the
above reasoning to $Y_1$ in place of $Y_0$ yields
\begin{equation}\label{eq:Y1()=O(1/l^m)E}
    Y_1\begin{pmatrix}
      k_{\k+1} & \ldots & k_n \\
      {\k+1} & \ldots & n \\
    \end{pmatrix}
    = O\left(\frac{1}{\l^s}\right)e^{i \tau_p \l},
    \quad \l \in S_{p,\eps,R}.
\end{equation}
Further, it is easily seen that
\begin{gather}
    \label{eq:Y1().m=0}
    Y_1\begin{pmatrix}
      {\k+1} & \ldots & n \\
      {\k+1} & \ldots & n \\
    \end{pmatrix}
    = (1+o(1))\cdot e^{i \tau_p \l}; \\
    \label{eq:Y1().m=1}
    Y_1 \begin{pmatrix}
      {\k+1} & \ldots & {k-1} & j & {k+1} & \ldots & n \\
      {\k+1} & \ldots & {k-1} & k & {k+1} & \ldots & n \\
    \end{pmatrix}
    = (r_{j k}(1)+o(1))\cdot \frac{e^{i \tau_p \l}}{\l},
\end{gather}
as $\l \to \infty$, $\l \in S_{p,\eps,R}$, where $j \in
\{1,\ldots,\k\}$ and $k \in \{\k+1,\ldots,n\}$.

Inserting formulas~\eqref{eq:Y0()=O(1/l^m)} and
\eqref{eq:Y1()=O(1/l^m)E} into~\eqref{eq:DeltaY=sum.J()Y0()Y1()}
and using~\eqref{eq:JY0.perm} we get
\begin{align} \label{eq:DeltaY=J()Y0()Y1()+sum+sum+}
    \gamma_p^{-1} \Delta_{\wt{Y}}(\l) &= \Bigl(\Bigr. J \left(\begin{smallmatrix}
      1 & \ldots & \k &   \k+1 & \ldots &   n\\
      1 & \ldots & \k & n+\k+1 & \ldots & n+n\\
    \end{smallmatrix}\right)
    \cdot Y_0 \left(\begin{smallmatrix}
      1 & \ldots & \k \\
      1 & \ldots & \k \\
    \end{smallmatrix}\right)
    \cdot Y_1 \left(\begin{smallmatrix}
      \k+1 & \ldots & n \\
      \k+1 & \ldots & n \\
    \end{smallmatrix}\right) \nonumber \\
    &+ \sum_{j=1}^{\k} \sum_{k=\k+1}^n J \left(\begin{smallmatrix}
      1 & \ldots & j-1 & j & j+1 & \ldots & \k &   \k+1 & \ldots &   n\\
      1 & \ldots & j-1 & k & j+1 & \ldots & \k & n+\k+1 & \ldots & n+n\\
    \end{smallmatrix}\right) \cdot Y_0 \left(\begin{smallmatrix}
      1 & \ldots & j-1 & k & j+1 & \ldots & \k \\
      1 & \ldots & j-1 & j & j+1 & \ldots & \k \\
    \end{smallmatrix}\right) \cdot Y_1 \left(\begin{smallmatrix}
      \k+1 & \ldots & n \\
      \k+1 & \ldots & n \\
    \end{smallmatrix}\right) \nonumber \\
    &+ \sum_{j=1}^{\k} \sum_{k=\k+1}^n J \left(\begin{smallmatrix}
      1 & \ldots & \k &   \k+1 & \ldots &   k-1 &   k &   k+1 & \ldots &   n\\
      1 & \ldots & \k & n+\k+1 & \ldots & n+k-1 & n+j & n+k+1 & \ldots & n+n\\
    \end{smallmatrix}\right) \nonumber \\
    &\times\ Y_0 \left(\begin{smallmatrix}
      1 & \ldots & \k \\
      1 & \ldots & \k \\
    \end{smallmatrix}\right)
    \cdot Y_1 \left(\begin{smallmatrix}
      \k+1 & \ldots & k-1 & j & k+1 & \ldots & n \\
      \k+1 & \ldots & k-1 & k & k+1 & \ldots & n \\
    \end{smallmatrix}\right) \Bigl.\Bigr)
    + O\left(\frac{1}{\l^2}\right) e^{i \tau_p \l},
    \quad \l \in S_{p,\eps,R}.
\end{align}
Let $z_p$ be some fixed point in $S_{p,\eps}$. Then it is clear
from inequalities~\eqref{eq:Re(i.bj.l)} and definition of
matrices $T_{i z_p B}(C,D)$, $T_{i z_p B}^{c_j \to c_k}$ and
$T_{i z_p B}^{d_k \to d_j}$ that
\begin{align}
    \label{eq:J()=T}
        J \left(\begin{smallmatrix}
          1 & \ldots & \k &   \k+1 & \ldots &   n\\
          1 & \ldots & \k & n+\k+1 & \ldots & n+n\\
        \end{smallmatrix}\right) &= \det T_{i z_p B}(C,D), \\
    \label{eq:J()=T(cj->ck)}
        J \left(\begin{smallmatrix}
          1 & \ldots & j-1 & j & j+1 & \ldots & \k &   \k+1 & \ldots &   n\\
          1 & \ldots & j-1 & k & j+1 & \ldots & \k & n+\k+1 & \ldots & n+n\\
        \end{smallmatrix}\right) &= \det T_{i z_p B}^{c_{j} \to c_{k}}, \\
    \label{eq:J()=T(dk->dj)}
        J \left(\begin{smallmatrix}
          1 & \ldots & \k &   \k+1 & \ldots &   k-1 &   k &   k+1 & \ldots &   n\\
          1 & \ldots & \k & n+\k+1 & \ldots & n+k-1 & n+j & n+k+1 & \ldots & n+n\\
        \end{smallmatrix}\right) &= \det T_{i z_p B}^{d_{k} \to d_{j}}.
\end{align}
Now
inserting~\eqref{eq:Y0().m=0},~\eqref{eq:Y0().m=1},~\eqref{eq:Y1().m=0},~\eqref{eq:Y1().m=1}
and~\eqref{eq:J()=T},~\eqref{eq:J()=T(cj->ck)},~\eqref{eq:J()=T(dk->dj)}
into~\eqref{eq:DeltaY=J()Y0()Y1()+sum+sum+} we get
\begin{align}
    \gamma_p^{-1} \Delta_{\wt{Y}}(\l)
    &= \det T_{i z_p B}(C,D) \cdot (1+o(1)) \cdot (1+o(1)) \cdot e^{i \tau_p \l} \nonumber \\
    &+ \sum_{j=1}^{\k} \sum_{k=\k+1}^n \det T_{i z_p B}^{c_{j} \to c_{k}}
    \cdot \frac{r_{k j}(0)+o(1)}{\l} \cdot (1+o(1)) \cdot e^{i \tau_p \l} \nonumber \\
    &+ \sum_{j=1}^{\k} \sum_{k=\k+1}^n \det T_{i z_p B}^{d_{k} \to d_{j}}
    \cdot (1+o(1)) \cdot \frac{r_{j k}(1)+o(1)}{\l} \cdot e^{i \tau_p \l}
    + O\left(\frac{1}{\l^2}\right) e^{i \tau_p \l} \\
    &= e^{i \tau_p \l} \cdot \Biggl(\Biggr.\omega_0(z_p) \cdot (1+o(1)) + o(\l^{-1}) \nonumber \\
    &+ \left. \sum_{j=1}^{\k} \sum_{k=\k+1}^n
    \frac{\det T_{i z_p B}^{c_{j} \to c_{k}} b_{k} q_{k j}(0)
    - \det T_{i z_p B}^{d_{k} \to d_{j}} b_{j} q_{j  k}(1)}{\l(b_{k}-b_{j})} \right),
\end{align}
as $\l \to \infty$, $\l \in S_{p,\eps,R}$. Rewriting the double
sum in the last equality with account of~\eqref{eq:Re(i.bj.l)}
we arrive at formula~\eqref{eq:DeltaY=(c0(zp)+c1(zp)/l)} with
the required form of $\omega_1(z_p)$.
\end{proof}
Next we present an asymptotic formula for the characteristic
determinant $\Delta(\cdot)$ which will be needed in the sequel.
It can be obtained by repeating the proof of
Proposition~\ref{prop:DeltaY=(c0+c1/l)} but using
Proposition~\ref{prop:BirkSys} in place of
Proposition~\ref{prop:asymp} for estimating the solution
$Y(x,\l)$.
%
%
\begin{lemma} \label{lem:Delta}
Assume that $Q(\cdot) \in L^1([0,1]; \bC^{n \times n})$. Let $p
\in \{1,\ldots,\nu\}$. Then for sufficiently small $\eps
> 0$ the characteristic determinant $\Delta(\cdot)$ admits the
following asymptotic behavior
\begin{equation}\label{eq:Delta=(T+o(1)).E}
    \Delta(\l)=\gamma_p \cdot \left(\det T_{i z_p B}(C,D) + o(1)\right) e^{i \tau_p \l},
    \quad\text{as}\quad \l \to \infty, \ \l \in S_{p,\eps}.
\end{equation}
Here $z_p$ is a fixed point in $S_{p,\eps}$, while $\gamma_p$
and $\tau_p$ are given by~\eqref{eq:delta.zp}
and~\eqref{eq:tau.def}, respectively.
\end{lemma}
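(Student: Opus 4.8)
The plan is to mimic the proof of Proposition~\ref{prop:DeltaY=(c0+c1/l)} but to replace the sharp endpoint asymptotics of Proposition~\ref{prop:asymp} with the weaker (but hypothesis-free) estimates of Proposition~\ref{prop:BirkSys}. First I would reduce, exactly as in the proof of Proposition~\ref{prop:DeltaY=(c0+c1/l)}, to the case where conditions~\eqref{eq:B.beta}--\eqref{eq:Qjj=0} hold: after renumbering the $b_j$ so that $B$ has the block form~\eqref{eq:B.beta}, the gauge transform $W:y\mapsto W(x)y$ with $W$ given by~\eqref{eq:iBW'=Q1.W}--\eqref{eq:Wx} sends $L_{C,D}(Q)$ to $L_{C,\wt D}(\wt Q)$, and by~\eqref{eq:wtDelta=Delta} it leaves the characteristic determinant unchanged, while $\wt Q$ satisfies~\eqref{eq:Qjj=0}. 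Since the Liouville-theorem computation in~\eqref{eq:V1=Y1.delta}--\eqref{eq:delta.zp} that produces the factor $\gamma_p$ uses only the block-diagonal structure of $B$ and $Q_1$ and not any continuity, it carries over verbatim and accounts for the prefactor $\gamma_p$ in~\eqref{eq:Delta=(T+o(1)).E}.

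Next I would invoke Proposition~\ref{prop:BirkSys} to get a fundamental matrix solution $Y(x,\l)$ of~\eqref{eq:system} (with $\wt Q$ in place of $Q$) analytic in $S_{p,\eps,R}$ and satisfying~\eqref{eq:yjk=(1+o(1)).e(i.bk.l)}. Writing $Y(x,\l)=\wt\Phi(x,\l)P(\l)$ as in~\eqref{eq:DeltawtY.def}, the estimate~\eqref{eq:Y(0)=I+o(1)} (which is immediate from~\eqref{eq:yjk=(1+o(1)).e(i.bk.l)} at $x=0$) gives $\Delta_Y(\l)=(1+o(1))\Delta(\l)$, so it suffices to estimate $\Delta_Y(\l)=\det(C Y(0,\l)+\wt D Y(1,\l))$. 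From~\eqref{eq:yjk=(1+o(1)).e(i.bk.l)} one has $Y(0,\l)=I_n+o_n(1)$ and $Y(1,\l)=(I_n+o_n(1))E(\l)$ with $E(\l)=\diag(e^{ib_1\l},\dots,e^{ib_n\l})$. I would then run the Cauchy--Binet / Laplace expansion~\eqref{eq:binet}--\eqref{eq:V()=sum} of $\det(J\cdot V)$ exactly as in the proof of Proposition~\ref{prop:DeltaY=(c0+c1/l)}: using the crude bounds $y_{jk}^{[0]}(\l)=O(1)$ and $y_{jk}^{[1]}(\l)=O(1)\,e^{ib_k\l}$ from~\eqref{eq:yjk(0)=O(1),...}, every minor with $\{j_{m+1},\dots,j_n\}\ne\{\k+1,\dots,n\}$ (equivalently $m\ne\k$) is $o(1)\cdot e^{i\tau_p\l}$ by the strict inequality on real parts, and the single surviving block-diagonal term contributes $Y(0,\l)\begin{pmatrix}1&\dots&\k\\1&\dots&\k\end{pmatrix}\cdot Y(1,\l)\begin{pmatrix}\k+1&\dots&n\\\k+1&\dots&n\end{pmatrix}=(1+o(1))e^{i\tau_p\l}$ together with the combinatorial factor $J\left(\begin{smallmatrix}1&\dots&\k&\k+1&\dots&n\\1&\dots&\k&n+\k+1&\dots&n+n\end{smallmatrix}\right)=\det T_{iz_pB}(C,D)$ from~\eqref{eq:J()=T}. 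Collecting these and restoring the factor $\gamma_p$ yields~\eqref{eq:Delta=(T+o(1)).E}.

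The only genuinely new point over Proposition~\ref{prop:DeltaY=(c0+c1/l)} is that here no $O(1/\l)$ refinement is needed or available, so one does \emph{not} track the $s=1$ minors; everything off the main block-diagonal term is simply absorbed into $o(1)$ after factoring out $e^{i\tau_p\l}$, which is why only the leading coefficient $\det T_{iz_pB}(C,D)$ survives. I expect the main (though still routine) obstacle to be bookkeeping the exponential orders in the Cauchy--Binet sum — i.e.\ verifying that for $m\ne\k$ one has $\Re\big(i(b_{j_{m+1}}+\dots+b_{j_n})\l\big)<\Re(i\tau_p\l)$ uniformly on $S_{p,\eps}$, with a gap proportional to $|\l|$, so that those terms are $o(1)\,e^{i\tau_p\l}$ rather than merely $o(e^{i\tau_p\l})$ — but this is exactly the inequality already used in~\eqref{eq:Y0.Y1=O(1/l^m).e} of the previous proof and requires no new ideas.
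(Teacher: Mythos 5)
Your proposal is correct and is exactly the route the paper takes: the paper's entire ``proof'' of Lemma~\ref{lem:Delta} is the one-sentence remark that one repeats the proof of Proposition~\ref{prop:DeltaY=(c0+c1/l)} with Proposition~\ref{prop:BirkSys} in place of Proposition~\ref{prop:asymp}, and your write-up supplies precisely those details (gauge reduction and the $\gamma_p$ factor via Liouville, Cauchy--Binet with Laplace expansion, exponent comparison with a gap proportional to $|\l|$, and absorption of the remaining minors into $o(1)$ using $Y(0,\l)=I_n+o_n(1)$). One small imprecision: the condition $\{j_{m+1},\dots,j_n\}\ne\{\kappa+1,\dots,n\}$ is implied by, but not equivalent to, $m\ne\kappa$, and the terms with $m=\kappa$ but $\{k_1,\dots,k_\kappa\}\ne\{1,\dots,\kappa\}$ carry the full exponential $e^{i\tau_p\l}$ and are killed only because each such $Y_0$-minor has an entire row that is $o(1)$ --- a point your closing paragraph (``one does not track the $s=1$ minors'') does correctly account for.
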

%
%
Formula~\eqref{eq:Delta=(T+o(1)).E} can also be extracted from
the proof of~\cite[Theorem 1.2]{MalOri12} (cf. formula (3.38)
from~\cite{MalOri12}).
%
%
\section{Explicit completeness results} \label{sec:completeness}
%
%
\subsection{Explicit sufficient conditions of completeness.}
%
%
Now we are ready to state our main result on completeness of the
root vectors of the boundary value
problem~\eqref{eq:system}--\eqref{eq:CY(0)+DY(1)=0} in terms of
the matrices $B,C,D$ and $Q(\cdot)$.
%
%
\begin{theorem} \label{th:explicit.nxn}
Assume that $Q(\cdot) \in L^1([0,1]; \bC^{n\times n})$ and
$q_{jk}$ is continuous at points 0 and 1 if $b_j \ne b_k$. Let
$\omega_0(z_k)$ and $\omega_1(z_k)$ be given
by~\eqref{eq:c0zp.def} and~\eqref{eq:c1zp.def}, respectively.
Assume also that there exist three admissible complex numbers
$z_1, z_2, z_3$ satisfying the following conditions:

(a) the origin is an interior point of the triangle $\triangle
_{z_1z_2z_3};$

(b) $|\omega_0(z_k)|+|\omega_1(z_k)| \ne 0, \quad k\in
\{1,2,3\}$.

Then the system of root functions of the
BVP~\eqref{eq:system}--\eqref{eq:CY(0)+DY(1)=0} is complete and
minimal in $L^2([0,1]; \bC^n)$.
\end{theorem}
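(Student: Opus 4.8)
The plan is to deduce Theorem~\ref{th:explicit.nxn} from the abstract completeness criterion, Theorem~\ref{th:compl.gen}, by showing that the hypotheses (a)--(b) here, together with the asymptotic expansion of the characteristic determinant from Proposition~\ref{prop:DeltaY=(c0+c1/l)}, imply the lower bound~\eqref{eq:Delta>=e/l} along three rays surrounding the origin. First I would observe that each admissible $z_k$ either is feasible (lies strictly inside some sector $S_p$) or lies on one of the lines $l_{jk}$. A small perturbation argument is needed: since the set of non-feasible directions is finite, one can replace each $z_k$ by a nearby feasible $z_k'$ so that the origin is still interior to $\triangle_{z_1'z_2'z_3'}$, and one checks (using continuity of $\omega_0,\omega_1$ as functions of the direction, or rather that they are locally constant off the critical lines and controlled across them) that condition (b) is preserved, perhaps after also perturbing which columns enter $T_{izB}$. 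So without loss of generality I may assume $z_1,z_2,z_3$ are feasible.

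Next, for a fixed feasible $z_k$ lying in the sector $S_{p,\eps}$, I invoke Proposition~\ref{prop:DeltaY=(c0+c1/l)}: along $\arg\l=\arg z_k$,
\begin{equation*}
    \Delta(\l)=\gamma_p\left(\omega_0(z_k)(1+o(1))+\frac{\omega_1(z_k)+o(1)}{\l}\right)e^{i\tau_k\l},
\end{equation*}
with $\gamma_p\neq 0$. Condition (b) says $\omega_0(z_k)$ and $\omega_1(z_k)$ are not both zero. If $\omega_0(z_k)\neq 0$, the bracket is bounded below by $|\omega_0(z_k)|/2$ for large $|\l|$, giving~\eqref{eq:Delta>=e/l} with $s=0$. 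If $\omega_0(z_k)=0$ but $\omega_1(z_k)\neq 0$, the bracket is $(\omega_1(z_k)+o(1))/\l$, bounded below by $|\omega_1(z_k)|/(2|\l|)$, giving~\eqref{eq:Delta>=e/l} with $s=1$. In either case, after absorbing $\gamma_p$ into the constant $C$, the hypothesis~\eqref{eq:Delta>=e/l} of Theorem~\ref{th:compl.gen} holds along $\Gamma_k$ with the uniform choice $s:=1$ (which dominates $s=0$ as well), and $\tau_k$ here coincides with the $\tau_k$ there since both equal $\sum_{\Re(ib_jz_k)>0}b_j$. Note the continuity hypothesis on $q_{jk}$ at $0,1$ for $b_j\neq b_k$ is exactly what Proposition~\ref{prop:DeltaY=(c0+c1/l)} requires, and $\omega_1(z_k)$ only involves such off-diagonal entries, so everything is consistent.

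Having verified all hypotheses of Theorem~\ref{th:compl.gen} with this common $s$ and these three feasible points, I conclude that the system of root functions of $L_{C,D}(Q)$ is complete and minimal in $L^2([0,1];\bC^n)$, which is the assertion. The main obstacle I anticipate is the first step: making rigorous the passage from \emph{admissible} to \emph{feasible} points while preserving condition (b). One must track how $T_{izB}(C,D)$, $T_{izB}^{c_j\to c_k}$, $T_{izB}^{d_k\to d_j}$ change when $z$ crosses a line $l_{jk}$ (some columns of $C$ get swapped for columns of $D$), and argue that a direction realizing $|\omega_0|+|\omega_1|\neq 0$ can be chosen off all critical lines — the cleanest route is probably to note that if the original $z_k$ is non-feasible, then for all nearby feasible $z$ on one side the quantity $|\omega_0(z)|+|\omega_1(z)|$ is controlled from below by a semicontinuity/limit argument using~\eqref{eq:c0zp.def}--\eqref{eq:c1zp.def}. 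Everything else is a routine application of the already-established propositions.
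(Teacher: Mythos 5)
Your proposal is correct and follows essentially the same route as the paper: perturb the admissible points to feasible ones, apply Proposition~\ref{prop:DeltaY=(c0+c1/l)} along the three rays to get the lower bound~\eqref{eq:Delta>=e/l} with $s=1$, and invoke Theorem~\ref{th:compl.gen}. The ``main obstacle'' you anticipate is in fact immediate: $\omega_0$ and $\omega_1$ depend only on the signs of $\Re(i b_j z)$ and are therefore constant on each sector $\sigma_j$ cut out by the lines $l_j$ alone, so crossing a line $l_{jk}$ swaps no columns of $T_{izB}(C,D)$, and a small rotation of a non-feasible admissible $z_k$ leaves $\omega_0(z_k)$ and $\omega_1(z_k)$ unchanged while preserving condition (a).
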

%
%
\begin{remark} \label{rem:omega01}
Note that $\omega_{j}(\cdot)$, $j \in \{0,1\}$, is a constant
function in each sector $\sigma_k$, $k \in \{1,\ldots,m\}$,
introduced before formula~\eqref{eq:detTizBCD}. Hence
$\omega_{j}(\cdot)$, $j \in \{0,1\}$, is piecewise constant
function in the plane $\bC$ with cuts along the lines $\partial
\sigma_k$, $k \in \{1,\ldots,m\}$. It is easily seen that the
assumptions of Theorem~\ref{th:explicit.nxn} fail if and only if
both $\omega_0(\cdot)$ and $\omega_1(\cdot)$ vanish in the open
half-plane $\{\l \in \bC : \Re(c \l) > 0\}$ for some $c \ne 0$.
\end{remark}
%
%
\begin{proof}[Proof of Theorem~\ref{th:explicit.nxn}]
Recall that the lines $l_j = \{\l \in \bC : \Re(i b_j \l) = 0\}$
divide the complex plane into $m$ sectors
$\sigma_1,\ldots,\sigma_m$. Let $k \in \{1,2,3\}$ be fixed. Note
that the point $z_k$ can be not feasible but it is clear from
definition of $\omega_0(\cdot)$ and $\omega_1(\cdot)$ that they
are constant in each sector $\sigma_j$. Hence if $z_k$ is not
feasible, that is, it lies at one of the lines $l_{jk} = \{\l
\in \bC : \Re(i b_j \l) = \Re(i b_k \l)\}$, we can replace it by
any point with arbitrary close argument to make it feasible and
to conserve the condition (a) of the theorem. Thus, we can
assume that the points $z_1,z_2,z_3$ are feasible. Then
combining condition (b) of the theorem with
Proposition~\ref{prop:DeltaY=(c0+c1/l)} implies for $k \in
\{1,2,3\}$
\begin{equation}
    \left|\Delta(\l)\right| \geqslant C\left|\omega_0(z_k)
    + \frac{\omega_1(z_k)}{\l}\right|e^{\Re(i \tau_k \l)}
    \geqslant C_1 \frac{e^{\Re(i \tau_k \l)}}{|\l|}, \quad |\l|>R,\ \arg \l = \arg z_k,
\end{equation}
where $C,C_1>0$, $\tau_k := \sum_{\Re(i b_j z_k) > 0 } b_j$ and
$R$ is sufficiently large. To complete the proof it remains to
apply Theorem~\ref{th:compl.gen}.
\end{proof}
The following result is easily derived from
Theorem~\ref{th:explicit.nxn} (cf.~\cite[Corolarry
3.2]{MalOri12}).
%
%
\begin{corollary} \label{cor:explicit.nxn}
Let $Q$ satisfy assumptions of Theorem~\ref{th:explicit.nxn},
and let $|\omega_0(\pm z)|+|\omega_1(\pm z)| \ne 0$ for some
admissible number $z$. Then the system of root functions of the
BVP~\eqref{eq:system}--\eqref{eq:CY(0)+DY(1)=0} is complete and
minimal in $L^2([0,1]; \bC^n)$.
\end{corollary}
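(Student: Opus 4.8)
The plan is to derive Corollary~\ref{cor:explicit.nxn} directly from Theorem~\ref{th:explicit.nxn} by exhibiting a suitable triple of admissible numbers $z_1, z_2, z_3$ that satisfies conditions (a) and (b) of that theorem, starting from the single admissible number $z$ for which $|\omega_0(\pm z)| + |\omega_1(\pm z)| \ne 0$.

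First I would observe, as noted in Remark~\ref{rem:omega01}, that $\omega_0(\cdot)$ and $\omega_1(\cdot)$ are piecewise constant on $\bC$, being constant on each of the sectors $\sigma_1, \ldots, \sigma_m$ cut out by the lines $l_j = \{\l : \Re(i b_j \l) = 0\}$. Consequently, the nonvanishing of $|\omega_0| + |\omega_1|$ at the points $z$ and $-z$ persists on small arcs of arguments around $\arg z$ and $\arg z + \pi$. I would then choose $z_1$ to be a slight perturbation of $z$ and $z_3$ a slight perturbation of $-z$, keeping both admissible (this is possible since the non-admissible directions form a finite set) and keeping $|\omega_0(z_1)| + |\omega_1(z_1)| \ne 0$ and $|\omega_0(z_3)| + |\omega_1(z_3)| \ne 0$. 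Since $z_1$ and $z_3$ are nearly antipodal, the segment $[z_1, z_3]$ passes near the origin; a generic small rotation of the pair makes the origin lie strictly on one side, and then I pick $z_2$ to be any admissible point in the open half-plane on the opposite side of the line through $z_1$ and $z_3$, chosen moreover so that $|\omega_0(z_2)| + |\omega_1(z_2)| \ne 0$. The main point to check here is that such a $z_2$ exists: by Remark~\ref{rem:omega01}, the assumptions of Theorem~\ref{th:explicit.nxn} fail only if both $\omega_0$ and $\omega_1$ vanish on an entire open half-plane, and the hypothesis $|\omega_0(\pm z)| + |\omega_1(\pm z)| \ne 0$ already rules this out (a vanishing half-plane cannot avoid both $z$ and $-z$); hence on any open half-plane at least one point $z_2$ works.

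With $z_1, z_2, z_3$ so chosen, condition (a) (the origin is interior to $\triangle_{z_1 z_2 z_3}$) holds by construction, and condition (b) holds at each of the three points. Theorem~\ref{th:explicit.nxn} then immediately gives completeness and minimality of the root functions of the BVP in $L^2([0,1]; \bC^n)$, which is the assertion of the corollary.

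The only genuine obstacle is the bookkeeping around admissibility and the half-plane argument for $z_2$; everything else is a routine perturbation of directions exploiting the piecewise-constancy of $\omega_0$ and $\omega_1$. In fact the cleanest way to present this is to invoke the final sentence of Remark~\ref{rem:omega01} directly: the hypotheses of Theorem~\ref{th:explicit.nxn} fail exactly when $\omega_0$ and $\omega_1$ both vanish on some open half-plane $\{\Re(c\l) > 0\}$; but any such half-plane contains at least one of $z$ and $-z$, contradicting $|\omega_0(\pm z)| + |\omega_1(\pm z)| \ne 0$. Hence the hypotheses of Theorem~\ref{th:explicit.nxn} hold, and the corollary follows.
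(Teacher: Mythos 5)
Your overall strategy --- perturb $z$ and $-z$ into nearby admissible points with the same values of $\omega_0,\omega_1$ and complete them to a triangle satisfying (a) and (b) of Theorem~\ref{th:explicit.nxn} --- is surely the intended derivation (the paper gives no proof, only the assertion that the corollary is ``easily derived''). But two steps are wrong as written. First, in the constructive version you place $z_2$ in the open half-plane on the side of the line through $z_1$ and $z_3$ opposite to the origin; since the edge $[z_1,z_3]$ lies on that line, the triangle $\triangle_{z_1z_2z_3}$ is then contained in the closed half-plane containing $z_2$, which excludes the origin. You need $z_2$ on the \emph{same} side as the origin (and, more precisely, with $\arg z_2$ in a specific open arc of length slightly less than $\pi$, which is a bit more than membership in a half-plane). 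Second, in the ``cleanest way'' the parenthetical ``a vanishing half-plane cannot avoid both $z$ and $-z$'' fails when $\Re(cz)=0$: then neither $z$ nor $-z$ lies in the open half-plane $\{\l : \Re(c\l)>0\}$. The repair is the sector-constancy you already invoke: the open sector $\sigma_k$ containing the admissible point $z$ meets that open half-plane, so $\omega_0$ and $\omega_1$ would vanish on all of $\sigma_k$ and hence at $z$, a contradiction. Note also that the implication you extract from Remark~\ref{rem:omega01} (no vanishing half-plane implies the hypotheses of Theorem~\ref{th:explicit.nxn} hold) is precisely the substantive, unproved direction of that remark, so invoking it does not actually discharge the work of producing the triple.

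All of this can be bypassed: take $z_1 = ze^{i\eps}$, $z_2 = ze^{-i\eps}$, $z_3 = -z$ with $\eps>0$ small. For small $\eps$ the points $ze^{\pm i\eps}$ lie in the same open sector $\sigma_k$ as $z$, hence are admissible and satisfy $\omega_j(ze^{\pm i\eps}) = \omega_j(z)$, $j\in\{0,1\}$; and $-z$ is admissible with $|\omega_0(-z)|+|\omega_1(-z)|\ne 0$ by hypothesis, so (b) holds at all three points. For (a), writing $0=\alpha z_1+\beta z_2+\gamma z_3$ and normalizing $z=1$ gives $\alpha=\beta=\tfrac{1}{2(1+\cos\eps)}$ and $\gamma=\tfrac{\cos\eps}{1+\cos\eps}$, all positive with sum $1$, so the origin is interior to $\triangle_{z_1z_2z_3}$. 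Theorem~\ref{th:explicit.nxn} then applies directly; no third direction, and no appeal to Remark~\ref{rem:omega01}, is needed.
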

%
%
\begin{remark}
In connection with Theorem~\ref{th:explicit.nxn} we mention the
fundamental  paper~\cite{Shk83} by A.A.~Shkalikov, where he
studied BVP for ODE~\eqref{eq:ODE} with spectral parameter in
boundary conditions. In particular, the notion of $B$-weakly
regular boundary conditions might be treated as an analogue of
the notion of normal BVP of order 0 from~\cite{Shk83}, while
conditions of Theorem~\ref{th:explicit.nxn} correlate with those
of normal BVP of order 1 from~\cite{Shk83}. Moreover, it is
proved in~\cite{Shk83} that the system of root functions of the
linearization of the normal BVP for ODE~\eqref{eq:ODE} is
complete in certain direct sums of Sobolev spaces. For certain
matrices $B = \diag(b_1, \ldots, b_n)$ with simple spectrum this
result correlate with~\cite[Theorem 1.2]{MalOri12} and
Theorem~\ref{th:explicit.nxn}.
\end{remark}
We first apply Theorem~\ref{th:explicit.nxn} to $2 \times 2$
case. Let
\begin{equation}
    \begin{pmatrix} C & D \end{pmatrix} = \begin{pmatrix}
        a_{11} & a_{12} & a_{13} & a_{14} \\
        a_{21} & a_{22} & a_{23} & a_{24}
    \end{pmatrix}, \quad J_{jk} := \det \begin{pmatrix}
        a_{1j} & a_{1k} \\
        a_{2j} & a_{2k}
    \end{pmatrix}, \quad j,k\in\{1,\ldots,4\}.
\end{equation}
%
%
\begin{proposition} \label{prop:2x2}
Let $n=2$, $\arg b_1 \ne \arg b_2$, and let $q_{12}$, $q_{21}$
be continuous at the endpoints 0 and 1. Then the system of root
functions of the boundary value
problem~\eqref{eq:system}--\eqref{eq:CY(0)+DY(1)=0} is complete
and minimal in $L^2\left([0,1]; \bC^2\right)$ whenever
\begin{eqnarray}
  \label{eq:J32} \left|J_{32}\right| + \left|b_1 J_{13} q_{12}(0) + b_2 J_{42} q_{21}(1)\right| &\ne& 0, \\
  \label{eq:J14} \left|J_{14}\right| + \left|b_1 J_{13} q_{12}(1) + b_2 J_{42} q_{21}(0)\right| &\ne& 0.
\end{eqnarray}
\end{proposition}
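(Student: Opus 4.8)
The plan is to derive Proposition~\ref{prop:2x2} as a direct specialization of Theorem~\ref{th:explicit.nxn} to the case $n=2$. First I would set $B = \diag(b_1,b_2)$ with $\arg b_1 \ne \arg b_2$, so that $b_1 \ne b_2$ and the continuity hypotheses on $q_{12}, q_{21}$ match the requirement in Theorem~\ref{th:explicit.nxn} that $q_{jk}$ be continuous at $0$ and $1$ when $b_j \ne b_k$. The two lines $l_1 = \{\Re(i b_1 \l) = 0\}$ and $l_2 = \{\Re(i b_2 \l) = 0\}$ (distinct, since $\arg b_1 \ne \arg b_2$) divide $\bC$ into $m = 4$ sectors $\sigma_1, \sigma_2, \sigma_3, \sigma_4$. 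On these sectors $\omega_0(\cdot)$ and $\omega_1(\cdot)$ are piecewise constant. The key computational step is to evaluate $\omega_0(z)$ and $\omega_1(z)$ from~\eqref{eq:c0zp.def} and~\eqref{eq:c1zp.def} in each of the four sectors, using the explicit $2\times 2$ structure of $T_{izB}(C,D)$.

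Concretely, there are (up to the relevant symmetry) two cases: either exactly one of $\Re(i b_1 z), \Re(i b_2 z)$ is positive, or both have the same sign. When both $\Re(i b_j z)$ have the same sign, $\k \in \{0,2\}$, so the double sum in~\eqref{eq:c1zp.def} is empty and $\omega_1(z) = 0$, while $\omega_0(z) = \det(C,D$ with columns chosen all from $C$ or all from $D)$ — i.e. $\omega_0 = J_{12}$ or $J_{34}$ up to sign. When exactly one is positive, $\k = 1$: say $\Re(i b_1 z) < 0 < \Re(i b_2 z)$, then $T_{izB}(C,D)$ has first column $c_1$ and second column $d_2$, so $\omega_0(z) = \det(c_1\ d_2) = J_{13}$ (matching the labeling where columns $1,2$ of $(C\,D)$ are those of $C$ and $3,4$ those of $D$), and $\omega_1(z) = \frac{\det T^{c_1 \to c_2} b_2 q_{21}(0) - \det T^{d_2 \to d_1} b_1 q_{12}(1)}{b_2 - b_1}$. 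Here $\det T^{c_1 \to c_2} = \det(c_2\ d_2) = J_{23}$... I would carefully track signs and verify this produces, up to a nonzero common factor, the expressions $b_1 J_{13} q_{12}(1) + b_2 J_{42} q_{21}(0)$ appearing in~\eqref{eq:J14}; the opposite sector $-z$ yields the conjugate configuration $\Re(i b_2 z) < 0 < \Re(i b_1 z)$ and produces the expression in~\eqref{eq:J32}.

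Once the four sector values are identified, the logic is: pick $z$ in the sector where $\k = 1$ with $\Re(i b_1 z) < 0 < \Re(i b_2 z)$; then $|\omega_0(z)| + |\omega_1(z)| \ne 0$ is precisely (a nonzero multiple of) condition~\eqref{eq:J14} (or~\eqref{eq:J32}, depending on orientation). By Corollary~\ref{cor:explicit.nxn}, if $|\omega_0(\pm z)| + |\omega_1(\pm z)| \ne 0$ for one admissible $z$, completeness and minimality follow. The hypotheses~\eqref{eq:J32} and~\eqref{eq:J14} together guarantee this for $z$ in the two relevant $\k=1$ sectors (and their negatives), so one can always find a triangle $\triangle_{z_1 z_2 z_3}$ with the origin inside and condition (b) of Theorem~\ref{th:explicit.nxn} holding at each vertex — e.g. take all three $z_j$ among $\pm z$ with $z$ in a $\k=1$ sector, perturbing arguments slightly to form a genuine triangle around the origin. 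Then Theorem~\ref{th:explicit.nxn} (or directly Corollary~\ref{cor:explicit.nxn}) gives the conclusion.

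The main obstacle I anticipate is purely bookkeeping: correctly matching the $2\times2$ minors $J_{jk}$ of $(C\ D)$ — with the index convention $J_{jk} = \det(\text{columns } j, k \text{ of } (C\ D))$ — against the column-replacement determinants $\det T_{izB}^{c_j \to c_k}$ and $\det T_{izB}^{d_k \to d_j}$, keeping all the signs (from column swaps and from $b_k - b_j$ versus $b_j - b_k$) straight, and checking that the coefficient combinations collapse to exactly $b_1 J_{13} q_{12}(0) + b_2 J_{42} q_{21}(1)$ and $b_1 J_{13} q_{12}(1) + b_2 J_{42} q_{21}(0)$ rather than some sign-variant. This is routine but error-prone; I would do it by writing out $C = (c_1\ c_2)$, $D = (d_1\ d_2)$ explicitly and computing each relevant $2\times2$ determinant. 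No genuinely new idea is needed beyond Theorem~\ref{th:explicit.nxn}.
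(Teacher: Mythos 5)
Your proposal is correct and is essentially the paper's own proof: pick $z$ with $\Re(i b_1 z)<0<\Re(i b_2 z)$, compute $\omega_0(\pm z)$ and $\omega_1(\pm z)$ explicitly in terms of the minors $J_{jk}$, observe that conditions~\eqref{eq:J32} and~\eqref{eq:J14} are exactly $|\omega_0(\pm z)|+|\omega_1(\pm z)|\ne 0$, and invoke Corollary~\ref{cor:explicit.nxn}. The only defects are the index slips you yourself flagged as needing verification (with your labeling $\det(c_1\ d_2)=J_{14}$, not $J_{13}$, and $\det(c_2\ d_2)=J_{24}$, not $J_{23}$); these do not affect the structure of the argument.
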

%
%
\begin{proof}[Proof]
Since $\arg b_1 \ne \arg b_2$ then there exists $z \in \bC$ such
that $\Re(i b_1 z) < 0 < \Re(i b_2 z)$. Then, in accordance with
definition of $J_{jk}$ and the numbers $\omega_0(z),
\omega_1(z)$,
\begin{gather}
    \omega_0(z) = J_{14}, \quad \omega_1(z) = \frac{J_{24} b_1 q_{21}(0) - J_{13} b_1 q_{12}(1)}{b_1-b_2}, \\
    \omega_0(-z) = J_{32}, \quad \omega_1(-z) = \frac{J_{31} b_2 q_{12}(0) - J_{42} b_2 q_{21}(1)}{b_2-b_1}.
\end{gather}
Conditions~\eqref{eq:J32},~\eqref{eq:J14} imply $|\omega_0(\pm
z)| + |\omega_1(\pm z)| \ne 0$. Hence
Corollary~\ref{cor:explicit.nxn} yields the result.
\end{proof}
%
%
\begin{remark}
In the case of $2 \times 2$ Dirac-type systems $(b_1<0<b_2)$
this result improves Theorem~5.1 from~\cite{MalOri12} where the
completeness was proved under the stronger assumption $q_{12},
q_{21}\in C^1[0,1]$ {while was stated for $q_{12}, q_{21}\in
C[0,1]$. It happened because the precise  version} of Lemma 5.4
from~\cite{MalOri12} requires a stronger assumption $Q(\cdot)\in
C^1([0,1]; \bC^{n\times n})$ instead of $Q(\cdot)\in C([0,1];
\bC^{n\times n})$ (cf.~\cite[Theorem 1.1]{Mal99}). In our
forthcoming paper the completeness property of
BVP~\eqref{eq:system}--\eqref{eq:CY(0)+DY(1)=0} for $2 \times 2$
Dirac-type systems will be discussed in detail.

In the case $b_2 b_1^{-1} \not\in \bR$
Proposition~\ref{prop:2x2} improves Theorems 1.4 and 1.6
from~\cite{AgiMalOri12} where the completeness property was
proved for analytic $Q(\cdot)$.
\end{remark}
%
%
The next result demonstrates that Theorem~\ref{th:explicit.nxn}
cannot be treated as a perturbation result since unperturbed
operator $L_{C,D}(0)$ may have incomplete system of root
functions.
%
%
\begin{corollary} \label{cor:y1(0)=0}
Let $\k \in \{1,\ldots,n-1\}$, $\Re\, b_j < 0$ for $j \in
\{1,\ldots, \k\}$, $\Re\, b_j
> 0$ for $j \in \{\k+1,\ldots,n\}$, and the first boundary
condition in~\eqref{eq:CY(0)+DY(1)=0} is of the form $y_1(0)=0$.
Then the following holds:

(i) Assume that $Q$ is continuous at the endpoints 0 and 1 of
the segment $[0,1]$,
\begin{equation} \label{eq:detTBCD}
    \det T_{B}(C,D) \ne 0 \qquad\text{and}\qquad \sum_{j=\k+1}^n
    \frac{\det T_{-B}^{c_j \to c_1}}{b_1-b_j}\cdot q_{1j}(0) \ne 0.
\end{equation}
Then the system of root functions of the operator $L_{C,D}(Q)$
is complete and minimal in $L^2([0,1]; \bC^n)$.

(ii) If $q_{1j}(x) = 0$ for $x \in [0,\eps]$, $j \in
\{2,\ldots,n\}$, for some $\eps > 0$, then the system of root
functions of the operator $L_{C,D}(Q)$ is incomplete in
$L^2([0,1]; \bC^n)$ and its defect is infinite. In particular,
the latter is valid for the operator $L_{C,D}(0)$ with zero
potential.
\end{corollary}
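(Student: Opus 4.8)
The plan is to obtain part (i) from Corollary~\ref{cor:explicit.nxn} and part (ii) by a direct decoupling argument for the first component. For part (i) I would take $z=i$, which is admissible because $\Re(i b_j z) = -\Re b_j \ne 0$ for every $j$ by hypothesis, and then verify that the pair $\{z,-z\}$ meets the assumption of Corollary~\ref{cor:explicit.nxn}. Since $B$ is invertible, $i(-z)B=B$ and $izB=-B$, so~\eqref{eq:c0zp.def} gives $\omega_0(-z)=\det T_B(C,D)$, which is nonzero by the first relation in~\eqref{eq:detTBCD}; hence $|\omega_0(-z)|+|\omega_1(-z)|\ne 0$ for free. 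Recall $\omega_0$ and $\omega_1$ are constant on each sector $\sigma_k$ (Remark~\ref{rem:omega01}), so they may be evaluated at $z=i$ directly, and the continuity of $Q$ at $0$ and $1$ is exactly what makes Corollary~\ref{cor:explicit.nxn} applicable. Thus it remains only to compute $\omega_0(i)$ and $\omega_1(i)$ and to match $\omega_1(i)\ne 0$ with the second relation in~\eqref{eq:detTBCD}.

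The computation exploits the special shape of the boundary conditions: since the first scalar condition is $y_1(0)=0$, the first row of $D$ vanishes and $c_{1k}=\delta_{1k}$. In $T_{-B}(C,D)$ the columns with index $>\k$ (where $\Re b_j>0$) come from $C$ and those with index $\le\k$ from $D$, so its first row is identically zero --- the $D$-columns contribute $0$, and the $C$-columns $c_{\k+1},\dots,c_n$ have vanishing first entry because $\k\ge 1$; thus $\omega_0(i)=\det T_{-B}(C,D)=0$. The same ``zero first row'' mechanism, applied in~\eqref{eq:c1zp.def}, forces $\det T_{-B}^{d_k\to d_j}=0$ for every admissible pair (the replacement column $d_j$ still has vanishing first entry) and $\det T_{-B}^{c_j\to c_k}=0$ unless $k=1$ (for $k\ge 2$ the new column $c_k$ again has vanishing first entry). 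Hence only the terms with $k=1$ survive, for which $b_k=b_1$, and one is left with
\[
    \omega_1(i) = b_1 \sum_{j=\k+1}^n \frac{\det T_{-B}^{c_j\to c_1}}{b_1-b_j}\, q_{1j}(0),
\]
so that $\omega_1(i)\ne 0$ is precisely the second condition in~\eqref{eq:detTBCD}. Corollary~\ref{cor:explicit.nxn} then yields the completeness and minimality asserted in (i).

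For part (ii) I would show that \emph{every} root function of $L_{C,D}(Q)$ has vanishing first component on $[0,\eps]$. Let $y^{(0)},\dots,y^{(p)}$ be a root chain at an eigenvalue $\l_0$, so $y^{(m)}\in\dom(L_{C,D})$, $(L-\l_0)y^{(0)}=0$ and $(L-\l_0)y^{(m)}=y^{(m-1)}$ with $y^{(-1)}:=0$. The first scalar equation of the system is $-i b_1^{-1}(y^{(m)}_1)'+\sum_{k=1}^n q_{1k}y^{(m)}_k=\l_0 y^{(m)}_1+y^{(m-1)}_1$; on $[0,\eps]$ the hypothesis $q_{1k}\equiv 0$ for $2\le k\le n$ reduces it to the closed scalar linear equation $-i b_1^{-1}(y^{(m)}_1)'+(q_{11}-\l_0)y^{(m)}_1=y^{(m-1)}_1$ on $[0,\eps]$, together with $y^{(m)}_1(0)=0$. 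Since $q_{11}\in L^1$, uniqueness for linear first order equations (Gronwall) and induction on $m$ give $y^{(m)}_1\equiv 0$ on $[0,\eps]$ for all $m$. Consequently the closed linear span of the root functions of $L_{C,D}(Q)$ lies in $\{y\in L^2([0,1];\bC^n):y_1|_{[0,\eps]}=0\}$, whose orthogonal complement consists of all $\col(\varphi,0,\dots,0)$ with $\varphi\in L^2[0,\eps]$ extended by zero and is therefore infinite dimensional; hence the root system is incomplete with infinite defect. For $Q\equiv 0$ one has $q_{1k}\equiv 0$ for every $k$, so the hypothesis of (ii) holds with any $\eps>0$ and the same conclusion applies to $L_{C,D}(0)$.

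The only genuinely delicate step is the combinatorial bookkeeping in (i): deciding which of the minors $\det T_{-B}^{c_j\to c_k}$, $\det T_{-B}^{d_k\to d_j}$ in~\eqref{eq:c1zp.def} are annihilated by the $y_1(0)=0$ structure, and checking that the surviving terms reassemble into the sum in~\eqref{eq:detTBCD} with the correct weights $(b_1-b_j)^{-1}$. The reduction from a possibly non-feasible argument to a feasible one, and the passage from the asymptotics of $\Delta(\cdot)$ to the required lower bounds on three rays, are already packaged inside Theorem~\ref{th:explicit.nxn} and Corollary~\ref{cor:explicit.nxn}, so no extra work is needed there.
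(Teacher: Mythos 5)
Your proof is correct and follows essentially the same route as the paper: in (i) you evaluate $\omega_0(\pm i)$ and $\omega_1(i)$ exactly as the authors do, observing that the zero first row of $T_{-B}(C,D)$ kills $\omega_0(i)$ and all minors except $\det T_{-B}^{c_j\to c_1}$, and then invoke Corollary~\ref{cor:explicit.nxn}; in (ii) you use the same decoupling of the first component on $[0,\eps]$ and the Cauchy uniqueness argument. Your explicit induction along the root chains in (ii) is a slightly fuller write-up of what the paper leaves implicit, but it is not a different method.
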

%
%
\begin{proof}[Proof]
\textbf{(i)} The condition $y_1(0)=0$ means that $c_{11}=1$,
$c_{1k}=0$ for $k\in \{2,\ldots,n\}$, and $d_{1k}=0$, $k\in
\{1,\ldots,n\}$. Therefore, the matrix $T_{-B}(C,D)$ has zero
first line and hence $\omega_0(i) = 0$. Moreover, due to the
structure of the first row of $\begin{pmatrix} C &
D\end{pmatrix}$, $\det T_{-B}^{d_k \to d_j} = 0$, $j, k \in
\{1,\ldots,n\}$, and $\det T_{-B}^{c_j \to c_k} = 0$, for $k >
1$. Now the assumption  on $\Re\, b_j$, definition of
$\omega_1(\cdot)$, and condition~\eqref{eq:detTBCD} together
imply
\begin{equation}
    \omega_1(i) = \sum_{j=\k+1}^n
        \frac{\det T_{-B}^{c_j \to c_1} \cdot b_1 q_{1j}(0)}{b_1-b_j} \ne 0.
\end{equation}
Due to the first relation in~\eqref{eq:detTBCD} $\omega_0(-i) =
\det T_B(C,D) \ne 0$. It remains to apply
Corollary~\ref{cor:explicit.nxn}.

\textbf{(ii)} Under our assumption each solution
$y=\col(y_1,\ldots,y_n)$ of the
problem~\eqref{eq:system}--\eqref{eq:CY(0)+DY(1)=0} satisfies
\begin{equation}
    y_1' = i b_1 \l y_1 + i b_1 q_{11}(x) y_1, \quad x \in [0,\eps], \quad\text{and}\quad y_1(0)=0.
\end{equation}
By the uniqueness theorem, $y_1(x) = 0$ for $x \in [0,\eps]$.
Hence each $f = \col(f_1,0,\ldots,0) \in L^2([0,1]; \bC^n)$ with
$f_1$ vanishing on $[\eps,1]$ is orthogonal to the system of
root functions of the operator $L_{C,D}(Q)$.
\end{proof}
%
%
\begin{remark}
Let $n=3$, $\k=1$ and $y_1(0)=0$. Then
condition~\eqref{eq:detTBCD} takes the form
\begin{equation} \label{eq:d22d33-d23d32}
    \begin{vmatrix} d_{22} & d_{23} \\ d_{32} & d_{33} \end{vmatrix} \ne 0 \quad\text{and}\quad
    \begin{vmatrix} d_{21} & c_{23} \\ d_{31} & c_{33} \end{vmatrix} \cdot \frac{q_{12}(0)}{b_2-b_1} +
    \begin{vmatrix} d_{21} & c_{22} \\ d_{31} & c_{32} \end{vmatrix} \cdot \frac{q_{13}(0)}{b_1-b_3} \ne 0.
\end{equation}
Therefore, if $\left|q_{12}(0)\right|+\left|q_{13}(0)\right| \ne
0$, then, in general, the system of root functions of the
operator $L_{C,D}(Q)$ with the first boundary condition
$y_1(0)=0$, is complete in $L^2([0,1]; \bC^3)$.
\end{remark}
%
%
Finally, we specify Corollary~\ref{cor:explicit.nxn} for
$4\times 4$ Dirac type equation subject to special boundary
conditions. This statement will be applied in
Section~\ref{sec:Timoshenko} for study of the Timoshenko beam
model.
%
%
\begin{corollary} \label{cor:n=4}
Let $n=4$, $B = \diag(-b_1, b_1, -b_2, b_2)$, where $b_1, b_2
> 0$, let $Q \in L^1([0,1]; \bC^{4 \times 4})$, where $Q$ is continuous at
the endpoints 0 and 1, and matrices $C$ and $D$ are of the form
\begin{equation} \label{eq:n=4.CD}
    C = \begin{pmatrix}
         1 & 1 & 0 & 0 \\
         0 & 0 & 0 & 0 \\
         0 & 0 & 1 & 1 \\
         0 & 0 & 0 & 0
    \end{pmatrix}, \quad
    D = \begin{pmatrix}
         0 & 0 & 0 & 0 \\
         d_1 & d_2 & 0 & 0 \\
         0 & 0 & 0 & 0 \\
         0 & 0 & d_3 & d_4
    \end{pmatrix}.
\end{equation}
Assume that
\begin{equation} \label{eq:d2d4+...,d1d3+...}
    |d_2 d_4| + |d_1 d_4 q_{12}(1)| + |d_2 d_3 q_{34}(1)| \ne 0, \quad
    |d_1 d_3| + |d_2 d_3 q_{21}(1)| + |d_1 d_4 q_{43}(1)| \ne 0.
\end{equation}
Then the system of root functions of the
BVP~\eqref{eq:system}--\eqref{eq:CY(0)+DY(1)=0} is complete and
minimal in $L^2([0,1]; \bC^4)$.
\end{corollary}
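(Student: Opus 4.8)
The plan is to derive Corollary~\ref{cor:n=4} as a direct specialization of Corollary~\ref{cor:explicit.nxn}. The main task is purely computational: for the specific matrices $B = \diag(-b_1,b_1,-b_2,b_2)$ and $C$, $D$ given by~\eqref{eq:n=4.CD}, I need to compute $\omega_0(\pm z)$ and $\omega_1(\pm z)$ for a suitably chosen admissible $z$, and then check that the hypotheses~\eqref{eq:d2d4+...,d1d3+...} of the corollary are exactly the conditions $|\omega_0(z)|+|\omega_1(z)| \ne 0$ and $|\omega_0(-z)|+|\omega_1(-z)| \ne 0$. Since $b_1, b_2 > 0$, the quantities $\Re(i b_j z)$ for $j=1,2,3,4$ have the sign pattern governed by $\Re(i z)$ and $\Re(-ib_1 z)$, $\Re(ib_1 z)$, etc. I would choose $z = i$ (or another convenient admissible point): then $\Re(i b_j \cdot i) = -b_j \Re(1)\cdot$ — more carefully, with $z=i$ one gets $\Re(i b_1 z) = \Re(-b_1) = -b_1 < 0$ for the first coordinate, $\Re(i b_1 z) = b_1 > 0$ for the second (since the second diagonal entry is $+b_1$), and similarly $-b_2 < 0$, $b_2 > 0$ for the third and fourth. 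So for $z = i$ the "negative" indices are $\{1,3\}$ and the "positive" indices are $\{2,4\}$, giving $\k = 2$.

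First I would write down $T_{iB}(C,D)$: by the column rule, columns $1$ and $3$ (negative) come from $C$, columns $2$ and $4$ (positive) come from $D$. This yields
\begin{equation}
    T_{iB}(C,D) = \begin{pmatrix}
        1 & 0 & 0 & 0 \\
        0 & d_2 & 0 & 0 \\
        0 & 0 & 1 & 0 \\
        0 & 0 & 0 & d_4
    \end{pmatrix},
\end{equation}
so $\omega_0(i) = \det T_{iB}(C,D) = d_2 d_4$. Then I would compute $\omega_1(i)$ from~\eqref{eq:c1zp.def}: the sum runs over $j \in \{1,3\}$ (negative) and $k \in \{2,4\}$ (positive), and each term involves $\det T_{iB}^{c_j \to c_k}$ times $b_k q_{kj}(0)$ minus $\det T_{iB}^{d_k \to d_j}$ times $b_j q_{jk}(1)$, divided by $b_k - b_j$. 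Because of the sparsity of $C$ and $D$, most of these determinants vanish — for instance $\det T_{iB}^{d_k \to d_j}$ where $d_j$ is the $j$th column of $D$ for $j \in \{1,3\}$: the first and third columns of $D$ are $\col(0,d_1,0,0)$ and $\col(0,0,0,d_3)$. A short case check of which replacements keep the determinant nonzero will leave only the terms producing $q_{12}$ and $q_{34}$ contributions. I expect the surviving terms of $\omega_1(i)$ to combine (up to harmless nonzero scalar factors $b_k/(b_k-b_j)$) into something proportional to a linear combination of $d_1 d_4 q_{12}(1)$ and $d_2 d_3 q_{34}(1)$; hence $|\omega_0(i)| + |\omega_1(i)| \ne 0$ is equivalent to the first inequality in~\eqref{eq:d2d4+...,d1d3+...}. (One must be slightly careful: the stated condition has $q_{12}(1)$ and $q_{34}(1)$ at the right endpoint, so the surviving contributions to $\omega_1$ come from the $\det T^{d_k\to d_j} b_j q_{jk}(1)$ part, not the $q(0)$ part; I would verify the $q(0)$-terms drop out due to the zero rows/columns of $C$.)

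Then I would repeat the computation for $z = -i$, where the roles of negative and positive indices swap: indices $\{2,4\}$ become negative and $\{1,3\}$ become positive, so $T_{-iB}(C,D)$ takes columns $2,4$ from $C$ and columns $1,3$ from $D$, giving
\begin{equation}
    T_{-iB}(C,D) = \begin{pmatrix}
        1 & 0 & 0 & 0 \\
        d_1 & 0 & 0 & 0 \\
        0 & 0 & 1 & 0 \\
        0 & 0 & d_3 & 0
    \end{pmatrix}
\end{equation}
— wait, that has two zero columns, so I would instead have $\omega_0(-i) = \det T_{-iB}(C,D) = d_1 d_3$ after correctly placing columns; the point is that by symmetry $\omega_0(-i) = d_1 d_3$ and $\omega_1(-i)$ reduces to a nonzero combination of $d_2 d_3 q_{21}(1)$ and $d_1 d_4 q_{43}(1)$, so $|\omega_0(-i)| + |\omega_1(-i)| \ne 0$ matches the second inequality in~\eqref{eq:d2d4+...,d1d3+...}. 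Having established both, I invoke Corollary~\ref{cor:explicit.nxn} with the admissible number $z = i$ to conclude completeness and minimality in $L^2([0,1];\bC^4)$. The only real obstacle is bookkeeping accuracy in the minor computations — tracking the correct sign conventions and column replacements in the definition of $T_{izB}^{c_j \to c_k}$ and $T_{izB}^{d_k \to d_j}$, and confirming that exactly the four named products survive while all other terms vanish because of the many zero entries in $C$ and $D$.
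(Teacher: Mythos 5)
Your overall strategy is exactly the paper's: specialize Corollary~\ref{cor:explicit.nxn} to the admissible points $\pm i$, compute $\omega_0(\pm i)$ and $\omega_1(\pm i)$ from~\eqref{eq:c0zp.def}--\eqref{eq:c1zp.def}, and match the result with~\eqref{eq:d2d4+...,d1d3+...}. Your sign bookkeeping for which of $\pm i$ produces which inequality is off (with $B=\diag(-b_1,b_1,-b_2,b_2)$ and $z=-i$ one has $\Re(i b_j z)=b_j$, so columns $1,3$ are taken from $C$ and columns $2,4$ from $D$; the matrix you call $T_{iB}$ is the paper's $T_B=T_{i(-i)B}$ and gives $\omega_0(-i)=d_2d_4$), but since the pair of conditions is symmetric under $z\mapsto-z$ this is harmless.

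The genuine gap is in the matching step. You assert that the $q(0)$-terms in $\omega_1$ ``drop out due to the zero rows/columns of $C$'' and that exactly the products $d_1d_4q_{12}(1)$ and $d_2d_3q_{34}(1)$ survive. That is false: one computes $\det T_B^{c_1\to c_2}=\det T_B^{c_3\to c_4}=d_2d_4$, which is nonzero in general, so
\begin{equation*}
\omega_1(-i)=\tfrac12\bigl(d_2d_4\,q_{21}(0)+d_1d_4\,q_{12}(1)+d_2d_4\,q_{43}(0)+d_2d_3\,q_{34}(1)\bigr),
\end{equation*}
which does contain endpoint-$0$ contributions. The condition $|\omega_0(-i)|+|\omega_1(-i)|\ne 0$ is still equivalent to the first inequality in~\eqref{eq:d2d4+...,d1d3+...}, but only because the unwanted terms carry the factor $d_2d_4=\omega_0(-i)$: if $d_2d_4\ne 0$ both conditions hold trivially, while if $d_2d_4=0$ then either $d_2=0$, which forces $\omega_1(-i)=\tfrac12 d_1d_4q_{12}(1)$, or $d_4=0$, which forces $\omega_1(-i)=\tfrac12 d_2d_3q_{34}(1)$; in either case at most one of the two named products can be nonzero, so nonvanishing of the sum coincides with nonvanishing of the sum of absolute values. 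Without this short case analysis the identification of your hypotheses with~\eqref{eq:d2d4+...,d1d3+...} does not go through, and as written your plan would have you ``verify'' a cancellation that does not occur. The same remark applies verbatim to the second inequality at the opposite point.
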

%
%
\begin{proof}[Proof]
By the definition of the matrix $T_B(C,D)$,
\begin{equation} \label{eq:n=4.TBCD}
    T_{B}(C,D) = \begin{pmatrix}
         1 & 0 & 0 & 0 \\
         0 & d_2 & 0 & 0 \\
         0 & 0 & 1 & 0 \\
         0 & 0 & 0 & d_4
    \end{pmatrix},
\end{equation}
and hence
\begin{equation} \label{eq:n=4.omega0}
    \omega_0(-i) = \det T_{B}(C,D) = d_2 d_4.
\end{equation}
In our case the double sum in~\eqref{eq:c1zp.def} for
$\omega_1(-i)$ involves only values $j=1,3$ and $k=2,4$. It
follows from definition of matrices $T_{izB}^{c_j \rightarrow
c_k}$ and $T_{izB}^{d_k \rightarrow d_j}$ that
\begin{eqnarray}
    \det T_{B}^{c_1 \rightarrow c_2} &= d_2 d_4, & \det T_{B}^{d_2 \rightarrow d_1} = d_1 d_4, \\
    \det T_{B}^{c_1 \rightarrow c_4} &= 0,\ \quad  & \det T_{B}^{d_4 \rightarrow d_1} = 0, \\
    \det T_{B}^{c_3 \rightarrow c_2} &= 0,\ \quad  & \det T_{B}^{d_2 \rightarrow d_3} = 0, \\
    \det T_{B}^{c_3 \rightarrow c_4} &= d_2 d_4, & \det T_{B}^{d_4 \rightarrow d_3} = d_2 d_3.
\end{eqnarray}
Inserting these expressions into~\eqref{eq:c1zp.def} we obtain
\begin{equation}
    \omega_1(-i) = \frac{1}{2} \bigl(d_2 d_4 q_{21}(0) + d_1 d_4 q_{12}(1)
    + d_2 d_4 q_{43}(0) + d_2 d_3 q_{34}(1)\bigr).
\end{equation}
Note that if $d_2 = 0$, then $\omega_1(-i) = \frac{1}{2} d_1 d_4
q_{12}(1)$. On the other hand, if $d_4=0$, then $\omega_1(-i) =
\frac{1}{2} d_2 d_3 q_{34}(1)$. This allows us to rewrite
condition $|\omega_0(-i)| + |\omega_1(-i)| \ne 0$ in the form of
the first relation in~\eqref{eq:d2d4+...,d1d3+...}.

Similarly, one verifies that condition $|\omega_0(i)| +
|\omega_1(i)| \ne 0$ turns into the second relation
in~\eqref{eq:d2d4+...,d1d3+...}. One completes the proof by
applying Corollary~\ref{cor:explicit.nxn}.
\end{proof}
The following simple lemma will be useful for us in Section 6.
%
%
\begin{lemma} \label{lem:4x4}
Condition~\eqref{eq:d2d4+...,d1d3+...} is fulfilled if and only
if each of the following conditions is satisfied
\begin{eqnarray}
    \label{eq:d1d2} |d_1| + |d_2| \ne 0, & & |d_3| + |d_4| \ne 0, \\
    \label{eq:d1d3} |d_1| + |d_3| \ne 0, & & |d_2| + |d_4| \ne 0, \\
    \label{eq:d1q21} |d_1| + |q_{21}(1)| \ne 0, & & |d_2| + |q_{12}(1)| \ne 0, \\
    \label{eq:d3q43} |d_3| + |q_{43}(1)| \ne 0, & & |d_4| + |q_{34}(1)| \ne 0.
\end{eqnarray}
\end{lemma}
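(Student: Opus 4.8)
The plan is to prove the equivalence by unwinding the meaning of each of the two conditions in~\eqref{eq:d2d4+...,d1d3+...} as a pair of boolean (non-vanishing) statements and matching them with the list~\eqref{eq:d1d2}--\eqref{eq:d3q43}. The first relation in~\eqref{eq:d2d4+...,d1d3+...}, namely $|d_2 d_4| + |d_1 d_4 q_{12}(1)| + |d_2 d_3 q_{34}(1)| \ne 0$, fails exactly when all three products vanish: $d_2 d_4 = 0$, $d_1 d_4 q_{12}(1) = 0$, and $d_2 d_3 q_{34}(1) = 0$. First I would do a short case analysis on which of $d_2, d_4$ is zero. If $d_2 = d_4 = 0$, the relation always fails, so a necessary condition is $|d_2| + |d_4| \ne 0$; this is the second half of~\eqref{eq:d1d3}. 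If $d_2 = 0 \ne d_4$, the relation reduces to $|d_1 q_{12}(1)| \ne 0$, i.e. $d_1 \ne 0$ and $q_{12}(1) \ne 0$; if $d_4 = 0 \ne d_2$, it reduces to $|d_3 q_{34}(1)| \ne 0$, i.e. $d_3 \ne 0$ and $q_{34}(1) \ne 0$. Collecting these observations, the first relation in~\eqref{eq:d2d4+...,d1d3+...} is equivalent to the conjunction: $|d_2| + |d_4| \ne 0$, and ($d_2 \ne 0$ or ($d_1 \ne 0$ and $q_{12}(1) \ne 0$)), and ($d_4 \ne 0$ or ($d_3 \ne 0$ and $q_{34}(1) \ne 0$)).

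Next I would rewrite these three clauses as the four non-vanishing statements that appear in the lemma. The clause ``$d_2 \ne 0$ or ($d_1 \ne 0$ and $q_{12}(1) \ne 0$)'' is equivalent, by distributing the disjunction over the conjunction, to the pair ``($|d_1| + |d_2| \ne 0$) and ($|d_2| + |q_{12}(1)| \ne 0$)''; similarly ``$d_4 \ne 0$ or ($d_3 \ne 0$ and $q_{34}(1) \ne 0$)'' splits into ``($|d_3| + |d_4| \ne 0$) and ($|d_4| + |q_{34}(1)| \ne 0$)''. Together with $|d_2| + |d_4| \ne 0$ (kept as is), the first relation in~\eqref{eq:d2d4+...,d1d3+...} is therefore equivalent to the five statements: the first half of~\eqref{eq:d1d2}, the second half of~\eqref{eq:d1d2}, the second half of~\eqref{eq:d1d3}, the second half of~\eqref{eq:d1q21}, and the second half of~\eqref{eq:d3q43}.

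Then I would run the identical argument on the second relation, $|d_1 d_3| + |d_2 d_3 q_{21}(1)| + |d_1 d_4 q_{43}(1)| \ne 0$. Case analysis on $d_1, d_3$ gives: $|d_1| + |d_3| \ne 0$ (the first half of~\eqref{eq:d1d3}); ``$d_1 \ne 0$ or ($d_2 \ne 0$ and $q_{21}(1) \ne 0$)'', which splits into the first half of~\eqref{eq:d1d2} and the first half of~\eqref{eq:d1q21}; and ``$d_3 \ne 0$ or ($d_4 \ne 0$ and $q_{43}(1) \ne 0$)'', which splits into the second half of~\eqref{eq:d1d2} and the first half of~\eqref{eq:d3q43}. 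Forming the conjunction of everything obtained from both relations and discarding the duplicate clauses (the two halves of~\eqref{eq:d1d2} each appear twice, and they are the only repeats), one is left with exactly the eight statements in~\eqref{eq:d1d2}--\eqref{eq:d3q43}. Conversely, assuming all eight, one reads off each of the three clauses for the first relation and each of the three for the second, so both relations in~\eqref{eq:d2d4+...,d1d3+...} hold.

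The only mildly delicate point — and it is purely logical, not analytic — is the distributive step ``$a \ne 0$ or ($b \ne 0$ and $c \ne 0$)'' $\Longleftrightarrow$ ``($|a|+|b| \ne 0$) and ($|a|+|c| \ne 0$)'', which one should spell out once and then apply four times; I would also note that $|xy| \ne 0 \iff (x \ne 0 \text{ and } y \ne 0)$ and $|x| + |y| \ne 0 \iff (x \ne 0 \text{ or } y \ne 0)$ are the only facts used, so the whole proof is a finite bookkeeping exercise with no real obstacle. One could alternatively phrase it more slickly by observing that each displayed relation in~\eqref{eq:d2d4+...,d1d3+...} is a sum of absolute values of monomials and hence is nonzero iff at least one monomial is nonzero in every ``selection'' forced by the zero pattern of $(d_1,d_2,d_3,d_4)$, but the case analysis above is the most transparent route to write down.
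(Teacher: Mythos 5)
Your proof is correct, and it is complete: the factorization of each relation in~\eqref{eq:d2d4+...,d1d3+...} into the three clauses you list, the distributive step, and the clause-matching all check out (I verified that the union of the clauses from the two relations is exactly the eight statements, with the two halves of~\eqref{eq:d1d2} as the only duplicates). Your route differs from the paper's in organization. The paper dispenses with the case $d_1 d_2 d_3 d_4 \ne 0$ as obvious and otherwise fixes one vanishing coordinate, ``for definiteness $d_1 = 0$'', showing that in that case both sides of the equivalence collapse to the same pair of conditions $d_2 d_3 q_{21}(1) \ne 0$ and $|d_4| + |q_{34}(1)| \ne 0$; the remaining three cases are left to the reader as analogous. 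Your argument instead rewrites each relation directly as a conjunction of disjunctions (a CNF decomposition) and matches clauses, which treats all zero patterns of $(d_1,d_2,d_3,d_4)$ uniformly and avoids the implicit appeal to a symmetry among the indices that is not quite literal (the pairs $(1,2)$ and $(3,4)$ play paired but not interchangeable roles). The paper's version is shorter on the page; yours is more mechanical and easier to audit case-free. Either is acceptable.
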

%
%
\begin{proof}[Proof]
If $d_1 d_2 d_3 d_4 \ne 0$ then the statement is obvious.
Further assume that $d_j = 0$ for some $j \in \{1,2,3,4\}$. Let
for definiteness, $d_1=0$. Then
conditions~\eqref{eq:d1d2}--\eqref{eq:d3q43} are satisfied if
and only if
\begin{equation}
    d_2 d_3 q_{21}(1) \ne 0 \quad\text{and}\quad |d_4| + |q_{34}(1)| \ne 0.
\end{equation}
This, in turn, is equivalent to~\eqref{eq:d2d4+...,d1d3+...}
whenever $d_1=0$, and we are done.
\end{proof}
%
%
\subsection{Example}
%
%
Here we illustrate Proposition~\ref{prop:2x2} by investigation
of the completeness and minimality of the system of vector
functions
\begin{equation} \label{eq:levin}
    \left\{\begin{pmatrix}
        e^{a n x} \sin n x \\
        n e^{a n x} (\sin n x + i \cos n x)
    \end{pmatrix}\right\}_{n \in \bZ \setminus \{0\}},\quad  a \in \bC,
\end{equation}
in the space $L^2([0,\pi]; \bC^2)$.
%
%
\begin{corollary} \label{cor:levin}
Let
\begin{equation} \label{eq:ia.notin}
    i a \not \in (-\infty, -1] \cap [1, \infty).
\end{equation}
Then system~\eqref{eq:levin} is complete and minimal in
$L^2([0,\pi]; \bC^2)$.
\end{corollary}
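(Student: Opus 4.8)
The plan is to recognize the system \eqref{eq:levin} as (up to a nonzero scalar normalization in each component) the system of eigenfunctions of a concrete $2\times 2$ BVP of the form \eqref{eq:system}--\eqref{eq:CY(0)+DY(1)=0}, and then to apply Proposition~\ref{prop:2x2}. First I would identify the differential expression: the functions $e^{anx}\sin nx$ and $e^{anx}\cos nx$ are real and imaginary parts of $e^{(a+i)nx}$ and $e^{(a-i)nx}$, so each component is a linear combination of $e^{i\mu_\pm nx}$ with $\mu_\pm = \mp 1 - ia$ (equivalently, the scalar second-order operator annihilating $e^{anx}\sin nx$ has characteristic roots $(a\pm i)n$). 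Accordingly I expect the underlying first order system to have $B = \diag(b_1,b_2)$ with $b_1,b_2$ proportional to $(a+i)$ and $(a-i)$ (after the rescaling $\l \mapsto n$ that matches the spectral parameter to the integer index), and a constant off-diagonal potential $Q$ producing exactly the coupling between the two components seen in \eqref{eq:levin}. Concretely I would look for $b_1, b_2, q_{12}, q_{21}$ and boundary matrices $C, D$ such that the fundamental solution $\Phi(x,\l)$ evaluated along the eigenvalue sequence $\l = n$ reproduces \eqref{eq:levin}; the second component being $n$ times an exponential strongly suggests that one takes $y = \col(y_1, y_1')$-type coordinates, i.e. the system is the companion form of a scalar equation, with $b_j = (a \pm i)/(\text{const})$.

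Once the matching is done, the boundary conditions should come out to be of a simple separated/periodic type — the natural guess, given that the index set is $\bZ\setminus\{0\}$ and the eigenfunctions vanish at $x = 0$ (since $\sin(n\cdot 0) = 0$), is that the first boundary condition is $y_1(0) = 0$ and the second is something like $y_1(\pi) = 0$ or a Robin-type condition at $\pi$; the exclusion of $n = 0$ corresponds to dropping a trivial/constant eigenfunction. With $C, D$ in hand one computes the $2\times 2$ minors $J_{jk}$ and checks hypotheses \eqref{eq:J32}--\eqref{eq:J14} of Proposition~\ref{prop:2x2}. Here the constants $q_{12}(0), q_{12}(1), q_{21}(0), q_{21}(1)$ are just the (constant) entries of $Q$, so conditions \eqref{eq:J32}--\eqref{eq:J14} become explicit algebraic inequalities on $a$; I expect them to reduce precisely to $b_1/b_2 \notin (-\infty,0]$ together with the degeneracy locus encoded in \eqref{eq:ia.notin}. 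Since $b_1/b_2 = (a+i)/(a-i)$ (or its reciprocal), the condition $\arg b_1 \ne \arg b_2$ required by Proposition~\ref{prop:2x2} holds iff $(a+i)/(a-i) \notin (0,\infty)$, which one checks is equivalent to $ia \notin (-\infty,-1]\cup[1,\infty)$ — matching \eqref{eq:ia.notin} (modulo the apparent typo "$\cap$" for "$\cup$" in the statement).

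The main obstacle I anticipate is the bookkeeping in the first step: correctly normalizing the two components of \eqref{eq:levin} so they literally form the columns/values of a genuine fundamental solution of a system \eqref{eq:system} with the *right* $B$ and a *constant* $Q$, and pinning down $C$ and $D$ from the requirement that the eigenvalues be exactly the integers $n\ne 0$ (this fixes the characteristic determinant $\Delta(\l) = \det(C + D\Phi(1,\l))$ up to a nonvanishing entire factor, and one must verify it has no spurious zeros and the claimed zero set). A clean way to organize this is to first treat the scalar picture: write down the second-order scalar ODE on $[0,\pi]$ whose eigenfunctions are $e^{anx}\sin nx$ subject to $u(0) = u(\pi) = 0$, then reduce it to the system form via the substitution used in \cite{Mal99} (or directly), read off $B$, $Q$, $C$, $D$, and finally invoke Proposition~\ref{prop:2x2}. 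The continuity hypothesis on $q_{12}, q_{21}$ at the endpoints is automatic since $Q$ is constant, so no further regularity check is needed, and the conclusion — completeness and minimality of \eqref{eq:levin} in $L^2([0,\pi];\bC^2)$ — follows once \eqref{eq:J32}--\eqref{eq:J14} are verified under \eqref{eq:ia.notin}.
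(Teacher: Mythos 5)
Your plan is essentially the paper's proof: the paper writes $a=\ctg\theta$ and realizes \eqref{eq:levin} (after rescaling to $[0,1]$) as the eigenfunctions of the triangular system $y_1'=e^{i\theta}\l y_1+y_2$, $y_2'=e^{-i\theta}\l y_2$ with the degenerate boundary conditions $y_1(0)=y_1(1)=0$, so that $B=-i\diag(e^{i\theta},e^{-i\theta})$ (proportional to $a+i$ and $a-i$, as you predicted), $Q$ is constant with $q_{12}=-e^{-i\theta}$, $J_{13}=1$ while all other $J_{jk}$ vanish, conditions \eqref{eq:J32}--\eqref{eq:J14} reduce to $q_{12}(0)q_{12}(1)\ne 0$, and Proposition~\ref{prop:2x2} applies. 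The only points where your sketch drifts are that the second coordinate is not of $y_1'$-companion type but the combination making the system triangular with constant $Q$, and that hypothesis \eqref{eq:ia.notin} enters solely through $\arg b_1\ne\arg b_2$ (you are right that the stated $\cap$ should be $\cup$) rather than through the $J$-inequalities, which hold automatically here.
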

%
%
\begin{proof}[Proof]
Since $a \ne \pm i$ there exists $\theta \in \bC \setminus \{\pi
n\}_{n \in \bZ}$ such that $a = \ctg \theta$. Consider the
following boundary value problem
\begin{gather}
    \label{eq:system.levin}
    \begin{cases}
        y_1' =& e^{i \theta} \l y_1 + y_2, \\
        y_2' =& e^{-i \theta} \l y_2,
    \end{cases} \\
    \label{eq:y10=y11=0} y_1(0) = y_1(1) = 0.
\end{gather}
Straightforward calculation shows that its spectrum is simple,
consists of the eigenvalues $\left\{\frac{\pi n}{\sin
\theta}\right\}_{n \in \bZ \setminus \{0\}}$, and the system of
the corresponding eigenfunctions is
\begin{equation} \label{eq:levin.pi}
    \left\{\begin{pmatrix}
        e^{a \pi n x} \sin \pi n x \\
        \pi n \cdot e^{(a - i) \pi n x}
    \end{pmatrix}\right\}_{n \in \bZ \setminus \{0\}}.
\end{equation}
It is easily seen that a potential matrix of the operator
$L_{C,D}(Q)$ associated with the boundary value
problem~\eqref{eq:system.levin}--\eqref{eq:y10=y11=0} is
constant: $Q(\cdot)=\begin{pmatrix} 0 & -e^{-i \theta} \\ 0 & 0
\end{pmatrix}$. Clearly,
\begin{equation}
    B = \diag(b_1, b_2) := -i \diag (e^{i \theta}, e^{-i \theta}).
\end{equation}
Moreover, due to~\eqref{eq:ia.notin} $\arg b_1 \ne \arg b_2$.

Clearly, boundary conditions~\eqref{eq:y10=y11=0} imply
$J_{13}=1$, while the other determinants $J_{jk}$ are zero. In
particular, boundary conditions~\eqref{eq:y10=y11=0} are
non-weakly regular and even degenerate: $\Delta_0(\cdot) \equiv
0$. However, conditions~\eqref{eq:J32}--\eqref{eq:J14} take now
the form $q_{12}(0) q_{12}(1) \ne 0$ and clearly, are fulfilled.
Hence, by Proposition~\ref{prop:2x2}, the system of
eigenvectors~\eqref{eq:levin.pi} is complete and minimal in
$L^2([0,1]; \bC^2)$. The latter is equivalent to the
completeness and minimality of the system~\eqref{eq:levin} in
$L^2([0,\pi]; \bC^2)$.
\end{proof}
\begin{remark}
In connection with Corollary~\ref{cor:levin} let us consider one
more system of functions $\cK_a = \{e^{a n x} \sin n x\}_{n \in
\bZ \setminus \{0\}}$. Clearly, it is a system of the
eigenfunctions of the problem
\begin{equation} \label{eq:y''-2bly'+'2y=0}
    y'' - 2 a \l y' + (a^2 + 1) \l^2 y = 0, \quad y(0) = y(\pi) = 0.
\end{equation}
It is known (see~\cite[Part II, Appendix A1]{Lev96},
\cite{LyubTka84} and the references therein) that this system is
twofold complete in $L^2[0,\pi]$ in the sense of
M.V.~Keldysh~\cite{Kel51}. The latter means completeness of the
system
\begin{equation}
    \left\{\col(e^{a n x} \sin n x, n e^{a n x} \sin n x) \right\}_{n \in \bZ\setminus \{0\}}
\end{equation}
in $L^2([0,\pi]; \bC^2)$. So, the statement of
Corollary~\ref{cor:levin} is in a sense close to the twofold
completeness and minimality of the system $\cK_a$. Note that
investigation of the completeness and basis property of a "half"
system $\cK_a^+ := \{e^{a n x} \sin n x\}_{n=1}^{\infty}$ in
$L^2[0,\pi]$ has been initiated by A.G.~Kostyuchenko and
constitutes his named problem.

Note also that in the case $a \in \bR$ problem
\eqref{eq:y''-2bly'+'2y=0} naturally arises in the investigation
of the solvability of the following elliptic boundary value
problem in the strip $\Omega = [0,\pi] \times \bR_+$:
\begin{equation} \label{eq:Lu=d2u/dx2+...}
    \begin{cases}
        L u := \frac{\partial^2 u}{\partial x^2}
        - 2 a \frac{\partial^2 u}{\partial x\partial t}
        + (a^2+1)\frac{\partial^2 u}{\partial t^2} = 0, \\
        u(0,t) = u(\pi,t) = 0, \quad t \geqslant 0, \\
        u(x,0) = u_0(x), \quad u_0 \in L^2[0,\pi].
    \end{cases}
\end{equation}
Since equation $Lu=0$ is elliptic, the Cauchy problem in the
strip is incorrect. Applying the Fourier method, i.e. seeking
for a solution of~\eqref{eq:Lu=d2u/dx2+...} in the form $u(x,t)
= e^{\lambda t}y(x)$, leads to
problem~\eqref{eq:y''-2bly'+'2y=0}.
\end{remark}
%
%
\subsection{Necessary conditions of completeness.}
%
%
Next we present some necessary conditions of completeness.
%
%
\begin{proposition} \label{prop:necess}
Let boundary conditions~\eqref{eq:CY(0)+DY(1)=0} be of the form
$y(0)=Ay(1)$, where $\det A \ne 0$,
\begin{equation} \label{eq:AB+BA=0}
    AB+BA=0 \quad\text{and}\quad Q(1-x)=A^{-1}Q(x)A,\quad x \in [0,\eps], \quad\text{for some}\quad \eps>0.
\end{equation}
Then the defect of the system of root functions of the operator
$L_{C,D}(Q)$ in $L^2([0,1]; \bC^n)$ is infinite.
\end{proposition}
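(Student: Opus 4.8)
The plan is to exhibit an infinite-dimensional subspace of $L^2([0,1];\bC^n)$ orthogonal to every root function, exploiting the symmetry $y(0)=Ay(1)$ together with~\eqref{eq:AB+BA=0}. The key observation is that the hypothesis $AB+BA=0$ means $A$ anti-commutes with $B$, so conjugation by $A$ together with the reflection $x\mapsto 1-x$ acts as a symmetry of the differential expression near the endpoints. First I would introduce the reflection-conjugation map $(\mathcal J f)(x):=A^{-1}f(1-x)$ (or a suitable variant). A direct computation shows that if $y$ solves $Ly=\l y$ on $[0,\eps]\cup[1-\eps,1]$, i.e. near both endpoints, then $\mathcal J y$ solves the \emph{same} equation $L\,(\mathcal J y)=\l\,(\mathcal J y)$ near the endpoints: indeed $-iB^{-1}\frac{d}{dx}\bigl(A^{-1}y(1-x)\bigr)=iB^{-1}A^{-1}y'(1-x)=-iA^{-1}B^{-1}y'(1-x)$ using $B^{-1}A^{-1}=-A^{-1}B^{-1}$, and the potential term transforms correctly by the second relation in~\eqref{eq:AB+BA=0}. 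Moreover the boundary condition $y(0)=Ay(1)$ is invariant under $\mathcal J$ precisely because of this same relation. Hence $\mathcal J$ maps root functions of $L_{C,D}(Q)$ to root functions (at the same eigenvalue), at least as far as their germs at the endpoints are concerned.

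The heart of the argument is then a \emph{local uniqueness / finite-codimension} statement: because any root function $u$ and its $\mathcal J$-image agree as solutions of the same first-order ODE near $x=0$ and near $x=1$, the values $u|_{[0,\eps]}$ and $u|_{[1-\eps,1]}$ are rigidly linked — on $[0,\eps]$ a root function is determined by the finitely many Taylor-type data at $0$ (here one uses that solutions of $-iB^{-1}y'+Qy=\l y$ on an interval are determined by their value at one endpoint, so the space of germs at $0$ of solutions, over all $\l$ and all Jordan chains, is governed by $n$-dimensional data for each fixed $\l$; the union over the countable spectrum still lies in a proper closed subspace when restricted to $[0,\eps]$). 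The precise mechanism is the one already used in Corollary~\ref{cor:y1(0)=0}(ii): I would show that the symmetry forces every root function, restricted to $[0,\eps]$, to lie in a \emph{fixed proper subspace} $\mathcal L\subset L^2([0,\eps];\bC^n)$ cut out by the compatibility between its germ at $0$ and its $\mathcal J$-transformed germ at $1$. Concretely, write the boundary condition and the symmetry as a system of linear constraints on the Cauchy data $(u(0),u(1))\in\bC^{2n}$ of a solution; the condition $y(0)=Ay(1)$ plus the $\mathcal J$-invariance gives enough equations to pin $u(0)$ into a proper subspace of $\bC^n$, say $\ker P$ for some nonzero projection $P$ depending only on $A,B$. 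Then the propagator of the ODE on $[0,\eps]$ maps $\ker P$ to a proper closed subspace $\mathcal L$ of $L^2([0,\eps];\bC^n)$, and every root function lies in $\mathcal L$ on $[0,\eps]$.

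Granting this, completeness fails with infinite defect: take any $g\in L^2([0,\eps];\bC^n)$ orthogonal to $\mathcal L$, with $g\ne 0$, extend by zero to $[\eps,1]$; the set of such $g$ is infinite-dimensional (it contains $P^* L^2([0,\eps];\bC^n)$ modulo the propagator, still infinite-dimensional), and each such extension is orthogonal to all root functions. That yields the claimed infinite defect.

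I expect the \textbf{main obstacle} to be making the "rigidity" step fully rigorous: one must check that the constraint coming from $\mathcal J$-invariance genuinely cuts down the endpoint data — i.e. that $AB+BA=0$ with $\det A\ne0$ forces the relevant linear map on $\bC^n$ (built from $A$ and the spectral projections of $B$, or from $A$ and the propagator) to have nontrivial cokernel — rather than being vacuously satisfied. Equivalently, one must verify that $A$ anti-commuting with the diagonal $B$ genuinely permutes/pairs the eigen-directions of $B$ in a way incompatible with the boundary relation $u(0)=Au(1)$ for a full-dimensional family; here the anti-commutation $AB=-BA$ implies $A$ maps the $b_j$-eigenspace of $B$ into the $(-b_j)$-eigenspace, so $A$ can be nonsingular only if the multiplicities of $b_j$ and $-b_j$ coincide, and this pairing is exactly what produces the proper subspace. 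A secondary technical point is handling eigenvalues of higher algebraic multiplicity (Jordan chains / associated functions): one applies the same reflection identity to the generalized eigenfunctions, using that $\frac{\partial^p}{\partial\l^p}$ commutes with $\mathcal J$, so the whole root subspace — not merely the eigenspace — is subject to the same local constraint near the endpoints.
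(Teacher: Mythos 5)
Your opening move is the right one: the reflection--conjugation symmetry coming from $AB+BA=0$ is exactly what the paper exploits (it works with $v(x):=Au(1-x)$ rather than $A^{-1}u(1-x)$; with your normalization the potential condition you would need is $Q(1-x)=AQ(x)A^{-1}$ rather than the stated one, and your claim that the boundary condition is invariant under your map is false in general --- it would require $A^2=I_n$. These are repairable slips, and in fact no invariance of the boundary condition is needed.) The genuine gap is in the second half. The symmetry does \emph{not} pin the Cauchy datum $u(0)$ of a root function into a fixed proper subspace of $\bC^n$, and consequently the restrictions of the root functions to $[0,\eps]$ need not lie in any proper closed subspace $\mathcal{L}$ of $L^2([0,\eps];\bC^n)$: the relation $v(0)=Au(1)=u(0)$ \emph{is} the boundary condition and carries no extra information, and for each eigenvalue one only gets $u(0)\in\ker\bigl(I_n-A\Phi(1,\l)\bigr)$, a subspace that varies with $\l$ and whose union over the spectrum in general spans $\bC^n$. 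So your final construction --- test functions supported on $[0,\eps]$ alone and orthogonal to $\mathcal{L}$ --- has nothing to bite on; the mechanism of Corollary~\ref{cor:y1(0)=0}(ii) that you are imitating is not the one at work here.

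What the symmetry actually yields is a functional equation \emph{linking the two endpoint neighbourhoods}, and the orthogonal complement must be built from functions living near both endpoints. For a Jordan chain $\{u_p\}$ at $\l$ one checks, inductively in $p$, that $v_p(x):=Au_p(1-x)$ satisfies the same recursion $Lv_p=\l v_p+v_{p-1}$ on $[0,\eps]$ (this is where $AB+BA=0$ and $Q(1-x)=A^{-1}Q(x)A$ enter), and $v_p(0)=Au_p(1)=u_p(0)$ by the boundary condition; Cauchy uniqueness then gives $u_p(x)=Au_p(1-x)$ for $x\in[0,\eps]$. One then takes $f$ vanishing on $[\eps,1-\eps]$ with the two halves tied by the \emph{adjoint} relation $f(1-x)=-A^*f(x)$, $x\in[0,\eps]$: splitting $\int_0^1\langle u_p,f\rangle\,dx$ into the two end pieces and substituting $u_p(1-x)=A^{-1}u_p(x)$ makes the contributions cancel, and since the half of $f$ on $[0,\eps]$ is an arbitrary element of $L^2([0,\eps];\bC^n)$, the defect is infinite. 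Your write-up never arrives at this pairing, so as it stands the argument does not close.
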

%
%
\begin{proof}[Proof]
Let $\l$ be an eigenvalue of $L$ and let $\{u_p(x)\}_{p=1}^m$ be
a chain of the eigenfunction and associated functions of the
operator $L$ corresponding to $\l$. Put $u_0(x):=0$. It is clear
that $u_p(\cdot)$, $p \in \{0,1,\ldots,m\}$, satisfies boundary
conditions~\eqref{eq:CY(0)+DY(1)=0} and the following identity
holds
\begin{equation} \label{eq:Lu=...}
    L u_p(x) = \l u_p(x) + u_{p-1}(x), \quad x \in [0,1], \quad p \in \{1,\ldots,m\}.
\end{equation}
Denote $v_p(x) := A u_p(1-x)$. It follows from~\eqref{eq:Lu=...}
and~\eqref{eq:system} that
\begin{align}
    (u_p')(1-x)
    &= i B (\l - Q(1-x)) u_p(1-x) + i B u_{p-1}(1-x) \nonumber \\
    &= i B \left[(\l - Q(1-x)) A^{-1} v_p(x) + A^{-1} v_{p-1}(x) \right].
\end{align}
Further, combining relations~\eqref{eq:AB+BA=0} with the
definition of $v_p$ yields
\begin{align} \label{eq:Lvpx}
    L v_p(x) &= -i B^{-1} v_p'(x) + Q(x) v_p(x) \nonumber \\
             &=  i B^{-1} A \cdot (u_p')(1-x) + Q(x) v_p(x) \nonumber \\
             &= -i A B^{-1} i B \left[\left(\l - Q(1-x)\right) A^{-1} v_p(x) + A^{-1} v_{p-1}(x)\right] + Q(x) v_p(x) \nonumber \\
             &= \l v_p(x) + v_{p-1}(x) + (Q(x) - A Q(1-x) A^{-1}) v_p(x) \nonumber \\
             &= \l v_p(x) + v_{p-1}(x), \quad x \in [0,\eps].
\end{align}
Next, due to the assumption, $v_p(0)=Au_p(1)=u_p(0)$. Thus, both
$u_p$ and $v_p$ satisfy the same equation~\eqref{eq:Lu=...} for
$x \in [0,\eps]$ and have equal initial data at zero. Therefore,
by the Cauchy uniqueness theorem, %
\begin{equation}
    u_p(x) = v_p(x) = A u_p(1-x), \quad x \in [0,\eps].
\end{equation}
Further, let $f \in L^2([0,1]; \bC^n)$ and let
\begin{equation}\label{eq:f(x)=0,x<eps}
    f(x)=0 \quad \text{for}\quad x \in [\eps,1-\eps]\quad \text{and}\quad
    f(1-x)=-A^*f(x),\quad \text{for}\quad x \in [0, \eps].
\end{equation}
Then one has for $p \geqslant 0$
\begin{align}
    \int_0^1\langle u_p(x), f(x) \rangle dx
    &= \int_0^{\eps}\langle u_p(x), f(x) \rangle dx + \int_{0}^{\eps}\langle u_p(1-x), f(1-x) \rangle dx \nonumber \\
    &= \int_0^{\eps}\left(\langle u_p(x), f(x) \rangle + \langle A^{-1} u_p(x), -A^* f(x) \rangle \right) dx = 0.
\end{align}
This identity shows that each vector-function $f$ satisfying
\eqref{eq:f(x)=0,x<eps} is orthogonal to the system of root
functions of the operator $L_{C,D}(Q)$. This completes the
proof.
\end{proof}
Note that existence of a nonsingular solution of the matrix
equation $AB+BA=0$ is equivalent to the similarity of the
matrices $B$ and $-B$: $ABA^{-1} = -B$. Since $B$ is diagonal,
the latter amounts to saying that the spectra $\sigma(B)$ and
$\sigma(-B)$ coincide with their multiplicities. Thus we can
restate Proposition~\ref{prop:necess} as follows.
%
%
\begin{corollary} \label{cor:necess}
Let $n=2p$ and $B = \diag(\wt{B},-\wt{B})$, where
\begin{equation}
    \wt{B} = \diag(I_{n_1} b_1, \ldots, I_{n_r} b_r), \quad n_1+\ldots+n_r=p.
\end{equation}
Further, let
\begin{equation}
    A = \begin{pmatrix} 0 & A_1 \\ A_2 & 0\end{pmatrix}, \quad A_j = \diag(A_{j1},\ldots,A_{jr}),
    \quad A_{jk} \in \GL(n_k, \bC), \quad j \in \{1,2\},
\end{equation}
let boundary conditions~\eqref{eq:CY(0)+DY(1)=0} be of the form
$y(0)=Ay(1)$, and let
\begin{equation} \label{eq:Q(1-x)=...}
    Q(1-x) = A^{-1} Q(x) A, \quad x \in [0,\eps], \quad\text{for some}\quad \eps>0.
\end{equation}
Then the system of root functions of the operator $L_{C,D}(Q)$
is incomplete in $L^2([0,1]; \bC^n)$ and its defect is infinite.
\end{corollary}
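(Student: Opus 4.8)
The plan is to recognize Corollary~\ref{cor:necess} as a direct specialization of Proposition~\ref{prop:necess}, so essentially all the work reduces to checking that the hypotheses of that proposition are met by the particular matrices $B$ and $A$ under consideration. Thus the first step is to verify the anticommutation relation $AB + BA = 0$. With $B = \diag(\wt B, -\wt B)$ and $A = \begin{pmatrix} 0 & A_1 \\ A_2 & 0 \end{pmatrix}$, a block multiplication gives
\begin{equation}
    AB = \begin{pmatrix} 0 & A_1 \\ A_2 & 0 \end{pmatrix}\begin{pmatrix} \wt B & 0 \\ 0 & -\wt B \end{pmatrix} = \begin{pmatrix} 0 & -A_1 \wt B \\ A_2 \wt B & 0 \end{pmatrix},
    \qquad
    BA = \begin{pmatrix} 0 & \wt B A_1 \\ -\wt B A_2 & 0 \end{pmatrix},
\end{equation}
and since $\wt B$ and each $A_{jk}$ are diagonal (hence $\wt B A_j = A_j \wt B$ because of the matching block structure $A_j = \diag(A_{j1},\dots,A_{jr})$ conforming to $\wt B = \diag(I_{n_1}b_1,\dots,I_{n_r}b_r)$), we get $AB + BA = 0$.

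Second, I would check that $A$ is nonsingular: since each $A_{jk} \in \GL(n_k,\bC)$, both $A_1 = \diag(A_{11},\dots,A_{1r})$ and $A_2 = \diag(A_{21},\dots,A_{2r})$ are invertible, and $\det A = \pm \det A_1 \cdot \det A_2 \ne 0$. The symmetry hypothesis $Q(1-x) = A^{-1} Q(x) A$ on $[0,\eps]$ is assumed outright in~\eqref{eq:Q(1-x)=...}, so it carries over verbatim. Hence all hypotheses of Proposition~\ref{prop:necess} hold, and its conclusion — that the defect of the system of root functions of $L_{C,D}(Q)$ in $L^2([0,1];\bC^n)$ is infinite — is precisely the assertion of the corollary.

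Alternatively, and perhaps more in the spirit of the remark preceding the corollary, one observes that the existence of a nonsingular $A$ with $AB + BA = 0$ is equivalent to $\sigma(B) = \sigma(-B)$ with multiplicities, which forces $n$ even and $B$ (after renumbering) to have the stated form $\diag(\wt B, -\wt B)$; the most general such intertwiner $A$ is then block-antidiagonal with invertible diagonal blocks respecting the eigenvalue grouping. So Corollary~\ref{cor:necess} is simply the explicit description of the class of pairs $(B, A)$ to which Proposition~\ref{prop:necess} applies. There is no genuine obstacle here — the only mild point of care is bookkeeping: making sure the block decomposition $\bC^n = \bC^p \oplus \bC^p$ used for $A$ is compatible with the finer decomposition into the $\beta_j$-eigenspaces of $\wt B$, which is exactly what the hypothesis $A_j = \diag(A_{j1},\dots,A_{jr})$ with $A_{jk} \in \GL(n_k,\bC)$ guarantees and which is needed for $\wt B A_j = A_j \wt B$ in the verification of $AB + BA = 0$.
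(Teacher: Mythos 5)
Your proposal is correct and follows exactly the paper's own argument: verify $AB+BA=0$ from the block structure, note $\det A\ne 0$ since each $A_{jk}$ is invertible, and invoke Proposition~\ref{prop:necess}. One tiny slip: the blocks $A_{jk}\in\GL(n_k,\bC)$ need not be diagonal, but the commutation $\wt B A_j=A_j\wt B$ still holds because $\wt B$ acts as the scalar $b_k I_{n_k}$ on each block, which is the reason your parenthetical actually relies on.
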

%
%
\begin{proof}[Proof]
Due to the block structure of the matrices $\wt{B}$, $A_1$ and
$A_2$, one has $AB+BA=0$. Since $A_{jk}$ is nonsingular, $\det A
\ne 0$. Therefore, Proposition~\ref{prop:necess} completes the
result.
\end{proof}
%
%
\begin{remark}
Note that in the case of $2\times 2$ Dirac system ($B =
\diag(-1,1)$, $q_{11} \equiv q_{22} \equiv 0$)
Proposition~\ref{prop:necess} turns
into~\cite[Proposition~5.12]{MalOri12}. Indeed, consider
$2\times 2$ Dirac equation subject to the boundary conditions
$y_1(0) = \alpha_1 y_2(1)$, $y_2(0) = \alpha_2 y_1(1)$. Setting
$A = \begin{pmatrix} 0 & \alpha_1 \\ \alpha_2 & 0\end{pmatrix}$,
one rewrites these conditions as $y(0)=Ay(1)$. Moreover,
condition~\eqref{eq:Q(1-x)=...} turns into $ \alpha_1
q_{21}(1-x) = \alpha_2 q_{12}(x)$, $x \in [0,\eps] \cap
[1-\eps,1]$, for some $\eps>0$, i.e. coincides with the
respective condition from~\cite{MalOri12}. Similar result for
Sturm-Liouville operator subject to degenerate boundary
conditions was proved earlier in~\cite{Mal08}.
\end{remark}
%
%
\section{The Riesz basis property for root functions}
\label{sec:riesz}
%
%
Here we investigate the Riesz basis property for operator
$L_{C,D}(Q)$ by reduction it to the operator
$L_{\wt{C},\wt{D}}(\wt{Q})$ being a perturbation of a normal
operator. To this end we find conditions for matrices $C$ and
$D$ guarantying that $L_{C,D}(0)$ is normal.
%
%
\begin{lemma} \label{lem:normal}
(i) An operator $L:=L_{C,D}(0)$ is normal if and only if
\begin{equation} \label{eq:CBC*=DBD*}
C B C^* = D B D^*.
\end{equation}

(ii) Boundary conditions~\eqref{eq:CY(0)+DY(1)=0} are regular,
i.e. $\det T_{izB}(C,D) \ne 0$ for each admissible $z$,
whenever~\eqref{eq:CBC*=DBD*} is fulfilled.

(iii) If $Q \in L^1([0,1];\bC^{n \times n})$ and condition
\eqref{eq:CBC*=DBD*} is satisfied, then the system of root
functions of the operator $L_{C,D}(Q)$ is complete and minimal
in $L^2([0,1]; \bC^n)$.
\end{lemma}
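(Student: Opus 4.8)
The plan is to prove the three parts in order, with part~(i) being the analytic heart of the argument and parts~(ii)--(iii) following by combining (i) with the completeness machinery already available in the paper. For part~(i), I would start from the description of the adjoint given in Lemma~\ref{lem:adjoint}: since $B = B^*$ in the Dirac type case, $L_{C,D}(0)^* = L_{C_*,D_*}(0)$ for suitable matrices $C_*, D_*$ with $\rank\begin{pmatrix} C_* & D_* \end{pmatrix} = n$. Normality of $L = L_{C,D}(0)$ means $L L^* = L^* L$, and since both $L$ and $L^*$ are restrictions of the same maximal operator $-iB^{-1}\frac{d}{dx}$ with the \emph{same} differential expression (again using $B = B^*$, $Q = 0$), the condition $L = L^*$ on the level of operators — i.e. self-adjointness — would already give normality; but normality is weaker, so one expects the precise criterion to be that $L^*$ is a \emph{unitary conjugate} rotation of $L$, or more concretely that $\dom(L)$ and $\dom(L^*)$ carry compatible boundary data. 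The cleanest route is: compute $L_{C,D}(0)$ explicitly, observe its resolvent/spectrum, and characterize when $L$ commutes with $L^*$. I would realize $C_*,D_*$ explicitly. The adjoint boundary form: for $y \in \dom(L)$, $z \in \dom(L^*)$, integration by parts gives $(Ly,z) - (y,L^*z) = -i\langle B^{-1}y, z\rangle\big|_0^1 = -i\big(\langle B^{-1}y(1),z(1)\rangle - \langle B^{-1}y(0),z(0)\rangle\big)$, which must vanish. The condition $Cy(0) + Dy(1) = 0$ picks out an $n$-dimensional subspace of $\bC^{2n}$ for the pair $(y(0),y(1))$; its annihilator under the (indefinite) form $\begin{pmatrix} B^{-1} & 0 \\ 0 & -B^{-1}\end{pmatrix}$ gives the admissible $(z(0),z(1))$. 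A standard computation shows one may take $C_* = B^{-1}D^*$-type expression; concretely $\dom(L^*)$ is defined by $D^* B z(0) - $ wait — I would carry this out and land on: $L^* = L_{C_*,D_*}(0)$ with $C_* = -D^* B$, $D_* = C^* B$ (up to the obvious normalization), using $B^* = B$.

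The key step is then: $L_{C,D}(0)$ is normal $\iff$ there exists a unitary $U$ on $L^2([0,1];\bC^n)$ with $L^* = U L U^{-1}$, but more usefully one reduces to a finite-dimensional identity. Since every eigenvalue of $L_{C,D}(0)$ comes from the characteristic equation $\det(C + D e^{iB\lambda}) = 0$ (here $\Phi(1,\lambda) = e^{iB\lambda}$ because $Q=0$), and normality forces the system of eigenfunctions to be an orthogonal basis, I expect the condition to reduce precisely to the \emph{orthogonality of eigenfunctions}, and a direct computation of $\int_0^1 \langle e^{iB\lambda x}v, e^{iB\mu x}w\rangle\,dx$ together with the boundary relations should collapse to $CBC^* = DBD^*$. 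Alternatively, and more robustly: $L$ normal $\iff$ $\dom(L) = \dom(L^*)$ and $Ly = $ the same... no. The safe argument: $L$ is normal iff $L$ and $L^*$ have the same domain and $\|Ly\| = \|L^*y\|$ for $y$ in that common domain. Using the explicit form of $L^* = L_{C_*,D_*}(0)$, the equality of domains $\{Cy(0)+Dy(1)=0\} = \{C_*y(0)+D_*y(1)=0\}$ is equivalent to $\begin{pmatrix} C & D\end{pmatrix}$ and $\begin{pmatrix} C_* & D_*\end{pmatrix}$ having the same kernel, i.e. $\begin{pmatrix} C_* & D_*\end{pmatrix} = M\begin{pmatrix} C & D\end{pmatrix}$ for some invertible $M$; substituting $C_* = -D^*B$, $D_* = C^*B$ and eliminating $M$ yields $D^*B\cdot(CC^*+DD^*)^{-1}C = C^*B\cdot(CC^*+DD^*)^{-1}D$-type relations that, after using the maximality condition~\eqref{eq:rankCD} to normalize (WLOG $CC^* + DD^* = I_n$), simplify to $CBC^* = DBD^*$. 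I would present this computation carefully since sign and conjugation bookkeeping is where errors hide — that is the main obstacle.

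For part~(ii), given~\eqref{eq:CBC*=DBD*}, I would show $\det T_{izB}(C,D) \ne 0$ for every admissible $z$ by the following argument: fix admissible $z$, let $P_+ = P_+(z)$ and $P_- = P_-(z)$ be the spectral projections of $B$ onto the subspaces where $\Re(ib_jz) > 0$ and $< 0$ respectively (so $P_+ + P_- = I_n$), and note $T_{izB}(C,D) = CP_- + DP_+$. Suppose $v \in \ker(CP_- + DP_+)^*$, i.e. $P_-C^*v = 0$ and $P_+D^*v = 0$. I want to derive $v = 0$. Using $CBC^* = DBD^*$: compute $\langle CBC^*v, v\rangle = \langle DBD^*v,v\rangle$; split $B = BP_+ + BP_-$ and use $P_-C^*v = 0$ so that $C^*v = P_+C^*v$, hence $\langle CBC^*v,v\rangle = \langle BP_+C^*v, P_+C^*v\rangle$, and similarly $\langle DBD^*v,v\rangle = \langle BP_-D^*v,P_-D^*v\rangle$. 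Now on $\range P_+$ the matrix $iB$ has spectrum with positive real part... more precisely $\Re\langle BP_+ w, w\rangle$ and $\Re\langle BP_- w,w\rangle$ have opposite signs (since $\Re(b_j) $ has fixed sign dictated by $\Re(ib_jz)$ when... hmm, one needs $z$ real or to track the phase) — the correct statement: the Hermitian form $w \mapsto \Re\langle (iz)BP_+ w, w\rangle < 0$ is negative definite on $\range P_+\setminus\{0\}$ while $\Re\langle (iz)BP_-w,w\rangle > 0$ on $\range P_-\setminus\{0\}$. Multiplying the identity $\langle BP_+C^*v,P_+C^*v\rangle = \langle BP_-D^*v,P_-D^*v\rangle$ by $iz$ and taking real parts gives a quantity that is $\le 0$ on the left and $\ge 0$ on the right, forcing both to vanish, hence $P_+C^*v = 0$ and $P_-D^*v = 0$; combined with the earlier $P_-C^*v = 0$, $P_+D^*v = 0$ we get $C^*v = D^*v = 0$, so $(CC^*+DD^*)v = 0$, so $v = 0$ by~\eqref{eq:rankCD}. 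Thus $T_{izB}(C,D)$ is invertible. Finally, part~(iii) is immediate: by~(ii) the boundary conditions are regular, hence (as recalled in the paragraph preceding Definition~\ref{def:weak.regular} and the discussion after~\eqref{eq:detT+-}, or directly from Theorem~\ref{th:compl.gen} / Theorem~\ref{th:explicit.nxn} applied with any admissible $z$ since $\omega_0(z) = \det T_{izB}(C,D) \ne 0$) the system of root functions of $L_{C,D}(Q)$ is complete and minimal in $L^2([0,1];\bC^n)$ for any $Q \in L^1$. I would invoke Corollary~\ref{cor:explicit.nxn} with the pair $\pm z$ for a single admissible $z$: both $\omega_0(z)$ and $\omega_0(-z)$ are nonzero, so $|\omega_0(\pm z)| + |\omega_1(\pm z)| \ne 0$ and completeness follows.
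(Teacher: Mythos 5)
Your overall strategy coincides with the paper's: part (i) via the boundary form produced by integration by parts and a comparison of $\ker\begin{pmatrix} C & D\end{pmatrix}$ with its annihilator; part (ii) via sign-definiteness of that form on the spectral half of $B$ selected by $z$; part (iii) by feeding the resulting regularity into the completeness machinery. Parts (ii) and (iii) are essentially sound. In (ii) your argument (passing to $\ker\bigl(T_{izB}(C,D)\bigr)^*$ and using $C^*v=P_+C^*v$, $D^*v=P_-D^*v$) is a correct, mildly different packaging of the paper's proof, which instead shows $\ker\begin{pmatrix} C & D\end{pmatrix}\cap\cH_z=\{0\}$ using that $\ker\begin{pmatrix} C & D\end{pmatrix}$ is neutral for the form while $\cH_z$ is definite; note only that $\Re\bigl(iz\langle Bw,w\rangle\bigr)=\sum_j|w_j|^2\,\Re(izb_j)$ is \emph{positive} definite on $\range P_+$ (you wrote negative), which does not affect the conclusion. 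In (iii), drop the appeal to Corollary~\ref{cor:explicit.nxn}: $\omega_1$ involves the values $q_{jk}(0)$, $q_{jk}(1)$, and that corollary assumes continuity of $Q$ at the endpoints, which part (iii) does not; since $\omega_0(z)=\det T_{izB}(C,D)\ne 0$ for \emph{every} admissible $z$, the correct citation is Lemma~\ref{lem:Delta} together with Theorem~\ref{th:compl.gen} with $s=0$ (equivalently, weak regularity plus \cite[Theorem 1.2]{MalOri12}), which is what the paper does.

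The genuine gap is in part (i). First, you restrict to $B=B^*$ from the outset, but the lemma must hold for arbitrary nonsingular diagonal $B$: Theorem~\ref{th:basis.LCD} applies it to blocks $\diag(b_{j1}I_{n_j},b_{j2}I_{n_j})$ with $b_{j1}b_{j2}^{-1}\in(-\infty,0)$ but $b_{j1},b_{j2}$ possibly non-real. The paper sidesteps this by observing that $LL^*y=L^*Ly=-(BB^*)^{-1}y''$ for diagonal $B$ and that $\|Ly\|=\|L^*y\|$ holds automatically, so normality reduces to the single condition $\dom(L)=\dom(L^*)$. Second, and more seriously, the pivotal computation --- that $\dom(L)=\dom(L^*)$ is equivalent to \eqref{eq:CBC*=DBD*} --- is never carried out, and the one concrete formula you commit to is wrong: the boundary data of $\dom(L^*)$ is the $n$-dimensional subspace $\cH_2=\{\col(BC^*h,\,-BD^*h):h\in\bC^n\}$ (the annihilator of $\ker\begin{pmatrix} C & D\end{pmatrix}$ with respect to the form induced by $\diag(B^{-1},-B^{-1})$), and this is \emph{not} the kernel of $\begin{pmatrix}-D^*B & C^*B\end{pmatrix}$: with your $C_*,D_*$ one gets $C_*(BC^*h)+D_*(-BD^*h)=-D^*B^2C^*h-C^*B^2D^*h$, which is not identically zero. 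Consequently the subsequent ``elimination of $M$'' cannot be checked and, as written, does not produce \eqref{eq:CBC*=DBD*}. The clean way to finish --- the paper's way --- is to note that \eqref{eq:CBC*=DBD*} says exactly that $\cH_2$ is neutral for the form $w(u,v)=\langle\diag(B^{-1},-B^{-1})u,v\rangle$, that $\cH_1:=\ker\begin{pmatrix} C & D\end{pmatrix}$ is the $w$-orthogonal complement of $\cH_2$ with $\dim\cH_1=\dim\cH_2=n$ by \eqref{eq:rankCD}, and hence that neutrality of $\cH_2$ is equivalent to $\cH_1=\cH_2$, which in turn is equivalent to neutrality of $\cH_1$, i.e.\ to the vanishing of the boundary form on $\dom(L)\times\dom(L)$, i.e.\ to $\dom(L)=\dom(L^*)$. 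Without some such argument your part (i) remains an assertion rather than a proof.
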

%
%
\begin{proof}[Proof]
\textbf{(i)} It is easily seen that
\begin{equation}
    L L^* y = L^* L y = - (B B^*)^{-1} y'', \quad y \in W^{2,2}([0,1];\bC^n).
\end{equation}
Therefore, $L$ is normal if and only if $\dom(L) = \dom(L^*)$,
which is equivalent to
\begin{equation}
    (Lf,g) = (f,L^*g), \quad f,g \in \dom(L).
\end{equation}
In turn, integrating by parts one gets that this identity is
equivalent to
\begin{equation} \label{eq:Bf0=Bf1}
    \langle B^{-1} f(0), g(0) \rangle = \langle B^{-1} f(1), g(1) \rangle, \quad f,g \in \dom(L).
\end{equation}
Put $\wt{B} := \diag(B^{-1},-B^{-1})$ and equip the space $\cH =
\bC^n \oplus \bC^n$ with the bilinear form
\begin{equation} \label{eq:wfg}
    w(u,v) := \langle \wt{B} u, v \rangle = \langle B^{-1} u_1, v_1 \rangle
    - \langle B^{-1} u_2, v_2 \rangle, \quad u = \col(u_1, u_2),\ v = \col(v_1, v_2).
\end{equation}
Now condition~\eqref{eq:Bf0=Bf1} takes the form
\begin{equation} \label{eq:wuv1}
    w(u,v)=0, \quad u, v \in
    \cH_1 := \ker\begin{pmatrix} C & D \end{pmatrix} := \{\col(u_1, u_2) : C u_1 + D u_2 = 0\}.
\end{equation}
On the other hand, the equality $CBC^*=DBD^*$ can be rewritten
as
\begin{equation}
    \langle B^{-1} B C^* h, B C^* k \rangle
    = \langle B^{-1} (- B D^* h), -B D^* k \rangle, \quad h, k \in \bC^n.
\end{equation}
Using~\eqref{eq:wfg} one rewrites this equality in the form
\begin{equation} \label{eq:wuv2}
    w (u, v) = 0, \quad u, v \in
    \cH_2 := \{ \col(B C^* h, -B D^* h) : \ h \in \bC^{n} \}.
\end{equation}
Thus, to prove the statement it suffices to show
that~\eqref{eq:wuv1} is equivalent to~\eqref{eq:wuv2}. To this
end we prove that $\cH_1$ is the right $w$-orthogonal complement
of $\cH_2$,
\begin{equation}
    \cH_1 = \cH_2^{[\bot]} := \{u \in \cH : w(v, u)=0, v \in \cH_2\}.
\end{equation}
Indeed, if
\begin{equation}
    v = \col(B C^* h, -B D^* h) \in \cH_2 \quad\text{and}\quad
    u = \col(u_1, u_2) \in \cH,
\end{equation}
then
\begin{equation}
    w(v, u) = \langle {B^{-1}}(BC^*) h, u_1 \rangle -
    \langle {B^{-1}}(-BD^*) h, u_2 \rangle = \langle h, C u_1 + D u_2 \rangle,
    \qquad h \in \bC^{n}.
\end{equation}
It follows that $w(v, u) = 0$ for each $v \in \cH_2$ if and only
if $Cu_1 + Du_2 = 0$, i.e. $u \in \cH_1$. Next, maximality
condition~\eqref{eq:rankCD} yields $\dim \cH_1 = \dim \cH_2 =
n$.

Now, if~\eqref{eq:wuv2} is satisfied, then $\cH_2 \subset
\cH_2^{[\bot]} = \cH_1$. Since $\dim \cH_1 = \dim \cH_2$, one
has $\cH_1 = \cH_2$ and~\eqref{eq:wuv1} is fulfilled. The
opposite implication is derived similarly.

\textbf{(ii)} Since $L=L_{C,D}(0)$ is normal,
condition~\eqref{eq:wuv1} is satisfied. Let
\begin{equation}
    \beta_1^{-1}, \beta_2^{-1}, \ldots, \beta_{2n}^{-1}
\end{equation}
be the eigenvalues of $\wt{B}$ and let $e_1, e_2, \ldots,
e_{2n}$ be the corresponding normalized eigenvectors. Note that
\begin{equation}
    \beta_k = -\beta_{n+k} = b_k, \quad k \in \{1,\ldots,n\}.
\end{equation}
For every admissible $z$, i.e. for $z$ satisfying
\begin{equation}
    \Re(i z b_k) \ne 0, \quad k \in \{1,\ldots,n\},
\end{equation}
we put
\begin{equation}
    \cH_z := {\rm span}\{e_k : \Re(i z \beta_k) > 0 \}.
\end{equation}
Since
\begin{equation}
    \beta_{n+k} = - \beta_{k}, \quad k\in \{1,\ldots, n\},
\end{equation}
then $\dim \cH_z = n$ for every admissible $z$. Next we note
that
\begin{equation}
    T_{izB}(C,D)= \left. \begin{pmatrix} C & D \end{pmatrix} \right|_{\cH_z}.
\end{equation}
Therefore,
\begin{equation}
    \det T_{izB}(C,D)\ne 0 \quad \Leftrightarrow \quad
    \ker \begin{pmatrix} C & D \end{pmatrix} \cap \cH_z = \{0\}.
\end{equation}
Let $u \in \cH_z$. Then
\begin{equation}
    u = \sum_{\Re(i z \beta_k) > 0} c_k e_k,
\end{equation}
for some $c_1, \ldots, c_n \in \bC$, and
\begin{equation}
    \Re(i z \langle u , \wt{B} u \rangle)
    = \sum_{\Re(i z \beta_k) > 0} |c_k|^2 \Re(i z \overline{\beta_k^{-1}})
    = \sum_{\Re(i z \beta_k) > 0} \frac{|c_k|^2}{|\beta_k|^2} \Re(i z \beta_k).
\end{equation}
Hence
\begin{equation} \label{eq:Re(izh)}
    \Re(i z \langle u , \wt{B} u \rangle) > 0, \quad u \in \cH_{z} \setminus \{0\}.
\end{equation}
On the other hand, due to~\eqref{eq:wuv1},
\begin{equation}
    \langle u, \wt{B} u \rangle = \ol{\langle \wt{B} u, u \rangle} = 0,
    \quad u \in \ker \begin{pmatrix} C & D \end{pmatrix}.
\end{equation}
Combining this fact with~\eqref{eq:Re(izh)} one obtains $\ker
\begin{pmatrix} C & D
\end{pmatrix} \cap \cH_z = \{0\}$ and we are done.

\textbf{(iii)} It follows from (ii) that boundary
conditions~\eqref{eq:CY(0)+DY(1)=0} are weakly $B$-regular. Now
the completeness and minimality of the root functions of the
operator $L_{C,D}(Q)$ is implied by~\cite[Theorem
1.2]{MalOri12}.
\end{proof}
%
%
\begin{remark}
Let $Q \in L^2([0,1];\bC^{n \times n})$. Then (unbounded)
multiplication operator
\begin{equation}
    Q: f \to Q(x)f, \quad f \in L^2([0,1];\bC^{n}),
\end{equation}
is relatively compact with respect to $L_{C,D}(0)$. Therefore
statement (iii) is implied by the classical Keldysh theorem
(cf.~\cite[Theorem 4.3]{Markus86}) if in addition the spectrum
of $L_{C,D}(0)$ lies on the union of rays
\begin{equation}
    \{\l \in \bC : \arg \l = \varphi_k\}, \quad k \in \{1,\ldots,n\}.
\end{equation}
\end{remark}
%
%
Recall the following definitions from~\cite{GohKre65}
and~\cite{Markus86}.
%
%
\begin{definition} \label{def:basis}
(i) A sequence $\{f_k\}_{k=1}^{\infty}$ of vectors in $\fH$ is
called a \textbf{Riesz basis} if it admits a representation $f_k
= T e_k$, $k \in \bN$, where $\{e_k\}_{k=1}^{\infty}$ is an
orthonormal basis in $\fH$ and $T : \fH \to \fH$ is a bounded
operator with bounded inverse.

(ii) A sequence of subspaces $\{\fH_k\}_{k=1}^{\infty}$ is
called a \textbf{Riesz basis of subspaces} in $\fH$ if there
exists a complete sequence of mutually orthogonal subspaces
$\{\fH'_k\}_{k=1}^{\infty}$ and a bounded operator $T$ in $\fH$
with bounded inverse such that $\fH_k = T \fH'_k$, $k \in \bN$.

(iii) A sequence $\{f_k\}_{k=1}^{\infty}$ of vectors in $\fH$ is
called a \textbf{Riesz basis with parentheses} if each its
finite subsequence is linearly independent, and there exists an
increasing sequence $\{n_k\}_{k=0}^{\infty} \subset \bN$ such
that $n_0=1$ and the sequence $\fH_k :=
\Span\{f_j\}_{j=n_{k-1}}^{n_k-1}$, forms a Riesz basis of
subspaces in $\fH$. Subspaces $\fH_k$ are called blocks.
\end{definition}
%
%
To state the next result we need the following definition.
%
%
\begin{definition} \label{def:e-close}
Let $\{\varphi_k\}_{k=1}^n$ be a sequence of angles, $\varphi_k
\in (-\pi, \pi]$, and $\eps>0$. Numbers $\l, \mu \in \bC$ are
called $\eps$-close with respect to $\{\varphi_k\}_{k=1}^n$, if
for some $k \in \{1,\ldots,n\}$ we have
\begin{equation}
    \l, \mu \in \{z \in \bC: |\arg z - \varphi_k| < \eps\}
    \quad{and}\quad |\Re(e^{-i \varphi_k}(\l - \mu))| < \eps.
\end{equation}
In other words, $\l$ and $\mu$ are $\eps$-close if for some $k$
they belong to a small angle with the bisectrix
\begin{equation} \label{eq:l+.def}
    l_+(\varphi_k) := \{\l \in \bC : \arg \l = \varphi_k\}
\end{equation}
and their projections on this ray are close.
\end{definition}
%
%
Let $A$ be an operator with compact resolvent and let $\Omega$
be a bounded subset of $\bC$. We put
\begin{equation}
    N(\Omega,A) := \sum_{\l \in \sigma(A) \cap \Omega} m_a(\l, A) = \sum_{\l \in \sigma(A) \cap \Omega} \dim \cR_{\l}(A).
\end{equation}
Our investigation of the Riesz basis property of the operator
$L_{C,D}$ is based on the following statement that can easily be
extracted from~\cite{MarkMats84} and~\cite[\S I.6]{Markus86}.
%
%
\begin{proposition} \label{prop:basis}
Let $\fH$ be a separable Hilbert space and let $G$ be a normal
operator with compact resolvent in $\fH$. Assume that the
spectrum of $G$ lies on the union of rays $l_+(\varphi_1),
\ldots, l_+(\varphi_n)$, and
\begin{equation} \label{eq:basis.supNDrA}
    \sup_{z \in \bC} N(\bD(z), G) < \infty, \qquad \bD(z) := \{\zeta \in \bC : |\zeta-z| < 1\}.
\end{equation}
Finally, let $T$ be a bounded operator in $\fH$ and let $\eps >
0$ be arbitrarily small. Then the system of root vectors of the
operator $A=G+T$ forms a Riesz basis with parentheses in $\fH$,
where each block is constituted by the root subspaces
corresponding to the eigenvalues of $A$ that are mutually
$\eps$-close with respect to the sequence
$\{\varphi_k\}_{k=1}^n$.
\end{proposition}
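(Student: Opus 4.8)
The plan is to put Proposition~\ref{prop:basis} into the exact form covered by the Markus--Matsaev perturbation theorem for normal operators and then quote it. First I would fix once and for all an orthonormal basis $\{h_k\}$ of eigenvectors of the normal operator $G$, $G h_k = \mu_k h_k$, so that $\sigma(G) = \{\mu_k\} \subset \bigcup_{i=1}^n l_+(\varphi_i)$ and the spectral projections of $G$ onto groups of the $h_k$ are orthogonal. Since $G$ is normal, $\|(\l - G)^{-1}\| = \operatorname{dist}(\l, \sigma(G))^{-1}$, so every $\l$ with $\operatorname{dist}(\l, \sigma(G)) > \|T\|$ lies in $\rho(A)$; hence $A = G+T$ has compact resolvent, $\sigma(A)$ is contained in the closed $\|T\|$-neighbourhood of $\sigma(G)$, and in particular $\sigma(A)$ again clusters near the rays $l_+(\varphi_i)$ and satisfies $\sup_{z} N(\bD(z), A) < \infty$ by~\eqref{eq:basis.supNDrA}.

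Next I would construct the blocks. Introduce on $\sigma(G)$ (and, in the same way, on $\sigma(A)$) the graph whose edges join $\eps$-close pairs in the sense of Definition~\ref{def:e-close}, and let $\{\Lambda_j^0\}$ (resp. $\{\Lambda_j\}$) be its connected components, matched so that $\Lambda_j$ and $\Lambda_j^0$ lie inside one common contour. Using~\eqref{eq:basis.supNDrA} I would check that these components are uniformly well behaved: for $|\l|$ large the $\eps$-sectors about distinct rays are disjoint, so each component eventually sits near a single ray $l_+(\varphi_i)$; there the orthogonal projections onto $l_+(\varphi_i)$ of the points of a component form an $\eps$-chain, and a component with $m$ points would span a segment of length $\ge (m-1)\eps$, forcing $m \le N_0\bigl((m-1)\eps + 1\bigr)$ with $N_0 := \sup_z N(\bD(z), G)$, which bounds $\# \Lambda_j^0 \le M$ and $\operatorname{diam}\Lambda_j^0 \le (M-1)\eps$ uniformly in $j$; near the origin there are only finitely many eigenvalues, which fall into finitely many finite components. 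Moreover two distinct components on the same ray are separated by more than $\eps$ in ray-projection, hence in distance, and globally $\inf_{i \ne j}\operatorname{dist}(\Lambda_i^0, \Lambda_j^0) > 0$. Since, by the unit-disk bound, consecutive eigenvalues of $G$ in any ray-window of length $1$ leave a gap $\ge (N_0+1)^{-1}$, I can enclose each $\Lambda_j^0$ by a rectifiable contour $\Gamma_j$ with $\operatorname{dist}(\Gamma_j, \sigma(G)) \ge c_1 > 0$, length $\le c_2 < \infty$, containing $\Lambda_j^0$ and $\Lambda_j$ and no other point of $\sigma(G) \cup \sigma(A)$, all bounds uniform in $j$; the root subspace of $A$ attached to $\Lambda_j$ is then exactly $\range P_j(A)$, where $P_j(A) := \frac{1}{2\pi i}\int_{\Gamma_j}(\l - A)^{-1}\,d\l$.

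With $G$ normal and block-diagonal with respect to the orthogonal decomposition $\fH = \bigoplus_j \fH_j$, $\fH_j := \Span\{h_k : \mu_k \in \Lambda_j^0\}$, the block dimensions uniformly bounded by $M$, the clusters $\{\Lambda_j^0\}$ pairwise $\eps$-separated, and $T$ bounded, these are precisely the hypotheses of the Markus--Matsaev theorem~\cite{MarkMats84},~\cite[\S I.6]{Markus86}: it gives $\dim\range P_j(A) = \dim\fH_j$ for all $j$ and asserts that $\{\range P_j(A)\}_j$ is a Riesz basis of subspaces of $\fH$ (indeed $\range P_j(A) = S\fH_j$ for a single bounded invertible $S$), whence regrouping and enumerating the root vectors of $A$ by these blocks produces the asserted Riesz basis with parentheses, each block being the span of the root subspaces for an $\eps$-close cluster of eigenvalues of $A$. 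The step I expect to be the real obstacle is exactly the one delegated to those references: since $\|T\|$ need not be small compared with $\eps$ or with $\operatorname{diam}\Lambda_j^0$, one cannot bound $\|(\l - A)^{-1}\|$ on $\Gamma_j$ by a Neumann series, nor can one show $\sum_j \|P_j(A) - P_j(G)\|^2 < \infty$; instead one must control the family $\{P_j(A)\}$ globally --- uniform boundedness together with a two-sided Bessel-type estimate $\sum_j \|P_j(A)f\|^2 \asymp \|f\|^2$ --- using only the uniform bound on cluster sizes and the uniform spectral separation. Everything preceding this --- the clustering, the uniform cardinality and separation bounds, and the construction of the contours --- is routine once~\eqref{eq:basis.supNDrA} is in hand.
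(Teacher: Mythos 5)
There is a genuine gap, and it sits exactly where you place the blocks. You build the clusters $\Lambda_j^0$ from the $\eps$-closeness graph on $\sigma(G)$, so consecutive clusters are only guaranteed to be separated by a bit more than $\eps$, while the perturbation can displace eigenvalues by as much as $\|T\|$, which is in no way controlled by $\eps$. Consequently the contours you need --- at uniformly positive distance from $\sigma(G)$, enclosing $\Lambda_j^0$ together with a ``matched'' cluster $\Lambda_j$ of $\sigma(A)$ and nothing else --- need not exist, and the matching itself need not exist: if $G$ has simple eigenvalues at $0$ and $3\eps$ (two distinct $G$-clusters) and a rank-two $T$ supported on the corresponding eigenvectors moves them to $1.4\eps$ and $1.6\eps$, then $\sigma(A)$ has a single $\eps$-cluster straddling the two $G$-clusters, and no family of disjoint contours of the kind you describe can be drawn. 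More generally, condition~\eqref{eq:basis.supNDrA} only guarantees gaps of order $N_0^{-1}$ in $\sigma(G)$, so for $\|T\|\gtrsim N_0^{-1}$ one cannot even place a contour at distance $>\|T\|$ from $\sigma(G)$, and on a contour closer than $\|T\|$ the resolvent of $A$ admits no a priori bound (eigenvalues of $A$ may lie on it). The statement you attribute to Markus--Matsaev --- that for \emph{any} fixed system of contours around uniformly separated, uniformly bounded clusters of $\sigma(G)$ the projections $P_j(A)$ form a Riesz basis of subspaces --- is not what \cite{MarkMats84},~\cite[\S I.6]{Markus86} prove, and is false in the generality you use it.

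What the reference actually provides, and what the paper's proof uses, is an \emph{adaptive} selection of cut points transversal to each ray: one orders the eigenvalues $\l_{j,k}$ of $A$ (not of $G$) along the ray $l_+(\varphi_j)$, sets $r_k=\Re(e^{-i\varphi_j}\l_{j,k})-\eps/2$, and invokes~\cite[Lemma 6.8]{Markus86} with $p=0$, $q=\|T\|+4\eps$ to produce points $x_k\in(r_k-\eps/2,r_k+\eps/2)$ --- an existence statement proved by a measure/counting argument, not a ``for all cuts'' statement --- such that the spectral subspaces of $A$ between consecutive cuts form a Riesz basis of subspaces; the $\eps$-width of the windows is what forces non-$\eps$-close eigenvalues of $A$ into different blocks. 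So the blocks are defined by $\sigma(A)$ from the start and the hard analytic content is precisely the selection your construction bypasses. Two further points: your cardinality bound should read that an $m$-point component spans a segment of length at most $(m-1)\eps$ (the displayed inequality is reversed), which still yields $m\leqslant N_0(1-\eps)/(1-N_0\eps)$ only for $\eps<N_0^{-1}$; and you never address completeness of the root vectors of $A$ in $\fH$, which is needed to pass from a Riesz basis of subspaces of a closed linear span to a Riesz basis with parentheses of all of $\fH$ --- the paper gets this from the Keldysh theorem, using that $T$ is relatively compact with respect to $G$ and $\sigma(G)$ lies on finitely many rays.
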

%
%
\begin{proof}[Proof]
Since $T$ is bounded, it is relatively compact with respect to
$G$. Hence by~\cite[Corollary 3.7]{Markus86}, all but finitely
many eigenvalues of $A=G+T$ belong to the union of
non-overlapping sectors
\begin{equation}
    \Omega_j(\eps) := \{\l \in \bC : |\arg \l - \varphi_j| < \eps\},
    \quad j \in \{1,\ldots,n\}.
\end{equation}
Fix $j \in \{1,\ldots,n\}$ and set $G_j := e^{-i \varphi_j} G$.
Condition~\eqref{eq:basis.supNDrA} implies condition~(6.21)
of~\cite[Lemma 6.8]{Markus86},
\begin{equation}
    \sup_{k \in \bN} N\left((r_k - q r_k^p, r_k + q r_k^p), G_j\right) < \infty,
\end{equation}
with $p=0$, any $q > 0$ and any increasing sequence
$\{r_k\}_{k=1}^{\infty}$. Let $\{\l_{j,k}\}_{k=1}^{\infty}$ be
the sequence of eigenvalues of $A$ belonging to $\Omega_j(\eps)$
and ordered in ascending order of $\Re\left(e^{-i \varphi_j}
\l_{j,k}\right)$. Put
\begin{equation}
    r_k := \Re\left(e^{-i \varphi_j}\l_{j,k}\right) - \eps/2,
    \quad k \in \bN.
\end{equation}
Applying~\cite[Lemma 6.8]{Markus86} to the operator $G_j$ with
$p=0$, $q = \|T\| + 4 \eps$ and the above sequence
$\{r_k\}_{k=1}^{\infty}$, we conclude that there exists
\begin{equation}
    x_k \in (r_k - \eps/2, r_k + \eps/2), \quad k \in \bN,
\end{equation}
such that the sequence $\{x_k\}_{k=1}^{\infty}$ is strictly
monotone and the sequence of subspaces
\begin{equation}
    \fH_{j,k} := \Span \{\cR_{\l_{j,s}}(A) : x_{k} \leqslant
    \Re\left(e^{-i \varphi_j} \l_{j,s}\right) < x_{k+1}\},
    \quad k \in \bN,
\end{equation}
forms a Riesz basis of subspaces in its closed linear span. It
follows from definition of $r_k$ and $x_k$ that
\begin{equation}
    \Re\left(e^{-i \varphi_j} \l_{j,k}\right) - \eps < x_k < \Re\left(e^{-i
    \varphi_j} \l_{j,k}\right), \quad k \in \bN.
\end{equation}
Hence root subspaces of $A$ corresponding to the eigenvalues of
$A$, that are not $\eps$-close with respect to
$\{\varphi_k\}_{k=1}^n$, belong to different blocks. Let $\l'_1,
\ldots, \l'_m$ be the sequence of eigenvalues of $A$ not
belonging to the union of sectors $\cup_{j=1}^n \Omega_j(\eps)$.
Clearly, the family of subspaces
\begin{equation} \label{eq:subspaces}
    \{\cR_{\l'_k}\}_{k=1}^m, \{\fH_{1,k}\}_{k=1}^{\infty}, \ldots, \{\fH_{n,k}\}_{k=1}^{\infty},
\end{equation}
forms a Riesz basis of subspaces in its closed linear span.
Since the latter spans the system of root vectors of the
operator $A$, the Keldysh theorem (cf.~\cite[Theorem
4.3]{Markus86}) yields its completeness in $\fH$. Therefore, the
system of root vectors of the operator $A$ forms a Riesz basis
with parentheses having the required properties of the blocks.
\end{proof}
Now we are ready to prove our main result on the Riesz basis
property of BVP~\eqref{eq:system}--\eqref{eq:CY(0)+DY(1)=0}.
%
%
\begin{theorem} \label{th:basis.LCD}
Let $Q \in L^{\infty}([0,1]; \bC^{n \times n})$ and
\begin{equation} \label{eq:basis.BCD}
    B = \diag(B_j)_{j=1}^r, \quad C = \diag(C_j)_{j=1}^r, \quad D = \diag(D_j)_{j=1}^r,
\end{equation}
where
\begin{align}
    \label{eq:basis.Bj2}
    B_j &= \begin{pmatrix} b_{j1} I_{n_j} & 0 \\ 0 & b_{j2} I_{n_j} \end{pmatrix},
    \quad b_{j1} b_{j2}^{-1} \in (-\infty, 0), \quad \\
    \label{eq:basis.CjDj2}
    C_j &= \begin{pmatrix} C_{j1} & C_{j2} \\ 0 & 0 \end{pmatrix},\quad
    D_j = \begin{pmatrix} 0 & 0 \\ D_{j1} & D_{j2} \end{pmatrix},
    \quad C_{j1}, C_{j2}, D_{j1}, D_{j2} \in \GL (n_j, \bC).
\end{align}
Then the system of root functions of the operator $A :=
L_{C,D}(Q)$ forms a Riesz basis with parentheses in $L^2([0,1];
\bC^n)$, where each block is constituted by the root subspaces
corresponding to the eigenvalues of $A$ that are mutually
$\eps$-close with respect to the sequence of angles
\begin{equation}
    \{-\varphi_1,\ldots,-\varphi_r, \pi-\varphi_1,\ldots,\pi-\varphi_r\}.
\end{equation}
Here $\varphi_j = \arg(b_{j1}-b_{j2})$, $j \in \{1,\ldots,r\}$,
and $\eps>0$ is sufficiently small.
\end{theorem}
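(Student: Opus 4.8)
The plan is to realize $A:=L_{C,D}(Q)$, up to a bounded similarity transformation, as a bounded perturbation of a normal operator whose spectrum lies on the $2r$ rays $l_+(-\varphi_j)$, $l_+(\pi-\varphi_j)$ of~\eqref{eq:l+.def}, and then to invoke Proposition~\ref{prop:basis}. First, since $B,C,D$ are block-diagonal as in~\eqref{eq:basis.BCD}, the free operator splits: $L_{C,D}(0)=\bigoplus_{j=1}^r L_{C_j,D_j}(0)$ on $L^2([0,1];\bC^n)=\bigoplus_{j=1}^r L^2([0,1];\bC^{2n_j})$, and $A=\bigl(\bigoplus_{j=1}^r L_{C_j,D_j}(0)\bigr)+\mathcal Q$, where $\mathcal Q\colon f\mapsto Q(\cdot)f$ is a bounded operator with $\|\mathcal Q\|\le\|Q\|_{\infty}$. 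Hence it suffices to work in a single block.

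Next I would show that each $L_{C_j,D_j}(0)$ is similar, through a bounded operator with bounded inverse, to an operator $L_{\wt C_j,\wt D_j}(\wt Q_j)$ with $\wt Q_j\in L^{\infty}$ and $\wt C_jB_j\wt C_j^{*}=\wt D_jB_j\wt D_j^{*}\ (=0)$; by Lemma~\ref{lem:normal}(i) the operator $G_j:=L_{\wt C_j,\wt D_j}(0)$ is then normal. Every matrix that is block-diagonal with respect to $\bC^{2n_j}=\bC^{n_j}\oplus\bC^{n_j}$ commutes with $B_j$ (which is scalar on each summand), so the following three similarities all preserve the free equation $-iB_j^{-1}y'=\l y$. \textbf{(a)} The constant point transformation $y\mapsto\diag(C_{j1},C_{j2})y$ turns $C_j$ into $\left(\begin{smallmatrix}I&I\\ 0&0\end{smallmatrix}\right)$ and $D_j$ into $\left(\begin{smallmatrix}0&0\\ R_j&S_j\end{smallmatrix}\right)$, where $R_j=D_{j1}C_{j1}^{-1}$, $S_j=D_{j2}C_{j2}^{-1}\in\GL(n_j,\bC)$. \textbf{(b)} Let $R_j^{-1}S_j=U_je^{H_j}$ be the polar decomposition, $U_j$ unitary and $H_j:=\tfrac12\log\bigl((R_j^{-1}S_j)^{*}R_j^{-1}S_j\bigr)=H_j^{*}$; the gauge transformation associated with $W_j(x)=\diag(I_{n_j},e^{-xH_j})$ as in~\eqref{eq:iBW'=Q1.W}--\eqref{eq:wtD.wtQ} leaves $C_j$ unchanged (since $W_j(0)=I$), replaces $D_j$ by $D_jW_j(1)$, i.e.\ replaces $S_j$ by $S_je^{-H_j}=R_jU_j$, and modifies the potential only by an $L^{\infty}$ term ($W_j$ and $W_j^{-1}$ are $C^1$ on $[0,1]$). \textbf{(c)} Writing $b_{j2}=-\rho_jb_{j1}$ with $\rho_j>0$, available by~\eqref{eq:basis.Bj2}, the constant point transformation $y\mapsto\diag(\rho_j^{-1/2}I,I)y$ produces
\[
    \wt C_j=\begin{pmatrix}\rho_j^{1/2}I&I\\ 0&0\end{pmatrix},\qquad
    \wt D_j=\begin{pmatrix}0&0\\ \rho_j^{1/2}R_j&R_jU_j\end{pmatrix},
\]
and a direct computation gives $\wt C_jB_j\wt C_j^{*}=(\rho_jb_{j1}+b_{j2})\left(\begin{smallmatrix}I&0\\0&0\end{smallmatrix}\right)=0$ and, using $U_jU_j^{*}=I$, $\wt D_jB_j\wt D_j^{*}=(\rho_jb_{j1}+b_{j2})\left(\begin{smallmatrix}0&0\\0&R_jR_j^{*}\end{smallmatrix}\right)=0$. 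Carrying out these transformations in every block simultaneously, and conjugating $\mathcal Q$ accordingly (which keeps it of $L^{\infty}$-multiplication type, hence bounded), yields a bounded invertible $\Gamma$ with $\Gamma A\Gamma^{-1}=L_{\wt C,\wt D}(\wt Q)=G+\wt{\mathcal Q}$, where $G:=\bigoplus_{j=1}^r G_j=L_{\wt C,\wt D}(0)$ is normal with compact resolvent and $\wt{\mathcal Q}$ is bounded.

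The spectrum of $G$ is then explicit. For each $j$ the eigenvalue equation of $L_{\wt C_j,\wt D_j}(0)$, written with $y^{(k)}(x)=e^{i\l b_{jk}x}c_k$ ($k=1,2$, $c_k\in\bC^{n_j}$), gives $\rho_j^{1/2}c_1+c_2=0$ and, after eliminating $c_2$, $U_jc_1=e^{i\l(b_{j1}-b_{j2})}c_1$. Since $U_j$ is unitary with eigenvalues $e^{i\theta_{j,\ell}}$, $\theta_{j,\ell}\in\bR$, the spectrum of $G_j$ equals $\{(\theta_{j,\ell}+2\pi k)/(b_{j1}-b_{j2}):k\in\bZ,\ 1\le\ell\le n_j\}$, i.e.\ a finite union of arithmetic progressions lying on the two rays $l_+(-\varphi_j)\cup l_+(\pi-\varphi_j)$ with $\varphi_j=\arg(b_{j1}-b_{j2})$. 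Thus $\sigma(G)$ lies on the $2r$ rays indicated in the theorem, every bounded window along each ray carries boundedly many eigenvalues, so $\sup_{z\in\bC}N(\bD(z),G_j)<\infty$, and summing over $j$ gives $\sup_{z}N(\bD(z),G)<\infty$. Normality of $G$ moreover makes all eigenvalues semisimple, so $N$ counts eigenspace dimensions, as required by Proposition~\ref{prop:basis}.

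It remains to apply Proposition~\ref{prop:basis} to $\Gamma A\Gamma^{-1}=G+\wt{\mathcal Q}$ with the angle sequence $\{-\varphi_1,\dots,-\varphi_r,\pi-\varphi_1,\dots,\pi-\varphi_r\}$: its root vectors form a Riesz basis with parentheses, the blocks being the spans of the root subspaces at mutually $\eps$-close eigenvalues. Since $\Gamma$ is a similarity — preserving the spectrum with multiplicities and mapping root vectors to root vectors — the root vectors of $A$ inherit the same property with the same eigenvalue grouping, which is exactly the assertion. The main obstacle is Step (b): no point transformation can by itself enforce $\wt C_jB_j\wt C_j^{*}=\wt D_jB_j\wt D_j^{*}$, since that would force the ``monodromy'' matrix $R_j^{-1}S_j$ to be similar to a unitary; the remedy is to spend one gauge transformation to replace $R_j^{-1}S_j$ by its unitary polar factor at the controlled price of a bounded potential, after which a diagonal rescaling makes both quadratic forms vanish. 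Keeping precise track of the three transformations and of the conjugated potential is the technical heart of the argument.
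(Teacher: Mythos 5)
Your proposal is correct and follows essentially the same route as the paper: reduce by gauge/point transformations (all commuting with $B$) to boundary conditions satisfying $\wt{C}B\wt{C}^*=\wt{D}B\wt{D}^*=0$, invoke Lemma~\ref{lem:normal} to get a normal free operator whose spectrum is explicitly a finite union of arithmetic progressions on the stated rays with $\sup_z N(\bD(z),G)<\infty$, and then apply Proposition~\ref{prop:basis} to the bounded perturbation and transfer back by similarity. The only deviation is the normalization step: the paper uses a single gauge transform $W(x)=W_0e^{x\log(W_0^{-1}W_1)}$, which prescribes $W(0)=W_0$ and $W(1)=W_1$ independently and hence normalizes $C$ and $D$ simultaneously to $\wt{C}_j=\left(\begin{smallmatrix}I&b_jI\\0&0\end{smallmatrix}\right)$, $\wt{D}_j=\left(\begin{smallmatrix}0&0\\I&b_jI\end{smallmatrix}\right)$ with $b_j=(-b_{j1}b_{j2}^{-1})^{1/2}$, so the polar-decomposition detour in your step (b) is sound but unnecessary.
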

%
%
\begin{proof}[Proof]
First we show that the operator $L_{C,D}(Q)$ is similar to the
operator $L_{\wt{C},\wt{D}}(\wt{Q})$ with the same matrix $B$
and matrices $\wt{C},\wt{D}$ satisfying~\eqref{eq:CBC*=DBD*}. To
this end we use the gauge transform $W : y \to W(x) y$, with
$W(\cdot)$ satisfying
\begin{gather}
    \label{eq:WxB=BWx} W(x) B = B W(x), \quad x \in [0,1], \\
    \label{eq:W.in.C1} W(\cdot) \in C^1([0,1]; \bC^{n \times n}),
    \quad W^{-1}(\cdot) \in C([0,1]; \bC^{n \times n}).
\end{gather}
Then the operator $L_{C,D}(Q)$ is transformed into the operator
$L_{\wt{C},\wt{D}}(\wt{Q}) = W^{-1} L_{C,D}(Q) W$ with the same
$B$, and matrices $\wt{C}$, $\wt{D}$, $\wt{Q}(\cdot)$ given by
\begin{equation} \label{eq:wtC.wtD}
    \wt{C} := CW(0), \ \ \wt{D} := DW(1), \ \ \wt{Q}(x) :=
    W^{-1}(x) Q(x) W(x) -i W^{-1}(x) B^{-1} W'(x).
\end{equation}
Since $W(\cdot), W'(\cdot), W^{-1}(\cdot), Q(\cdot) \in
L^{\infty}([0,1]; \bC^{n \times n})$, then $\wt{Q} \in
L^{\infty}([0,1]; \bC^{n \times n})$.

Due to the block diagonal
structure~\eqref{eq:basis.BCD}--\eqref{eq:basis.CjDj2} of the
matrices $B$, $C_j$, and $D_j$, we can choose $W_0, W_1 \in
\GL(n, \bC)$ such that $W_k B = B W_k$, $k \in \{0,1\}$, and
\begin{eqnarray}
    \label{eq:C.W0}
    C W_0 = \diag(\wt{C}_j)_{j=1}^r, && \wt{C}_j :=
    \begin{pmatrix} I_{n_j} & b_j I_{n_j} \\ 0 & 0 \end{pmatrix},
    \quad b_j := \left(-b_{j1}b_{j2}^{-1}\right)^{1/2}, \\
    \label{eq:D.W1}
    D W_1 = \diag(\wt{D}_j)_{j=1}^r, && \wt{D}_j :=
    \begin{pmatrix} 0 & 0 \\ I_{n_j} & b_j I_{n_j} \end{pmatrix}, \quad j \in \{1,\ldots,r\}.
\end{eqnarray}
Choose any branch of logarithm and put $\wt{W} := \log (W_0^{-1}
W_1)$. Clearly, $\wt{W}$ is well defined since the matrix
$W_0^{-1} W_1$ is nonsingular. Hence $W(x) := W_0 e^{x \wt{W}}$
satisfies~\eqref{eq:WxB=BWx},~\eqref{eq:W.in.C1} and $W(0)=W_0$,
$W(1)=W_1$. Define a gauge transform $W: y \to W(x) y$. In view
of~\eqref{eq:wtC.wtD},~\eqref{eq:C.W0},~\eqref{eq:D.W1} the
matrices $\wt{C},\wt{D}$ of the new operator
$L_{\wt{C},\wt{D}}(\wt{Q}) = W^{-1} L_{C,D}(Q) W$ are $\wt{C} =
\diag(\wt{C}_j)_{j=1}^r$ and $\wt{D} = \diag(\wt{D}_j)_{j=1}^r$
where $\wt{C}_j$ and $\wt{D}_j$ are given by~\eqref{eq:C.W0} and
\eqref{eq:D.W1}, respectively.

Straightforward calculation shows that
\begin{equation}
    \wt{C}_j B_j \wt{C}_j^* = \wt{D}_j B_j \wt{D}_j^* = 0, \quad j \in \{1,\ldots,r\}.
\end{equation}
Hence $\wt{C}B \wt{C}^* = \wt{D}B \wt{D}^* = 0$. By
Lemma~\ref{lem:normal}, the operator $G :=
L_{\wt{C},\wt{D}}({0})$ is normal. Its spectrum coincides with
the set of zeros of the characteristic determinant
$\Delta(\cdot) = \det(\wt{C} + \wt{D}\wt{\Phi}(1,\cdot))$. The
fundamental matrix $\wt{\Phi}(\cdot,\l)$ of the operator
$L_{\wt{C},\wt{D}}({0})$ is $\wt{\Phi}(x,\l) = e^{i B \l x}$.
Hence, in view of the block-diagonal structure of the matrices
$B$, $\wt{C}$, $\wt{D}$, we obtain
\begin{equation}
    \Delta(\l) = \prod_{j=1}^r \det(\wt{C}_j + \wt{D}_j e^{i B_j \l})
    = \prod_{j=1}^r \det \begin{pmatrix} I_{n_j} & b_j I_{n_j} \\ e^{i b_{j1} \l} I_{n_j} & b_j e^{i b_{j2} \l} I_{n_j} \end{pmatrix}
    = \prod_{j=1}^r \left(b_j^{n_j} \cdot (e^{i b_{j2} \l} - e^{i b_{j1} \l})^{n_j}\right).
\end{equation}
Hence
\begin{equation}
    \sigma(G) = \left\{\frac{2 \pi k}{b_{j1}-b_{j2}} : k \in \bZ, j
    \in \{1, \ldots, r\}\right\}.
\end{equation}
Thus $\sigma(G)$ lies on the union of rays
$\{l_+(-\varphi_j)\}_1^r$ and $\{l_+(\pi-\varphi_j)\}_1^r$,
where
\begin{equation}
    \varphi_j = \arg(b_{j1}-b_{j2}), \quad j \in \{1,\ldots,r\}.
\end{equation}
Moreover, $\sigma(G)$ is the union of a finite number of
arithmetic progressions and multiplicities of eigenvalues are
bounded, hence condition~\eqref{eq:basis.supNDrA} is satisfied.
Since $\wt{Q}(\cdot)$ is bounded, then, by
Proposition~\ref{prop:basis}, the system of root functions of
the operator
\begin{equation}
    \wt{A} := L_{\wt{C},\wt{D}}(\wt{Q}) = L_{\wt{C},\wt{D}}(0) + \wt{Q} = G + \wt{Q}
\end{equation}
forms a Riesz basis with parentheses in $\fH$, where each block
is constituted by the root subspaces corresponding to the
mutually close eigenvalues of $A$ in the sense of
Definition~\ref{def:e-close}. Since $A = L_{C,D}(Q)$ is similar
to $\wt{A}$, the same is true for the root functions of the
operator $L_{C,D}(Q)$.
\end{proof}
As a consequence of this result we obtain \emph{the Riesz basis
property of the system of root functions for Dirac system with
general splitting boundary conditions}.
%
%
\begin{corollary} \label{cor:basis.split}
Let $n=2m$, $Q \in L^{\infty}([0,1]; \bC^{n \times n})$ and let
\begin{align}
    B &= \diag(b_1 I_m, b_2 I_m), \quad b_1 < 0 < b_2, \\
    C &= \begin{pmatrix} C_1 & C_2 \\ 0 & 0 \end{pmatrix}, \quad
    D = \begin{pmatrix} 0 & 0 \\ D_1 & D_2 \end{pmatrix}, \quad
    C_1, C_2, D_1, D_2 \in \GL (m, \bC).
\end{align}
Then the system of root functions of the operator $L_{C,D}(Q)$
forms a Riesz basis with parentheses in $L^2([0,1]; \bC^{n
\times n})$.
\end{corollary}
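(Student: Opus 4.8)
The plan is to obtain this statement as the single-block instance ($r=1$) of Theorem~\ref{th:basis.LCD}. First I would put $r=1$ and $n_1=m$ in the notation of~\eqref{eq:basis.BCD}--\eqref{eq:basis.CjDj2}, so that the matrix $B=\diag(b_1 I_m, b_2 I_m)$ becomes $B_1$ with $b_{11}=b_1$ and $b_{12}=b_2$. The only condition in~\eqref{eq:basis.Bj2} that is not verbatim present here is $b_{11}b_{12}^{-1}\in(-\infty,0)$; but since $b_1<0<b_2$ one has $b_1 b_2^{-1}<0$, so it holds. Likewise the matrices $C$ and $D$ have exactly the block form~\eqref{eq:basis.CjDj2} with $C_{11}=C_1$, $C_{12}=C_2$, $D_{11}=D_1$, $D_{12}=D_2$, and these four matrices are nonsingular by assumption. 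Thus all the hypotheses of Theorem~\ref{th:basis.LCD} are fulfilled.

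Next I would simply invoke Theorem~\ref{th:basis.LCD}, which yields at once that the system of root functions of $L_{C,D}(Q)$ forms a Riesz basis with parentheses in $L^2([0,1];\bC^n)$. For completeness one may add the explicit description of the blocks supplied by that theorem: here $\varphi_1=\arg(b_1-b_2)=\pi$ (because $b_1-b_2<0$), so the relevant sequence of angles reduces to $\{-\varphi_1,\pi-\varphi_1\}=\{\pi,0\}$, i.e.\ to the two directions of the real axis, and each block is spanned by the root subspaces attached to eigenvalues that are mutually $\eps$-close along $\bR$. This is consistent with the fact that, for $B=\diag(b_1 I_m, b_2 I_m)$ with $b_1<0<b_2$, the prescribed boundary conditions decouple into one condition at the endpoint $0$ and one at the endpoint $1$, i.e.\ they are the (regular) splitting conditions for a $2m\times 2m$ Dirac-type system.

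There is essentially no obstacle: the whole content of the corollary is carried by Theorem~\ref{th:basis.LCD}, and the proof reduces to the bookkeeping of matching the block notation and to the trivial observation that $b_1<0<b_2$ forces $b_1 b_2^{-1}\in(-\infty,0)$, while the four corner blocks $C_1,C_2,D_1,D_2$ are invertible by hypothesis.
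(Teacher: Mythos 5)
Your proof is correct and matches the paper exactly: the corollary is stated there as an immediate consequence of Theorem~\ref{th:basis.LCD} with $r=1$, and your verification of the hypotheses (the sign condition $b_1b_2^{-1}<0$ and the invertibility of the corner blocks) together with the identification of the angles $\{\pi,0\}$ is precisely the intended bookkeeping.
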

%
%
Similarly to Theorem~\ref{th:basis.LCD} we can obtain the
following result.
%
%
\begin{proposition} \label{prop:basis.LCD.per}
Let
\begin{gather}
  \label{eq:basis.B}
    B = \diag(b_1 I_{n_1}, \ldots, b_r I_{n_r}), \quad n = n_1 + \ldots + n_r, \\
  \label{eq:basis.CD}
    C = \diag(C_j)_{j=1}^r, \quad D = \diag(D_j)_{j=1}^r,
    \quad C_j, D_j \in GL(n_j, \bC),\ j \in \{1,\ldots,r\},
\end{gather}
and $Q \in L^{\infty}([0,1]; \bC^{n \times n})$. Then the system
of root functions of the operator $A := L_{C,D}(Q)$ forms a
Riesz basis with parentheses in $L^2([0,1]; \bC^n)$, where each
block is constituted by the root subspaces corresponding to the
eigenvalues of $A$ that are mutually $\eps$-close with respect
to the sequence of angles $\{-\varphi_1,\ldots,-\varphi_r,
\pi-\varphi_1,\ldots,\pi-\varphi_r\}$. Here $\varphi_j = \arg
b_j$, $j \in \{1,\ldots,r\}$, and $\eps>0$ is sufficiently
small.
\end{proposition}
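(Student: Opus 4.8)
The plan is to follow the scheme used for Theorem~\ref{th:basis.LCD}. As there, I would employ a gauge transform $W:y\to W(x)y$ commuting with $B$ to pass from $L_{C,D}(Q)$ to a similar operator $L_{\wt{C},\wt{D}}(\wt{Q})$ for which $L_{\wt{C},\wt{D}}(0)$ is normal, and then apply Proposition~\ref{prop:basis}. Since now every block $C_j,D_j$ is already invertible, one can arrange the simplest possible reduction $\wt{C}=\wt{D}=I_n$. Concretely, put $W_0:=\diag(C_j^{-1})_{j=1}^r$ and $W_1:=\diag(D_j^{-1})_{j=1}^r$; both are block-diagonal in the pattern~\eqref{eq:basis.B}, hence commute with $B$, and $W_0^{-1}W_1=\diag(C_jD_j^{-1})_{j=1}^r$ is nonsingular. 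Choosing a branch of the logarithm, let $\wt{W}:=\log(W_0^{-1}W_1)$ (which may be taken block-diagonal, hence also commuting with $B$) and $W(x):=W_0e^{x\wt{W}}$; then $W(\cdot)\in C^1([0,1];\bC^{n\times n})$, $W^{-1}(\cdot)\in C([0,1];\bC^{n\times n})$, $W(x)B=BW(x)$, and $W(0)=W_0$, $W(1)=W_1$, so that $CW(0)=DW(1)=I_n$. By~\eqref{eq:wtC.wtD} the transformed potential $\wt{Q}(x)=W^{-1}(x)Q(x)W(x)-iW^{-1}(x)B^{-1}W'(x)$ lies again in $L^{\infty}([0,1];\bC^{n\times n})$, and $L_{C,D}(Q)$ is similar to $L_{I_n,I_n}(\wt{Q})$.

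Next I would set $G:=L_{I_n,I_n}(0)$; it is normal by Lemma~\ref{lem:normal}(i), since $I_nBI_n^*=B=I_nBI_n^*$, and it has compact resolvent. The fundamental matrix of the $B$-system with zero potential being $e^{iB\l x}$, the characteristic determinant factorizes as $\Delta(\l)=\det(I_n+e^{iB\l})=\prod_{j=1}^r(1+e^{ib_j\l})^{n_j}$, so that $\sigma(G)=\{\pi(2k+1)/b_j:k\in\bZ,\ j\in\{1,\ldots,r\}\}$ with all multiplicities at most $n$. Hence $\sigma(G)$ lies on the union of the rays $l_+(-\varphi_j)$ and $l_+(\pi-\varphi_j)$, $j\in\{1,\ldots,r\}$, with $\varphi_j=\arg b_j$; and, being a union of finitely many arithmetic progressions of bounded density with uniformly bounded multiplicities, it satisfies~\eqref{eq:basis.supNDrA}. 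Since $\wt{Q}$ is bounded, Proposition~\ref{prop:basis} applied to $G$ and $T:=\wt{Q}$ shows that the root functions of $\wt{A}:=G+\wt{Q}=L_{I_n,I_n}(\wt{Q})$ form a Riesz basis with parentheses in $L^2([0,1];\bC^n)$ with the blocks described in the statement. Finally, $A=L_{C,D}(Q)$ is similar to $\wt{A}$ via the bounded and boundedly invertible gauge transform $W$, and similarity preserves the eigenvalues together with their multiplicities as well as the Riesz basis property of parenthesized systems of root vectors; hence the conclusion transfers to $L_{C,D}(Q)$.

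I do not anticipate a genuine obstacle: compared with Theorem~\ref{th:basis.LCD} one avoids the square-root trick, and the only points requiring a little care are that the interpolating gauge $W(x)=W_0e^{x\wt{W}}$ really commutes with $B$ (which holds precisely because $W_0$ and $W_1$ inherit the block-diagonal pattern of $B$, whence $\wt{W}$ can be chosen block-diagonal) and the verification of~\eqref{eq:basis.supNDrA}, which is immediate from the explicit product form of $\Delta$: each factor $1+e^{ib_j\l}$ has zeros forming an arithmetic progression of spacing $2\pi/|b_j|$, so any unit disk contains $O(\sum_{j=1}^r n_j|b_j|)$ zeros counted with multiplicity.
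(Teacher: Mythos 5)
Your proposal is correct and follows essentially the same route as the paper's (very terse) proof: a block-diagonal gauge transform commuting with $B$ that reduces to $\wt{C}=\wt{D}=I_n$, normality of $G=L_{I_n,I_n}(0)$ via Lemma~\ref{lem:normal}, verification of~\eqref{eq:basis.supNDrA} from the arithmetic-progression structure of $\sigma(G)$, and an application of Proposition~\ref{prop:basis} followed by transfer under similarity, exactly as in Theorem~\ref{th:basis.LCD}. Your computation $\sigma(G)=\{(2k+1)\pi/b_j\}$ is in fact the consistent one for the antiperiodic normalization $\wt{C}=\wt{D}=I_n$ (the paper writes the periodic spectrum $\{2\pi k/b_j\}$ there), but the rays and the conclusion are unaffected.
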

%
%
\begin{proof}[Proof]
The proof is similar to that of Theorem~\ref{th:basis.LCD}. At
first choosing an appropriate gauge transform, we transform the
operator $L_{C,D}(Q)$ into $L_{\wt{C},\wt{D}}(\wt{Q})$ with
$\wt{C}_j = \wt{D}_j = I_{n_j}$. It follows that the operator $G
:= L_{\wt{C},\wt{D}}(0)$ is normal and its spectrum is of the
form
\begin{equation}
    \sigma(G) = \left\{2 \pi k / b_j : k \in \bZ, \ j \in \{1, \ldots, r\}\right\}.
\end{equation}
Hence the same argument as in the proof of
Theorem~\ref{th:basis.LCD} yields the result.
\end{proof}
A direct consequence of this result is the \emph{Riesz basis
property of the periodic $($reps. antiperiodic$)$ BVP} with
general matrix $B$.
%
%
\begin{corollary} \label{cor:basis.per}
Let $B = \diag(b_1,\ldots,b_n) \in \GL(n,\bC)$, $C = \pm D =
I_n$ and $Q \in L^{\infty}([0,1]; \bC^{n \times n})$. Then the
system of root functions of the operator $L_{C,D}(Q)$ forms a
Riesz basis with parentheses in $L^2([0,1]; \bC^{n \times n})$.
\end{corollary}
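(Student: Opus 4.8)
The plan is to recognize Corollary~\ref{cor:basis.per} as a special case of Proposition~\ref{prop:basis.LCD.per}. The only point requiring attention is a cosmetic discrepancy: in Proposition~\ref{prop:basis.LCD.per} the matrix $B$ is assumed to have its equal diagonal entries already grouped into consecutive blocks, $B = \diag(b_1 I_{n_1},\ldots,b_r I_{n_r})$, whereas here $B = \diag(b_1,\ldots,b_n)$ is an arbitrary nonsingular diagonal matrix.

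First I would remove this discrepancy by a permutation of the coordinates. Let $\beta_1,\ldots,\beta_r$ be the distinct values among $b_1,\ldots,b_n$, with $n_k$ the multiplicity of $\beta_k$, and let $P \in \GL(n,\bC)$ be the permutation matrix rearranging the standard basis of $\bC^n$ so that $P B P^{-1} = \diag(\beta_1 I_{n_1},\ldots,\beta_r I_{n_r})$. The map $y(\cdot) \mapsto P y(\cdot)$ is unitary on $L^2([0,1];\bC^n)$ and conjugates $L_{C,D}(Q)$ to the operator $L_{PCP^{-1},\,PDP^{-1}}(PQP^{-1})$ of the same form (with $PQP^{-1} \in L^{\infty}([0,1];\bC^{n\times n})$). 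Since $C = \pm D = I_n$ is a scalar matrix, $PCP^{-1} = I_n$ and $PDP^{-1} = \pm I_n$; writing $I_n = \diag(I_{n_1},\ldots,I_{n_r})$ and $\pm I_n = \diag(\pm I_{n_1},\ldots,\pm I_{n_r})$ we see that the transformed operator satisfies hypotheses~\eqref{eq:basis.B}--\eqref{eq:basis.CD} of Proposition~\ref{prop:basis.LCD.per} with $C_j = I_{n_j}$ and $D_j = \pm I_{n_j} \in \GL(n_j,\bC)$.

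By Proposition~\ref{prop:basis.LCD.per} the system of root functions of the transformed operator forms a Riesz basis with parentheses in $L^2([0,1];\bC^n)$, and conjugation by the unitary (in particular boundedly invertible) operator $P$ preserves this property — it simply applies $P$ to every root vector and leaves the blocking intact — so the same holds for $L_{C,D}(Q)$. I do not anticipate any genuine obstacle; the only thing to be careful about is that the renumbering is implemented by an operator bounded with bounded inverse, which is automatic for a permutation. One could alternatively bypass the reduction and argue directly from Proposition~\ref{prop:basis}: here $C B C^* = B = D B D^*$, so by Lemma~\ref{lem:normal}(i) the operator $G := L_{C,D}(0)$ is normal, its characteristic determinant equals $\prod_{j=1}^n (1 \pm e^{i b_j \lambda})$, whose zero set is a finite union of arithmetic progressions (along the rays $\arg\lambda = -\arg b_j$ and $\arg\lambda = \pi - \arg b_j$) with uniformly bounded multiplicities, so that~\eqref{eq:basis.supNDrA} holds, and then Proposition~\ref{prop:basis} applies with this $G$ and $T$ the bounded multiplication operator $f \mapsto Q(x) f$.
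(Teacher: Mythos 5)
Your proposal is correct and follows essentially the same route as the paper, which derives the corollary as a direct consequence of Proposition~\ref{prop:basis.LCD.per}; the permutation reduction you spell out (grouping equal $b_j$ into blocks, under which the scalar matrices $C=\pm D=I_n$ are invariant) is exactly the implicit renumbering the paper uses throughout. The alternative direct argument via Lemma~\ref{lem:normal} and Proposition~\ref{prop:basis} is also sound and is, in substance, what the proof of Proposition~\ref{prop:basis.LCD.per} does.
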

%
%
\begin{remark}
In the case of Dirac type systems ($B=B^*$) we can extend the
statements of Theorem~\ref{th:basis.LCD} and
Proposition~\ref{prop:basis.LCD.per} to the case of $Q \in
L^2([0,1]; \bC^{n \times n})$. Indeed, it suffices to apply
Theorem 2 from the recent paper~\cite{Shk10} instead of the
quoted results from~\cite{MarkMats84} and~\cite[\S
I.6]{Markus86}. Note however, that in~\cite[Theorem 2]{Shk10}
only the basis property instead of the Riesz basis property was
stated.
\end{remark}

\begin{remark}\label{rem:Riesz.basis}
The Riesz basis property for $2 \times 2$ Dirac equation subject
to splitting boundary conditions has been investigated in
numerous papers~\cite{TroYam01, TroYam02, HasOri09,
DjaMit10BariDir, DjaMit12UncDir}. The most general statement
covering Corollary~\ref{cor:basis.split} (for $n=1$) was
obtained by Djakov and Mityagin~\cite{DjaMit12UncDir} who
relaxed the assumption on a potential matrix to $Q \in
L^2([0,1]; \bC^2)$. Moreover, these authors proved
in~\cite{DjaMit12UncDir} the Riesz basis property for $2 \times
2$ Dirac equation subject to \textbf{general strictly regular
boundary conditions}.

For $2 \times 2$ Dirac system Corollary~\ref{cor:basis.per} was
proved in~\cite{DjaMit10BariDir} under weaker assumption $Q \in
L^2([0,1]; \bC^2)$. Moreover, these authors found
out~\cite{DjaMit12Crit} a criterion for the system of root
functions of the periodic boundary value problem for $2 \times
2$ Dirac equation to contain a Riesz basis (without
parentheses). Similar result for Sturm-Liouville operator
$-\frac{d^2}{dx^2} +q$ was obtained by different methods
in~\cite{GesTka09,GesTka11} and~\cite{DjaMit12Crit}. Both
criteria are formulated directly in terms of periodic and
Dirichlet eigenvalues. {Moreover,  in} \cite[Theorem
13]{DjaMit12TrigDir}, \cite[Theorem 19]{DjaMit12Crit} (see also
\cite{DjaMit11TrigHill}) it is established \emph{criteria} {for
eigenfunctions and associated functions to form a Riesz basis
for periodic 1D Dirac operator (resp. Sturm-Liouville operator)
in terms of the Fourier coefficients of} $Q$ (resp. $q$).
Equivalence of this formulation to that in terms of periodic and
Dirichlet eigenvalues is explained in~\cite[Theorem
24]{DjaMit12Crit}. Let us mention in this connection the paper
\cite{ShkVel09} where Riesz basis property for periodic
Sturm-Liouville operator was obtained under \emph{certain
explicit sufficient conditions} in terms of Fourier coefficients
of a potential $q$.
\end{remark}
%
%
In the simplest case $B = I_n$ we can indicate a criterion for
the system of root functions of the operator $L_{C,D}(Q)$ to
form a Riesz basis with parentheses.
%
%
\begin{corollary} \label{cor:B=In}
Let $B = I_n$ and $Q \in L^1([0,1];\bC^{n \times n})$. Then the
system of root functions of the operator $L_{C,D}(Q)$ forms a
Riesz basis with parentheses in $L_2([0,1]; \bC^n)$ if and only
if $\det(C \cdot D) \ne 0$.
\end{corollary}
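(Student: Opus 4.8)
The plan is to reduce, by a gauge transformation, to the potential-free case $Q=0$, and then to treat the two implications separately; the converse (incompleteness) implication is the substantial one. Since $B=I_n$, the matrix Cauchy problem $W'=-iQ(x)W$, $W(0)=I_n$, has a unique solution $W(\cdot)\in AC([0,1];\bC^{n\times n})$ with $\det W(x)=\exp\bigl(-i\int_0^x\tr Q(t)\,dt\bigr)\ne 0$, so $W(\cdot)$ and $W^{-1}(\cdot)$ are bounded and the gauge transform $W\colon y\mapsto W(x)y$ is bounded on $L^2([0,1];\bC^n)$ with bounded inverse. A direct computation (as in the passage following Definition~\ref{def:similar}) gives $W^{-1}L_{C,D}(Q)W=L_{C,DW(1)}(0)$, so $L_{C,D}(Q)$ is similar to $L_{C,DW(1)}(0)$ and has the same geometric properties of its root vectors. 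Since $\det W(1)\ne 0$, we have $\det\bigl(C\cdot DW(1)\bigr)=0$ iff $\det(CD)=0$; hence, after renaming $DW(1)$ as $D$, it suffices to prove the corollary for $Q=0$. In that case $\Phi(x,\l)=e^{i\l x}I_n$ (cf.~\eqref{eq:Phi0=In}) and the characteristic determinant is $\Delta(\l)=\det\bigl(C+e^{i\l}D\bigr)=P(e^{i\l})$, where $P(z):=\det(C+zD)$.

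For the ``if'' direction, if $\det(CD)\ne 0$ then $C,D\in\GL(n,\bC)$, which is exactly the hypothesis of Proposition~\ref{prop:basis.LCD.per} in the case $r=1$, $n_1=n$, $b_1=1$ (with $Q=0\in L^\infty$); hence the system of root functions of $L_{C,D}(0)$, and therefore of $L_{C,D}(Q)$, forms a Riesz basis with parentheses in $L^2([0,1];\bC^n)$.

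For the ``only if'' direction, suppose $\det(CD)=0$. If $\Delta\equiv 0$ then $\rho(L_{C,D}(0))=\emptyset$, so the operator has non-discrete spectrum and the statement fails trivially; so assume $\Delta\not\equiv 0$, i.e.\ $P\not\equiv 0$. From $\det(CD)=0$ we get either $\det D=0$ (so $\deg P\le n-1$) or $\det C=0$ (so $P(0)=0$); in either case the nonzero roots of $P$, counted with multiplicity, total at most $n-1$. Only nonzero roots $z_i$ of $P$ produce eigenvalues of $L_{C,D}(0)$, via $e^{i\l}=z_i$: each $z_i$, of multiplicity $m_i$, contributes eigenvalues $\{\theta_i+2\pi k\}_{k\in\bZ}$ (with $e^{i\theta_i}=z_i$), and $\sum_i m_i\le n-1$. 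Writing $U_{\Phi,j}(x,\l)=e^{i\l x}\wt M(e^{i\l})_{\cdot j}$ with $\wt M(z)$ the adjugate of $C+zD$, one reads off from the spanning system~\eqref{eq:DpUj} that every root function attached to an eigenvalue with $e^{i\l_0}=z_i$ has the form $e^{i\l_0 x}v(x)$, where $v(\cdot)$ is a $\bC^n$-valued polynomial of degree $<m_i$ whose coefficients lie in the subspace $\mathcal W_i\subseteq\bC^n$ spanned by the columns of $\wt M(z_i),\wt M'(z_i),\dots,\wt M^{(m_i-1)}(z_i)$; by the standard theory of regular matrix pencils $\dim\mathcal W_i=m_i$. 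Consequently the entire system of root functions lies in $L^2([0,1];\mathcal S)$ with $\mathcal S:=\sum_i\mathcal W_i$ and $\dim\mathcal S\le\sum_i m_i\le n-1<n$; hence it is incomplete in $L^2([0,1];\bC^n)$, and a fortiori not a Riesz basis with parentheses.

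I expect the dimension count to be the main obstacle: one must verify that the associated functions, which carry positive powers of $x$, still occupy only the directions $\mathcal W_i$, and that taking the closed linear span over all the translates $\theta_i+2\pi k$ does not enlarge the range to all of $\bC^n$ — this is exactly what the identification of $\mathcal W_i$ with the root subspace (of dimension $m_i$) of the pencil $C+zD$ at $z_i$ is for. The remaining ingredients — the gauge reduction, the citation of Proposition~\ref{prop:basis.LCD.per}, and the description of $\sigma(L_{C,D}(0))$ — are routine.
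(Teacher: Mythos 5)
Your argument is correct, and for the reduction and the ``if'' direction it coincides with the paper's: the same gauge transform $W'=-iQW$, $W(0)=I_n$ turns $L_{C,D}(Q)$ into $L_{C,DW(1)}(0)$, and since $\det W(1)\ne 0$ the condition $\det(CD)\ne 0$ is preserved; the positive direction is then exactly Proposition~\ref{prop:basis.LCD.per} with $r=1$. Where you genuinely diverge is the ``only if'' direction. The paper disposes of it in one line by observing that for $B=I_n$ one has $T_B=DW(1)$, $T_{-B}=C$, so $\det(CD)=0$ forces $\det T_B\cdot\det T_{-B}=0$, and then citing \cite[Proposition 4.6]{MalOri12}, which asserts infinite defect of the root system of $L_{\wt C,\wt D}(0)$ in that case. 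You instead reprove this fact from scratch in the special situation $B=I_n$: since $\Delta(\l)=P(e^{i\l})$ with $P(z)=\det(C+zD)$, the condition $\det(CD)=0$ caps the total multiplicity of the \emph{nonzero} roots of $P$ at $n-1$, and via the spanning system~\eqref{eq:DpUj} every root function is $e^{i\l_0x}$ times a polynomial with coefficients in the span $\mathcal W_i$ of the columns of $\wt M^{(c)}(z_i)$, $c<m_i$, where $\wt M=\mathrm{adj}(C+zD)$; hence all root functions take values in a fixed subspace of dimension $\le n-1$, giving incompleteness with infinite defect. This is self-contained and slightly stronger in spirit (it exhibits the orthogonal complement explicitly as $L^2([0,1];\mathcal S^\perp)$), but it hinges on the one assertion you flag yourself: that $\dim\mathcal W_i\le m_i$, i.e.\ that the columns of the derivatives of the adjugate up to order $m_i-1$ lie in the root subspace of the regular pencil $C+zD$ at $z_i$, whose dimension is the multiplicity of $z_i$ in $\det(C+zD)$. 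This is indeed standard (Weierstrass canonical form, or Keldysh's description of the principal part of $(C+zD)^{-1}=\wt M(z)/P(z)$), but it is precisely the content that the paper outsources to \cite{MalOri12}; as written your proof trades one citation for another. The degenerate case $\Delta\equiv 0$ deserves one more sentence (the root system is then uncountable, so it cannot be arranged as a Riesz basis with parentheses in the sense of Definition~\ref{def:basis}), but this is a minor point.
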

%
%
\begin{proof}[Proof]
Applying the gauge transform $y \to W(x)y$ with $W(\cdot)$
described in the beginning of the proof of
Proposition~\ref{prop:DeltaY=(c0+c1/l)}, we see that the
operator $L_{C,D}(Q)$ is similar to the operator
$L_{\wt{C},\wt{D}}(0)$ with $\wt{C}=C$, $\wt{D}=DW(1)$ and zero
potential matrix. Further, since $B=I_n$, then
$T_{B}(\wt{C},\wt{D}) = DW(1)$ and $T_{-B}(\wt{C},\wt{D})=C$.
Hence, $\det(C \cdot D) \ne 0$ if and only if $\det
T_B(\wt{C},\wt{D}) \cdot \det T_{-B}(\wt{C},\wt{D}) \ne 0$.
Therefore, by~\cite[Proposition 4.6]{MalOri12}, the system of
root functions of the operator $L_{\wt{C},\wt{D}}(0)$ has
infinite defect, whenever $\det(C \cdot D) = 0$. On the other
hand, if $\det(C \cdot D) \ne 0$ then, by
Proposition~\ref{prop:basis.LCD.per}, applied with $r=1$ and
$Q=0$, the system of root functions of the operator
$L_{\wt{C},\wt{D}}(0)$ forms a Riesz basis with parentheses.
Similarity of the operators $L_{C,D}(Q)$ and
$L_{\wt{C},\wt{D}}(0)$ completes the proof.
\end{proof}
%
%
\section{General properties of the resolvent and spectral synthesis}
\label{sec:resolv}
%
%
\subsection{General properties of the resolvent}
\label{subsec:resolv}
%
%
Let $\fH := L^2([0,1]; \bC^n)$. We start with the explicit form
of the resolvent of the operator $L_{C,D}(Q)$ in terms of the
fundamental matrix solution $\Phi(x,\l)$. This statement is of
folklore character (cf.~\cite[Theorem 9.4.1]{Atk64}). We present
the proof for the sake of completeness.
%
%
\begin{lemma} \label{lem:Green}
Assume that BVP~\eqref{eq:system}--\eqref{eq:CY(0)+DY(1)=0} is
non-degenerate, i.e. $\Delta(\cdot) \not \equiv 0$. Then for any
$\l \in \bC$ with $\Delta(\l) \ne 0$ the Green function of the
problem~\eqref{eq:system}--\eqref{eq:CY(0)+DY(1)=0} is
\begin{equation} \label{eq:Gxtl=...}
    G(x,t;\l) =
    \begin{cases}
        \ \ \Phi(x,\l) (C+D\Phi(1,\l))^{-1} C \Phi^{-1}(t,\l) i B, & 0 \leqslant t \leqslant x, \\
        -\Phi(x,\l) (C+D\Phi(1,\l))^{-1} D \Phi(1,\l) \Phi^{-1}(t,\l) i B, & x < t \leqslant 1.
    \end{cases}
\end{equation}
Moreover,
\begin{equation} \label{eq:Gxx-0-Gxx+0=...}
    G(x,x-0;\l) - G(x,x+0;\l) = i B, \quad x \in (0,1).
\end{equation}
\end{lemma}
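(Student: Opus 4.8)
The plan is to compute the resolvent $(L_{C,D}(Q)-\l)^{-1}$ explicitly by variation of parameters and then simply read off the Green function and its jump. Fix $\l\in\bC$ with $\Delta(\l)\ne 0$ and $f\in\fH$, and let $y:=(L_{C,D}(Q)-\l)^{-1}f$ be the unique element of $\dom(L_{C,D})$ solving $(L-\l)y=f$. Multiplying this equation by $iB$ one rewrites it in the form
\begin{equation*}
    y'(x) = iB(\l-Q(x))y(x) + iB f(x), \qquad x\in[0,1],
\end{equation*}
and recalls that the fundamental matrix $\Phi(\cdot,\l)$ normalized by~\eqref{eq:Phi0=In} satisfies the corresponding homogeneous equation $\Phi'(x,\l)=iB(\l-Q(x))\Phi(x,\l)$, with $\Phi(\cdot,\l)$ and $\Phi^{-1}(\cdot,\l)$ both lying in $W^{1,1}([0,1];\bC^{n\times n})$ and invertible for every $x$.

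Next I would substitute the ansatz $y(x)=\Phi(x,\l)c(x)$. The homogeneous terms cancel and what remains is $\Phi(x,\l)c'(x)=iBf(x)$, i.e. $c'(x)=\Phi^{-1}(x,\l)iBf(x)$, so that
\begin{equation*}
    y(x) = \Phi(x,\l)\Bigl(c(0) + \int_0^x \Phi^{-1}(t,\l)iB f(t)\,dt\Bigr).
\end{equation*}
Since $\Phi(0,\l)=I_n$, the boundary condition $Cy(0)+Dy(1)=0$ reads $(C+D\Phi(1,\l))c(0) = -D\Phi(1,\l)\int_0^1\Phi^{-1}(t,\l)iBf(t)\,dt$, and because $\Delta(\l)=\det(C+D\Phi(1,\l))\ne 0$ this determines $c(0)$ uniquely. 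Substituting $c(0)$ back, splitting the integral over $[0,1]$ into the ranges $t\le x$ and $t>x$, and using the elementary identity
\begin{equation*}
    I_n - (C+D\Phi(1,\l))^{-1}D\Phi(1,\l) = (C+D\Phi(1,\l))^{-1}C,
\end{equation*}
one identifies the kernel of $y\mapsto\int_0^1 G(x,t;\l)f(t)\,dt$ with the two expressions in~\eqref{eq:Gxtl=...}: the boundary term alone yields the $x<t$ branch, while adding the $\int_0^x$-contribution on $\{t\le x\}$ and applying the identity yields the $t\le x$ branch.

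For the jump~\eqref{eq:Gxx-0-Gxx+0=...} I would just evaluate the two branches of~\eqref{eq:Gxtl=...} at $t=x$ and subtract:
\begin{align*}
    G(x,x-0;\l)-G(x,x+0;\l)
    &= \Phi(x,\l)(C+D\Phi(1,\l))^{-1}\bigl(C + D\Phi(1,\l)\bigr)\Phi^{-1}(x,\l)\,iB \\
    &= \Phi(x,\l)\Phi^{-1}(x,\l)\,iB = iB.
\end{align*}
There is no real obstacle here — this is a folklore computation. The only points that need a word of care are the verification that the constructed $y$ indeed belongs to $\dom(L_{C,D})$ (it lies in $W^{1,1}([0,1];\bC^n)$ as a product of the $W^{1,1}$-matrix $\Phi(\cdot,\l)$ with an absolutely continuous vector function, $(L-\l)y=f\in\fH$ by construction, and the boundary condition holds by the choice of $c(0)$) and the uniqueness of $y$, which is again guaranteed by $\Delta(\l)\ne 0$. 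The small algebraic identity displayed above is the only nontrivial bookkeeping step, and it is precisely what makes the symmetric-looking formula~\eqref{eq:Gxtl=...} fall out.
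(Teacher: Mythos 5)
Your proposal is correct and follows essentially the same route as the paper: variation of parameters with the normalized fundamental matrix $\Phi(\cdot,\l)$, determination of the initial value $y(0,\l)$ from the boundary condition using $\Delta(\l)\ne 0$, splitting the integral at $t=x$, and the identity $I_n-(C+D\Phi(1,\l))^{-1}D\Phi(1,\l)=(C+D\Phi(1,\l))^{-1}C$ to obtain the $t\leqslant x$ branch, with the jump formula read off by direct subtraction. The only difference is cosmetic: you make explicit the algebraic identity and the domain/uniqueness verification that the paper leaves implicit.
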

%
%
\begin{proof}[Proof]
Consider the non-homogenous system
\begin{equation}
    -i B^{-1} y' + Q(x) y = \l y + f, \quad x \in [0,1],
\end{equation}
with $f \in \fH$. The general solution of this system is
\begin{equation} \label{eq:y=int.Fx.Ft}
    y(x,\l) = \int_0^x \Phi(x,\l) \Phi^{-1}(t,\l)iBf(t)dt + \Phi(x,\l) y(0,\l),
\end{equation}
where $y(0,\l) = \col(y_1(0,\l), \ldots, y_n(0,\l)) \in \bC^n$
is arbitrary vector. Inserting this expression
into~\eqref{eq:CY(0)+DY(1)=0} we get after straightforward
calculations
\begin{equation} \label{eq:wtC(l)=...}
    y(0,\l) = - (C+D\Phi(1,\l))^{-1} D \Phi(1,\l) \int_0^1 \Phi^{-1}(t,\l) iBf(t) dt =: K(\l) f.
\end{equation}
Combining this expression with~\eqref{eq:y=int.Fx.Ft} we arrive
at
\begin{align}
    y(x,\l) &= \int_0^x \Phi(x,\l) \Bigl(I_n - (C+D\Phi(1,\l))^{-1} D \Phi(1,\l)\Bigr)\Phi^{-1}(t,\l) i B f(t)dt \nonumber \\
    &+ \int_x^1 -\Phi(x,\l) (C+D\Phi(1,\l))^{-1} D \Phi(1,\l)\Phi^{-1}(t,\l) i B f(t)dt
    = \int_0^1 G(x,t;\l)f(t)dt,
\end{align}
where $G(x,t;\cdot)$ is given by~\eqref{eq:Gxtl=...}.
Formula~\eqref{eq:Gxx-0-Gxx+0=...} directly follows
from~\eqref{eq:Gxtl=...}.
\end{proof}
Combining~\eqref{eq:y=int.Fx.Ft} with~\eqref{eq:wtC(l)=...} we
get the following alternative representation for the resolvent.
\begin{corollary} \label{cor:Green}
Assume the conditions of Lemma~\ref{lem:Green}, i.e.
$\rho(L_{C,D}(Q)) \ne \varnothing$. Then for $f \in \fH$ and $\l
\in \rho(L_{C,D}(Q))$
\begin{equation} \label{eq:LCDQ=LI0Q+}
    (L_{C,D}(Q) - \l)^{-1} f = (L_{I_n,0}(Q) - \l)^{-1} f +
    \Phi(\cdot,\l) K(\l) f,
\end{equation}
where $K(\l) : \fH \to \bC^n$ is given by~\eqref{eq:wtC(l)=...}
and $(L_{I_n,0}(Q) - \l)^{-1}$ is Volterra operator of the form
\begin{equation} \label{eq:LI0Q}
    ((L_{I_n,0}(Q) - \l)^{-1} f)(x) = \int_0^x \Phi(x,\l) \Phi^{-1}(t,\l) i B f(t) dt.
\end{equation}
\end{corollary}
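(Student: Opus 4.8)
The plan is to observe that Corollary~\ref{cor:Green} is nothing but a rearrangement of the computation already carried out in the proof of Lemma~\ref{lem:Green}. Recall from that proof that for $\l\in\rho(L_{C,D}(Q))$ the vector function $(L_{C,D}(Q)-\l)^{-1}f$ is the unique solution $y(\cdot,\l)$ of the non-homogeneous system $-iB^{-1}y'+Q(x)y=\l y+f$ satisfying the boundary condition~\eqref{eq:CY(0)+DY(1)=0}, and that by~\eqref{eq:y=int.Fx.Ft} this solution splits as
\begin{equation*}
    y(x,\l)=\int_0^x \Phi(x,\l)\Phi^{-1}(t,\l)\,iBf(t)\,dt+\Phi(x,\l)\,y(0,\l),
\end{equation*}
where, by~\eqref{eq:wtC(l)=...}, the vector $y(0,\l)\in\bC^n$ equals $K(\l)f$. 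Thus the only thing left is to identify the integral summand with $(L_{I_n,0}(Q)-\l)^{-1}f$.

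To do this I would specialize the whole construction of Lemma~\ref{lem:Green} to the boundary matrices $C=I_n$, $D=0$. Then~\eqref{eq:CY(0)+DY(1)=0} becomes $y(0)=0$, the characteristic determinant is $\Delta(\l)=\det(I_n+0\cdot\Phi(1,\l))\equiv1\ne0$, so $\rho(L_{I_n,0}(Q))=\bC$ and the resolvent is an entire operator-valued function. Substituting $C=I_n$, $D=0$ into~\eqref{eq:wtC(l)=...} makes the prefactor $(C+D\Phi(1,\l))^{-1}D\Phi(1,\l)$ vanish, hence $y(0,\l)=K(\l)f=0$, and~\eqref{eq:y=int.Fx.Ft} reduces to
\begin{equation*}
    \bigl((L_{I_n,0}(Q)-\l)^{-1}f\bigr)(x)=\int_0^x \Phi(x,\l)\Phi^{-1}(t,\l)\,iBf(t)\,dt,
\end{equation*}
which is exactly~\eqref{eq:LI0Q}; the kernel being supported on $\{t\le x\}$ shows that this operator is Volterra. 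Feeding this identification back into the splitting above gives~\eqref{eq:LCDQ=LI0Q+}.

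I do not expect any real obstacle: the statement is a bookkeeping consequence of Lemma~\ref{lem:Green}. The only points deserving a line of comment are that $(L_{I_n,0}(Q)-\l)^{-1}$ in~\eqref{eq:LI0Q} is well defined for every $\l\in\bC$ (which follows from $\Delta\equiv1$ for these boundary data) and that the map $K(\l)\colon\fH\to\bC^n$ in~\eqref{eq:wtC(l)=...} is bounded, so that the correction $\Phi(\cdot,\l)K(\l)f$ is a genuine element of $\fH$ whose values lie in the $n$-dimensional subspace spanned by the columns of $\Phi(\cdot,\l)$. One could alternatively bypass the $C=I_n$, $D=0$ specialization and simply read off~\eqref{eq:LI0Q} directly from~\eqref{eq:y=int.Fx.Ft}, but the specialization makes the identification with an actual resolvent transparent.
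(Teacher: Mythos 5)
Your argument is correct and is essentially the paper's own proof: the paper derives the corollary in one line by combining the general solution formula~\eqref{eq:y=int.Fx.Ft} with the expression~\eqref{eq:wtC(l)=...} for $y(0,\l)=K(\l)f$, exactly as you do. Your explicit specialization to $C=I_n$, $D=0$ to justify identifying the integral term with the resolvent of the initial-value problem is a worthwhile detail that the paper leaves implicit (and confirms later in the proof of Theorem~\ref{th:Resolv.dif.in.S1}).
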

%
%
\begin{theorem} \label{th:Resolv.dif.in.S1}
For $\l \in \rho(L_{C_1,D_1}(Q_1)) \cap \rho(L_{C_2,D_2}(Q_2))$
the following inclusion holds
\begin{equation} \label{eq:RL1-RL2infS1}
    (L_{C_1,D_1}(Q_1) - \l)^{-1} - (L_{C_2,D_2}(Q_2) - \l)^{-1} \in \fS_1(\fH).
\end{equation}
Moreover, the following trace formula holds
\begin{multline} \label{eq:spur}
    \tr \bigl( (L_{C_1,D_1}(Q_1) - \l)^{-1} - (L_{C_2,D_2}(Q_2) - \l)^{-1} \bigr) \\
    = \tr \int_0^1 \bigl( \Phi_1(x,\l) (C_1+D_1\Phi_1(1,\l))^{-1} C_1 \Phi_1^{-1}(x,\l) \\
    - \Phi_2(x,\l) (C_2+D_2\Phi_2(1,\l))^{-1} C_2 \Phi_2^{-1}(x,\l)\bigr) i B dx,
\end{multline}
where $\Phi_j(\cdot,\l)$ is the fundamental matrix of the
equation $L(Q_j) y = \l y$ satisfying $\Phi_j(0,\l) = I_n$, $j
\in \{1,2\}$.
\end{theorem}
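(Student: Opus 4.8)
The plan is to build on the structural decomposition of the resolvent given by Corollary~\ref{cor:Green}. For $j\in\{1,2\}$ and $\l\in\rho(L_{C_j,D_j}(Q_j))$ write
\[
 (L_{C_j,D_j}(Q_j)-\l)^{-1}=V_j+F_j,\qquad V_j:=(L_{I_n,0}(Q_j)-\l)^{-1},\quad F_j:=\Phi_j(\cdot,\l)K_j(\l),
\]
where $V_j$ is the Volterra operator \eqref{eq:LI0Q} and $F_j$ is the operator from \eqref{eq:LCDQ=LI0Q+}, whose range lies in the $n$-dimensional span of the columns of $\Phi_j(\cdot,\l)$, so $\rank F_j\leqslant n$. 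Then $F_1-F_2$ has rank at most $2n$, hence automatically lies in $\fS_1(\fH)$, and the whole problem reduces to showing $V_1-V_2\in\fS_1(\fH)$ together with a trace computation.

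For the Volterra part I would first record the variation‑of‑constants identity for the propagators $U_j(x,t;\l):=\Phi_j(x,\l)\Phi_j^{-1}(t,\l)$. Since $\partial_x U_j=iB(\l-Q_j(x))U_j$ and $U_j(t,t;\l)=I_n$, the difference $W:=U_1-U_2$ solves $\partial_x W=iB(\l-Q_1(x))W+iB(Q_2(x)-Q_1(x))U_2$ with $W(t,t;\l)=0$, whence for $t\leqslant x$
\[
 U_1(x,t;\l)-U_2(x,t;\l)=\int_t^x U_1(x,s;\l)\,iB\,(Q_2(s)-Q_1(s))\,U_2(s,t;\l)\,ds .
\]
Inserting this into the kernel of $V_1-V_2$ and interchanging the order of integration produces the factorization $V_1-V_2=\widetilde V_1\circ M\circ V_2$, where $M$ is multiplication by $iB(Q_2-Q_1)\in L^1([0,1];\bC^{n\times n})$ and $\widetilde V_1$ is the Volterra operator with the bounded kernel $U_1(x,s;\l)\mathbf 1_{\{s\leqslant x\}}$. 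The only delicate point is that $M$ is unbounded on $\fH$; this is handled by the usual device of writing, via the pointwise polar decomposition, $iB(Q_2(s)-Q_1(s))=m_1(s)m_2(s)$ with $m_1,m_2\in L^2([0,1];\bC^{n\times n})$, so that $V_1-V_2=(\widetilde V_1 M_{m_1})(M_{m_2}V_2)$. Each of the two factors has an $L^2([0,1]^2)$ matrix kernel, because $\Phi_j^{\pm1}(\cdot,\l)$ are continuous, hence bounded, on $[0,1]$ and $m_1,m_2\in L^2$; thus each factor is Hilbert–Schmidt, their product is trace class, and \eqref{eq:RL1-RL2infS1} follows.

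For the trace formula I would argue as follows. By Lemma~\ref{lem:Green} the kernel of the trace‑class operator on the left of \eqref{eq:spur} is $\mathcal{G}(x,t):=G(x,t;\l)\big|_{1}-G(x,t;\l)\big|_{2}$, the difference of the two Green functions \eqref{eq:Gxtl=...}. Off the diagonal $\mathcal{G}$ is manifestly jointly continuous in $(x,t)$, and across the diagonal the jumps cancel: by \eqref{eq:Gxx-0-Gxx+0=...} each $G(x,\cdot;\l)$ has one-sided limits at $t=x$ differing exactly by $iB$, so the one-sided diagonal limits of $\mathcal{G}$ coincide and $\mathcal{G}$ extends to a continuous function on $[0,1]^2$. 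By the classical formula for the trace of a trace‑class integral operator with continuous kernel (see, e.g., \cite{GohKre65}),
\[
 \tr\bigl((L_{C_1,D_1}(Q_1)-\l)^{-1}-(L_{C_2,D_2}(Q_2)-\l)^{-1}\bigr)=\int_0^1\tr_{\bC^n}\mathcal{G}(x,x)\,dx,
\]
and evaluating $\mathcal{G}(x,x)$ through the lower branch $0\leqslant t\leqslant x$ of \eqref{eq:Gxtl=...}, i.e. $\Phi_j(x,\l)(C_j+D_j\Phi_j(1,\l))^{-1}C_j\Phi_j^{-1}(x,\l)iB$, reproduces precisely \eqref{eq:spur}.

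The main obstacle is the Volterra step: converting the kernel identity for $U_1-U_2$ into genuine $\fS_1$‑membership while the potentials are only summable, which is exactly what forces the polar‑decomposition factorization into two Hilbert–Schmidt pieces; the trace identity itself is then essentially bookkeeping, once the continuous‑kernel trace formula and the cancellation of the diagonal jump are in place.
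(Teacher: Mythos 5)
Your proposal is correct and follows essentially the same route as the paper: reduce to the Volterra parts via the finite-rank remainders from~\eqref{eq:LCDQ=LI0Q+}, split $Q_2-Q_1$ by a pointwise polar-type factorization into two $L^2$ matrix functions so that $V_1-V_2$ becomes a product of two Hilbert--Schmidt operators, and then obtain the trace formula from the continuity of the difference of Green functions (the diagonal jumps $iB$ cancelling by~\eqref{eq:Gxx-0-Gxx+0=...}) together with the continuous-kernel trace formula from~\cite{GohKre65}. The only cosmetic difference is that you derive the middle factorization from a Duhamel identity for the propagators $U_j(x,t;\l)$ rather than from the second resolvent identity, which yields the same kernel factorization.
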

%
%
\begin{proof}[Proof]
\textbf{(i)} Put $T_{11} := L_{C_1,D_1}(Q_1)$ and $T_{22} :=
L_{C_2,D_2}(Q_2)$. Consider the auxiliary operators $T_j :=
L_{C,D}(Q_j)$, $j \in \{1,2\}$, where $C=I_n$ and $D=0$.
Clearly, $T_j$ corresponds to the initial value problem
\begin{equation}
    -i B^{-1} y' + Q_j(x) y = 0, \quad y(0)=0.
\end{equation}
Hence $\rho(T_1)=\rho(T_2)=\bC$. Therefore, for $\l \in
\rho(T_{11}) \cap \rho(T_{22})$ we have
\begin{multline} \label{eq:T11-T22}
    (T_{11}-\l)^{-1} - (T_{22}-\l)^{-1} = \bigl((T_{11}-\l)^{-1} - (T_{1}-\l)^{-1}\bigr) \\
    + \bigl((T_{1}-\l)^{-1} - (T_{2}-\l)^{-1}\bigr)
    + \bigl((T_{2}-\l)^{-1} - (T_{22}-\l)^{-1}\bigr).
\end{multline}
It follows from~\eqref{eq:LCDQ=LI0Q+} that the first and the
third summands in~\eqref{eq:T11-T22} are operators of finite
rank,
\begin{align}
    \label{eq:dim(ran(RT11-RT1))<=n}
        \dim \Bigl(\range\bigl((T_{11} - \l)^{-1} - (T_{1} - \l)^{-1}\bigr)\Bigr) \leqslant n, \\
    \label{eq:dim(ran(RT2-RT22))<=n}
        \dim \Bigl(\range\bigl((T_{2} - \l)^{-1} - (T_{22} - \l)^{-1}\bigr)\Bigr) \leqslant n.
\end{align}

Next, according to~\cite[Theorem 2.7]{GesMal09}, for each $x \in
[0,1]$ the matrix $Q(x) := Q_2(x)-Q_1(x)$ admits the generalized
polar decomposition
\begin{equation}
    Q(x) = U(x) \cdot |Q(x)| = |Q^*(x)| \cdot U(x) = |Q^*(x)|^{1/2} \cdot U(x) \cdot |Q(x)|^{1/2},
\end{equation}
where $|A| := (A^* A)^{1/2}$, $A \in \bC^{n \times n}$, and
$U(x)$ is a unitary matrix, $U^*(x) = U^{-1}(x)$. Clearly,
$|Q(\cdot)|$ and $|Q^*(\cdot)|$ are measurable matrix-function
and $U(\cdot)$ can be chosen to be measurable. In turn, these
families induce a generalized polar decomposition of the
(unbounded) multiplication operator $Q: f(x)\to Q(x)f(x)$ in
$\fH$, i.e.
\begin{equation}
    Q = U \cdot |Q| = |Q^*| \cdot U = |Q^*|^{1/2} \cdot U \cdot |Q|^{1/2}, \quad
    |Q| = \bigl(Q^*Q\bigr)^{1/2}, \quad |Q^*| = \bigl(Q Q^*\bigr)^{1/2},
\end{equation}
and $|Q|$ denotes the multiplication operator in $\fH$ with the
matrix $|Q(\cdot)|$.

Let $G_{j}(\cdot,\cdot;\l)$ be the Green function of the
operator $T_{j}$, $j \in \{1,2\}$. It is easily seen that
\begin{equation} \label{eq:T1-T2}
    (T_{1}-\l)^{-1} - (T_{2}-\l)^{-1} = \ol{(T_{1}-\l)^{-1} |Q^*|^{1/2}}
    \cdot \Bigl(U |Q|^{1/2} (T_{2}-\l)^{-1}\Bigr) = K_1 K_2,
\end{equation}
where $\ol{T}$ denotes the closure of the operator $T$ and for
$f \in \fH$,
\begin{align}
    (K_1 f) (x) &:= \int_0^1 \Bigl(G_{1}(x,t;\l) |Q^*(t)|^{1/2}\Bigr) f(t) dt, \quad x \in [0,1], \\
    (K_2 f) (x) &:= \int_0^1 \Bigl(U(x) |Q(x)|^{1/2} G_{2}(x,t;\l)\Bigr) f(t) dt, \quad x \in [0,1].
\end{align}
It follows from~\eqref{eq:Gxtl=...} that the kernel
\begin{equation}
    G_{j}(\cdot,\cdot;\l)\in L^\infty([0,1] \times [0,1]; \bC^{n\times n}), \quad j\in \{1,2\}.
\end{equation}
Moreover, since
\begin{equation}
    |Q(\cdot)|^{1/2}, |Q^*(\cdot)|^{1/2} \in L^2([0,1]; \bC^{n\times n})
    \quad\text{and}\quad U(\cdot) \in L^{\infty}([0,1]; \bC^{n\times n}),
\end{equation}
the operator $K_j$ is of Hilbert-Schmidt class, $K_j \in
\fS_2(\fH)$, $j\in \{1,2\}$. Combining these relations with
factorization identity~\eqref{eq:T1-T2} yields
\begin{equation}
    (T_1-\l)^{-1} - (T_2-\l)^{-1} = K_1 K_2 \in \fS_1(\fH).
\end{equation}
In turn, combining this relation
with~\eqref{eq:T11-T22}--\eqref{eq:dim(ran(RT2-RT22))<=n} we
arrive at~\eqref{eq:RL1-RL2infS1}.

\textbf{(ii)} Let $G_{jj}(\cdot,\cdot;\l)$ be the Green function
of the operator $T_{jj}$, $j\in \{1,2\}$. By (i), the difference
\mbox{$(T_{11}-\l)^{-1} - (T_{22}-\l)^{-1}$} is of trace class
integral operator with the kernel
\begin{equation}
    {\wt G}(\cdot,\cdot;\l) := G_{11}(\cdot,\cdot;\l) - G_{22}(\cdot,\cdot;\l).
\end{equation}
In view of~\eqref{eq:Gxtl=...} and~\eqref{eq:Gxx-0-Gxx+0=...}
the kernel ${\wt G}(\cdot,\cdot;\l)$ is continuous,
\begin{equation}
    {\wt G}(\cdot,\cdot;\l)\in C([0,1] \times [0,1]; \bC^{n \times n}).
\end{equation}
On the other hand, if $K$ is of trace class integral operator in
$L^2([0,1];\bC^n)$ with continuous kernel $K(\cdot,\cdot) \in
\bC^{n \times n}$, then
\begin{equation}
    \tr K = \int_{0}^1 (\tr K(x,x))dx
\end{equation}
(see~\cite[Corollary III.10.2]{GohKre65}). Combining this result
with formula~\eqref{eq:Gxtl=...} for the Green function
$G_{jj}(\cdot,\cdot;\l)$, $j\in \{1,2\}$,
yields~\eqref{eq:spur}.
\end{proof}
\begin{corollary}
Let $B = B^*$ and let $C_*, \ D_*\in \bC^{n \times n}$ be such
that $(L_{C,D}(Q))^* = L_{C_*,D_*}(Q^*)$ (see
Lemma~\ref{lem:adjoint}). Let also $\Phi(\cdot,\l)$ and
$\Phi_*(\cdot,\l)$ be the fundamental matrices of equations
$L(Q) y = \l y$ and $L(Q^*) y = \l y$, respectively, satisfying
$\Phi(0,\l) = \Phi_*(\cdot,\l) =I_n$. Finally, let
$\{\l_n\}_{n=1}^{\infty}$ be the sequence of all eigenvalues of
$L_{C,D}(Q)$, counting multiplicity, and let the system of root
functions of the operator $L_{C,D}(Q)$ be complete in
$L^2([0,1];\bC^n)$. Then for $\l \in \bR \cap \rho(L_{C,D}(Q))$
the following identity holds
\begin{multline}
    \sum_{n=1}^{\infty} \frac{-2\Im\, \l_n}{|\l_n-\l|^2}
    = \tr \int_0^1 \bigl( \Phi(x,\l) (C+D\Phi(1,\l))^{-1} C \Phi^{-1}(x,\l) \\
    - \Phi_*(x,\l) (C_*+D_*\Phi_*(1,\l))^{-1} C_* \Phi_*^{-1}(x,\l)\bigr) B dx.
\end{multline}
\end{corollary}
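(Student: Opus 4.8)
The plan is to deduce the identity directly from the trace formula~\eqref{eq:spur} in Theorem~\ref{th:Resolv.dif.in.S1}, applied to $L_{C,D}(Q)$ together with its adjoint, once the left-hand side of~\eqref{eq:spur} is identified, via a Lidskii-type trace theorem, with the sum over the eigenvalues. First I would fix $\l \in \bR \cap \rho(L_{C,D}(Q))$ and set $T := (L_{C,D}(Q) - \l)^{-1}$. Since $\l$ is real, $\l$ also lies in $\rho((L_{C,D}(Q))^*)$ and $T^* = ((L_{C,D}(Q))^* - \l)^{-1}$; by Lemma~\ref{lem:adjoint}, where the hypothesis $B = B^*$ is used in an essential way, $(L_{C,D}(Q))^* = L_{C_*,D_*}(Q^*)$, so that $T^* = (L_{C_*,D_*}(Q^*) - \l)^{-1}$. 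Applying Theorem~\ref{th:Resolv.dif.in.S1} with $(C_1,D_1,Q_1) := (C,D,Q)$ and $(C_2,D_2,Q_2) := (C_*,D_*,Q^*)$ yields $T - T^* \in \fS_1(\fH)$ and, extracting the scalar $i$ from $iB$ in~\eqref{eq:spur},
\begin{equation*}
    \tr(T - T^*) = i\,\tr\int_0^1\bigl(\Phi(x,\l)(C + D\Phi(1,\l))^{-1} C\,\Phi^{-1}(x,\l) - \Phi_*(x,\l)(C_* + D_*\Phi_*(1,\l))^{-1} C_*\,\Phi_*^{-1}(x,\l)\bigr) B\,dx .
\end{equation*}
Hence it remains to show that $\tr(T - T^*) = -2i\sum_{n} \Im\,\l_n / |\l_n - \l|^2$; dividing by $i$ then gives the assertion.

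To evaluate $\tr(T - T^*)$ I would use that $T$ is an injective compact operator whose eigenvalues are exactly $\mu_n := (\l_n - \l)^{-1}$, with algebraic multiplicities equal to those of the $\l_n$, and whose root subspaces coincide with those of $L_{C,D}(Q)$; hence $T$ has a complete system of root vectors. Since $\l$ is real, the eigenvalues of $T^*$ are $\overline{\mu_n} = (\overline{\l_n} - \l)^{-1}$. The operator $T - T^*$ is anti-Hermitian and lies in $\fS_1(\fH)$, so $\tfrac{1}{2i}(T - T^*)$ is a self-adjoint trace-class operator; the Lidskii--Gohberg--Krein trace theorem for operators with nuclear imaginary part and complete system of root vectors (see~\cite{GohKre65}) then gives
\begin{equation*}
    \tr(T - T^*) = \sum_{n=1}^{\infty}(\mu_n - \overline{\mu_n}) = \sum_{n=1}^{\infty}\Bigl(\frac{1}{\l_n - \l} - \frac{1}{\overline{\l_n} - \l}\Bigr) = \sum_{n=1}^{\infty}\frac{\overline{\l_n} - \l_n}{|\l_n - \l|^2} = -2i\sum_{n=1}^{\infty}\frac{\Im\,\l_n}{|\l_n - \l|^2} .
\end{equation*}
Here the series converges absolutely because the imaginary parts of the eigenvalues of $T$ are majorized by the singular numbers of $\tfrac{1}{2i}(T - T^*)$, whence $\sum_n |\Im\,\l_n|/|\l_n - \l|^2 \leqslant \|\tfrac{1}{2i}(T - T^*)\|_{\fS_1} < \infty$.

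Combining the two displays and cancelling the common factor $i$ completes the proof. The routine steps are the elementary manipulation in the second display and the bookkeeping of algebraic multiplicities when passing from $\sigma(L_{C,D}(Q))$ to $\sigma(T)$. The one step I expect to require care is the trace identity $\tr(T - T^*) = \sum_n (\mu_n - \overline{\mu_n})$: one must rule out any contribution to $\tr(T - T^*)$ coming from a completely non-selfadjoint (quasinilpotent) part of $T$, and it is precisely the completeness of the root functions of $L_{C,D}(Q)$ — i.e. the standing hypothesis of the corollary — that eliminates such a part. I would carry this out by quoting the appropriate trace theorem for operators with nuclear imaginary part.
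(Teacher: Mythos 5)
Your argument is correct and follows the same route as the paper: apply Theorem~\ref{th:Resolv.dif.in.S1} (with its trace formula~\eqref{eq:spur}) to the pair $L_{C,D}(Q)$, $L_{C,D}(Q)^*=L_{C_*,D_*}(Q^*)$, and then invoke the Livsic theorem~\cite[Theorem V.2.1]{GohKre65} for a compact operator with nuclear imaginary component and complete system of root vectors to identify $\tr(T-T^*)$ with $\sum_n(\mu_n-\overline{\mu_n})$. Your explicit remark that the completeness hypothesis is what kills the contribution of the completely non-selfadjoint (Volterra) part is exactly the content of the Livsic theorem the paper cites, so the two proofs coincide in substance; yours merely spells out the bookkeeping.
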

\begin{proof}[Proof]
By Theorem~\ref{th:Resolv.dif.in.S1}, the imaginary part of the
resolvent $(L_{C,D}(Q)-\l)^{-1}$ is of trace class operator,
\begin{equation}
    (L_{C,D}(Q)-\l)^{-1} - (L_{C,D}(Q)^*-\l)^{-1}\in \fS_1(\fH),
    \quad \l \in \bR \cap \rho(L_{C,D}(Q)),
\end{equation}
and trace formula~\eqref{eq:spur} holds. Combining
formula~\eqref{eq:spur} with Livsic theorem (see~\cite[Theorem
V.2.1]{GohKre65}) yields the result.
\end{proof}
To state the next result we recall some properties of the
classes $\cS_p(\fH)$ and $\cS_p^0(\fH)$, $p\in (0,\infty)$,
introduced in the following definition.
%
%
\begin{definition} \label{def:Sp}
Define for $p>0$
\begin{align}
    \label{eq:Sp.def}
        \cS_p(\fH) &= \{T\in \fS_\infty(\fH) \,|\,
        s_j(T) = O(j^{-1/p}) \quad\text{as}\quad j \to \infty \}, \\
    \label{eq:Sp0.def}
        \cS_p^0(\fH) &= \{T\in \fS_\infty(\fH) \,|\,
        s_j(T) = \ o(j^{-1/p}) \quad\text{as}\quad j \to \infty \},
\end{align}
where $s_j(T)$, $j \in \bN$, denote the singular values
($s$-numbers) of $T$ (i.e., the eigenvalues of $(T^*T)^{1/2}$
ordered in decreasing magnitude, counting multiplicity).
\end{definition}
%
%
Clearly $\fS_p \subset \cS_p^0 \subset \cS_p$. It is known that
$\cS_p(\fH)$ ($\cS_p^0(\fH)$) is a two-sided (non-closed) ideal
in $\cB(\fH)$. Clearly, $\cS_{p_1}\subset \cS_{p_2}$ and
$\cS_{p_1}^0\subset \cS_{p_2}^0$ if $p_1 > p_2$. The main
property of the classes $\cS_p(\fH)$ and $\cS_p^0(\fH)$ we need
in the sequel, is (see~\cite[\S II.2]{GohKre65})
\begin{equation} \label{eq:Sp1.Sp2.in.Sp}
    \cS_{p_1}\cdot \cS_{p_2} \subset \cS_{p}, \quad \text{and} \quad
    \cS_{p_1}\cdot \cS_{p_2}^0 \subset \cS_{p}^0, \quad \text{where} \quad
    p^{-1} = p_1^{-1} + p_2^{-1}.
\end{equation}
We need also a generalization of the known Ky-Fan lemma
(see~\cite[Theorem II.2.3]{GohKre65}).
%
%
\begin{lemma} \label{lem:Ky-Fan}
Let $A, B \in \fS_{\infty}(\fH)$, $r>0$, and let the following
conditions be satisfied
\begin{equation} \label{eq:lim.k.snk(A)}
    \lim_{k \to \infty} \left( k^r \cdot s_{n_k}(A) \right) = a,
    \qquad \lim_{n \to \infty} \left( n^r \cdot s_{n}(B) \right) = 0,
\end{equation}
where $\{n_k\}_{k=1}^{\infty}$ is an increasing sequence of
positive integers. Then
\begin{equation}
    \lim_{k \to \infty} \left( k^r \cdot s_{n_k}(A+B) \right) = a.
\end{equation}
\end{lemma}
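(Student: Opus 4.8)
The plan is to adapt the proof of the classical Ky--Fan lemma (\cite[Theorem II.2.3]{GohKre65}), which asserts $\limsup_n n^r s_n(A+B) = a$ when $\lim_n n^r s_n(A) = a$ and $\lim_n n^r s_n(B) = 0$, to the situation where only a \emph{subsequence} $\{s_{n_k}(A)\}$ is controlled, via the extra hypothesis that the \emph{whole} sequence $\{s_n(B)\}$ is $o(n^{-r})$. The natural tool is the two-sided perturbation inequality for singular values: for all $i,j \geqslant 1$,
\begin{equation} \label{eq:svineq}
    s_{i+j-1}(A+B) \leqslant s_i(A) + s_j(B),
\end{equation}
and the symmetric version with $A$ and $B$ interchanged; equivalently, applied to $A = (A+B) + (-B)$, one gets $s_{i+j-1}(A) \leqslant s_i(A+B) + s_j(B)$. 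These are standard (see~\cite[\S II.2]{GohKre65}). The point is that \eqref{eq:svineq} lets one compare $s_{n_k}(A+B)$ with $s_{n_k}(A)$ at the \emph{same} index, up to a controlled shift, paying only a $s_j(B)$ error that decays faster than $k^{-r}$ once $j$ is chosen to grow slowly with $k$.

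\textbf{Upper bound.} First I would show $\limsup_{k\to\infty} k^r s_{n_k}(A+B) \leqslant a$. Fix $\eps > 0$. Choose an auxiliary integer sequence $j_k \to \infty$ with $j_k = o(n_k)$ and $j_k = o(k)$ — for instance $j_k = \lfloor \min\{k, n_k\}^{1/2}\rfloor$. Apply \eqref{eq:svineq} with $i = n_k - j_k + 1$ and $j = j_k$:
\begin{equation}
    s_{n_k}(A+B) \leqslant s_{n_k - j_k + 1}(A) + s_{j_k}(B).
\end{equation}
Since $\{s_m(A)\}$ is nonincreasing and $n_k - j_k + 1 \leqslant n_k$, we have $s_{n_k - j_k + 1}(A) \geqslant s_{n_k}(A)$, so this inequality alone is not quite enough — I need an \emph{upper} bound on $s_{n_k - j_k+1}(A)$. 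This is the one genuinely delicate point (see below); granting it for the moment, namely that $(n_k - j_k + 1)^r s_{n_k - j_k + 1}(A) \to a$ along $k$, and using $(n_k - j_k + 1)/n_k \to 1$, one gets $k^r s_{n_k - j_k + 1}(A) \to a$ (note $k^r / n_k^r$ need not converge, but $k^r s_{n_k}(A) \to a$ by hypothesis and $s_{n_k - j_k+1}(A)/s_{n_k}(A) \to 1$ would suffice); meanwhile $k^r s_{j_k}(B) = (k/j_k)^r \cdot j_k^r s_{j_k}(B) \to 0$ because $j_k^r s_{j_k}(B) \to 0$ faster than any polynomial correction — wait, here one must be careful that $(k/j_k)^r$ is bounded, which forces the requirement $j_k \geqslant c\,k$ for some $c>0$, conflicting with $j_k = o(k)$.

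\textbf{Resolving the index bookkeeping — the main obstacle.} The real work, and the step I expect to be the crux, is choosing $j_k$ to simultaneously make (i) the shift $j_k$ negligible relative to $n_k$ so that $s_{n_k \pm j_k}(A) \sim s_{n_k}(A)$, and (ii) the error term $k^r s_{j_k}(B) \to 0$. For (ii) we need $k = O(j_k)$ \emph{up to} the decay of $B$: precisely, since $s_{j_k}(B) = o(j_k^{-r})$, we get $k^r s_{j_k}(B) = o\big((k/j_k)^r\big)$, which tends to $0$ provided $k/j_k = O(1)$, i.e.\ $j_k \succeq k$. For (i) we need $j_k/n_k \to 0$. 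Both can be met iff $k = o(n_k)$; but in general $\{n_k\}$ is merely increasing, so $n_k$ could equal $k$ (no gaps). To handle this I would instead \emph{not} shift by $j_k \asymp k$ but exploit that $s_{j}(B) = o(j^{-r})$ to write, for the choice $j = j_k := n_k - n_{k'} $ hmm — cleaner: use \eqref{eq:svineq} in the form $s_{n_k}(A+B) \leqslant s_{n_k - m}(A) + s_{m+1}(B)$ for \emph{every} $m \in \{0,\dots,n_k-1\}$, then take $m = m_k$ optimizing the bound; since $k^r s_{n_k}(A) \to a$ and $\{s_j(B)\}$ decays like $o(j^{-r})$ \emph{everywhere}, one shows the infimum over $m$ of $k^r\big(s_{n_k-m}(A) + s_{m+1}(B)\big)$ has $\limsup \leqslant a$ by splitting: if $m \leqslant n_k/2$ then $n_k - m \asymp n_k$ and monotonicity plus the subsequential hypothesis controls the first term, while the genuinely problematic regime is avoided because we are free to take $m$ as small as we like (even $m=0$ gives $s_{n_k}(A) + s_1(B)$, useless; the trick is $m_k \to \infty$ slowly, $m_k = o(n_k)$, and bounding $s_{n_k - m_k}(A) \leqslant$ something $\to a \cdot n_k^{-r}$ using that $A$'s $s$-numbers restricted to the subsequence $\{n_k\}$ force, by interlacing with monotonicity, $n^r s_n(A)$ to be \emph{bounded} on a full-density-like set — actually $\limsup_n n^r s_n(A) \leqslant a$ follows from the subsequential limit only if the gaps $n_{k+1}/n_k \to 1$, which is \emph{not} assumed).

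\textbf{Lower bound and conclusion.} Given the upper bound, the matching lower bound $\liminf_k k^r s_{n_k}(A+B) \geqslant a$ is symmetric: apply \eqref{eq:svineq} to $A = (A+B) + (-B)$, giving $s_{n_k}(A) \leqslant s_{n_k - m}(A+B) + s_{m+1}(B)$, hence $k^r s_{n_k - m}(A+B) \geqslant k^r s_{n_k}(A) - k^r s_{m+1}(B)$, and pass to the limit with $m = m_k$ chosen as before; since $s_{n_k - m_k}(A+B) \to$ can only be compared to $s_{n_k}(A+B)$ via monotonicity ($s_{n_k-m_k}(A+B) \geqslant s_{n_k}(A+B)$, the wrong direction), one instead uses the shifted inequality with indices arranged so that the $A+B$ term carries index $\geqslant n_k$, e.g.\ $s_{n_k}(A) \leqslant s_{n_k + m}(A+B) + \dots$ is false; the correct move is $s_{i+j-1}(A) \leqslant s_i(A+B) + s_j(-B)$ with $i = n_k$, $j = m_k+1$ giving $s_{n_k + m_k}(A) \leqslant s_{n_k}(A+B) + s_{m_k+1}(B)$, so $s_{n_k}(A+B) \geqslant s_{n_k + m_k}(A) - s_{m_k+1}(B)$, and now $n_k + m_k \geqslant n_k$ with $m_k = o(n_k)$ keeps $s_{n_k+m_k}(A)$ comparable to the controlled $s_{n_k}(A)$ only if we additionally know $A$'s tail is regular between consecutive $n_k$'s. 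I suspect the paper circumvents all of this because in its intended application $A$ itself has $s_{n_k}(A) \asymp k^{-r}$ arising from an explicit asymptotic where the \emph{ambient} sequence $n^r s_n(A)$ is in fact bounded (only the precise limit is subsequential), so the honest statement of the proof is: (1) invoke the perturbation inequalities \eqref{eq:svineq}; (2) choose $m_k \to \infty$, $m_k = o(n_k)$; (3) for the upper bound combine $s_{n_k}(A+B) \leqslant s_{n_k - m_k}(A) + s_{m_k+1}(B)$ with monotonicity-plus-subsequential-limit to get $\limsup \leqslant a$; (4) for the lower bound combine $s_{n_k + m_k}(A) \leqslant s_{n_k}(A+B) + s_{m_k+1}(B)$ with the subsequential limit to get $\liminf \geqslant a$; (5) use $m_k^r s_{m_k+1}(B) \to 0$ to kill both error terms after multiplying by $k^r$ and using $(k/m_k)^r \to $ (choose $m_k \asymp k$, legitimate as long as $k = o(n_k)$, which does hold in the application). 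The hard part is exactly this interplay of index shifts, and in a fully general statement one would need to assume $n_k/k \to \infty$ or that $n^r s_n(A)$ is bounded; I would state the proof under whichever of these is implicitly in force, flagging it explicitly.
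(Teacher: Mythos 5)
Your proposal does not close, and the lemma is in fact true exactly as stated, with no extra hypotheses; the idea you are missing is the one the paper's proof turns on. You keep trying to compare $s_{n_k}(A+B)$ with $s_{n_k-m}(A)$ for a shift $m$ ranging over all integers; since $n_k-m$ is in general not a member of the subsequence $\{n_j\}$, the hypothesis $\lim_k k^r s_{n_k}(A)=a$ gives no control over it, and this is precisely what drives you to postulate boundedness of $n^r s_n(A)$ or $n_k/k\to\infty$. The correct move is to compare with $A$ at another \emph{subsequence} index: fix $\eps\in(0,1)$, set $j=k-\lfloor \eps k\rfloor$, and apply the Ky--Fan inequality in the form $s_{n_k}(A+B)\leqslant s_{n_j}(A)+s_{n_k-n_j+1}(B)$. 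Because $\{n_k\}$ is a strictly increasing sequence of integers, $n_k-n_j+1\geqslant k-j+1\geqslant \eps k$, so $k^r s_{n_k-n_j+1}(B)\leqslant \eps^{-r}\,(n_k-n_j+1)^r s_{n_k-n_j+1}(B)\to 0$ as $k\to\infty$; and $k^r s_{n_j}(A)=(k/j)^r\, j^r s_{n_j}(A)\leqslant (1-\eps)^{-r}\, j^r s_{n_j}(A)\to (1-\eps)^{-r}a$. This resolves the conflict you identified (shift of order at least $k$ needed for the $B$-term versus a negligible shift wanted for the $A$-term): one takes a shift \emph{proportional} to $k$, accepts the fixed multiplicative loss $(1-\eps)^{-r}$ in the $A$-term, and removes it by a second limit $\eps\downarrow 0$ taken \emph{after} $k\to\infty$. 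The lower bound is symmetric, with $j=k+\lfloor\eps k\rfloor$ and $s_{n_j}(A)\leqslant s_{n_k}(A+B)+s_{n_j-n_k+1}(B)$.

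Concretely, the gaps are these: you never exploit the elementary but essential inequality $n_k-n_j\geqslant k-j$, which is what converts a shift measured in the subsequence counter into a large enough singular-value index for $B$; you insist on making all errors vanish in a single limit in $k$, whereas the argument needs the two-parameter structure (first $k\to\infty$ for fixed $\eps$, then $\eps\to 0$); and your closing claim that the statement requires additional hypotheses such as $n_k/k\to\infty$ or boundedness of $n^r s_n(A)$ is incorrect.
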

%
%
\begin{proof}[Proof]
We follow the proof of Ky-Fan lemma~\cite[Theorem
II.2.3]{GohKre65}. Fix $\eps \in (0, 1)$ and for any $k \in \bN$
define $j = j_{\eps,k} := k - \lfloor \eps k \rfloor \in \bN$.
Then the Ky-Fan inequality,
\begin{equation}
    s_n(A+B) \leqslant s_m(A) + s_{n-m+1}(B), \qquad n \geqslant m,
\end{equation}
implies for each $k \in \bN$
\begin{align} \label{eq:k.snk(A+B)}
    k^r \cdot s_{n_k}(A+B)
    &\leqslant \left(\frac{k}{j}\right)^r \cdot \left(j^r \cdot s_{n_j}(A)\right)
    + \left(\frac{k}{n_k-n_j+1}\right)^r \cdot \left( (n_k-n_j+1)^r \cdot s_{n_k-n_j+1}(B)\right) \nonumber \\
    &\leqslant \frac{1}{(1-\eps)^r} \cdot \left(j^r \cdot s_{n_j}(A)\right)
    + \frac{1}{\eps^r} \cdot \left( (n_k-n_j+1)^r \cdot s_{n_k-n_j+1}(B)\right).
\end{align}
Here we have used that
\begin{equation}
    k - \eps k \leqslant j < k - \eps k + 1 \quad\text{and}\quad
    n_k - n_j \geqslant k-j.
\end{equation}
Note that
\begin{equation}
    j_{\eps,k} \to \infty \quad\text{and}\quad n_k - n_j + 1 \to \infty
    \quad\text{as}\quad k \to \infty.
\end{equation}
Hence tending $k$ to infinity in~\eqref{eq:k.snk(A+B)} and
using~\eqref{eq:lim.k.snk(A)} we derive
\begin{equation}
    \varlimsup_{k \to \infty} \left( k^r \cdot s_{n_k}(A+B) \right) \leqslant \frac{a}{(1-\eps)^r}.
\end{equation}
Tending $\eps$ to zero here we get
\begin{equation} \label{eq:ol.lim.snk}
    \varlimsup_{k \to \infty} \left( k^r \cdot s_{n_k}(A+B) \right) \leqslant a.
\end{equation}
Using inequality
\begin{equation}
    s_{n_j}(A) \leqslant s_{n_k}(A+B) + s_{n_j-n_k+1}(B), \quad j = k + \lfloor \eps k \rfloor,
\end{equation}
we obtain in a similar way that
\begin{equation}
    \varliminf_{k \to \infty} \left( k^r \cdot s_{n_k}(A+B) \right) \geqslant a.
\end{equation}
One completes the proof by combining this inequality
with~\eqref{eq:ol.lim.snk}.
\end{proof}
Now we are ready to find the asymptotic behavior of the
$s$-numbers of the resolvent operator $(L_{C,D}(Q)-\l)^{-1}$ for
each fixed $\l\in \rho(L_{C,D}(Q))$.
%
%
\begin{proposition} \label{prop:RL.in.S1+KyFan}
Let $\rho(L_{C,D}(Q)) \ne \varnothing$. Then for any $\l \in
\rho(L_{C,D}(Q))$ the following inclusion holds
\begin{equation} \label{eq:RL.in.S1}
    (L_{C,D}(Q)-\l)^{-1} \in \cS_1(\fH) \setminus \fS_1(\fH).
\end{equation}
Moreover, the sequence $\{s_k\}_{k \in \bN}$ of singular values
of the operator $(L_{C,D}(Q)-\l)^{-1}$ can be decomposed into
the union of $n$ disjoint non-increasing subsequences
$\{s_{j,k}\}_{k \in \bN}$, $j \in \{1,\ldots,n\}$, satisfying
\begin{equation} \label{eq:sjk}
    s_{j,k} = \frac{|b_j|+o(1)}{\pi k} \quad\text{as}\quad k \to \infty.
\end{equation}
\end{proposition}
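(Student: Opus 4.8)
The plan is to reduce everything to an explicit \emph{normal} reference operator whose singular values can be written down by hand, and then to transport the asymptotics across a trace-class perturbation by means of the generalized Ky-Fan Lemma~\ref{lem:Ky-Fan}.

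First I would take $L_0:=L_{I_n,-I_n}(0)$, i.e.\ periodic boundary conditions $y(0)=y(1)$ with zero potential. By Lemma~\ref{lem:normal}(i) it is normal, since $CBC^*=B=DBD^*$ for $C=I_n$, $D=-I_n$. As $B$ is diagonal and $Q\equiv0$, the system decouples, $L_0=\bigoplus_{j=1}^n\ell_j$ with $\ell_j u=-i b_j^{-1}u'$ on $L^2[0,1]$, $u(0)=u(1)$; each $\ell_j=b_j^{-1}\ell_0$ is normal with the common orthonormal eigenbasis $\{e^{2\pi i k x}\}_{k\in\bZ}$ and eigenvalues $2\pi k/b_j$. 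Fixing $\l_0\in\rho(L_0)\cap\rho(L_{C,D}(Q))$ (possible, since $\sigma(L_0)$ is discrete), the operator $(\ell_j-\l_0)^{-1}=b_j(\ell_0-b_j\l_0)^{-1}$ is normal and its singular values are the numbers $|b_j|\,|2\pi k-b_j\l_0|^{-1}$, $k\in\bZ$, arranged in decreasing order; for each $m\ge1$ exactly two of them lie near $|b_j|/(2\pi m)$ (from $k=\pm m$) and only finitely many exceed a given threshold, so the $\ell$-th largest equals $\sigma_{j,\ell}:=\dfrac{|b_j|+o(1)}{\pi\ell}$. Hence $(L_0-\l_0)^{-1}\in\cS_1(\fH)\setminus\fS_1(\fH)$, and its $s$-number sequence is the merge of the $n$ non-increasing sequences $\{\sigma_{j,\ell}\}_{\ell\in\bN}$; I would then fix a partition $\bN=\bigsqcup_{j=1}^n N_j$, $N_j=\{n_{j,1}<n_{j,2}<\cdots\}$, realizing this merge, so that $s_{n_{j,\ell}}\bigl((L_0-\l_0)^{-1}\bigr)=\sigma_{j,\ell}$.

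Next, by Theorem~\ref{th:Resolv.dif.in.S1}, $(L_{C,D}(Q)-\l_0)^{-1}-(L_0-\l_0)^{-1}\in\fS_1(\fH)$. For an arbitrary $\l\in\rho(L_{C,D}(Q))$ the resolvent identity $(L_{C,D}(Q)-\l)^{-1}=\bigl(I+(\l-\l_0)(L_{C,D}(Q)-\l)^{-1}\bigr)(L_{C,D}(Q)-\l_0)^{-1}$ together with the ideal property of $\cS_1$ shows $(L_{C,D}(Q)-\l)^{-1}\in\cS_1(\fH)$, and then \eqref{eq:Sp1.Sp2.in.Sp} gives $(L_{C,D}(Q)-\l)^{-1}-(L_{C,D}(Q)-\l_0)^{-1}\in\cS_{1/2}(\fH)\subset\fS_1(\fH)$ (summable $s$-numbers). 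Thus $B_0:=(L_{C,D}(Q)-\l)^{-1}-(L_0-\l_0)^{-1}\in\fS_1(\fH)$ for every $\l\in\rho(L_{C,D}(Q))$, which together with $(L_0-\l_0)^{-1}\notin\fS_1$ already yields \eqref{eq:RL.in.S1}. Since $\sum_n s_n(B_0)<\infty$ and $s_n(B_0)$ is non-increasing, $n\,s_n(B_0)\to0$, so Lemma~\ref{lem:Ky-Fan} applies with $A=(L_0-\l_0)^{-1}$, $B=B_0$, $r=1$, along the increasing sequence $\{n_{j,\ell}\}_{\ell}$ and $a=|b_j|/\pi$; it gives $\ell\,s_{n_{j,\ell}}\bigl((L_{C,D}(Q)-\l)^{-1}\bigr)\to|b_j|/\pi$. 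Setting $s_{j,\ell}:=s_{n_{j,\ell}}\bigl((L_{C,D}(Q)-\l)^{-1}\bigr)$ and using that $\{N_j\}_{j=1}^n$ partitions $\bN$, the subsequences $\{s_{j,\ell}\}_{\ell\in\bN}$, $j\in\{1,\ldots,n\}$, are disjoint, non-increasing, exhaust the whole $s$-number sequence of $(L_{C,D}(Q)-\l)^{-1}$, and satisfy \eqref{eq:sjk}.

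The diagonalization in the first step and the ideal calculus in the second are routine. \emph{The delicate point} is the bookkeeping in the last step: one must verify that merging the $n$ model sequences really gives a partition of $\bN$ into \emph{increasing} index sets $N_j$ along which the hypotheses of Lemma~\ref{lem:Ky-Fan} hold verbatim, and that the finitely many ``large'' singular values of $(L_0-\l_0)^{-1}$ (those near the indices where $2\pi k\approx b_j\l_0$) are harmless --- which is handled by discarding finitely many terms, affecting neither membership in $\cS_1\setminus\fS_1$ nor the $o(1)$ in \eqref{eq:sjk}. An alternative reference, closer to Corollary~\ref{cor:Green}, is $L_{I_n,0}(0)$, whose inverse is $\bigoplus_{j=1}^n i b_j V$ with $V$ the Volterra integration operator on $L^2[0,1]$; its singular values $\tfrac{2}{(2k-1)\pi}$ produce the same $\sigma_{j,\ell}$, and one passes to $L_{C,D}(Q)$ using that $(L_{C,D}(Q)-\l)^{-1}-(L_{I_n,0}(Q)-\l)^{-1}$ has rank $\le n$.
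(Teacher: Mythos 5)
Your proof is correct and follows essentially the same route as the paper: the same periodic reference operator $L_0=L_{I_n,-I_n}(0)$, the trace-class resolvent difference from Theorem~\ref{th:Resolv.dif.in.S1}, and the generalized Ky-Fan Lemma~\ref{lem:Ky-Fan} applied along the merged index sets. The only (harmless) difference is organizational: you absorb the passage from $\l_0$ to a general $\l$ into a single trace-class perturbation $B_0$ and apply Lemma~\ref{lem:Ky-Fan} once, whereas the paper applies it twice, first at $\l_0$ and then again after invoking the Hilbert identity and $\cS_1\cdot\cS_1\subset\cS_{1/2}\subset\cS_1^0$.
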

%
%
\begin{proof}[Proof]
Put $L := L_{C,D}(Q)$. Since $\rho(L) \ne \varnothing$, its
spectrum is discrete. Alongside the operator $L_{C,D}(Q)$
consider the auxiliary operator $L_0:=L_{I_n,-I_n}(0)$
corresponding to the periodic boundary value problem
\begin{equation}
    -iB^{-1}y'=0, \quad y(0)=y(1).
\end{equation}
Straightforward calculation shows that the spectrum
$\sigma(L_0)$ is decomposed into $n$ series
\begin{equation}
    \{ 2 \pi k / b_j \}_{k \in \bZ}, \quad j \in \{1,\ldots,n\}.
\end{equation}
It is easily seen that the operator $L_0$ is normal. Hence the
singular values of the operator $(L_0 - \l)^{-1}$ coincide with
the absolute values of its eigenvalues. Hence the sequence
$\{s_k^{(0)}\}_{k \in \bN}$ of singular values of $(L_0 -
\l)^{-1}$ can be reordered and decomposed into the union of $n$
disjoint subsequences $\{\wt{s}_{j,k}^{(0)}\}_{k \in \bZ}$, $j
\in \{1,\ldots,n\}$, such that $\wt{s}_{j,k}^{(0)} =
\left|\frac{2 \pi k}{b_j} - \l\right|^{-1}$. Reordering the
sequence $\{\wt{s}_{j,k}^{(0)}\}_{k \in \bZ}$ in decreasing
order of magnitude we obtain the sequence $\{s_{j,k}^{(0)}\}_{k
\in \bN}$ satisfying
\begin{equation} \label{eq:sjk0}
    s_{j,k}^{(0)} = \frac{|b_j|+o(1)}{\pi k} \qquad\text{as}\qquad k \to \infty.
\end{equation}
Therefore, $s_k^{(0)} = O(k^{-1})$ as $k \to \infty$ and hence
\begin{equation}
    (L_0 - \l)^{-1} \in \cS_1(\fH), \quad \l \in \rho(L_0).
\end{equation}
Since $\sigma(L_0)$ and $\sigma(L)$ are at most countable,
$\rho(L_0) \cap \rho(L) \ne \varnothing$. By
Theorem~\ref{th:Resolv.dif.in.S1},
\begin{equation} \label{eq:RL0-RL.in.fS}
    (L - \l_0)^{-1} - (L_0 - \l_0)^{-1} \in \fS_1(\fH),
    \qquad \l_0 \in \rho(L_0) \cap \rho(L).
\end{equation}
Combining this relation with just established inclusion $(L_0 -
\l_0)^{-1} \in \cS_1(\fH)$ yields
\begin{equation}
    (L - \l_0)^{-1} = (L_0 - \l_0)^{-1} + \bigl((L - \l_0)^{-1} - (L_0 - \l_0)^{-1}\bigr) \in \cS_1(\fH).
\end{equation}
As an immediate consequence of~\eqref{eq:RL0-RL.in.fS} one gets
that the sequence $\{s_k^{(1)}\}_{k \in \bN}$ of $s$-numbers of
the operator $(L_0 - \l_0)^{-1} - (L - \l_0)^{-1}$ satisfies
\begin{equation} \label{eq:sjk1}
    s_k^{(1)} = o(k^{-1}) \quad\text{as}\quad k \to \infty.
\end{equation}
Combining~\eqref{eq:sjk0} with~\eqref{eq:sjk1} and applying
Lemma~\ref{lem:Ky-Fan} we arrive at the desired asymptotic
formula~\eqref{eq:sjk} for $s$-numbers of the operator
$(L-\l_0)^{-1}$.

Next, noting that $\cS_1(\fH)$ is two-sided ideal in $\fH$ and
using the Hilbert identity for the resolvent,
\begin{equation}
    (L-\l)^{-1} = (L-\l_0)^{-1} + (\l_0 - \l) (L-\l)^{-1} (L-\l_0)^{-1},
\end{equation}
one gets $(L - \l)^{-1} \in \cS_1(\fH)$ for $\l \in \rho(L)$.
Moreover, since $(L-\l)^{-1}, (L-\l_0)^{-1} \in \cS_1(\fH)$, we
obtain from~\eqref{eq:Sp1.Sp2.in.Sp} that
\begin{equation}
    (L-\l)^{-1} \cdot (L-\l_0)^{-1} \in \cS_1(\fH) \cdot \cS_1(\fH)
    \subset \cS_{1/2}(\fH) \subset \cS_1^0(\fH).
\end{equation}
Combining this relation with Lemma~\ref{lem:Ky-Fan} yields the
desired asymptotic formula for the $s$-numbers of the operator
$(L-\l)^{-1}$, $\l \in \rho(L)$. This implies that $(L-\l)^{-1}
\not\in \fS_1(\fH)$, which completes the proof.
\end{proof}
Next we improve Theorem~\ref{th:Resolv.dif.in.S1} (see
formula~\eqref{eq:RL1-RL2infS1})) assuming that $Q_2 - Q_1 \in
L^2([0,1]; \bC^{n \times n})$.
%
%
\begin{corollary} \label{cor:res_dif_in_S2/3}
Let $Q_2 - Q_1 \in L^2([0,1]; \bC^{n \times n})$. Then for $\l
\in \rho(L_{C_1,D_1}(Q_1)) \cap \rho(L_{C_2,D_2}(Q_2))$ the
following inclusion holds
\begin{equation} \label{eq:RL1-RL2infS2/3}
    (L_{C_1,D_1}(Q_1) - \l)^{-1} - (L_{C_2,D_2}(Q_2) - \l)^{-1} \in \cS_{2/3}^0(\fH).
\end{equation}
\end{corollary}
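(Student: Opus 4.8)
The plan is to reproduce, with a sharper bookkeeping of Schatten classes, the three‑term decomposition used in the proof of Theorem~\ref{th:Resolv.dif.in.S1}. Put $T_{jj}:=L_{C_j,D_j}(Q_j)$ and $T_j:=L_{I_n,0}(Q_j)$, so that $\rho(T_1)=\rho(T_2)=\bC$, and for $\l\in\rho(T_{11})\cap\rho(T_{22})$ write
\begin{multline*}
    (T_{11}-\l)^{-1}-(T_{22}-\l)^{-1}
    =\bigl((T_{11}-\l)^{-1}-(T_{1}-\l)^{-1}\bigr)\\
    +\bigl((T_{1}-\l)^{-1}-(T_{2}-\l)^{-1}\bigr)
    +\bigl((T_{2}-\l)^{-1}-(T_{22}-\l)^{-1}\bigr).
\end{multline*}
By Corollary~\ref{cor:Green} the first and third summands have rank at most $n$, and a finite‑rank operator belongs to every class $\cS_p^0(\fH)$, in particular to $\cS_{2/3}^0(\fH)$. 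So everything reduces to the middle difference.

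For the middle difference I would keep the factorization from the proof of Theorem~\ref{th:Resolv.dif.in.S1}: with $Q:=Q_2-Q_1$ and its generalized polar decomposition $Q=|Q^*|^{1/2}\,U\,|Q|^{1/2}$ one has $(T_1-\l)^{-1}-(T_2-\l)^{-1}=K_1K_2$, where $K_1:=\ol{(T_1-\l)^{-1}|Q^*|^{1/2}}$ and $K_2:=U|Q|^{1/2}(T_2-\l)^{-1}$. The extra hypothesis $Q\in L^2$ means exactly that the multipliers $|Q(\cdot)|^{1/2}$ and $|Q^*(\cdot)|^{1/2}$ lie in $L^4([0,1];\bC^{n\times n})$. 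Using the explicit form~\eqref{eq:LI0Q} of the Volterra resolvent, $(T_j-\l)^{-1}=M_{\Phi_j(\cdot,\l)}\,V_0\,M_{\Phi_j^{-1}(\cdot,\l)iB}$, where $M_g$ denotes multiplication by the matrix function $g$ and $V_0$ is the scalar integration operator $f\mapsto\int_0^x f(t)\,dt$ applied componentwise, one gets
\[
    K_1=M_{\Phi_1(\cdot,\l)}\,V_0\,M_{h_1},\qquad
    K_2=M_{h_2}\,V_0\,M_{\Phi_2^{-1}(\cdot,\l)iB},
\]
with $h_1:=\Phi_1^{-1}(\cdot,\l)iB\,|Q^*(\cdot)|^{1/2}\in L^4$ and $h_2:=U(\cdot)|Q(\cdot)|^{1/2}\Phi_2(\cdot,\l)\in L^4$, all other factors being bounded multiplication operators.

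The heart of the matter — and the step I expect to be the main obstacle — is the estimate that a weighted Volterra operator $R_h\colon f\mapsto\int_0^x h(t)f(t)\,dt$ with $h\in L^4$ belongs to $\cS_{4/3}^0(\fH)$, together with a quasinorm bound $\|R_h\|_{\cS_{4/3}}\leqslant c\|h\|_{L^4}$. This is a Birman--Solomyak type inequality, which I would obtain by interpolation between the two endpoint facts that $h\in L^2$ gives $R_h\in\fS_2(\fH)$ (its kernel $\mathbf{1}_{\{t<x\}}h(t)$ is square integrable, with $\iint_{t<x}|h(t)|^2\,dt\,dx\leqslant\|h\|_{L^2}^2$) and that $h\in L^\infty$ gives $R_h=V_0M_h\in\cS_1(\fH)$ (because $s_k(V_0)=\tfrac{2}{(2k-1)\pi}$, so $V_0\in\cS_1(\fH)\setminus\fS_1(\fH)$, and $\cS_1(\fH)$ is a two‑sided ideal). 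Membership in the \emph{small} class $\cS_{4/3}^0(\fH)$ then follows since $L^\infty$ is dense in $L^4$, the map $h\mapsto R_h$ is continuous into $\cS_{4/3}(\fH)$, its values on bounded $h$ lie in $\cS_1(\fH)\subset\cS_{4/3}^0(\fH)$, and $\cS_{4/3}^0(\fH)$ is closed in $\cS_{4/3}(\fH)$. The operator $M_{h_2}V_0$ is handled by passing to adjoints, since $V_0^*\colon f\mapsto\int_x^1 f(t)\,dt$ is again a Volterra operator with bounded kernel. Therefore $K_1,K_2\in\cS_{4/3}^0(\fH)$, and the multiplication property~\eqref{eq:Sp1.Sp2.in.Sp} with $p_1=p_2=\tfrac{4}{3}$ (so $p=\tfrac{2}{3}$) gives $K_1K_2\in\cS_{2/3}^0(\fH)$. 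Together with the two finite‑rank terms this yields~\eqref{eq:RL1-RL2infS2/3}.
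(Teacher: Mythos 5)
Your argument is correct, but it takes a genuinely different and substantially heavier route than the paper. The paper also reduces to the middle difference $(T_1-\l)^{-1}-(T_2-\l)^{-1}$, but then uses the asymmetric factorization coming from the second resolvent identity, $(T_1-\l)^{-1}\cdot\bigl(Q\,(T_2-\l)^{-1}\bigr)$ with $Q=Q_2-Q_1$: since the Green kernel of $T_2$ is essentially bounded and $Q\in L^2$, the second factor is Hilbert--Schmidt, hence in $\fS_2\subset\cS_2^0$, while $(T_1-\l)^{-1}\in\cS_1$ by Proposition~\ref{prop:RL.in.S1+KyFan}; the inclusion then drops out of~\eqref{eq:Sp1.Sp2.in.Sp} with $p_1=1$, $p_2=2$. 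You instead keep the symmetric polar-decomposition splitting $K_1K_2$ from Theorem~\ref{th:Resolv.dif.in.S1} and place \emph{each} factor in $\cS_{4/3}^0$ via a Birman--Solomyak bound $\|V_0M_h\|_{\cS_{4/3}}\lesssim\|h\|_{L^4}$, obtained by real interpolation between the pairs $(L^\infty,\cS_{1})$ and $(L^2,\fS_2)$, followed by a density argument to land in the separable subideal. Everything you assert is true -- the endpoint bounds, the closedness of $\cS_{4/3}^0$ in $\cS_{4/3}$, the inclusion $\cS_1\subset\cS_{4/3}^0$, the passage to adjoints for $M_{h_2}V_0$, and the final multiplication $\cS_{4/3}^0\cdot\cS_{4/3}^0\subset\cS_{2/3}^0$ -- but the interpolation step imports nontrivial machinery (the Lorentz--Schatten interpolation scale) that the paper's one-line argument avoids entirely. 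The trade-off: your symmetric factorization distributes the smallness evenly between the two factors and would be the natural tool if one wanted sharper exponents from weighted Volterra estimates, whereas the paper's proof is self-contained, using only the Hilbert--Schmidt kernel criterion, the already-established $\cS_1$ membership of the resolvent, and the ideal multiplication property.
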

%
%
\begin{proof}[Proof]
Following the proof of Theorem~\ref{th:Resolv.dif.in.S1} it
suffices to show that
\begin{equation}
    (T_1-\l)^{-1} - (T_2-\l)^{-1} \in \cS_{2/3}^0(\fH), \quad \l \in \bC,
\end{equation}
where $T_j = L_{I_n,0}(Q_j)$, $j \in \{1,2\}$. Let
$G_2(\cdot,\cdot;\l)$ be the Green function of the operator
$T_2$. By Lemma~\ref{lem:Green} (cf. formula
\eqref{eq:Gxtl=...}),
\begin{equation}
    G_2(\cdot,\cdot;\l) \in L^{\infty}([0,1] \times [0,1]; \bC^{n \times n}).
\end{equation}
Combining this fact with the assumption
\begin{equation}
    Q(\cdot) := Q_2(\cdot) - Q_1(\cdot) \in L^2([0,1]; \bC^{n \times n}),
\end{equation}
one gets
\begin{equation}
    Q(x) \cdot G_2(x,t;\l) \in L^2([0,1] \times [0,1]; \bC^{n \times n}).
\end{equation}
The latter means that $Q \bigl(T_2 - \l \bigr)^{-1}$ is
Hilbert-Schmidt operator,
\begin{equation}
    Q \bigl(T_2 - \l \bigr)^{-1} \in \fS_2(\fH) \subset \cS_2^0(\fH).
\end{equation}
By Proposition~\ref{prop:RL.in.S1+KyFan}, $(T_1-\l)^{-1} \in
\cS_1(\fH)$. Combining this fact with
property~\eqref{eq:Sp1.Sp2.in.Sp} of classes $\cS_p(\fH)$,
yields
\begin{equation}
    (T_1-\l)^{-1} - (T_2-\l)^{-1} = (T_1-\l)^{-1} \cdot \Bigl(Q (T_2-\l)^{-1}\Bigr)
    \in \cS_1(\fH) \cdot \cS_2^0(\fH) \subset \cS_{2/3}^0(\fH),
\end{equation}
which completes the proof.
\end{proof}
%
%
\subsection{Spectral synthesis for dissipative Dirac type
operators} \label{subsec:dissip}
%
%
Recall that an operator $T$ in a Hilbert space $\fH$ is called
accumulative (dissipative) if
\begin{equation}
    \Im(Tf, f) \leqslant 0\ (\geqslant 0), \quad f \in \dom(T).
\end{equation}
Note that the accumulativity (dissipativity) of
BVP~\eqref{eq:system}--\eqref{eq:CY(0)+DY(1)=0} implies $B=B^*$.
Therefore, to investigate accumulative (dissipative) BVP we are
forced to consider Dirac type operators only. At first we
express accumulativity (dissipativity) of the operator
$L_{C,D}(Q)$ in terms of matrices $B,C,D$ and $Q(\cdot)$.
%
%
\begin{lemma}\label{lem:accum.CB*C-DB*D}
Let $B=B^*$. The operator $L_{C,D}(Q)$ is accumulative
(dissipative) if and only if $\Im\, Q \leqslant 0\ (\Im\, Q
\geqslant 0)$ and
\begin{equation} \label{eq:CBC*-DBD*}
    C B C^* - D B D^* \geqslant 0\ \ ( \leqslant 0).
\end{equation}
In particular, the operator $L_{C,D}(Q)$ is selfadjoint if and
only if $Q = Q^*$ and $CBC^* = DBD^*$.
\end{lemma}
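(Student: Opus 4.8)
The plan is to compute $\Im\bigl(L_{C,D}(Q)y,y\bigr)$ explicitly for $y\in\dom(L_{C,D}(Q))$ and to split off the boundary contribution from the potential contribution. Since $B=B^*$ the diagonal matrix $B^{-1}$ is real (Hermitian), and integrating by parts in $-i\int_0^1\langle B^{-1}y',y\rangle\,dx$, using the embedding $W^{1,1}([0,1];\bC^n)\hookrightarrow C([0,1];\bC^n)$ (which guarantees $\langle Q(\cdot)y,y\rangle\in L^1[0,1]$), I would obtain, for every $y\in\dom(L_{C,D}(Q))$,
\[
\Im\bigl(L_{C,D}(Q)y,y\bigr)=\tfrac12\bigl(\langle B^{-1}y(0),y(0)\rangle-\langle B^{-1}y(1),y(1)\rangle\bigr)+\int_0^1\bigl\langle (\Im Q(x))y(x),y(x)\bigr\rangle\,dx,
\]
where $\Im Q:=\tfrac1{2i}(Q-Q^*)$. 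Thus accumulativity (dissipativity) of $L_{C,D}(Q)$ is exactly the assertion that the right-hand side is $\le0$ ($\ge0$) for all $y\in\dom(L_{C,D}(Q))$.

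The second step is to recognise the boundary term through the machinery already developed in the proof of Lemma~\ref{lem:normal}. Putting $\underline y:=\col(y(0),y(1))$, the boundary term is $\tfrac12 w(\underline y,\underline y)$, where $w$ is the non-degenerate indefinite form \eqref{eq:wfg} on $\cH=\bC^n\oplus\bC^n$, and as $y$ runs over $\dom(L_{C,D}(Q))$ the vector $\underline y$ runs over $\cH_1:=\ker\begin{pmatrix}C&D\end{pmatrix}$. Recall from that proof that $\cH_1=\cH_2^{[\bot]}$ with $\cH_2=\{\col(BC^*h,-BD^*h):h\in\bC^n\}$, that $\dim\cH_1=\dim\cH_2=n$, and that $w\bigl(\col(BC^*h,-BD^*h),\col(BC^*h,-BD^*h)\bigr)=\langle(CBC^*-DBD^*)h,h\rangle$; moreover $\wt B=\diag(B^{-1},-B^{-1})$, hence $w$, has signature $(n,n)$. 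I claim that $w|_{\cH_1}\le0$ is equivalent to $w|_{\cH_2}\ge0$: if $w(u,u)>0$ for some $u\in\cH_1$, then $u\notin\cH_2$ (otherwise $w(u,u)=0$) and, since $w$ vanishes between $\cH_1$ and $\cH_2$, the subspace $\cH_2+\bC u$ has dimension $n+1$ and $w\ge0$ on it, contradicting the signature; the converse is symmetric, and the same reasoning gives $w|_{\cH_1}\ge0\iff w|_{\cH_2}\le0$. Since $w|_{\cH_2}\ge0$ ($\le0$) is precisely $CBC^*-DBD^*\ge0$ ($\le0$) by the displayed identity, the sign of the boundary term is governed by \eqref{eq:CBC*-DBD*}.

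With these two ingredients the \emph{if} direction is immediate: $\Im Q\le0$ makes the integral term nonpositive, and $CBC^*-DBD^*\ge0$ (hence $w|_{\cH_1}\le0$) makes the boundary term nonpositive. For the \emph{only if} direction I would localize. Using $y\in\dom(L_{C,D}(Q))$ supported near an interior point of $(0,1)$, the boundary term drops out and a Lebesgue-point argument yields $\Im Q(x)\le0$ for a.e.\ $x$; using $y$ supported in $[0,\delta]\cup[1-\delta,1]$ with $\underline y$ a prescribed element of $\cH_1$ and letting $\delta\to0$, the integral term tends to $0$ (because $\Im Q\in L^1$ while $\|y\|_\infty$ stays bounded), so $w(\underline y,\underline y)\le0$ for every $\underline y\in\cH_1$, i.e.\ $CBC^*-DBD^*\ge0$. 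I expect the main technical point to be the construction of these test functions: as $Q$ is merely in $L^1$, a smooth compactly supported $y$ need not satisfy $L_{C,D}(Q)y\in L^2$, so instead one must produce $y$ by solving the Cauchy problem $-iB^{-1}y'+Q(x)y=g$ with an $L^2$ (indeed $C^\infty_c$) right-hand side $g$ — the variation-of-parameters representation already used in Lemma~\ref{lem:Green} — choosing $g$ so that $y$ attains the required values and vanishes outside the prescribed small set, and then controlling $\|y\|_\infty$ and $\|y\|_{L^2}$ via Gronwall's inequality.

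The dissipative case is obtained by the same argument with all inequalities reversed. Finally, $L_{C,D}(Q)$ is selfadjoint if and only if it is simultaneously accumulative and dissipative, i.e.\ $\Im\bigl(L_{C,D}(Q)y,y\bigr)=0$ for all $y\in\dom(L_{C,D}(Q))$; by the formula above this forces $\Im Q=0$ a.e.\ (so $Q=Q^*$) and $w|_{\cH_1}=0$, which by the signature argument (or directly by Lemma~\ref{lem:normal}(i)) is equivalent to $CBC^*=DBD^*$. Conversely, when $Q=Q^*$ and $CBC^*=DBD^*$ the form $w$ vanishes on the $n$-dimensional subspace $\cH_1$, which is then Lagrangian for the boundary form, so $L_{C,D}(Q)$ is symmetric with selfadjoint boundary conditions; equivalently one verifies via Lemma~\ref{lem:adjoint} that $(L_{C,D}(Q))^*=L_{C,D}(Q)$.
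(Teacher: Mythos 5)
Your proposal is correct and follows essentially the same route as the paper's proof: the same integration-by-parts identity for $2\Im\bigl(L_{C,D}(Q)f,f\bigr)$, the same localization of test functions to extract $\Im Q\leqslant 0$ and the boundary inequality separately (the paper is terser than you about why such test functions lie in $\dom(L_{C,D}(Q))$ when $Q$ is only $L^1$), and the same Krein-space duality between $\cH_1=\ker\begin{pmatrix}C & D\end{pmatrix}$ and $\cH_2$, your explicit dimension count against the signature $(n,n)$ being a spelled-out version of the paper's appeal to maximal non-positive/non-negative $w$-orthogonal complements. One caution on the final clause: ``simultaneously accumulative and dissipative'' gives only symmetry, not selfadjointness, so the supplementary domain-equality argument you sketch via Lemma~\ref{lem:adjoint} (both $\dom(L_{C,D}(Q))$ and its adjoint's domain being cut out of the same maximal domain by $n$ independent boundary conditions) is genuinely needed rather than an optional alternative.
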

%
%
\begin{proof}[Proof]
Integrating by parts and noting that $B=B^*$ one easily gets for
$f \in \dom(L_{C,D}(Q))$,
\begin{equation} \label{eq:2ImLQ}
    2 \Im \bigl(L_{C, D}(Q)f, f\bigr) =
    \langle B^{-1} f(0), f(0)\rangle  - \langle B^{-1} f(1), f(1)\rangle
    + \int_0^1 \langle 2 \Im\, Q(x) f(x), f(x) \rangle dx.
\end{equation}
Let us show that $L_{C,D}(Q)$ is accumulative if and only if
$\Im\, Q \leqslant 0$ and
\begin{equation} \label{eq:2ImL0}
    \langle B^{-1} h_0, h_0\rangle - \langle B^{-1} h_1, h_1\rangle \leqslant 0
    \quad\text{whenever}\quad C h_0 + D h_1 = 0, \quad h_0, h_1 \in \bC^n.
\end{equation}
Indeed, if $\Im\, Q \leqslant 0$ then due to~\eqref{eq:2ImLQ}
the inequality $\Im \bigl(L_{C, D}(Q)f, f\bigr) \le 0$ is
implied by~\eqref{eq:2ImL0}. Conversely, let $L_{C,D}(Q)$ be
accumulative. Choose any $f\in \dom(L_{C,D}(Q))$ with $f(0) =
f(1) = 0$ and substitute it in~\eqref{eq:2ImLQ}. Then the
inequality $\Im \bigl(L_{C, D}(Q)f, f\bigr) \le 0$ turns into
\begin{equation}
    \int_0^1 \langle \Im\, Q(x) f(x), f(x) \rangle dx \le 0, \quad
    f \in \dom(L_{C,D}(Q)) \cap W^{1,1}_0([0,1]; \bC^n),
\end{equation}
which yields $\Im\, Q(x) \leqslant 0$, $x \in [0,1]$. Further,
to extract~\eqref{eq:2ImL0} from the inequality $\Im\, L_{C,
D}(Q) \le 0$ we fix $h_0, h_1 \in \bC^n$ with $C h_0 + D h_1 =
0$ and substitute in~\eqref{eq:2ImLQ} function $f \in
\dom(L_{C,D}(Q))$ such that
\begin{equation}
    f(0)=h_0, \quad f(1)=h_1 \quad\text{and}\quad \supp f \subset [0,\eps] \cup [1-\eps,1],
\end{equation}
where $\eps > 0$ is sufficiently small. Thus, to prove the
statement it suffices to show that inequality~\eqref{eq:2ImL0}
is equivalent to \eqref{eq:CBC*-DBD*}.

As in the proof of Lemma~\ref{lem:normal}, we put $\wt{B} :=
\diag(B^{-1}, -B^{-1})$ and consider $\cH = \bC^n \oplus \bC^n$
as the Pontryagin space equipped with the bilinear form $w$
given by~\eqref{eq:wfg}. Clearly, the inertia indices of $\cH$
are $\k_{\pm} = \k_{\pm}(\wt{B})=n$, where $\k_+(A)$ ($\k_-(A)$)
denotes the number of positive (resp. negative) eigenvalues of a
matrix $A=A^*$. Let $\cH_1 := \ker
\begin{pmatrix} C & D \end{pmatrix} \subset \cH$. Then it is
clear that~\eqref{eq:2ImL0} is satisfied if and only if the
subspace $\cH_1$ is non-positive in $\cH$, i.e.
\begin{equation}
    \cH_1 \subset \{u \in \cH : \langle \wt{B} u, u \rangle \leqslant 0 \}.
\end{equation}
Further, condition~\eqref{eq:CBC*-DBD*} rewritten as
\begin{equation}
    \langle C^* h, B C^* h \rangle \geqslant \langle D^* h, B D^* h \rangle, \ h \in \bC^n,
\end{equation}
is equivalent to
\begin{equation}\label{eq:wtB.u.u}
    \langle \wt{B} u, u \rangle \geqslant 0,
    \quad u \in \cH_2 := \{ \col(B C^* h, -B D^* h) : \ h \in \bC^{n} \},
\end{equation}
meaning the non-negativity of the subspace $\cH_2$. Note that
maximality condition~\eqref{eq:rankCD} yields $\dim \cH_1 = \dim
\cH_2 = n$. As it is proved in Lemma~\ref{lem:normal}, $\cH_1$
is $w$-orthogonal complement of the subspace $\cH_2$. Since
$w$-orthogonal complement of a maximal non-positive subspace is
the maximal non-negative and vice versa, and taking into account
that the inertia indices of $\cH$ are $\k_{\pm} = n = \dim \cH_1
= \dim \cH_2$, one derives that $\cH_1$ is non-positive in $\cH$
if and only if $\cH_2$ is non-negative in $\cH$.
\end{proof}
%
%
\begin{lemma} \label{lem:accum.detTBCD}
Let $B=B^*$ and let the operator $L_{C,D}(0)$ be accumulative
(dissipative). Then
\begin{equation} \label{eq:detT-}
    \det T_{-B}(C, D) \ne 0 \quad (\det T_{B}(C, D) \ne 0).
\end{equation}
\end{lemma}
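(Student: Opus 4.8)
The plan is to push a sign through the Pontryagin--space picture of the boundary conditions already built in the proofs of Lemma~\ref{lem:normal} and Lemma~\ref{lem:accum.CB*C-DB*D}. Write $\wt B:=\diag(B^{-1},-B^{-1})$ on $\cH:=\bC^n\oplus\bC^n$ with the Hermitian (since $B=B^*$) form $w(u,v):=\langle\wt B u,v\rangle$, and set $\cH_1:=\ker\begin{pmatrix} C & D\end{pmatrix}$. The first step is to record what accumulativity of $L:=L_{C,D}(0)$ gives. Since $Q\equiv 0$, the hypothesis $\Im Q\le 0$ of Lemma~\ref{lem:accum.CB*C-DB*D} is vacuous, so accumulativity of $L$ is equivalent to $CBC^*-DBD^*\ge 0$, which — as established inside that proof — is exactly the $w$-non-positivity of $\cH_1$, i.e. inequality~\eqref{eq:2ImL0}:
\[
    \langle B^{-1}h_0,h_0\rangle-\langle B^{-1}h_1,h_1\rangle\le 0 \qquad\text{whenever}\qquad Ch_0+Dh_1=0 .
\]
(One may also read this straight off~\eqref{eq:2ImLQ} with $Q=0$, by testing against $f\in\dom(L)$ with $f(0)=h_0$, $f(1)=h_1$, $\supp f\subset[0,\eps]\cup[1-\eps,1]$.) Dually, in the dissipative case the same computation yields the $w$-non-negativity of $\cH_1$.

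Next I would argue by contradiction. Assume $\det T_{-B}(C,D)=0$. Since $B=B^*$, the $b_k$ are real and nonzero, and by the definition of $T_{-B}(C,D)$ its $k$-th column is the $k$-th column $c_k$ of $C$ if $b_k>0$ and the $k$-th column $d_k$ of $D$ if $b_k<0$. A nontrivial linear relation among these $n$ columns supplies scalars $(x_k)_{k=1}^n\neq 0$; collecting them into $u_0\in\bC^n$ supported on $\{k:b_k>0\}$ and $u_1\in\bC^n$ supported on $\{k:b_k<0\}$ gives $Cu_0+Du_1=0$, hence $u:=\col(u_0,u_1)\in\cH_1\setminus\{0\}$, while
\[
    w(u,u)=\langle B^{-1}u_0,u_0\rangle-\langle B^{-1}u_1,u_1\rangle
    =\sum_{b_k>0}\frac{|(u_0)_k|^2}{|b_k|}+\sum_{b_k<0}\frac{|(u_1)_k|^2}{|b_k|}>0 ,
\]
every summand being nonnegative and at least one positive because $u\ne 0$. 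This contradicts the $w$-non-positivity of $\cH_1$, so $\det T_{-B}(C,D)\neq 0$. In invariant terms, the vectors arising this way fill out precisely the maximal $w$-positive subspace $\cH_+$ of $(\cH,w)$ (the positive eigenspace of $\wt B$), and the argument is simply $\cH_1\cap\cH_+=\{0\}$. For the dissipative statement the identical bookkeeping applied to the columns of $T_B(C,D)$ (namely $c_k$ for $b_k<0$ and $d_k$ for $b_k>0$) produces $u\in\cH_1\setminus\{0\}$ with $w(u,u)<0$, contradicting the $w$-non-negativity of $\cH_1$.

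I do not anticipate a genuine obstacle: this is a sign-sensitive corollary of the machinery developed above together with Lemma~\ref{lem:accum.CB*C-DB*D}. The one point that must be watched is the matching of $T_{-B}(C,D)$ (resp. $T_B(C,D)$) with the correct ``spectral half'' of $\wt B$ — equivalently, checking that the vector $u$ produced from the hypothetical column dependence lands on the $w$-positive (resp. $w$-negative) side of the indefinite form, so that it genuinely conflicts with the sign forced on $\cH_1$ by accumulativity (resp. dissipativity); the two conventions are easy to swap by accident, so the bookkeeping with $\{b_k>0\}$ versus $\{b_k<0\}$ deserves a careful line.
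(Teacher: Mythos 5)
Your proof is correct. It rests on the same key input as the paper's --- Lemma~\ref{lem:accum.CB*C-DB*D}, which identifies accumulativity of $L_{C,D}(0)$ with $CBC^*-DBD^*\geqslant 0$, equivalently with the $w$-non-positivity of $\cH_1=\ker\begin{pmatrix} C & D\end{pmatrix}$ --- but closes the argument on the other side of the duality. The paper takes $h_0\in\ker\bigl(T_{-B}(C,D)\bigr)^*$, so that $P_+C^*h_0=P_-D^*h_0=0$, rewrites \eqref{eq:CBC*-DBD*} as an inequality between sums of squares on $\bC^n$ to force $P_-C^*h_0=P_+D^*h_0=0$ as well, and then invokes the maximality condition~\eqref{eq:rankCD} to conclude $h_0=0$. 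You instead read $\det T_{-B}(C,D)=0$ as a nontrivial column relation, i.e.\ as a nonzero vector of $\cH_1$ lying in the positive eigenspace $\cH_+$ of $\wt{B}$, and contradict non-positivity via $w(u,u)>0$; this is precisely the mechanism of the paper's own proof of Lemma~\ref{lem:normal}(ii), transported from the neutral case ($w\equiv 0$ on $\cH_1$) to the semidefinite one. The two arguments are equally short; yours does not need~\eqref{eq:rankCD} at this step (nontriviality of the column relation already gives $u\neq 0$) and makes the geometric content of \eqref{eq:detT-} --- namely $\cH_1\cap\cH_{\pm}=\{0\}$ --- explicit. Your sign bookkeeping ($c_k$ for $b_k>0$ and $d_k$ for $b_k<0$ in $T_{-B}$, reversed for $T_B$) agrees with the paper's definition of $T_A(C,D)$, so the one point you flagged as delicate is in order.
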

%
%
\begin{proof}[Proof]
Denote by $P_+$ and $P_-$ the spectral projections onto
"positive"\ and "negative"\ parts of the spectrum of a
selfadjoint matrix $B=B^*$, respectively. Then
\begin{equation}
    T_- := T_{-B}(C,D) = C P_+ + D P_-.
\end{equation}
By Lemma~\ref{lem:accum.CB*C-DB*D}, it suffices to show that
$\det T_- \ne 0$ is implied by~\eqref{eq:CBC*-DBD*}. Let $h_0
\in \ker T_-^*$. Since
\begin{equation}
    T^*_- = P_+ C^* + P_-D^* \quad\text{and}\quad P_+ P_- = P_- P_+ = 0,
\end{equation}
one gets
\begin{equation} \label{eq:P+C*h0=0}
    P_+ C^* h_0 = P_- D^* h_0 = 0.
\end{equation}
Setting $B_{\pm}:=\pm P_{\pm} B$, and noting that $B = B_+ -
B_-$, we rewrite inequality~\eqref{eq:CBC*-DBD*} in the
following form
\begin{equation}
    \|B^{1/2}_+ P_+ C^* h\|^2+\|B_-^{1/2}P_- D^* h\|^2
    \geqslant \|B^{1/2}_- P_- C^* h\|^2+\|B_+^{1/2}P_+ D^* h\|^2,
    \quad h \in \bC^n.
\end{equation}
Substituting in this inequality $h_0$ in place of $h$ and using
\eqref{eq:P+C*h0=0} we get $P_- C^* h_0 = P_+ D^* h_0 = 0$.
Combining these relations with~\eqref{eq:P+C*h0=0}, yields $C^*
h_0 = D^* h_0 = 0$. Hence the maximality condition $\ker(C C^* +
D D^*) = \{0\}$ implies $h_0 = 0$. Therefore, $\ker T_-^* =
\{0\}$, which yields~\eqref{eq:detT-}.
\end{proof}
Thus, in the case of dissipative (accumulative) boundary
conditions their regularity~\eqref{eq:detT+-} is reduced to the
solo condition $\det T_- \ne 0$ ($\det T_+ \ne 0$).

Passing to the spectral synthesis we recall the following
definition.
%
%
\begin{definition} \label{def:synt}
$(i)$ A compact operator $T$ in a separable Hilbert space $\fH$
is called complete if the system of its root vectors is complete
in $\fH$.

$(ii)$ A compact complete operator $T$ in $\fH$ admits the
spectral synthesis if for any invariant subspace $\fH_1$ of $T$
the restriction $T \upharpoonright_{\fH_1}$ is complete in
$\fH_1$.

$(iii)$ A closed operator $T$ in $\fH$ with $\rho(T) \ne
\varnothing$ is called complete if its resolvent is compact and
complete. We say that $T$ admits the spectral synthesis if its
resolvent admits the spectral synthesis.
\end{definition}
%
%
Recall that the operator $T$ is called $m$-accumulative
($m$-dissipative) if it has no accumulative (dissipative)
extensions. It is well known that accumulative operator $T$ is
$m$-accumulative if and only if $\rho(T) \ne \varnothing$, or
equivalently, $\bC_+ \subset \rho(T)$. Now we are ready to prove
our main result on the spectral synthesis.
%
%
\begin{theorem} \label{th:synthesis}
Let $B=B^*$ and let $L_{C,D}(Q)$ be a complete accumulative
(dissipative) operator. Then for any $\l \in \bC_+ (\bC_-)$ the
operator $T(\l) := (L_{C,D}(Q)-\l)^{-1}$ exists and admits the
spectral synthesis.
\end{theorem}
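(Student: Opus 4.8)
The plan is to invoke the standard criterion for spectral synthesis of dissipative (accumulative) operators with compact resolvent, due essentially to the circle of ideas of Nikolskii, Markus and others: a complete $m$-accumulative operator $T(\l)$ whose imaginary part lies in a sufficiently small Schatten--von Neumann-type ideal admits the spectral synthesis. Concretely, I would use the following well-known fact (see \cite{Nik80}, \cite{Markus70}): if $V$ is a complete compact dissipative (or accumulative) operator such that its imaginary part $\Im V = \frac{1}{2i}(V - V^*)$ belongs to $\cS_p$ with $p < 1$ — more precisely, if $\sum_k s_k(\Im V)^p < \infty$ for some $p<1$, or even under the weaker Matsaev-type condition $s_k(\Im V) = o(1/k)$ — then $V$ admits the spectral synthesis. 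So the whole proof reduces to producing, for $T(\l) = (L_{C,D}(Q)-\l)^{-1}$, the two ingredients: (1) $T(\l)$ is complete and (accumulative or its negative is), and (2) $\Im T(\l) \in \cS_1^0(\fH)$, i.e. $s_k(\Im T(\l)) = o(1/k)$.

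First I would check that $T(\l)$ makes sense and is $m$-accumulative (resp. $m$-dissipative). By Lemma~\ref{lem:accum.CB*C-DB*D}, accumulativity forces $B=B^*$, which is in force here. Since $L_{C,D}(Q)$ is assumed complete, in particular $\rho(L_{C,D}(Q)) \ne \varnothing$, so an accumulative operator with nonempty resolvent set is automatically $m$-accumulative, whence $\bC_+ \subset \rho(L_{C,D}(Q))$ and $T(\l) = (L_{C,D}(Q)-\l)^{-1}$ is a bounded operator for every $\l \in \bC_+$. Moreover $T(\l)$ is compact (Proposition~\ref{prop:RL.in.S1+KyFan}), and completeness of the root functions of $L_{C,D}(Q)$ translates directly into completeness of the root vectors of its resolvent $T(\l)$, by the spectral mapping correspondence for root subspaces. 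Finally, for $\l \in \bC_+$ a direct computation with $\Im\bigl((L-\l)^{-1}g,g\bigr)$ shows that $T(\l)$ is itself accumulative (this is the standard fact that the resolvent of an $m$-accumulative operator at a point of the upper half-plane is accumulative); in the dissipative case one passes to $-T(\l)$ or to $\bC_-$.

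The key analytic step is the Schatten-class bound on the imaginary part. Here I would use that
$$
\Im T(\l) = \frac{1}{2i}\bigl((L_{C,D}(Q)-\l)^{-1} - (L_{C,D}(Q)^*-\bar\l)^{-1}\bigr),
$$
and that, by Lemma~\ref{lem:adjoint}, $L_{C,D}(Q)^* = L_{C_*,D_*}(Q^*)$ for suitable $C_*,D_*$. Thus $\Im T(\l)$ is, up to the constant $1/2i$ and a rewriting at the conjugate point via the Hilbert resolvent identity, a \emph{difference of two resolvents} of operators of the type $L_{C',D'}(Q')$. By Theorem~\ref{th:Resolv.dif.in.S1} this difference is trace class, $\Im T(\l) \in \fS_1(\fH)$, and even better: the difference of potentials is $Q^* - Q \in L^\infty \subset L^2$, so Corollary~\ref{cor:res_dif_in_S2/3} applies and gives $\Im T(\l) \in \cS_{2/3}^0(\fH) \subset \cS_1^0(\fH)$. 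In particular $s_k(\Im T(\l)) = o(1/k)$, which is exactly the hypothesis needed to conclude, via the Matsaev--Markus spectral synthesis theorem, that the complete accumulative operator $T(\l)$ admits the spectral synthesis.

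The main obstacle I anticipate is \textbf{not} the functional-analytic input but the careful bookkeeping of two points: (a) verifying that completeness of $L_{C,D}(Q)$ indeed passes to completeness of each invariant subspace only after invoking the right version of the synthesis theorem — one must cite a form valid for non-dissipative-in-the-usual-sense (accumulative) operators and for the ideal $\cS_1^0$ rather than $\fS_p$, $p<1$; and (b) making sure the difference $(L-\l)^{-1} - (L^*-\bar\l)^{-1}$ is genuinely covered by Theorem~\ref{th:Resolv.dif.in.S1}, which compares resolvents at the \emph{same} spectral point $\l$ — so one first writes $(L^*-\bar\l)^{-1}$ at the point $\l$ using $\l \in \rho(L^*)$ (true since $\bar\l\in\bC_-\subset\rho(L^*)$ when $\l\in\bC_+$) and then applies the theorem with $(C_1,D_1,Q_1)=(C,D,Q)$, $(C_2,D_2,Q_2)=(C_*,D_*,Q^*)$. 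Once these two points are settled, everything else is assembly of already-established results.
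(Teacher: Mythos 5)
Your proposal follows essentially the same route as the paper: establish $m$-accumulativity so that $T(\l)$ exists on $\bC_+$, show $\Im T(\l)$ lies in a small operator ideal by writing it as a resolvent difference of $L_{C,D}(Q)$ and $L_{C,D}(Q)^*=L_{C_*,D_*}(Q^*)$ corrected by a Hilbert-identity term, and invoke the Brodskii--Markus spectral synthesis theorem; the paper does exactly this, concluding $\Im T(\l)\in\fS_1(\fH)$ and citing \cite{Brod66}. Two minor slips, neither fatal: (a) your ``even better'' step is wrong in general, since the theorem only assumes $Q\in L^1$, so $Q^*-Q$ need not lie in $L^2$ and Corollary~\ref{cor:res_dif_in_S2/3} need not apply --- but the trace-class conclusion from Theorem~\ref{th:Resolv.dif.in.S1} already suffices (and $\fS_1\subset\cS_1^0$ anyway); (b) your justification ``$\l\in\rho(L^*)$ since $\ol{\l}\in\bC_-\subset\rho(L^*)$'' is a non sequitur --- $\sigma(L^*)\subset\ol{\bC_+}$ may well contain $\l$ --- and the correct maneuver is the paper's: insert $(L-\ol{\l})^{-1}$ as the intermediate term and absorb $(L-\l)^{-1}-(L-\ol{\l})^{-1}=2i\,\Im\l\,(L-\l)^{-1}(L-\ol{\l})^{-1}\in\cS_{1/2}\subset\fS_1$. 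Also note the paper derives $\rho(L_{C,D}(Q))\ne\varnothing$ from Lemmas~\ref{lem:accum.detTBCD} and~\ref{lem:Delta} rather than reading it off Definition~\ref{def:synt}; your shortcut is legitimate given that definition, but the paper's argument is the more self-contained one.
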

%
%
\begin{proof}
For definiteness we confine ourself to the accumulative case.
Since $L_{C,D}(Q)$ is accumulative then by
Lemma~\ref{lem:accum.CB*C-DB*D} condition~\eqref{eq:CBC*-DBD*}
is satisfied and $\Im\,Q \leqslant 0$. Hence
Lemma~\eqref{lem:accum.CB*C-DB*D} implies accumulativity of
$L_{C,D}(0)$. Hence by Lemma~\ref{lem:accum.detTBCD}
condition~\eqref{eq:detT-} is satisfied. Therefore, it follows
from Lemma~\ref{lem:Delta} (see
formula~\eqref{eq:Delta=(T+o(1)).E}) that the characteristic
determinant $\Delta(\cdot)$ is not identically zero. Thus,
$\rho(L_{C,D}(Q)) \ne \varnothing$, and the operator
$L_{C,D}(Q)$ is $m$-accumulative. Therefore, $\bC_+ \subset
\rho(L_{C,D}(Q))$ and operator $T(\l) = (L_{C,D}(Q) - \l)^{-1}$
exists for all $\l \in \bC_+$.

Since $B=B^*$, then, by Lemma~\ref{lem:adjoint}, the adjoint
operator $L_{C,D}(Q)^*$ is $L_{C,D}(Q)^* = L_{C_*,D_*}(Q^*)$
with appropriate $n \times n$ matrices $C_*$ and $D_*$. By
Theorem~\ref{th:Resolv.dif.in.S1},
\begin{equation}
    \left(L_{C,D}(Q)-\ol{\l}\right)^{-1} - \left(L_{C_*,D_*}(Q^*)-\ol{\l}\right)^{-1}\in \fS_1(\fH).
\end{equation}
Further, combining Hilbert identity with Proposition
\ref{prop:RL.in.S1+KyFan} and taking into account property
\eqref{eq:Sp1.Sp2.in.Sp} of the classes $\cS_{p}$, one gets
\begin{multline}
    (L_{C,D}(Q)-\l)^{-1} - (L_{C,D}(Q)-\ol{\l})^{-1}
    = 2 \Im\, \l \cdot \left(L_{C,D}(Q)-\l\right)^{-1} \cdot
    (L_{C,D}(Q)-\ol{\l})^{-1} \\
    \in \cS_1(\fH) \cdot \cS_1(\fH) \subset \cS_{1/2}(\fH) \subset \fS_1(\fH).
\end{multline}
In turn, combining last two relations we obtain
\begin{align}
    2 i \cdot \Im \left( (L_{C,D}(Q)-\l)^{-1} \right)
    &= \left( L_{C,D}(Q) - \l \right)^{-1} - \left( L_{C,D}(Q) - \ol{\l} \right)^{-1} \nonumber \\
    &+ \left( L_{C,D}(Q) - \ol{\l} \right)^{-1} - \left( L_{C_*,D_*}(Q^*) - \ol{\l} \right)^{-1} \in \fS_1(\fH).
\end{align}
Thus, $T(\l)$, $\l \in \bC_+$, is a compact accumulative
operator with the imaginary part of trace class. Moreover,
$T(\l)$ is complete simultaneously with the operator
$L_{C,D}(Q)$. Therefore, by the M.S.~Brodskii
theorem~\cite{Brod66} (see also~\cite[Corollary
2.2]{Markus70},~\cite[Chapter 4.5]{Nik80}), $T(\l)$ admits the
spectral synthesis.
\end{proof}
%
%
\begin{remark}
Emphasize, that Theorem~\ref{th:synthesis} is stated only for
the resolvent of $L := L_{C,D}(Q)$. In fact, the (unbounded)
operator $L$ itself does not admit the spectral synthesis.
Indeed, the subspace $\fH_a = \{y \in L^2[0,1] : y(x)=0,\ x \in
[0,a]\}$, $a \in (0,1)$, is invariant for $L$. However, it
contains no eigenfunctions of $L$ since, by the Cauchy
uniqueness theorem, each solution of~\eqref{eq:system} vanishing
at zero is identically zero.
\end{remark}
%
%
Further, we apply Theorem~\ref{th:compl.gen} and
Lemma~\ref{lem:accum.detTBCD} to obtain more explicit result on
spectral synthesis.
%
%
\begin{proposition} \label{prop:synth.gen}
Let $B=B^*$, $\Im\, Q \leqslant 0$ and let the operator
$L_{C,D}(0)$ be accumulative. Assume also that for some $C, R
> 0$ and $s \in \bZ_+$
\begin{equation} \label{eq:Delta(-it)}
    \left|\Delta(-i t)\right| \geqslant \frac{C e^{\tau t}}{t^s},
    \quad \tau = \sum_{b_j > 0} b_j, \quad t > R.
\end{equation}
Then the resolvent $(L_{C,D}(Q)-\l)^{-1}$, $\l \in \bC_+$,
admits the spectral synthesis.
\end{proposition}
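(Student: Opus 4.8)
The plan is to deduce Proposition~\ref{prop:synth.gen} from Theorem~\ref{th:synthesis} by verifying that its hypotheses are met, the only nontrivial point being the completeness of $L_{C,D}(Q)$. First I would record that since $B=B^*$ and $\Im\,Q\leqslant 0$, and $L_{C,D}(0)$ is accumulative, Lemma~\ref{lem:accum.CB*C-DB*D} gives $CBC^*-DBD^*\geqslant 0$, hence $L_{C,D}(Q)$ itself is accumulative. Thus, once completeness is established, Theorem~\ref{th:synthesis} applies directly and yields that $(L_{C,D}(Q)-\l)^{-1}$ admits the spectral synthesis for every $\l\in\bC_+$, which is exactly the assertion.

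The heart of the matter is therefore to show that the growth bound~\eqref{eq:Delta(-it)} along the single ray $\arg\l=-\pi/2$ suffices to invoke Theorem~\ref{th:compl.gen}. Here I would exploit the accumulativity once more: by Lemma~\ref{lem:accum.detTBCD} we have $\det T_{-B}(C,D)\ne 0$, i.e. $\omega_0(-i)\ne 0$ in the notation before~\eqref{eq:detTizBCD}. Since $\omega_0(\cdot)$ is piecewise constant on the sectors cut out by the lines $l_j$, and since $B=B^*$ forces these lines to be symmetric about the origin (indeed $\Re(ib_j\l)=0$ is a line through $0$ in the direction perpendicular to $b_j\in\bR$, namely the real axis for all $j$ — so in the Dirac-type case there are just two half-planes $\Re\l>0$ and $\Re\l<0$), the value $\omega_0(-i)=\det T_{-B}(C,D)\ne 0$ governs the whole left half-plane. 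Combined with Lemma~\ref{lem:Delta}, this gives a lower bound $|\Delta(\l)|\geqslant c\,|{\det T_{-B}(C,D)}|\,e^{\Re(i\tau\l)}\geqslant c_1 e^{\Re(i\tau\l)}$ for $\l$ in a sector $S_{p,\eps}$ contained in $\{\Re\l<0\}$ and $|\l|$ large, with $\tau=\sum_{b_j>0}b_j=\tau_p$. Together with hypothesis~\eqref{eq:Delta(-it)} along the ray $\arg\l=-\pi/2$ itself, this furnishes the estimate~\eqref{eq:Delta>=e/l} of Theorem~\ref{th:compl.gen} on the ray $z_1=-i$.

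To feed three rays into Theorem~\ref{th:compl.gen} with the origin in the interior of the triangle, I would take $z_1=-i$ and then two further admissible points $z_2,z_3$ lying in the open left half-plane $\{\Re\l<0\}$, one just above and one just below the ray $-i\bR_+$, so that $0$ is interior to $\triangle_{z_1z_2z_3}$; on each such ray $\Delta(\l)$ satisfies~\eqref{eq:Delta>=e/l} with the \emph{same} $\tau_k=\tau$ (because $\{j:\Re(ib_jz_k)>0\}=\{j:b_j>0\}$ throughout the left half-plane), again by Lemma~\ref{lem:Delta} and $\omega_0(-i)\ne 0$. Then Theorem~\ref{th:compl.gen} (with, say, $s$ as in the hypothesis and $R$ enlarged if necessary) yields completeness and minimality of the root functions of $L_{C,D}(Q)$ in $L^2([0,1];\bC^n)$. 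The main obstacle, and the one requiring care in the writeup, is precisely this geometric step: checking that $\det T_{-B}(C,D)\ne 0$ really does control $\Delta$ on a \emph{two-dimensional} set of rays surrounding $-i$ (not merely on the single ray where~\eqref{eq:Delta(-it)} is assumed), so that the triangle condition of Theorem~\ref{th:compl.gen} can be satisfied; this is where the symmetry $B=B^*$ and the structure of the sectors $\sigma_k$ must be used explicitly. Once completeness is in hand, the conclusion is immediate from Theorem~\ref{th:synthesis}.
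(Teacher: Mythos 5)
Your overall strategy coincides with the paper's (completeness via Theorem~\ref{th:compl.gen} using Lemma~\ref{lem:accum.detTBCD} and Lemma~\ref{lem:Delta}, then accumulativity of $L_{C,D}(Q)$ via Lemma~\ref{lem:accum.CB*C-DB*D} and an appeal to Theorem~\ref{th:synthesis}), but the geometric step you yourself flag as the crux is carried out incorrectly. For $B=B^*$ every line $l_j=\{\Re(ib_j\l)=0\}$ is the \emph{real} axis (since $\Re(ib_j\l)=-b_j\Im\l$), so the two sectors on which $\omega_0$ is constant are the upper and lower half-planes $\{\Im\l>0\}$ and $\{\Im\l<0\}$, not $\{\Re\l\gtrless 0\}$ as you state. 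Moreover, since $izB=-B$ for $z=i$ and $izB=B$ for $z=-i$, one has $\omega_0(i)=\det T_{-B}(C,D)$ and $\omega_0(-i)=\det T_{B}(C,D)$; thus Lemma~\ref{lem:accum.detTBCD} controls $\Delta$ throughout the \emph{upper} half-plane, not the left half-plane and in particular not in any neighbourhood of the ray through $-i$. In the lower half-plane the relevant quantity is $\det T_{B}(C,D)$, which is precisely what is \emph{not} assumed to be nonzero here (that is the whole point of replacing it by the growth hypothesis~\eqref{eq:Delta(-it)}), so your lower bound on the two auxiliary rays has no justification.

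Independently of this, your choice of rays violates condition (i) of Theorem~\ref{th:compl.gen}: with $z_1=-i$ and $z_2,z_3$ both satisfying $\Re z_j<0$, every point $t_1z_1+t_2z_2+t_3z_3$ of the triangle has $\Re\leqslant 0$ with equality only at the vertex $-i$, so the origin does not lie in the interior of $\triangle_{z_1z_2z_3}$. The correct construction, which is the paper's, is to take $z_1=1+i$ and $z_2=-1+i$ in the upper half-plane, where Lemma~\ref{lem:Delta} together with $\omega_0(i)=\det T_{-B}(C,D)\ne 0$ yields~\eqref{eq:Delta>=e/l} with $s=0$, and $z_3=-i$, where the hypothesis~\eqref{eq:Delta(-it)} supplies~\eqref{eq:Delta>=e/l}; this triangle does contain the origin in its interior, and Theorem~\ref{th:compl.gen} then gives completeness. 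The concluding step of your argument (accumulativity of $L_{C,D}(Q)$ and Theorem~\ref{th:synthesis}) is correct and agrees with the paper.
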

%
%
\begin{proof}
Since $L_{C,D}(0)$ is accumulative, then, by
Lemma~\ref{lem:accum.detTBCD}, $\det T_{-B}(C,D) \ne 0$. Hence
by Lemma~\ref{lem:Delta} the estimate~\eqref{eq:Delta>=e/l}
holds for $\Delta(\l)$ with $z_1 = 1+i$ and $z_2 = -1+i$ and
$s=0$. Estimate~\eqref{eq:Delta(-it)}
yields~\eqref{eq:Delta>=e/l} with $z_3 = -i$. Hence by
Theorem~\ref{th:compl.gen} the system of root of functions of
the operator $L_{C,D}(Q)$ is complete in $L^2([0,1]; \bC^n)$.
Since $\Im\, Q \leqslant 0$ then, by
Lemma~\ref{lem:accum.CB*C-DB*D}, the operator $L_{C,D}(Q)$ is
accumulative. Therefore, Theorem~\ref{th:synthesis} implies the
spectral synthesis.
\end{proof}
For $2m\times 2m$ Dirac operator
Proposition~\ref{prop:synth.gen} reads as follows.
\begin{corollary} \label{cor:synth.gen}
Let $B = \diag(-I_m, I_m)$, $\Im\, Q \leqslant 0$ and let the
operator $L_{C,D}(0)$ be accumulative. Assume also that for some
$C, R > 0$ and $s \in \bZ_+$
\begin{equation} \label{eq:Delta(-it).Dirac}
    \left|\Delta(-i t)\right| \geqslant \frac{C e^{m t}}{t^s},
    \qquad t > R.
\end{equation}
Then the resolvent $(L_{C,D}(Q)-\l)^{-1}$, $\l \in \bC_+$,
admits the spectral synthesis.
\end{corollary}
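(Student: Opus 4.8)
The plan is to derive this corollary as a direct specialization of Proposition \ref{prop:synth.gen} to the particular matrix $B = \diag(-I_m, I_m)$, checking only that the hypothesis \eqref{eq:Delta(-it).Dirac} is exactly what \eqref{eq:Delta(-it)} becomes in this case. First I would observe that $B = \diag(-I_m, I_m) = B^*$, so the standing assumption $B = B^*$ of Proposition \ref{prop:synth.gen} holds. The hypotheses $\Im\, Q \leqslant 0$ and accumulativity of $L_{C,D}(0)$ are identical in both statements, so nothing is needed there.

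The only substantive point is the computation of the exponent $\tau$ appearing in \eqref{eq:Delta(-it)}. With $B = \diag(-I_m, I_m)$ the diagonal entries are $b_1 = \ldots = b_m = -1$ and $b_{m+1} = \ldots = b_{2m} = 1$. Hence
\begin{equation}
    \tau = \sum_{b_j > 0} b_j = \underbrace{1 + \ldots + 1}_{m} = m,
\end{equation}
so condition \eqref{eq:Delta(-it)} with $\tau = m$ coincides verbatim with the assumed estimate \eqref{eq:Delta(-it).Dirac}. Therefore all hypotheses of Proposition \ref{prop:synth.gen} are fulfilled, and its conclusion — that the resolvent $(L_{C,D}(Q)-\l)^{-1}$, $\l \in \bC_+$, admits the spectral synthesis — gives precisely the assertion of the corollary.

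Since this is a pure specialization, I do not anticipate any genuine obstacle; the entire content is the elementary identification $\tau = m$ for this choice of $B$. The only care required is to make sure the sign conventions in $B = \diag(-I_m, I_m)$ match those in the definition of $\tau$ (the sum is over indices $j$ with $b_j > 0$, which are exactly the last $m$ indices), and that the accumulative rather than dissipative branch is the one being used, consistent with $\Im\, Q \leqslant 0$ and the requirement $\l \in \bC_+$.
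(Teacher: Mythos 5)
Your proposal is correct and matches the paper's (implicit) proof exactly: the paper presents this corollary as the direct specialization of Proposition~\ref{prop:synth.gen} to $B = \diag(-I_m, I_m)$, where $\tau = \sum_{b_j>0} b_j = m$, which is precisely your computation. Nothing further is needed.
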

Next, we clarify and complete Theorem~\ref{th:explicit.nxn} in
the case of the accumulative (dissipative) operator
$L_{C,D}(0)$. For definiteness we confine ourselves to the case
of an accumulative operator only.
%
%
\begin{theorem} \label{th:accum}
Let $B=B^*$ and let the operator $L_{C,D}(0)$ be accumulative.
Assume also that one of the following conditions is satisfied:

(i) $\det T_B(C,D) \ne 0$;

(ii) $Q$ is continuous at the endpoints $0$ and $1$ of the
segment $[0,1]$ and
\begin{equation}\label{eq:omega.accum}
    \sum_{b_j<0 \atop{b_k>0}} \frac{\det T_B^{c_j \to c_k} b_k q_{kj}(0)
    - \det T_B^{d_k \to d_j} b_j q_{jk}(1)}{b_k-b_j} \ne 0.
\end{equation}
Then the system of root functions of the operator $L_{C,D}(Q)$
is complete and minimal in $L^2([0,1]; \bC^n)$.

Moreover, if in addition, $\Im\, Q \leqslant 0$, then the
resolvent $(L_{C,D}(Q)-\l)^{-1}$, $\l \in \bC_+$, admits the
spectral synthesis.
\end{theorem}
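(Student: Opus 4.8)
The plan is to derive both parts from results already available in the paper: completeness and minimality from Theorem~\ref{th:explicit.nxn} (or, in case~(i), from \cite[Theorem~1.2]{MalOri12}), and the spectral synthesis assertion from Theorem~\ref{th:synthesis}.

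First I would exploit the special geometry of the selfadjoint case $B=B^*$. Since every $b_j$ is real and nonzero, all the lines $l_j=\{\Re(ib_j\l)=0\}$ collapse to the real axis, so the plane splits into only two sectors — the open upper and lower half-planes — on each of which $\omega_0(\cdot)$ and $\omega_1(\cdot)$ from \eqref{eq:c0zp.def}--\eqref{eq:c1zp.def} are constant. Evaluating at $z=-i$ one has $izB=B$ and $\Re(ib_j(-i))=b_j$, whence
\[
 \omega_0(-i)=\det T_{B}(C,D),\qquad
 \omega_1(-i)=\sum_{b_j<0,\;b_k>0}\frac{\det T_{B}^{c_j\to c_k}\,b_k q_{kj}(0)-\det T_{B}^{d_k\to d_j}\,b_j q_{jk}(1)}{b_k-b_j},
\]
so that $\omega_1(-i)$ is exactly the left-hand side of \eqref{eq:omega.accum}; similarly $z=i$ gives $izB=-B$, hence $\omega_0(i)=\det T_{-B}(C,D)$. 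Now accumulativity of $L_{C,D}(0)$ yields, by Lemma~\ref{lem:accum.detTBCD}, the inequality $\det T_{-B}(C,D)\ne0$, i.e. $\omega_0(i)\ne0$ with no further assumption.

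For the completeness statement I would then split according to the hypothesis. Under (ii), $Q$ is continuous at $0$ and $1$, and we have $\omega_1(-i)\ne0$ by \eqref{eq:omega.accum} and $\omega_0(i)\ne0$ as above; hence $|\omega_0(z)|+|\omega_1(z)|\ne0$ for $z_1:=-i$ and for every $z$ in the open upper half-plane. Taking $z_2:=1+i$ and $z_3:=-1+i$ — both admissible, both in the upper half-plane, with the origin interior to $\triangle_{z_1z_2z_3}$ — Theorem~\ref{th:explicit.nxn} applies and gives completeness and minimality in $L^2([0,1];\bC^n)$. Under (i) we moreover have $\omega_0(-i)=\det T_B(C,D)\ne0$; together with $\det T_{-B}(C,D)\ne0$ this means precisely that the boundary conditions are (weakly) $B$-regular, so completeness and minimality follow directly from \cite[Theorem~1.2]{MalOri12} — and no continuity of $Q$ is needed, which is why (i) is formulated without it.

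Finally, suppose in addition $\Im\,Q\leqslant0$. Accumulativity of $L_{C,D}(0)$ is equivalent to $CBC^*-DBD^*\geqslant0$, so Lemma~\ref{lem:accum.CB*C-DB*D} shows that $L_{C,D}(Q)$ itself is accumulative; being complete by the first part, it satisfies the hypotheses of Theorem~\ref{th:synthesis}, and that theorem yields that $(L_{C,D}(Q)-\l)^{-1}$, $\l\in\bC_+$, admits the spectral synthesis. The only points needing care are the bookkeeping that identifies the sum in \eqref{eq:omega.accum} with $\omega_1(-i)$ and the verification that $z_1,z_2,z_3$ are admissible with $0$ interior to their triangle; the remainder is a direct appeal to the cited lemmas and theorems, so no substantial obstacle is expected.
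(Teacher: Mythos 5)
Your proposal is correct and follows essentially the same route as the paper: Lemma~\ref{lem:accum.detTBCD} gives $\det T_{-B}(C,D)\ne0$, case~(i) is settled by the weak-regularity completeness result of~\cite{MalOri12}, case~(ii) by identifying \eqref{eq:omega.accum} with $\omega_1(-i)\ne0$ and applying Theorem~\ref{th:explicit.nxn}, and the synthesis claim by combining Lemma~\ref{lem:accum.CB*C-DB*D} with Theorem~\ref{th:synthesis}. The only cosmetic difference is that you exhibit the three points $-i$, $1+i$, $-1+i$ explicitly, which the paper leaves implicit inside Theorem~\ref{th:explicit.nxn}.
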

%
%
\begin{proof}[Proof]
Since $L_{C,D}(0)$ is accumulative, then, by
Lemma~\ref{lem:accum.detTBCD}, $\det T_{-B}(C,D) \ne 0$. Hence,
if condition (i) is satisfied, then, by~\cite[Corollary
3.2]{MalOri12}, the system of root functions of the operator
$L_{C,D}(0)$ is complete and minimal in $L^2([0,1]; \bC^n)$.
Further, if $\det T_{B}(C,D) = 0$, then condition (ii) is
satisfied. Clearly, condition~\eqref{eq:omega.accum} means that
$\omega_1(-i) \ne 0$. Since $\omega_0(i) = \det T_{-B}(C,D) \ne
0$, it remains to apply Theorem~\ref{th:explicit.nxn}.

If $\Im\, Q \leqslant 0$ then, by
Lemma~\ref{lem:accum.CB*C-DB*D}, the operator $L_{C,D}(Q)$ is
accumulative. Since the system of root functions of $L_{C,D}(Q)$
is complete in $L^2([0,1]; \bC^n)$, Theorem~\ref{th:synthesis}
implies the spectral synthesis.
\end{proof}
%
%
\begin{remark} \label{rem:ODE.synth}
In connection with Theorem~\ref{th:synthesis} we briefly discuss
the spectral synthesis for $m$-dissipative BVP for $n$th order
ordinary differential equation~\eqref{eq:ODE} generated by $n$
linearly independent boundary forms. First we note that if the
resolvent set of a BVP for equation~\eqref{eq:ODE} is non-empty,
then the resolvent $R(\l)$ is trace class operator. For $n=2$,
i.e. for the operator $-D^2 + q$, $D := \frac{d}{dx}$, this fact
is implied by~\cite[III.10.4.3]{GohKre65} since the Green
function $G(t,s)$ has essentially bounded derivative in mean
with respect to $t$, for $n \geqslant 3$ it is even $C^1([0,1]
\times [0,1])$-kernel.

Note also, that if $q \in L^{2}[0,1]$ then $\dom(-D^2 +q)
\subset W^{2,2}[0,1]$, and hence $R(\l) \in \cS_{1/2}(L^2[0,1])$
due to the properties of the embedding $W^{2,2}[0,1]
\hookrightarrow L^2[0,a]$. Alternatively for $n=2$ and $q \in
L^1[0,1]$ one can adopt the proof of
Theorem~\ref{th:Resolv.dif.in.S1} and
Proposition~\ref{prop:RL.in.S1+KyFan} taking the Dirichlet
realization of $-D^2$ in place of the periodic Dirac operator.

Further, by the Keldysh-Lidskii
theorem~\cite[Theorem~V.6.1]{GohKre65}, the dissipative operator
$R(\l)$, $\l \in \bC_-$, is complete. To obtain the spectral
synthesis it remains to apply the M.S.~Brodskii
theorem~\cite{Brod66} (see also~\cite[IV.5]{Nik80}).
\end{remark}
%
%
%
\section{Application to the Timoshenko beam model}
\label{sec:Timoshenko}
%
%
Here we obtain some important geometric properties of the system
of root functions for the dynamic generator of the Timoshenko
beam model. Consider the following linear system of two coupled
hyperbolic equations for $t \geqslant 0$
\begin{eqnarray}
    \label{eq:Tim.Ftt}
        I_{\rho}(x) \Phi_{tt} &=& K(x)(W_x-\Phi) + (EI(x) \Phi_x)_x - p_1(x) \Phi_t, \quad x \in [0, \ell],\\
    \label{eq:Tim.Wtt}
        \rho(x) W_{tt} &=& (K(x)(W_x-\Phi))_x - p_2(x) W_t, \qquad \qquad \qquad x \in [0, \ell].
\end{eqnarray}
The vibration of the Timoshenko beam of the length $\ell$
clamped at the left end is governed by the system
\eqref{eq:Tim.Ftt}--\eqref{eq:Tim.Wtt} subject to the following
boundary conditions for $t \geqslant 0$~\cite{Tim55}:
\begin{eqnarray}
    \label{eq:Tim.W0F0}
        W(0,t) = \Phi(0,t) &=& 0, \\
    \label{eq:Tim.WLFLa1}
        \bigl(EI(x) \Phi_x(x,t) + \alpha_1 \Phi_t(x,t) + \beta_1 W_t(x,t)\bigr)\bigr|_{x=l} &=& 0, \\
    \label{eq:Tim.WLFLa2}
        \bigl(K(x)(W_x(x,t)-\Phi(x,t)) + \alpha_2 W_t(x,t) + \beta_2 \Phi_t(x,t)\bigr)\bigr|_{x=l} &=& 0.
\end{eqnarray}
Here $W(x,t)$ is the lateral displacement at a point $x$ and
time $t$, $\Phi(x,t)$ is the bending angle at a point $x$ and
time $t$, $\rho(x)$ is a mass density, $K(x)$ is the shear
stiffness of a uniform cross-section, $I_{\rho}(x)$ is the
rotary inertia, $EI(x)$ is the flexural rigidity at a point $x$,
$p_1(x)$ and $p_2(x)$ are locally distributed feedback
functions, $\alpha_j, \beta_j \in \bC$, $j \in \{1,2\}$.
Boundary conditions at the right end contain as partial cases
most of the known boundary conditions if $\alpha_1, \alpha_2$
are allowed to be infinity.

Regarding the coefficients we assume that they satisfy the
following general conditions:
\begin{eqnarray}
    \label{eq:Tim.coef.cond1}
        \rho, I_{\rho}, K, EI \in C[0,\ell], \qquad p_1, p_2 \in L^1[0,\ell],\\
    \label{eq:Tim.coef.cond2}
        0 < C_1 \leqslant \rho(x), I_{\rho}(x), K(x), EI(x) \leqslant C_2, \quad x \in [0,\ell].
\end{eqnarray}
The energy space associated with the
problem~\eqref{eq:Tim.Ftt}--\eqref{eq:Tim.WLFLa2} is
\begin{equation} \label{eq:cH.def}
    \fH := \wt{H}^1_0[0,\ell] \times L^2[0,\ell] \times \wt{H}^1_0[0,\ell] \times L^2[0,\ell],
\end{equation}
where $\wt{H}^1_0[0,\ell] := \{f \in W^{1,2}[0,\ell] :
f(0)=0\}$. The norm in the energy space is defined as follows:
\begin{equation} \label{eq:Tim.|y|H}
    \|y\|_{\fH}^2 = \int_0^\ell \bigl(EI|y_1'|^2+I_{\rho}|y_2|^2 + K|y_3'-y_1|^2+\rho|y_4|^2\bigr)dx,
    \quad y =\col(y_1,y_2,y_3,y_4).
\end{equation}
The problem~\eqref{eq:Tim.Ftt}--\eqref{eq:Tim.WLFLa2} can be
rewritten as
\begin{equation} \label{eq:Tim.yt=i.cLy}
    y_t = i \cL y, \quad y(x,t)|_{t=0} = y_0(x),
\end{equation}
where $y$ and $\cL$ are given by
\begin{equation} \label{eq:Tim.Ly.def}
    y = \begin{pmatrix} \Phi(x,t) \\ \Phi_t(x,t) \\ W(x,t) \\ W_t(x,t) \end{pmatrix}, \ \
    \cL \begin{pmatrix} y_1 \\ y_2 \\ y_3 \\ y_4 \end{pmatrix} = \frac{1}{i} \begin{pmatrix}
        y_2 \\ \frac{1}{I_{\rho}(x)}\Bigl(K(x)(y_3'-y_1) + \bigl(EI(x) y_1'\bigr)' - p_1(x) y_2\Bigr) \\
        y_4 \\ \frac{1}{\rho(x)}\Bigl(\bigl(K(x)(y_3'-y_1)\bigl)' - p_2(x) y_4\Bigr)
    \end{pmatrix}
\end{equation}
on the domain
\begin{eqnarray} \label{eq:Tim.dom.cL}
    \dom(\cL) &=& \left\{ y = \col(y_1,y_2,y_3,y_4) : y_1, y_2, y_3, y_4 \in \wt{H}^1_0[0,\ell]\right., \nonumber \\
    && EI \cdot y_1' \in AC[0,\ell], \ \ (EI\cdot y_1')' - p_1 y_2 \in L^2[0,\ell], \nonumber \\
    && K\cdot(y_3'-y_1) \in AC[0,\ell], \ \ (K\cdot(y_3'-y_1))' - p_2 y_4 \in L^2[0,\ell], \nonumber \\
    && \bigl(EI \cdot y_1'\bigr)(\ell) + \alpha_1 y_2(\ell) + \beta_1 y_4(\ell)= 0, \nonumber \\
    && \Bigl.\bigl(K \cdot (y_3'-y_1)\bigr)(\ell) + \alpha_2 y_4(\ell) + \beta_2 y_2(\ell)= 0 \Bigr\}.
\end{eqnarray}
Timoshenko beam model is investigated in numerous papers
(see~\cite{Tim21,Tim55,KimRen87,Shub02,Souf03,XuYung04,XuHanYung07,WuXue11}
and the references therein). A number of stability,
controllability, and optimization problems were studied. Note
also that the general
model~\eqref{eq:Tim.Ftt}--\eqref{eq:Tim.WLFLa2} of spatially
non-homogenous Timoshenko beam with both boundary and locally
distributed damping covers the cases studied by many authors.
Geometric properties of the system of root functions of the
operator $\cL$ play important role in investigation of different
properties of the
problem~\eqref{eq:Tim.Ftt}--\eqref{eq:Tim.WLFLa2}.

Below we establish completeness and the Riesz basis property
with parentheses of the operator $\cL$, without analyzing its
spectrum. For convenience we impose the following additional
algebraic assumption on $\cL$:
\begin{equation} \label{eq:Tim.Irho=C0 EI/K rho}
    \nu(x) :=  \frac{EI(x) \rho(x)} {K(x) I_{\rho}(x)} = \const, \quad x \in [0,\ell],
\end{equation}
Clearly,~\eqref{eq:Tim.Irho=C0 EI/K rho} is satisfied whenever
$I_{\rho}(x) = R \rho(x)$, where $R = \rm{const}$ is a
cross-sectional area of the beam, $EI$ and $K$ are constant
functions, while $\rho \in AC[0,\ell]$ and is arbitrary positive
(cf. condition~\eqref{eq:Tim.h1,h2.in.AC}). Our approach to the
spectral properties of the operator $\cL$ is based on the
similarity reduction of $\cL$ to a special $4\times 4$
Dirac-type operator. To state the result we need some additional
preparations.

Let $\gamma(\cdot)$ be given by
\begin{equation} \label{eq:Tim.Irho/EI=...}
    \sqrt{\frac{I_{\rho}(x)}{EI(x)}} = b_1 \gamma(x), \quad\text{where}\quad b_1>0
    \quad\text{and}\quad \int_0^\ell \gamma(x) dx = 1.
\end{equation}
Conditions~\eqref{eq:Tim.coef.cond1}
and~\eqref{eq:Tim.coef.cond2} imply together that $\gamma \in
C[0,\ell]$ and is positive. Further, in view
of~\eqref{eq:Tim.Irho=C0 EI/K rho} we have
\begin{equation} \label{eq:Tim.rho/K=...}
    \sqrt{\frac{\rho(x)}{K(x)}} = b_2 \gamma(x), \quad\text{where}\quad b_2>0.
\end{equation}
Let
\begin{eqnarray}
    \label{eq:Tim.B}
        B &:=& \diag(-b_1, b_1, -b_2, b_2). \\
    \label{eq:Tim.Theta(x)}
        \Theta(x) &:=& -2i \diag(I_{\rho}(x), I_{\rho}(x), \rho(x), \rho(x)), \\
    \label{eq:Tim.g1.g2.def}
        h_1(x) &:=& \sqrt{EI(x) I_{\rho}(x)}, \qquad h_2(x):=\sqrt{K(x) \rho(x)}.
\end{eqnarray}
In the sequel we assume that
\begin{equation} \label{eq:Tim.h1,h2.in.AC}
    h_1, h_2 \in AC[0,\ell].
\end{equation}
Therefore, according
to~\eqref{eq:Tim.coef.cond1}--\eqref{eq:Tim.coef.cond2} the
following matrix function is well-defined:
\begin{equation}
    \label{eq:Tim.Q(x)}
        \widehat{Q}(x) := \Theta^{-1}(x)
        \begin{pmatrix}
            p_1+h_1' & p_1-h_1' &     h_2  &    -h_2  \\
            p_1+h_1' & p_1-h_1' &     h_2  &    -h_2  \\
               -h_2  &    -h_2  & p_2+h_2' & p_2-h_2' \\
                h_2  &     h_2  & p_2+h_2' & p_2-h_2'
        \end{pmatrix}.
\end{equation}
Next, we set
\begin{equation} \label{eq:Tim.t=t(x)}
    t(x) = \int_0^x \gamma(s) ds, \quad x \in [0,\ell].
\end{equation}
Since $\gamma \in C[0,\ell]$ and is positive, the function
$t(\cdot)$ strictly increases on $[0,\ell]$, $t(\cdot) \in
C^1[0,\ell]$, and due to~\eqref{eq:Tim.Irho/EI=...} $t(\ell)=1$.
Hence, the inverse function $x(\cdot) := t^{-1}(\cdot)$ is well
defined, strictly increasing on $[0,1]$, and $x(\cdot) \in
C^1[0,1]$. Next, we put
\begin{equation} \label{eq:Tim.Q}
    Q(t) := \widehat{Q}(x(t)) =: (q_{jk}(t))_{j,k=1}^4, \quad t \in [0,1].
\end{equation}
Finally, let
\begin{equation} \label{eq:Tim.C.D}
    C = \begin{pmatrix}
        1 & 1 & 0 & 0 \\
        0 & 0 & 0 & 0 \\
        0 & 0 & 1 & 1 \\
        0 & 0 & 0 & 0
    \end{pmatrix}, \ \
    D = \begin{pmatrix}
        0                    & 0                    & 0                    & 0                    \\
        \alpha_1 - h_1(\ell) & \alpha_1 + h_1(\ell) & \beta_1              & \beta_1              \\
        0                    & 0                    & 0                    & 0                    \\
        \beta_2              & \beta_2              & \alpha_2 - h_2(\ell) & \alpha_2 + h_2(\ell) \\
    \end{pmatrix}.
\end{equation}
%
%
\begin{proposition} \label{prop:Tim.similar}
Let functions $\rho, I_{\rho}, K, EI, p_1, p_2, h_1, h_2$
satisfy
conditions~\eqref{eq:Tim.coef.cond1},~\eqref{eq:Tim.coef.cond2},~\eqref{eq:Tim.Irho=C0
EI/K rho} and~\eqref{eq:Tim.h1,h2.in.AC}. Then the operator
$\cL$ is similar to the $4 \times 4$ Dirac-type operator $L :=
L_{C,D}(Q)$ with the matrices $B,C,D,Q(\cdot)$ given
by~\eqref{eq:Tim.B},~\eqref{eq:Tim.C.D} and~\eqref{eq:Tim.Q}.
\end{proposition}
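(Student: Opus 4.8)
The plan is to exhibit an explicit chain of elementary transformations converting the eigenvalue problem $\cL y = \l y$ on $[0,\ell]$ into the first order system $-iB^{-1}u' + Q(t)u = \l u$ on $[0,1]$ with the boundary conditions $Cu(0)+Du(1)=0$. There are three distinct operations to compose: (1) rewrite the two coupled second order ODEs \eqref{eq:Tim.Ftt}--\eqref{eq:Tim.Wtt} in the stationary regime as a first order $4\times4$ system by introducing the "fluxes" $EI(x)y_1'$ and $K(x)(y_3'-y_1)$ as new dependent variables (together with $y_1,y_3$), which is exactly the order-reduction that produces the $\Theta(x)$, $h_1,h_2$ factors in \eqref{eq:Tim.Q(x)}; (2) apply a pointwise (gauge-type) linear change of the dependent variable $u\mapsto S(x)u$ that diagonalizes the leading coefficient into $B^{-1}$ up to the scalar $\gamma(x)$, turning the $4\times4$ first order matrix coefficient into $\diag(-b_1,b_1,-b_2,b_2)^{-1}\gamma(x)$ plus a bounded remainder --- this is where conditions \eqref{eq:Tim.Irho/EI=...} and \eqref{eq:Tim.rho/K=...} are used, and where assumption \eqref{eq:Tim.Irho=C0 EI/K rho} is essential so that a \emph{single} function $\gamma$ works for both pairs of components; (3) perform the change of independent variable $x\mapsto t=t(x)=\int_0^x\gamma(s)ds$ from \eqref{eq:Tim.t=t(x)}, which absorbs the scalar factor $\gamma(x)$ and, since $t(\ell)=1$ by the normalization in \eqref{eq:Tim.Irho/EI=...}, rescales $[0,\ell]$ onto $[0,1]$.

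\textbf{Carrying it out.} First I would set $u_1 = y_1$, $u_2 = $ (appropriate combination), and so on, choosing the intermediate vector so that the clamped condition $y_1(0)=y_3(0)=0$ becomes the first and third rows of $C$ in \eqref{eq:Tim.C.D}, while the damped right-end conditions \eqref{eq:Tim.WLFLa1}--\eqref{eq:Tim.WLFLa2} become the second and fourth rows of $D$; this is pure bookkeeping once the variables are named. Then I would write the $4\times4$ first order system explicitly and note that its coefficient matrix of $u'$ is, after the pointwise rescaling by $\diag$ of the $h_j$'s and $\sqrt{I_\rho},\sqrt{\rho}$, equal to $\gamma(x)\,B^{-1}$ --- this uses \eqref{eq:Tim.Irho/EI=...}, \eqref{eq:Tim.rho/K=...}, and the factorizations $EI\,I_\rho = h_1^2$, $K\rho=h_2^2$. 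The zeroth order term then collects into $\Theta^{-1}(x)$ times the matrix of $p_j$'s and $h_j'$'s displayed in \eqref{eq:Tim.Q(x)} --- the $h_j'$ entries appear precisely because differentiating the flux variables produces $h_j'$ when the $h_j$ are pulled out, which is exactly why \eqref{eq:Tim.h1,h2.in.AC} is imposed so that $Q(\cdot)\in L^1$. Finally, substituting $t=t(x)$ and using $\frac{d}{dx} = \gamma(x)\frac{d}{dt}$ divides out the $\gamma(x)$ and yields $-iB^{-1}u_t + \widehat Q(x(t))u = \l u$, i.e. the system with $Q(t)$ from \eqref{eq:Tim.Q}. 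Since each of the three operations is given by multiplication by a function (matrix) that is bounded with bounded inverse on the relevant interval --- the $h_j$ and $\sqrt{\rho},\sqrt{I_\rho}$ are continuous and bounded below by \eqref{eq:Tim.coef.cond1}--\eqref{eq:Tim.coef.cond2}, $\gamma$ is continuous and positive, and $t(\cdot),x(\cdot)\in C^1$ with positive derivative --- the composite transformation is a bounded bijection of $\fH$ onto $L^2([0,1];\bC^4)$ with bounded inverse, so it is a similarity in the sense of Definition~\ref{def:similar}.

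\textbf{Main obstacle.} The conceptually routine but technically delicate point is verifying that the \emph{energy norm} $\|\cdot\|_\fH$ of \eqref{eq:Tim.|y|H} is transformed into an equivalent (not equal) norm comparable to the standard $L^2([0,1];\bC^4)$ norm; one must check that the Jacobian factors from the change of variables and the weights $EI,I_\rho,K,\rho$ combine so that the transformed quadratic form is squeezed between two constant multiples of $\int_0^1|u|^2\,dt$. This is where \eqref{eq:Tim.coef.cond2} does the work, but one should be careful that the off-diagonal term $K|y_3'-y_1|^2$ in the energy norm matches the flux variable chosen in step (1), which is exactly why the flux $K(x)(y_3'-y_1)$, rather than $Ky_3'$, is taken as the new coordinate. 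A secondary point requiring care is the precise identification of the constants $\alpha_j-h_j(\ell)$, $\alpha_j+h_j(\ell)$ in $D$: these arise because the boundary operators \eqref{eq:Tim.WLFLa1}--\eqref{eq:Tim.WLFLa2} are expressed in the \emph{original} variables $y_1',y_3'$ but $D$ must act on the \emph{new} flux variables evaluated at $t=1$, so a shift by $h_j(\ell)$ appears when re-expressing $y_j'(\ell)$ through the flux and $y_1(\ell)$. Once these identifications are made, no spectral analysis is needed and the proposition follows.
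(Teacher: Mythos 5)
Your proposal follows essentially the same route as the paper's proof: the identical three-step factorization into (1) the flux substitution $y \mapsto \col(EI\,y_1',\ y_2,\ K(y_3'-y_1),\ y_4)$, which is bounded with bounded inverse from $\fH$ onto $L^2([0,\ell];\bC^4)$ precisely because $\frac{d}{dx}$ maps $\wt{H}^1_0[0,\ell]$ isometrically onto $L^2[0,\ell]$ and the weights are bounded above and below, (2) a pointwise gauge transform built from $\pm h_j(x)$ that diagonalizes the leading coefficient into $\gamma(x)^{-1}B^{-1}$ (this diagonalizing matrix, evaluated at $x=\ell$, is what produces the entries $\alpha_j\pm h_j(\ell)$ of $D$), and (3) the change of variable $t=t(x)$. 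All the substantive points you flag --- the role of~\eqref{eq:Tim.Irho=C0 EI/K rho} in making a single $\gamma$ work, of~\eqref{eq:Tim.h1,h2.in.AC} in keeping $Q\in L^1$, and the energy-norm equivalence --- are handled exactly as you describe, so the proposal is correct.
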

%
%
\begin{proof}[Proof]
Introduce the following operator
\begin{equation}
    U y = \col(EI(x)y_1',\ y_2,\ K(x)(y_3'-y_1),\ y_4), \qquad y = \col(y_1,y_2,y_3,y_4),
\end{equation}
that maps the Hilbert space $\fH$ given by~\eqref{eq:cH.def}
into $L^2([0,\ell]; \bC^4)$. Since $\frac{d}{dx}$ isometrically
maps
\begin{equation}
    \wt{H}_0^1[0,\ell] = \{f \in W^{1,2}[0,\ell] : f(0)=0\}
\end{equation}
onto $L^2[0,\ell]$, it follows from
conditions~\eqref{eq:Tim.coef.cond2} that the operator $U$ is
bounded with bounded inverse. It is easy to check that for
$y=\col(y_1,y_2,y_3,y_4)$
\begin{equation} \label{eq:Tim.cLU-1}
    \cL U^{-1} y =
    \frac{1}{i}\begin{pmatrix} y_2 \\ \frac{1}{I_{\rho}}(y_1'-p_1y_2+y_3) \\
    y_4 \\ \frac{1}{\rho}(y_3' - p_2y_4)\end{pmatrix}, \ \
    \wt{L} y := U \cL U^{-1} y = \frac{1}{i}\begin{pmatrix} EI \cdot y_2' \\ \frac{1}{I_{\rho}}(y_1'-p_1y_2+y_3) \\
    K \cdot (y_4'-y_2) \\ \frac{1}{\rho}(y_3' - p_2 y_4)\end{pmatrix},
\end{equation}
and
\begin{align} \label{eq:Tim.dom.wtL}
    \dom(\wt{L}) = U \dom(\cL) = \bigl\{ &\bigr.y = \col(y_1,y_2,y_3,y_4)
        \in W^{1,1}([0,\ell]; \bC^4) : \nonumber \\
    &\wt{L}y \in L^2([0,\ell]; \bC^4), \quad y_2(0) = y_4(0) = 0, \nonumber \\
    &y_1(\ell) + \alpha_1 y_2(\ell) + \beta_1 y_4(\ell) = 0, \quad
        y_3(\ell) + \alpha_2 y_4(\ell) + \beta_2 y_2(\ell) = 0 \bigl.\bigr\}.
\end{align}
Thus, the operator $\cL$ is similar to the operator $\wt{L}$,
\begin{equation}
    \wt{L}y = -i\wt{B}(x)y'+\wt{Q}(x)y
\end{equation}
with the domain $\dom(\wt{L})$ given by~\eqref{eq:Tim.dom.wtL},
and the matrix functions $\wt{B}(\cdot)$, $\wt{Q}(\cdot)$, given
by
\begin{equation}
    \wt{B}(x) := \begin{pmatrix}
        0 & EI(x) & 0 & 0 \\
        \frac{1}{I_{\rho}(x)} & 0 & 0 & 0 \\
        0 & 0 & 0 & K(x) \\
        0 & 0 & \frac{1}{\rho(x)} & 0
    \end{pmatrix}, \ \
    \wt{Q}(x) := i \begin{pmatrix}
        0 & 0 & 0 & 0 \\
        0 & \frac{p_1(x)}{I_{\rho}(x)} & -\frac{1}{I_{\rho}(x)} & 0 \\
        0 & K(x) & 0 & 0 \\
        0 & 0 & 0 & \frac{p_2(x)}{\rho(x)}
    \end{pmatrix}.
\end{equation}
Note, that $\wt{Q} \in L^1([0,\ell]; \bC^{4 \times 4})$ in view
of
conditions~\eqref{eq:Tim.coef.cond1}--\eqref{eq:Tim.coef.cond2}.
Next we diagonalize the matrix $\wt{B}(\cdot)$. Namely, setting
\begin{equation} \label{eq:Tim.W(x)}
    \wt{U}(x) := \begin{pmatrix}
        -h_1(x) & h_1(x) & 0 & 0 \\
        1 & 1 & 0 & 0 \\
        0 & 0 & -h_2(x) & h_2(x) \\
        0 & 0 & 1 & 1
    \end{pmatrix},
\end{equation}
and noting that
\begin{equation}
    \wt{U}^{-1}(x) = \frac12 \begin{pmatrix}
        -\frac{1}{h_1(x)} & 1 & 0 & 0 \\
        \frac{1}{h_1(x)} & 1 & 0 & 0 \\
        0 & 0 & -\frac{1}{h_2(x)} & 1 \\
        0 & 0 & \frac{1}{h_2(x)} & 1
    \end{pmatrix},
\end{equation}
we easily get after straightforward calculations
\begin{equation} \label{eq:Tim.W1BW}
    \wt{U}^{-1}(x) \wt{B}(x) \wt{U}(x) = \diag\left(-\sqrt{\frac{EI(x)}{I_{\rho}(x)}},\sqrt{\frac{EI(x)}{I_{\rho}(x)}},
    -\sqrt{\frac{K(x)}{\rho(x)}},\sqrt{\frac{K(x)}{\rho(x)}}\right) = \frac{1}{\gamma(x)} B^{-1},
\end{equation}
Here we have used definition~\eqref{eq:Tim.g1.g2.def} of $h_1$,
$h_2$, and definitions~\eqref{eq:Tim.Irho/EI=...}
and~\eqref{eq:Tim.rho/K=...} of $b_1$, $b_2$, and $\gamma(x)$,
respectively. Further, note that
\begin{equation}
    \wt{U}(\cdot) \in W^{1,1}([0,\ell]; \bC^{4\times 4})
    \quad\text{and}\quad \widehat{Q} \in L^1([0,\ell]; \bC^{4 \times 4})
\end{equation}
in view of~\eqref{eq:Tim.coef.cond1},~\eqref{eq:Tim.coef.cond2}
and~\eqref{eq:Tim.h1,h2.in.AC}, where $\widehat{Q}(\cdot)$ is
given by~\eqref{eq:Tim.Q(x)} and~\eqref{eq:Tim.Theta(x)}. Hence,
it is easily seen that
\begin{equation} \label{eq:Tim.W1QW}
    \wt{U}^{-1}(x) \wt{Q}(x) \wt{U}(x) - i \wt{U}^{-1}(x) \wt{B}(x) \wt{U}'(x) = \widehat{Q}(x), \qquad x \in [0,\ell].
\end{equation}
Introducing the operator $\wt{U} : y \to \wt{U}(x) y$ in
$L^2([0,\ell]; \bC^4)$ and taking into
account~\eqref{eq:Tim.W1BW} and~\eqref{eq:Tim.W1QW} we obtain
that for any $y \in W^{1,1}([0,\ell]; \bC^4)$ and satisfying
$\wt{U} y \in \dom(\wt{L})$
\begin{equation} \label{eq:Tim.wtLy}
    \widehat{L} y := \wt{U}^{-1} \wt{L} \wt{U} y = -i {\gamma(x)}^{-1} B^{-1} y' + \widehat{Q}(x) y.
\end{equation}
Next, taking into account formulas~\eqref{eq:Tim.C.D} and
\eqref{eq:Tim.W(x)} for matrices $C$, $D$, and $\wt{U}(\cdot)$,
respectively, we derive
\begin{equation} \label{eq:Tim.dom.whL}
    \dom(\widehat{L}) = \{y \in W^{1,1}([0,\ell]; \bC^4) :
    \widehat{L}y \in L^2([0,\ell];\bC^4),\ Cy(0)+Dy(\ell) = 0 \}.
\end{equation}
Finally, we apply similarity transformation $S$ that realizes
the change of variable $x=x(t)$,
\begin{equation}
    S : L^2([0,\ell]; \bC^4) \to L^2([0,1]; \bC^4), \qquad (Sf)(t)=f(x(t)), \quad t \in [0,1].
\end{equation}
Since both $t(\cdot)$ and $x(\cdot)$ are strictly increasing and
continuously differentiable, the following implications hold
\begin{eqnarray}
    \label{eq:Tim.f->fx} f(\cdot) \in W^{1,1}([0,\ell];\bC^4) &\Rightarrow& f(x(\cdot)) \in W^{1,1}([0,1];\bC^4), \\
    \label{eq:Tim.g->gt} g(\cdot) \in W^{1,1}([0,1];\bC^4) &\Rightarrow& g(t(\cdot)) \in W^{1,1}([0,\ell];\bC^4).
\end{eqnarray}
Hence~\eqref{eq:Tim.dom.whL} and~\eqref{eq:dom} implies $\dom(L)
= S \dom(\widehat{L})$. Next, it follows
from~\eqref{eq:Tim.t=t(x)} that $t'(x)=\gamma(x)$, $x \in
[0,\ell]$. Hence for $f \in \dom(L)$ and $x \in [0,\ell]$ one
has
\begin{align} \label{eq:Tim.LS-1}
    (\widehat{L} S^{-1} f)(x) \nonumber
    &= -i \gamma(x)^{-1} B^{-1} \frac{d}{dx}\bigl[f(t(x))\bigr] + \widehat{Q}(x) f(t(x)) \\
    &= -i B^{-1} f'(t(x)) + \widehat{Q}(x) f(t(x)),
\end{align}
which directly implies that $L = S \widehat{L} S^{-1}$.
Combining this identity with~\eqref{eq:Tim.cLU-1}
and~\eqref{eq:Tim.wtLy} one concludes that $\cL$ is similar to
$L = L_{C,D}(Q)$.
\end{proof}
%
%
\begin{remark} \label{rem:Tim.AC}
Proposition~\ref{prop:Tim.similar} remains valid if we replace
condition~\eqref{eq:Tim.coef.cond1} by the weaker assumption
$\rho, I_{\rho}, K, EI \in L^{\infty}[0,\ell]$ and assume in
addition that the inverse function $x(\cdot) = t^{-1}(\cdot)$ is
absolutely continuous. Otherwise
implication~\eqref{eq:Tim.f->fx} fails, since in general the
inverse function of absolutely continuous function is not
necessarily absolutely continuous. For instance, the function
$h(x) := x + C(x)$, $x \in [0,1]$, where $C(\cdot)$ is the
Cantor function, strictly increases and is not absolutely
continuous. At the same time, the inverse function is absolutely
continuous.
\end{remark}
%
%
Applying~\cite[Corollary 3.2]{MalOri12} and
Theorem~\ref{th:basis.LCD} to the operator $L$ we obtain the
following result.
%
%
\begin{theorem} \label{th:Tim.weak}
Let
conditions~\eqref{eq:Tim.coef.cond1},~\eqref{eq:Tim.coef.cond2},~\eqref{eq:Tim.Irho=C0
EI/K rho},~\eqref{eq:Tim.h1,h2.in.AC} be satisfied and let also
\begin{equation} \label{eq:Tim.a1!=h1,a1!=h2}
    (\alpha_1 + h_1(\ell)) (\alpha_2 + h_2(\ell)) \ne \beta_1 \beta_2 \quad\text{and}\quad
    (\alpha_1 - h_1(\ell)) (\alpha_2 - h_2(\ell)) \ne \beta_1 \beta_2.
\end{equation}

\item[(i)] Then the system of root functions of $\cL$ is
    complete and minimal in $\fH$.

\item[(ii)] Assume in addition that
\begin{equation} \label{eq:Tim.p1,p2.inLinf}
    p_1, p_2 \in L^{\infty}[0,\ell], \quad h_1, h_2 \in \Lip_1[0,\ell]
    \quad\text{and}\quad \beta_1 = \beta_2 = 0.
\end{equation}
Then the system of root functions of the operator $\cL$ forms a
Riesz basis with parentheses in $\fH$.
\end{theorem}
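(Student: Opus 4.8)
The plan is to deduce both statements from Proposition~\ref{prop:Tim.similar}, which reduces $\cL$ to the $4\times 4$ Dirac-type operator $L = L_{C,D}(Q)$ with $B = \diag(-b_1,b_1,-b_2,b_2)$ and the matrices $C$, $D$ from~\eqref{eq:Tim.C.D}. Since similar operators share all geometric properties of their root systems, it suffices to verify the hypotheses of the relevant abstract theorems for $L$. The main work is a careful translation of the beam-boundary data $\alpha_j,\beta_j,h_j(\ell)$ into the matrix-theoretic conditions appearing in Corollary~\ref{cor:n=4}, Lemma~\ref{lem:4x4}, and Theorem~\ref{th:basis.LCD}.

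For part (i), I would invoke~\cite[Corollary 3.2]{MalOri12}, which gives completeness and minimality once $\det T_{B}(C,D)\ne 0$ and $\det T_{-B}(C,D)\ne 0$. With $B=\diag(-b_1,b_1,-b_2,b_2)$ and the $C,D$ of~\eqref{eq:Tim.C.D}, a direct computation (mirroring the one in the proof of Corollary~\ref{cor:n=4}) shows that $T_{B}(C,D)$ has a block structure whose determinant is, up to sign, $\bigl((\alpha_1+h_1(\ell))(\alpha_2+h_2(\ell))-\beta_1\beta_2\bigr)$, while $T_{-B}(C,D)$ produces $\bigl((\alpha_1-h_1(\ell))(\alpha_2-h_2(\ell))-\beta_1\beta_2\bigr)$. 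Hypothesis~\eqref{eq:Tim.a1!=h1,a1!=h2} is precisely the nonvanishing of both, so the boundary conditions are weakly $B$-regular and~\cite[Corollary 3.2]{MalOri12} applies; completeness and minimality then transfer back to $\cL$ via the similarity of Proposition~\ref{prop:Tim.similar}. (Alternatively, since $\beta_1=\beta_2=0$ is \emph{not} assumed in part (i), one may instead invoke Corollary~\ref{cor:n=4} directly, noting that~\eqref{eq:Tim.a1!=h1,a1!=h2} implies~\eqref{eq:d2d4+...,d1d3+...}; I would state it via~\cite{MalOri12} to keep the argument self-contained.)

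For part (ii), the extra hypotheses~\eqref{eq:Tim.p1,p2.inLinf} are exactly what is needed to land inside Theorem~\ref{th:basis.LCD}. First, $p_1,p_2\in L^\infty$ and $h_1,h_2\in\Lip_1[0,\ell]$ guarantee that $\widehat Q(\cdot)$ in~\eqref{eq:Tim.Q(x)} — hence $Q(\cdot)$ after the change of variable — lies in $L^\infty([0,1];\bC^{4\times4})$ (the entries $h_j'$ are bounded because $h_j$ is Lipschitz, and $\Theta^{-1}$ is bounded by~\eqref{eq:Tim.coef.cond2}). Second, with $\beta_1=\beta_2=0$ the matrices $C$ and $D$ become block-diagonal: grouping the coordinates as $\{1,2\}\oplus\{3,4\}$, one has $B=\diag(B_1,B_2)$ with $B_j=\diag(-b_j,b_j)$, and
\begin{equation*}
    C = \diag(C_1,C_2), \quad D = \diag(D_1,D_2), \quad
    C_j = \begin{pmatrix} 1 & 1 \\ 0 & 0 \end{pmatrix}, \quad
    D_1 = \begin{pmatrix} 0 & 0 \\ \alpha_1-h_1(\ell) & \alpha_1+h_1(\ell) \end{pmatrix},
\end{equation*}
and similarly $D_2$ with $\alpha_2,h_2(\ell)$. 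These are exactly the shapes~\eqref{eq:basis.CjDj2} (with $n_j=1$), and $b_{j1}b_{j2}^{-1}=-b_j/b_j=-1\in(-\infty,0)$ gives~\eqref{eq:basis.Bj2}. The invertibility requirements $C_{j1},C_{j2},D_{j1},D_{j2}\in\GL(1,\bC)$ amount to $\alpha_j-h_j(\ell)\ne0$ and $\alpha_j+h_j(\ell)\ne0$ for $j=1,2$, which indeed follow from~\eqref{eq:Tim.a1!=h1,a1!=h2} once $\beta_1=\beta_2=0$ (the two displayed inequalities become $(\alpha_1+h_1(\ell))(\alpha_2+h_2(\ell))\ne0$ and $(\alpha_1-h_1(\ell))(\alpha_2-h_2(\ell))\ne0$). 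Hence Theorem~\ref{th:basis.LCD} applies to $L=L_{C,D}(Q)$ and yields the Riesz basis property with parentheses; transporting this through the similarity $S\widehat U U$ of Proposition~\ref{prop:Tim.similar} gives the same property for $\cL$ in $\fH$.

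The main obstacle I anticipate is the bookkeeping in part (ii): one must carefully re-order the four coordinates so that the block structure required by~\eqref{eq:basis.BCD}--\eqref{eq:basis.CjDj2} is visible, and check that the reordering is compatible with the diagonal form of $B$ in~\eqref{eq:Tim.B}; this is routine but needs care with signs and permutations. A secondary point to verify cleanly is that the similarity operator of Proposition~\ref{prop:Tim.similar} is genuinely bounded with bounded inverse under the weaker hypothesis $h_1,h_2\in\Lip_1$ (rather than merely $AC$) — this is immediate since $\Lip_1\subset AC$, so Proposition~\ref{prop:Tim.similar} applies verbatim, and no new estimate is required.
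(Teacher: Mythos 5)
Your proposal follows the paper's proof essentially verbatim: part (i) is the computation $\det T_{\pm B}(C,D)=(\alpha_1\pm h_1(\ell))(\alpha_2\pm h_2(\ell))-\beta_1\beta_2$ followed by \cite[Corollary 3.2]{MalOri12}, and part (ii) is the identification of the block structure~\eqref{eq:basis.BCD}--\eqref{eq:basis.CjDj2} with $r=2$, $n_j=1$ plus the boundedness of $Q$, followed by Theorem~\ref{th:basis.LCD}, both transported through the similarity of Proposition~\ref{prop:Tim.similar}. The only blemish is the parenthetical alternative for part (i) via Corollary~\ref{cor:n=4}, which would not apply there (that corollary requires the $D$ of~\eqref{eq:n=4.CD}, i.e.\ $\beta_1=\beta_2=0$, and continuity of $Q$ at the endpoints, neither of which is assumed in part (i)) — but you correctly do not rely on it.
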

%
%
\begin{proof}[Proof]
\textbf{(i)} Consider the operator $L_{C,D}(Q)$ defined in
Proposition~\ref{prop:Tim.similar}. Combining
expressions~\eqref{eq:Tim.B} and~\eqref{eq:Tim.C.D} for the
matrices $B, C$, $D$ with definition of $T_A(C,D)$ yields
\begin{equation} \label{eq:Tim.TBCD}
    \det T_{B}(C,D) = \det\begin{pmatrix}
         1 & 0 & 0 & 0 \\
         0 & \alpha_1+h_1(\ell) & 0 & \beta_1 \\
         0 & 0 & 1 & 0 \\
         0 & \beta_2 & 0 & \alpha_2+h_2(\ell)
    \end{pmatrix} = (\alpha_1 + h_1(\ell))(\alpha_2 + h_2(\ell)) - \beta_1 \beta_2.
\end{equation}
Similarly one gets
\begin{equation}
    \det T_{-B}(C,D) = (\alpha_1-h_1(\ell)) (\alpha_2-h_2(\ell)) - \beta_1 \beta_2.
\end{equation}
Conditions~\eqref{eq:Tim.a1!=h1,a1!=h2} implies $\det T_{B}(C,D)
\ne 0$ and $\det T_{-B}(C,D) \ne 0$. Therefore,
by~\cite[Corollary 3.2]{MalOri12}, the system of root functions
of the operator $L_{C,D}(Q)$ is complete and minimal in
$L^2([0,1]; \bC^4)$. Since, by
Proposition~\ref{prop:Tim.similar}, $\cL$ is similar to the
operator $L_{C,D}(Q)$, the system of root functions of the
operator $\cL$ is complete and minimal in $\fH$.

\textbf{(ii)} Again consider the operator $L_{C,D}(Q)$ defined
in Proposition~\ref{prop:Tim.similar}. Since $\beta_1 = \beta_2
= 0$ and~\eqref{eq:Tim.a1!=h1,a1!=h2} is fulfilled, then
according to~\eqref{eq:Tim.B} and~\eqref{eq:Tim.C.D} the
matrices $B$, $C$, $D$ have the block structure described
in~\eqref{eq:basis.BCD}--\eqref{eq:basis.CjDj2} with $r=2$.
Moreover,~\eqref{eq:Tim.p1,p2.inLinf} implies $Q \in
L^{\infty}([0,1]; \bC^{4 \times 4})$. Therefore, combining
Theorem~\ref{th:basis.LCD} with
Proposition~\ref{prop:Tim.similar} yields the statement.
\end{proof}
Applying Corollary~\ref{cor:n=4} we can improve
Theorem~\ref{th:Tim.weak}(i) assuming that $\widehat{Q}(\cdot)$
is continuous at the endpoints $0$, $\ell$. For simplicity we
assume that $\beta_1 = \beta_2 = 0$.
%
%
\begin{theorem} \label{th:Tim.nonweak}
Let the functions $\rho$, $I_{\rho}$, $K$, $EI$, $p_1$, $p_2$,
$h_1$, $h_2$ satisfy
conditions~\eqref{eq:Tim.coef.cond1},~\eqref{eq:Tim.coef.cond2},~\eqref{eq:Tim.Irho=C0
EI/K rho} and~\eqref{eq:Tim.h1,h2.in.AC}. Let also the functions
$p_1$, $p_2$, $h_1'$, $h_2'$ be continuous at the endpoints 0
and $\ell$. Assume in addition that $\beta_1 = \beta_2 = 0$ and
the following assumptions are fulfilled:
\begin{itemize}
\item[(i)] $|\alpha_1 - h_1(\ell)| + |\alpha_2 - h_2(\ell)|
    \ne 0$ \ \ and \ \ $|\alpha_1 + h_1(\ell)| + |\alpha_2 +
    h_2(\ell)| \ne 0$;
\item[(ii)] for each $j \in \{1,2\}$ one of the following
    conditions is satisfied:
    \begin{itemize}
        \item[(a)] $\alpha_j^2 \ne h_j^2(\ell)$;
        \item[(b)] $\alpha_j = h_j(\ell)$ and
            $h_j'(\ell) \ne -p_j(\ell)$;
        \item[(c)] $\alpha_j = -h_j(\ell)$ and
            $h_j'(\ell) \ne p_j(\ell)$.
    \end{itemize}
\end{itemize}
Then the system of root functions of $\cL$ is complete and
minimal in $\fH$.
\end{theorem}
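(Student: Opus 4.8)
The plan is to reduce the statement, via the similarity of Proposition~\ref{prop:Tim.similar}, to an application of Corollary~\ref{cor:n=4} combined with the elementary Lemma~\ref{lem:4x4}. By Proposition~\ref{prop:Tim.similar} the operator $\cL$ is similar to $L := L_{C,D}(Q)$ with $B = \diag(-b_1,b_1,-b_2,b_2)$, $b_1,b_2>0$, and with $C,D$ as in~\eqref{eq:Tim.C.D}. Since we assume $\beta_1=\beta_2=0$, these matrices $C$ and $D$ have precisely the form~\eqref{eq:n=4.CD} required in Corollary~\ref{cor:n=4}, with
\begin{equation*}
    d_1 = \alpha_1 - h_1(\ell), \quad d_2 = \alpha_1 + h_1(\ell), \quad d_3 = \alpha_2 - h_2(\ell), \quad d_4 = \alpha_2 + h_2(\ell).
\end{equation*}
Moreover, because $p_1,p_2,h_1',h_2'$ are assumed continuous at the endpoints $0$ and $\ell$, while $I_\rho,\rho,h_2 \in C[0,\ell]$ and the change of variable $x(\cdot)=t^{-1}(\cdot)$ is $C^1$ on $[0,1]$ with $x(0)=0$, $x(1)=\ell$, formulas~\eqref{eq:Tim.Theta(x)},~\eqref{eq:Tim.Q(x)},~\eqref{eq:Tim.Q} show that $Q$ is continuous at $0$ and $1$. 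Thus all hypotheses of Corollary~\ref{cor:n=4} hold except possibly~\eqref{eq:d2d4+...,d1d3+...}, which by Lemma~\ref{lem:4x4} is equivalent to the four pairs~\eqref{eq:d1d2}--\eqref{eq:d3q43}.

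Next I would verify these four pairs using the explicit boundary values of $Q$. From~\eqref{eq:Tim.Theta(x)} and~\eqref{eq:Tim.Q(x)} one reads off
\begin{gather*}
    q_{21}(1) = \frac{i\,(p_1(\ell)+h_1'(\ell))}{2 I_\rho(\ell)}, \quad q_{12}(1) = \frac{i\,(p_1(\ell)-h_1'(\ell))}{2 I_\rho(\ell)}, \\
    q_{43}(1) = \frac{i\,(p_2(\ell)+h_2'(\ell))}{2 \rho(\ell)}, \quad q_{34}(1) = \frac{i\,(p_2(\ell)-h_2'(\ell))}{2 \rho(\ell)}.
\end{gather*}
Pair~\eqref{eq:d1d2} is automatic, since $h_j(\ell)=\sqrt{EI(\ell)I_\rho(\ell)}>0$ (resp. $\sqrt{K(\ell)\rho(\ell)}>0$) by~\eqref{eq:Tim.coef.cond2}, so $d_1,d_2$ (resp. $d_3,d_4$) cannot vanish simultaneously. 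Pair~\eqref{eq:d1d3} is exactly hypothesis~(i) of the theorem. For~\eqref{eq:d1q21} I would split hypothesis~(ii) with $j=1$ into its three cases: in case~(a), $\alpha_1^2\ne h_1^2(\ell)$ gives $d_1\ne0$ and $d_2\ne0$; in case~(b), $d_1=0$ but $h_1'(\ell)+p_1(\ell)\ne0$ makes $q_{21}(1)\ne0$ while $d_2=2h_1(\ell)\ne0$; in case~(c), $d_2=0$ but $h_1'(\ell)\ne p_1(\ell)$ makes $q_{12}(1)\ne0$ while $d_1=-2h_1(\ell)\ne0$. In each case $|d_1|+|q_{21}(1)|\ne0$ and $|d_2|+|q_{12}(1)|\ne0$; the identical argument with $j=2$ yields~\eqref{eq:d3q43}.

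Having checked~\eqref{eq:d1d2}--\eqref{eq:d3q43}, Lemma~\ref{lem:4x4} gives condition~\eqref{eq:d2d4+...,d1d3+...}, and Corollary~\ref{cor:n=4} then asserts that the system of root functions of $L_{C,D}(Q)$ is complete and minimal in $L^2([0,1];\bC^4)$. Since a similarity transformation carries root vectors to root vectors and preserves completeness and minimality, Proposition~\ref{prop:Tim.similar} transports this to the operator $\cL$ on $\fH$, which finishes the proof. I expect the only step requiring genuine care to be the bookkeeping identifying the three alternatives in~(ii) with the nonvanishing of the appropriate off-diagonal entries $q_{jk}(1)$ — one must track the signs in~\eqref{eq:Tim.Q(x)} attentively — but no real obstruction arises there.
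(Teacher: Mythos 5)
Your proposal is correct and follows essentially the same route as the paper's own proof: reduce via the similarity of Proposition~\ref{prop:Tim.similar}, identify $d_1,\dots,d_4$ and the endpoint values $q_{jk}(1)$ from~\eqref{eq:Tim.C.D} and~\eqref{eq:Tim.Q(x)}, and verify conditions~\eqref{eq:d1d2}--\eqref{eq:d3q43} of Lemma~\ref{lem:4x4} so that Corollary~\ref{cor:n=4} applies. The sign bookkeeping is consistent with the paper (note $\tfrac{1}{-2i}=\tfrac{i}{2}$, so your expressions for $q_{jk}(1)$ agree with those in the text), and the case analysis matching (a)--(c) to the nonvanishing of $d_1,d_2$ or $q_{21}(1),q_{12}(1)$ is exactly what the paper does.
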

%
%
\begin{proof}[Proof]
Consider the operator $L_{C,D}(Q)$ defined in
Proposition~\ref{prop:Tim.similar}. Since $\rho, I_{\rho} \in
C[0,\ell]$ and $p_1$, $p_2$, $h_1'$, $h_2'$ are continuous at
the endpoints 0 and $\ell$, it follows
from~\eqref{eq:Tim.Theta(x)}--\eqref{eq:Tim.Q} that the matrix
function $Q(\cdot)$ is continuous at the endpoints 0 and 1.
Since $\beta_1 = \beta_2 = 0$, the block matrix
representations~\eqref{eq:Tim.B} and~\eqref{eq:Tim.C.D} of the
matrices $B$, $C$, $D$, allow to apply Corollary~\ref{cor:n=4}
and Lemma~\ref{lem:4x4}. Let us verify
conditions~\eqref{eq:d1d2}--\eqref{eq:d3q43} of
Lemma~\ref{lem:4x4}. First, comparing~\eqref{eq:n=4.CD}
with~\eqref{eq:Tim.C.D} yields
\begin{eqnarray}
    && d_1 = \alpha_1 - h_1(\ell), \quad d_2 = \alpha_1 + h_1(\ell), \\
    && d_3 = \alpha_2 - h_2(\ell), \quad d_4 = \alpha_2 + h_2(\ell).
\end{eqnarray}
Therefore, condition~\eqref{eq:d1d2} is always satisfied, since
$h_j(\ell) \ne 0$, $j \in \{1,2\}$, while
condition~\eqref{eq:d1d3} is equivalent to the condition (i) of
the theorem. Further, it follows from~\eqref{eq:Tim.Q(x)}
and~\eqref{eq:Tim.Q} that
\begin{eqnarray}
    && q_{12}(1) = \frac{p_1(\ell) - h_1'(\ell)}{-2 i I_{\rho}(\ell)}, \quad
    q_{21}(1) = \frac{p_1(\ell) + h_1'(\ell)}{-2 i I_{\rho}(\ell)}, \\
    && q_{34}(1) = \frac{p_2(\ell) - h_2'(\ell)}{-2 i \rho(\ell)}, \quad
    q_{43}(1) = \frac{p_2(\ell) + h_2'(\ell)}{-2 i \rho(\ell)}.
\end{eqnarray}
Hence, conditions~\eqref{eq:d1q21} and~\eqref{eq:d3q43} are
equivalent to the conditions (a)-(c) of the theorem for $j=1$
and $j=2$, respectively. Therefore, by Lemma~\ref{lem:4x4},
condition~\eqref{eq:d2d4+...,d1d3+...} is satisfied and, by
Corollary~\ref{cor:n=4}, the system of root functions of the
operator $L_{C,D}(Q)$ is complete and minimal in $L^2([0,1];
\bC^4)$. Therefore, Proposition~\ref{prop:Tim.similar} completes
the proof.
\end{proof}
\begin{remark} \label{rem:Tim.cond}
The main results remain also valid if the function $\nu(\cdot)$
given by~\eqref{eq:Tim.Irho=C0 EI/K rho} satisfies $\nu(x) \ne
1$ for $x \in [0,\ell]$.
\end{remark}
%
%
\begin{remark}
\textbf{(i)} In connection with Theorem~\ref{th:Tim.weak} we
mention the paper~\cite{Shub02} where the operator $\cL$ was
investigated under the following assumptions on the parameters
of the model:
\begin{equation} \label{eq:Tim.p1=p2=0,...}
    EI, K \in W^{3,2}[0,\ell],
    \quad \rho, I_{\rho} \in W^{4,2}[0,\ell], \quad p_1 = p_2 = 0, \quad \beta_1 = \beta_2 = 0,
\end{equation}
but without algebraic assumption~\eqref{eq:Tim.Irho=C0 EI/K
rho}. The completeness of the root functions was stated
in~\cite{Shub02} under the
condition~\eqref{eq:Tim.a1!=h1,a1!=h2} and the additional
assumption
\begin{equation} \label{eq:Tim.nu!=1}
    I_{\rho}(x) K(x) \ne \rho(x) EI(x), \quad x \in [0,\ell],
\end{equation}
which in our notations means that $\nu(x) \ne 1$, $x \in
[0,\ell]$. Unfortunately, the proof of the completeness
in~\cite{Shub02} fails because of the incorrect application of
the Keldysh theorem. Namely, the representation $\cL^{-1} =
\cL_{00}^{-1} (I_{\fH} + T)$ used in~\cite{Shub02}, where $T$ is
of finite rank bounded operator and $\cL_{00} = \cL_{00}^{*}$,
fails since it leads to the inclusion $\dom(\cL) \subset
\dom(\cL_{00})$, which holds if only if $\cL = \cL_{00}$.

Moreover, under
conditions~\eqref{eq:Tim.p1=p2=0,...},~\eqref{eq:Tim.nu!=1}
and~\eqref{eq:Tim.a1!=h1,a1!=h2} the Riesz basis property for
the system of root functions of $\cL$ was stated
in~\cite{Shub02}. The proof is based on the fact that under the
above restrictions the eigenvalues of $\cL$ are asymptotically
simple and separated. However, it is not the case. For instance,
if $K \equiv EI \equiv \rho \equiv 1$, $I_{\rho} \equiv 4$,
$\alpha_1 = 5/2$ and $\alpha_2 = 13/12$, then according
to~\cite[Theorem 4.2]{Shub02} the sequence of the eigenvalues of
$\cL$ splits into two families
\begin{equation}
    \l_n^{(1)} = \frac{\pi n}{2} + \frac{i}{2} \ln 3 + O(n^{-1}) \quad\text{and}\quad
    \l_n^{(2)} = \pi n + \frac{i}{2} \ln 3 + O(n^{-1}), \quad n \in \bZ \setminus \{0\}.
\end{equation}
Clearly, in this case the sequence of the eigenvalues of $\cL$
is not asymptotically simple and separated. Note, however, that
according to Theorem~\ref{th:Tim.weak}(ii) the system of root
functions of the operator $\cL$ always forms a Riesz basis with
parentheses under the
restrictions~\eqref{eq:Tim.coef.cond1},~\eqref{eq:Tim.coef.cond2},~\eqref{eq:Tim.Irho=C0
EI/K
rho},~\eqref{eq:Tim.h1,h2.in.AC},~\eqref{eq:Tim.a1!=h1,a1!=h2}
and~\eqref{eq:Tim.p1,p2.inLinf}.

\textbf{(ii)} In connection with Theorem~\ref{th:Tim.weak} we
also mention the paper~\cite{XuYung04}. In this paper the
operator $\cL$ was investigated under the following stronger
assumptions on the parameters of the model:
\begin{equation}
    \label{eq:Tim.Xu} EI, K, \rho, I_{\rho} \ \text{are constant,} \quad p_1 = p_2 = 0, \quad
    \alpha_1, \alpha_2, \beta_1, \beta_2 \geqslant 0, \quad  4 \alpha_1 \alpha_2 \geqslant (\beta_1 + \beta_2)^2.
\end{equation}
The last condition in~\eqref{eq:Tim.Xu} ensures the
dissipativity of the operator $\cL$. The completeness of the
system of root functions of the operator $\cL$ was proved
in~\cite{XuYung04} under the restrictions~\eqref{eq:Tim.Xu}
and~\eqref{eq:Tim.a1!=h1,a1!=h2}. So, our
Theorem~\ref{th:Tim.weak}(i) generalizes this result to a
broader class of boundary conditions and improves it in the
dissipative case. Note also that under additional assumptions,
guarantying that the eigenvalues of $\cL$ are asymptotically
simple and separated, it was proved in~\cite{XuYung04} that the
root functions of $\cL$ contains the Riesz basis. Moreover, this
fact was applied to show the exponential stability of the
problem~\eqref{eq:Tim.Ftt}--\eqref{eq:Tim.WLFLa2}.
\end{remark}
%
%
\noindent {\bf Acknowledgments.} We are indebted to D.
Yakubovich for the reformulation of the condition (a) of Theorem
\ref{th:explicit.nxn} mentioned in Remark~\ref{rem:omega01}. We
are also indebted to A. Shkalikov for useful remarks helping us
to improve the exposition.
%
%

%
%
\end{document}